\definecolor{linksc}{RGB}{150,50,100}
\definecolor{citec}{RGB}{0,150,120}
\definecolor{urlc}{RGB}{0,150,120}
\newcommand{\mb}[1]{\mathbb #1}
\newcommand{\mc}[1]{\mathcal #1}
\tikzset{
  pic/.is if=tikz@node@is@pic,
  dot corner/.pic={
    \path[pic actions,draw=none,shade=none,-]
      (-6\pgflinewidth,6\pgflinewidth) circle[radius=1.5\pgflinewidth];
    \path[pic actions,fill=none,shade=none,-,line cap=round,line join=round]
      (-7.5\pgflinewidth,0pt) -| (0pt,7.5\pgflinewidth);},
  dot corner'/.code={\arrow[to path={node also[dot corner={#1}](\tikztostart)}]},
  dot corner/.cd, .is choice,
  north west/.style={label={[draw,fill,pic]north west:dot corner}},
  south west/.style={label={[draw,fill,pic,yscale=-1]south west:dot corner}},
  north east/.style={label={[draw,fill,pic,xscale=-1]north east:dot corner}},
  south east/.style={label={[draw,fill,pic,xscale=-1,yscale=-1]south east:dot corner}}}
\newcommand{\colim}{\varinjlim}
\newcommand{\ilim}{\varprojlim}
\DeclareFontFamily{U}{mathb}{}
\DeclareFontShape{U}{mathb}{m}{n}{
  <-5.5> mathb5
  <5.5-6.5> mathb6
  <6.5-7.5> mathb7
  <7.5-8.5> mathb8
  <8.5-9.5> mathb9
  <9.5-11.5> mathb10
  <11.5-> mathb12
}{}
\DeclareSymbolFont{mathb}{U}{mathb}{m}{n}
\DeclareMathSymbol{\solid}{\mathbin}{mathb}{"0D}%
\newcommand{\blank}{\rule{0.3cm}{0.15mm}}
\theoremstyle{plain}
\newtheorem{theorem}{Theorem}[section]
\newtheorem*{theorem*}{Theorem}
\newtheorem{lemma}[theorem]{Lemma}
\newtheorem{proposition}[theorem]{Proposition}
\newtheorem*{proposition*}{Proposition}
\newtheorem{corollary}[theorem]{Corollary}
\theoremstyle{definition}
\newtheorem{definition}[theorem]{Definition}
\newtheorem*{mquestion}{Main question}
\newtheorem{example}[theorem]{Example}
\newtheorem{notation}[theorem]{Notation}
\newtheorem{remark}[theorem]{Remark}
\theoremstyle{remark}
\newtheorem*{claim}{Claim}
\newtheorem*{subtquestion}{Question}
\newtheorem{pquestion}[theorem]{Question}
\DeclareMathOperator{\Sch}{Sch}
\DeclareMathOperator{\tensor}{\otimes}
\DeclareMathOperator{\Spec}{Spec}
\DeclareMathOperator{\rSpec}{\underline{Spec}}
\DeclareMathOperator{\Hom}{Hom}
\DeclareMathOperator{\iHom}{\underline{Hom}}
\DeclareMathOperator{\im}{im}
\renewcommand{\phi}{\varphi}
\newcommand{\set}[1]{\left\{#1\right\}}
\newcommand{\et}{\mathrm{\acute et}}
\title[Coh. smooth morphisms for {\'e}tale $\mb F_p$-sheaves in characteristic $p$]{Cohomologically smooth morphisms for \\ {\'e}tale $\mb F_p$-sheaves in characteristic $p$}
\author{Felix Lotter}
\date{}
\begin{document}
\begin{abstract}
    We scrutinise the notions of cohomologically smooth morphisms and smooth objects for the six functor formalism of étale $\mb F_p$-sheaves on schemes in characteristic $p$. We show that only cohomologically étale morphisms are cohomologically smooth in this setting. This is complemented by a characterisation of cohomologically étale morphisms in arbitrary characteristic. In fact, we prove that such a morphism is already étale up to universal homeomorphism.
\end{abstract}
\maketitle

{ \hypersetup{hidelinks} \setcounter{tocdepth}{1} \tableofcontents }

\section{Introduction}
\noindent The main goal of this paper is to answer the following question:
\vspace{0.7em}\\
    \textit{Which morphisms admit Poincaré duality for étale $\mb F_p$-sheaves on schemes in characteristic $p$?}
\vspace{0.7em}\\
Let us clarify what we mean by this.
\subsection{The 6-functor formalism}
For a scheme $X$, one can consider the category $\mathrm{Sh}(X, \Lambda)$ of étale sheaves of $\Lambda$-modules on $X$, where $\Lambda$ is some torsion ring. This category is equipped with a tensor product $\tensor$ and an internal $\Hom$ object $\iHom$; for a morphism $f: Y \to X$ there is an induced pullback functor $f^*: \mathrm{Sh}(X, \Lambda) \to \mathrm{Sh}(Y, \Lambda)$ which admits a right adjoint $f_*: \mathrm{Sh}(Y, \Lambda) \to \mathrm{Sh}(X, \Lambda)$. Deriving all of these functors yields four of the six operations: The tensor product $\tensor^L$, the internal $\Hom$ $R\iHom$, the pullback functor $f^*: \mc D(X,\Lambda) \to \mc D(Y,\Lambda)$ and its right adjoint $Rf_*: \mc D(Y,\Lambda) \to \mc D(X,\Lambda)$. The remaining two are special in that they do not arise from deriving a functor on the abelian level and are not even defined for all morphisms: If $f$ is a separated, finite type morphism of qcqs schemes, one defines a functor $f_!: \mc D(Y,\Lambda) \to \mc D(X,\Lambda)$ and again it admits a right adjoint, $f^!: \mc D(X,\Lambda) \to \mc D(Y,\Lambda)$, called the exceptional pullback. These $6$ functors are subject to certain relations, e.g. the projection formula and base change formula. Roughly, all of this data is what constitutes an abstract 6-functor formalism; but for a long time this term was an unprecise heuristic. However, a concise definition was found recently by L. Mann in \cite{Mann}, vastly streamlining the previous work of Liu-Zheng (\cite{LiuZheng}) and Gaitsgory-Rozenblyum (\cite{gaitsgory}).
\subsection{Cohomologically smooth morphisms} Using Mann's definition of an abstract six functor formalism, one can give meaning to the question if a morphism admits Poincaré duality by employing the notion of \textit{cohomologically smooth} morphisms. Cohomological smoothness was first defined in the context of étale cohomology of diamonds by P. Scholze in \cite{Scholze2} to capture the cohomological properties that are expected from "smooth" morphisms. This notion makes sense in any abstract 6-functor formalism. Roughly, a cohomologically smooth morphism is a morphism $f$ for which the functor $f^!$ right adjoint to $f_!$ is given by twisted pullback (in a universal way). If one considers the étale $6$-functor formalism of $\mb F_\ell$-sheaves on the category of separated, finite type schemes over a field $k = \bar k$ with $\ell \in k^\times$, all smooth morphisms are cohomologically smooth and this amounts indeed to Poincaré duality in étale cohomology (cf. \Cref{xmpl:coh smooth example}).\\
\\
Let us now make our question more precise:

\begin{mquestion}
    What are the cohomologically smooth morphisms for the 6-functor formalism of étale $\mb F_p$-sheaves on finite type, separated schemes over a field of characteristic $p$?
\end{mquestion}
In this setting, it is clear that Poincaré duality can not be expected from smooth morphisms in a naive way. Consider for example the projective line $\mb P^1_k$ over an algebraically closed field $k$ of characteristic $p$. Then one computes $R\Gamma(\mb P^1_k, \mb F_p) \cong \mb F_p[0]$ via the Artin-Schreier sequence, but Poincaré duality would predict that $H^0(\mb P^1_k, \mb F_p)^\lor \cong  H^2(\mb P^1_k, \mb F_p)$. In fact, we will see that the observation that $\mb P^1_k$ has no higher $\mb F_p$-cohomology even implies that $\mb P^1_k \to \Spec k$ can not be cohomologically smooth (\Cref{p1n-not-smooth}). Using the strong stability properties of cohomologically smooth morphisms, we will deduce from this that the cohomologically smooth morphisms must be quasi-finite. It turns out that this is enough to give a full geometric characterisation of them, answering our main question:

\begin{theorem*}[\ref{thm:char}]\label{introduction-1}
    Let $k$ be a field of characteristic $p$ and let $\mc D_p$ be the 6-functor formalism of étale $\mb F_p$-sheaves\footnote{See \Cref{thm:et 6 functors} for the precise definition.} on the category $\mc C$ of finite type, separated schemes over $k$. Let $f:Y \to X$ be a morphism in $\mc C$. Then the following are equivalent:
    \begin{enumerate}[label=\roman*)]
        \item $f$ is $\mc D_p$-cohomologically smooth.
        \item $f$ is $\mc D_p$-cohomologically étale.
        \item $f^{\mathrm{perf}}$ is étale.
    \end{enumerate}
    where $f^\mathrm{perf}$ denotes the perfection of $f$ (see \Cref{prop:perf and awn}).
\end{theorem*}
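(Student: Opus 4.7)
The plan is to establish the implications (ii) $\Rightarrow$ (i), (i) $\Rightarrow$ (ii), and (ii) $\Leftrightarrow$ (iii). The first is formal, as cohomological étaleness is the special case of cohomological smoothness with trivial dualising object. The equivalence (ii) $\Leftrightarrow$ (iii) is a geometric characterisation of cohomologically étale morphisms valid in arbitrary characteristic (as advertised in the abstract), whereas (i) $\Rightarrow$ (ii) is the new content specific to characteristic $p$.

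For (ii) $\Leftrightarrow$ (iii), the crucial input is that $X^{\mathrm{perf}} \to X$ is a cofiltered limit of absolute Frobenii, hence a universal homeomorphism; the small étale topoi of $X$ and $X^{\mathrm{perf}}$ with $\mb F_p$-coefficients are therefore canonically equivalent and the 6-functor formalism $\mc D_p$ is insensitive to perfection. This gives (iii) $\Rightarrow$ (ii) immediately, since on perfect schemes an étale morphism is automatically $\mc D_p$-cohomologically étale. The converse (ii) $\Rightarrow$ (iii), asserting that cohomological étaleness forces actual étaleness of the perfection, is more delicate; I would approach it via a local criterion combining the behaviour of $f^!$ on closed immersions with the absolute weak normality analysis of \Cref{prop:perf and awn}, to force the relative cotangent of $f^{\mathrm{perf}}$ to vanish.

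For (i) $\Rightarrow$ (ii), I would proceed in two steps. First, show that $f$ is quasi-finite: granting \Cref{p1n-not-smooth} (the failure of cohomological smoothness of $\mb P^1_k \to \Spec k$, forced by the computation $R\Gamma(\mb P^1_k, \mb F_p) \cong \mb F_p[0]$ and the Poincaré-duality mismatch), stability of cohomological smoothness under base change reduces the question to ruling out positive-dimensional cohomologically smooth fibres over an algebraically closed field, and a Noether-normalisation-style devissage produces, from any such fibre, a morphism involving $\mb P^1$ that contradicts \Cref{p1n-not-smooth}. Second, once $f$ is quasi-finite the dualising object $f^!(\mb F_p)$ is invertible and concentrated in degree $0$; combining this with the already established (ii) $\Leftrightarrow$ (iii), the task reduces to checking that $f^{\mathrm{perf}}$ is étale, which follows from a rigidity argument for quasi-finite cohomologically smooth morphisms between perfect schemes where the relevant Picard contributions become trivial. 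The principal obstacle I anticipate is the quasi-finiteness reduction: extracting such a strong geometric conclusion on arbitrary fibres from a single negative example requires a careful devissage that fully exploits the six-functor stability apparatus in the $\mb F_p$-coefficient setting, where the classical Artin-style vanishing theorems that normally power such reductions are unavailable.
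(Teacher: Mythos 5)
Your three-implication skeleton matches the paper's: $ii)\Rightarrow i)$ is formal (\Cref{lmm:coh et coh sm unram}), $ii)\Leftrightarrow iii)$ is the characteristic-free statement (\Cref{cor:coh et awn et}, via \Cref{prop:perf and awn}), and $i)\Rightarrow ii)$ is reduced to quasi-finiteness and then handled by \Cref{cor:coh sm quasifinite coh etale}. However, there are genuine gaps in your outlines of the two substantial implications.

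For $ii)\Rightarrow iii)$ your plan ``to force the relative cotangent of $f^{\mathrm{perf}}$ to vanish'' is a red herring: since both source and target of $f^{\mathrm{perf}}$ are perfect schemes, $f^{\mathrm{perf}}$ is \emph{automatically} formally \'etale (this is \Cref{prop:perf schemes form etale}), so the cotangent complex vanishes regardless of any cohomological hypothesis. The actual content of $ii)\Rightarrow iii)$ is showing that $f^{\mathrm{awn}}$ is of finite presentation (equivalently, \'etale), and the paper does this by a fundamentally different route: cohomological \'etaleness is first translated into two geometric facts about $f$ itself --- that it is topologically unramified (open diagonal, via \Cref{lmm:coh unram open} and \Cref{lmm:coh-et immersions}, yielding \Cref{cor:coh-unramified}) and universally open (\Cref{prop:quasi-finite coh sm open}) --- and then \Cref{thm:qf open unram awn etale} bootstraps these two properties into \'etaleness of the absolute weak normalisation, using the Bhatt--Mathew $v$-descent theorem for \'etale morphisms on the a.w.n.\ site together with an induction on the maximal fiber cardinality. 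Nothing in your sketch reaches this.

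For $i)\Rightarrow ii)$, your quasi-finiteness reduction via a ``Noether-normalisation-style devissage'' does not work as stated, because cohomological smoothness is not preserved under the decomposition $f = \pi\circ s$ with $s$ finite. The paper instead uses that cohomological smoothness is cohomologically-smooth-local on the source (\Cref{sm-is-loc}/\Cref{lmm:sm-is-loc-et}): reduce to $k=\bar k$, use generic smoothness of $X_{\mathrm{red}}$, factor the smooth locus \'etale over $\mathbb A^n_k$, and translate the image around $\mathbb A^n_k$ to conclude $\mathbb A^n_k\to\operatorname{Spec} k$ would be cohomologically smooth, contradicting \Cref{A^n-not-sm}. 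Finally, your second step (``once $f$ is quasi-finite the dualising object $f^!(\mathbb F_p)$ is invertible and concentrated in degree $0$; combining this with $(ii)\Leftrightarrow(iii)$, the task reduces to checking that $f^{\mathrm{perf}}$ is \'etale'') is circular: the concentration in degree $0$ is precisely what has to be proven, and once you have it you are already done by \Cref{lmm:coh etale vs coh smooth} without appealing to $(iii)$. The paper's actual argument is cleaner: base change to a geometric point $\bar x$, observe that $Y_{\bar x}\to\bar x$ is finite, so its perfection is \'etale (a disjoint union of points), hence $Y_{\bar x}\to\bar x$ is cohomologically \'etale, hence $f^!\Lambda\vert_{Y_{\bar x}}\cong\Lambda$, so $f^!\Lambda$ is concentrated in degree $0$.
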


\subsection{Cohomologically étale morphisms}

In the above theorem the notion of a \textit{cohomologically étale} morphism appeared, see \Cref{def:coh et prop}. Here, a cohomologically étale morphism is just a morphism $f$ for which $f_!$ is "universally" left adjoint to $f^*$; in other words, it is a cohomologically smooth morphism for which the twisted pullback $f^!$ is just the pullback $f^*$ (\Cref{lmm:coh etale vs coh smooth}).\par
We mentioned that it is enough to prove that cohomologically smooth morphisms are quasi-finite to deduce the characterisation above. This is due to the following more general description of cohomologically étale morphisms:

\begin{theorem*}[\ref{cor:coh sm quasifinite coh etale}]
    Let $\Lambda$ be a finite ring and $\mc D_\Lambda$ the 6-functor formalism of étale $\Lambda$-sheaves on the category $\Sch$ of qcqs schemes. Let $f:Y \to X$ be a morphism of finite type in $\Sch$. Then the following are equivalent:
    \begin{enumerate}[label=\roman*)]
        \item $f$ is $\mc D_\Lambda$-cohomologically smooth and quasi-finite.
        \item $f$ is $\mc D_\Lambda$-cohomologically étale.
        \item $f^\textrm{awn}$ is étale.
    \end{enumerate}
    where $f^\mathrm{awn}$ denotes the absolute weak normalisation of $f$ (see \Cref{lmm:awn-adjoint}).
\end{theorem*}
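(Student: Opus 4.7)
The plan is to establish the cycle (iii) $\Rightarrow$ (ii) $\Rightarrow$ (i) $\Rightarrow$ (iii), leveraging two structural inputs: first, that for a finite ring $\Lambda$ the étale $6$-functor formalism $\mc D_\Lambda$ is invariant under universal homeomorphisms, via topological invariance of the small étale site; second, that open immersions and étale morphisms are cohomologically étale in any reasonable $6$-functor formalism (since $f_!$ then agrees with the honest left adjoint $f_\#$ of $f^*$).

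For (iii) $\Rightarrow$ (ii), the canonical map $Y^{\mathrm{awn}} \to Y$ is a universal homeomorphism, as is its base change along any $X' \to X$. Combining this with the fact that étale morphisms are cohomologically étale, and the universal nature of cohomological étaleness, the hypothesis that $f^{\mathrm{awn}}$ is étale transports to $f$ being $\mc D_\Lambda$-cohomologically étale. For (ii) $\Rightarrow$ (i), the cohomologically étale property formally implies cohomological smoothness (with trivial dualizing object), so the content is quasi-finiteness. I would base change to a geometric fiber $f_{\bar x}: Y_{\bar x} \to \Spec \overline{\kappa(x)}$ and argue that if $Y_{\bar x}$ had positive dimension, the identity $f_{\bar x}^! \Lambda \cong \Lambda$ together with the projection formula, stability of cohomological étaleness under base change, and an explicit computation of $R\Gamma(Y_{\bar x}, \Lambda)$ versus $R\Gamma_c(Y_{\bar x}, \Lambda)$ would give a contradiction (in characteristic $p$ this is already visible via Artin--Schreier on $\mb A^1$; in mixed characteristic the analogous obstruction comes from Poincaré duality).

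The main step is (i) $\Rightarrow$ (iii). Assuming $f$ is cohomologically smooth and quasi-finite, I would apply Zariski's main theorem to factor $f$ Zariski-locally as a composition $j \circ g$, where $j$ is an open immersion and $g$ is finite. Since $j$ is cohomologically étale, the two-out-of-three behaviour of cohomologically smooth morphisms shows that $g$ is still cohomologically smooth, reducing the problem to the case of $f$ finite. For finite $f$, the dualizing complex $f^!\Lambda$ is invertible in $\mc D_\Lambda(Y)$; because the fibers are zero-dimensional, this invertible object must live in cohomological degree zero. One then tests étaleness of $f^{\mathrm{awn}}$ on strictly henselian local rings, where the cohomological smoothness of $f$ together with the degree and trace map of a finite morphism forces the fiber rings to be products of fields after passing to absolute weak normalisation.

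The hardest part, as I see it, is the finite subcase of (i) $\Rightarrow$ (iii): pinning down that a finite, cohomologically smooth morphism is étale after absolute weak normalisation requires more than abstract $6$-functor yoga, since one must extract ring-theoretic data (namely, the reducedness and separability of residue extensions up to awn) from the invertibility of $f^!\Lambda$. I would expect this to hinge on analysing the trace map $f_! \Lambda \to \Lambda$ and the duality unit, and identifying them with the classical trace of a finite morphism after awn, where the finite morphism becomes automatically reduced and seminormal so that étaleness can be read off from standard criteria. The quasi-finiteness argument in (ii) $\Rightarrow$ (i) is a secondary obstacle that may need its own concrete input on cohomological dimension of positive-dimensional varieties with finite coefficients.
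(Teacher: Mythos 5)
Your overall cycle (iii)\(\Rightarrow\)(ii)\(\Rightarrow\)(i)\(\Rightarrow\)(iii) is a reasonable template, and the (iii)\(\Rightarrow\)(ii) step is correct and matches the paper (\Cref{prop:awn etale implies coh etale}). But the two harder steps diverge from the paper's route, and both have real problems.

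For (ii)\(\Rightarrow\)(i), your proposed argument — compare \(R\Gamma\) with \(R\Gamma_c\) on a positive-dimensional geometric fiber, using \(f^!\Lambda\cong\Lambda\) — is not the paper's argument and will not work uniformly in \(\Lambda\). For \(\Lambda=\mb F_\ell\) with \(\ell\) invertible, a smooth proper curve has \(R\Gamma\cong R\Gamma_c\) and satisfies Poincaré duality, so there is no contradiction to be extracted from a cohomology count alone; the obstruction to being cohomologically étale in that case is a Tate twist, not a mismatch of Euler characteristics. The paper instead derives quasi-finiteness structurally: cohomologically étale \(\Rightarrow\) cohomologically unramified \(\Rightarrow\) topologically unramified (\Cref{cor:coh-unramified}, via \Cref{lmm:coh unram open}: the image of the diagonal is open because a \(g\)-smooth \(\Lambda\) on a universally closed \(g\) has dualisable, hence clopen-supported, pushforward) \(\Rightarrow\) quasi-finite (\Cref{cor:coh-et implies quasi-finite}). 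This route works for every finite \(\Lambda\) and makes no cohomology computations.

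For (i)\(\Rightarrow\)(iii), your reduction "factor \(f=j\circ g\) with \(j\) an open immersion and \(g\) finite, then cancel \(j\)" is not what Zariski's main theorem gives. ZMT (\Cref{thm:Zariski}) produces \(f=\pi\circ j\) with \(j\colon Y\hookrightarrow T\) open and \(\pi\colon T\to X\) finite, i.e.\ the open immersion comes \emph{first}. From \(\pi\circ j\) cohomologically smooth one cannot conclude \(\pi\) is cohomologically smooth; cancellation (cf.\ \Cref{rem:unram}) goes the other way. And as you yourself flag, even in the genuine finite case the passage from invertibility of \(f^!\Lambda\) to étaleness of \(f^{\mathrm{awn}}\) via trace maps is a hope, not an argument. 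The paper avoids this entirely by a short and rather slick observation: it proves (i)\(\Rightarrow\)(ii) (not (iii)) by noting that, since \(f\) is cohomologically smooth, the formation of \(f^!\Lambda\) commutes with base change; on each geometric fiber \(f_x\colon Y_{\bar x}\to \bar x\) the morphism is finite over a separably closed field, its absolute weak normalisation is étale, so \(f_x^!\Lambda\cong\Lambda\). Hence \(f^!\Lambda\) is concentrated in degree \(0\), and \Cref{lmm:coh etale vs coh smooth} then upgrades cohomological smoothness to cohomological étaleness. The genuinely hard implication (ii)\(\Rightarrow\)(iii) is then the separate \Cref{cor:coh et awn et}, proved by universal openness (\Cref{prop:quasi-finite coh sm open}), topological unramifiedness, and arc/\(v\)-descent for étale morphisms on the absolutely weakly normal site with induction on fiber cardinality (\Cref{thm:qf open unram awn etale}). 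That descent machinery is the missing ingredient your sketch gestures toward but does not supply.
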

The notion of absolutely weakly normal schemes was introduced by D. Rydh in \cite{rydh}. In positive characteristic it agrees with the notion of perfect schemes (i.e.\ schemes on which the Frobenius endomorphism is an isomorphism) and the absolute weak normalisation is just the perfection (cf. \Cref{prop:perf and awn}).\par
While the equivalence of $i)$ and $ii)$ in the above theorem follows rather formally, we will deduce the equivalence of $ii)$ and $iii)$ using the descent results proven in \cite{arc} by B. Bhatt and A. Mathew. More precisely, we will first observe that cohomologically étale morphisms are universally open and \textit{topologically unramified} (i.e. their diagonal is topologically open, see \Cref{def:top unram}) and then prove that each universally open, topologically unramified morphism in $\mc C$ is étale after absolute weak normalisation (\Cref{thm:qf open unram awn etale}), using descent for étale morphisms along universally open covers on the absolutely weakly normal site while doing induction on the cardinality of geometric fibers.

\subsection{Smooth objects}
Finally, we address a generalisation of our main question:
\vspace{0.7em}\\
\textit{If $f:Y \to X$ is a morphism in $\mc C$, for which objects $A, B \in \mc D(Y)$ is $f_!(A \tensor \blank)$ "universally" left adjoint to $f^* \blank \tensor B$ in the étale $6$-functor formalism of $\mb F_p$-sheaves?}
\vspace{0.7em}\\
That is, we want to understand the $f$-smooth objects (see \Cref{def:sm obj}). In the absolute case, i.e. for $f:X \to \Spec k$, we will characterise them as the scyscraper sheaves on $X$ which are dualisable on their support (\Cref{prop:sm object char absolute}). We can deduce the following statement about the relative case from this:

\begin{theorem*}[\ref{cor:supp of sm coh et}]
    Assume $k = \bar{k}$ and let $f:Y \to X$ be a morphism in $\mc C$ and $A$ an $f$-smooth object. Let $Z$ be the support of $A$. Then $A|_Z$ is $f|_Z$-smooth and $Z \to X$ is quasi-finite.
\end{theorem*}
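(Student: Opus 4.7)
The strategy is to reduce the problem to the absolute characterisation of smooth objects (\Cref{prop:sm object char absolute}) fiberwise over closed points of $X$, and then promote the resulting fiberwise dimension bound to all of $X$ via the Jacobson property. Since $f$-smoothness is built to be stable under arbitrary base change, for every closed point $x \in X$ the restriction $A_x := A|_{Y_x}$ is smooth for the fiber $f_x : Y_x \to \Spec k(x)$. Because $k = \bar k$ and $X$ is of finite type over $k$, the Nullstellensatz gives $k(x) = k$ at every closed point, so $Y_x \to \Spec k$ lies in $\mc C$ and the absolute characterisation applies: $A_x$ is a skyscraper sheaf on $Y_x$ which is dualisable on its finite support.

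From this I deduce quasi-finiteness of $f|_Z$. The support $Z$ is closed in $Y$ and the composite $Z \to X$ is of finite type, while the previous paragraph yields $\dim Z_x = 0$ for every closed $x \in X$. By upper semi-continuity of fiber dimension for morphisms of finite type, the locus $\{x \in X : \dim Z_x \geq 1\}$ is closed in $X$; it contains no closed point, and since $X$ is Jacobson the closed points are dense in any closed subset, so this locus must be empty. Hence every fiber of $Z \to X$ is zero-dimensional, and $f|_Z : Z \to X$ is quasi-finite.

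To obtain the first claim I restrict to the support. Writing $i : Z \hookrightarrow Y$ for the closed immersion, the identity $A \cong i_*(A|_Z)$ together with $i_! = i_*$ and the projection formula gives, for every $B \in \mc D(X)$, a natural isomorphism
$$f_!\bigl(A \tensor f^* B\bigr) \;\cong\; f_! i_*\bigl(A|_Z \tensor i^* f^* B\bigr) \;\cong\; (f|_Z)_!\bigl(A|_Z \tensor (f|_Z)^* B\bigr),$$
and analogous isomorphisms after any base change $X' \to X$. Under these, the universal adjunction exhibiting $f$-smoothness of $A$ transports to the universal adjunction exhibiting $f|_Z$-smoothness of $A|_Z$.

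The main obstacle I anticipate is this final transport of structure: while the geometric content is captured by the first two paragraphs, one must check carefully that each natural transformation appearing in the definition of an $f$-smooth object is mapped, under the $(i_!, i^*)$-adjunction, to the corresponding transformation for $A|_Z$, and that this compatibility is preserved under every base change along $X' \to X$. Everything else follows routinely from the absolute case plus the Jacobson argument.
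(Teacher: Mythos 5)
Your proposal contains a genuine gap alongside a valid (if roundabout) observation, so let me separate the two.

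The quasi-finiteness argument in your second paragraph is essentially correct, though more elaborate than the paper's: the paper simply base changes to $\Spec \overline{\kappa(x)}$ for \emph{every} point $x \in X$ (not just closed ones) and reads off from \Cref{prop:sm object char absolute} that the fiber of $Z$ is finite, so no Jacobson or semi-continuity argument is needed. Your closed-point-plus-Jacobson route works, but note that the upper semi-continuity statement is on the \emph{source}: the locus $\{y \in Z : \dim_y Z_{f(y)} \geq 1\}$ is closed in $Z$, and you then need that $Z$ is Jacobson and that closed points of $Z$ lie over closed points of $X$; your phrasing ``closed in $X$'' elides this.

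The real gap is in the third paragraph. You write ``The support $Z$ is closed in $Y$'' and then take $i : Z \hookrightarrow Y$ to be a closed immersion with $i_! = i_*$. This is false in general: the support of a perfect-constructible complex is only locally closed, and indeed $j_!\,\mb F_p$ for an open immersion $j$ is smooth with open (non-closed) support. The paper handles this by factoring $Z \subseteq Y$ as a closed immersion $Z \to U$ followed by an open immersion $U \to Y$, deducing $f|_U$-smoothness of $A|_U$ from \Cref{lmm:sm obj composition} (open immersions are cohomologically \'etale, so $1_U$ is $j$-smooth) and then applying \Cref{prop:sm with closed support} to the closed immersion $Z \to U$. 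Beyond the closedness issue, your ``transport of structure'' is exactly the content of \Cref{prop:sm with closed support} and is not a formal consequence of the isomorphism $f_!(A \tensor f^*B) \cong (f|_Z)_!(A|_Z \tensor (f|_Z)^*B)$: $f$-smoothness is an adjunction in the Lu--Zheng 2-category, and one must verify either the triangle identities or, as the paper does, the $\tensor^!$ criterion from \Cref{lmm:sm obj crit}(iii), involving $\Delta^!$ and the projection maps on $Z \times_X Z$. You correctly flag this as ``the main obstacle,'' but that obstacle is precisely the nontrivial part; the proof should invoke \Cref{prop:sm with closed support} rather than attempt to rederive it informally.
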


It is a nice property of quasi-finite morphisms in our setting that they are cohomologically étale over a stratification of the target (\Cref{cor:coh et on stratum}). In particular, the support of smooth objects is cohomologically étale over a stratification of the target and in particular they are dualisable on the corresponding stratification of the support (\Cref{prop:some sm observation}). For formal reasons, we also know that sheaves whose support is cohomologically étale over the target and which are dualisable on their support are smooth. However, these are not all smooth objects which can be seen by just taking direct sums of smooth objects (\Cref{xmpl:sm-sums}) and we were not able to find a full characterisation of smooth objects in the relative case.
\begin{subtquestion}[\ref{question:rel smooth}]
    Let $f: Y \to X$ be a morphism in $\mc C$. Are $f$-smooth objects generated by smooth objects of the form in \Cref{prop:a class of sm} under direct sums and cones?
\end{subtquestion}

\subsection{Relation to the Riemann-Hilbert correspondence} 
The formalism of $\mb F_p$-sheaves in characteristic $p$ is related to coherent cohomology in a certain sense by the $\mathrm{mod} \ p$ Riemann-Hilbert correspondence proven by B. Bhatt and J. Lurie in \cite{lurie-riehilb}. Also see \cite{mathew-riehilb} for an alternative exposition. They show that $\mb F_p$-sheaves in characteristic $p$ correspond to \textit{algebraic Frobenius modules} via an equivalence of categories which satisfies certain compatibilities with pullback, pushfoward and $!$-functors. However, coherent cohomology in the classic sense cannot be upgraded to a 6-functor formalism in a reasonable way. This problem can be avoided by using the theory of condensed mathematics developed by P. Scholze and D. Clausen, which might open up a possibility to turn the $\mathrm{mod} \ p$ Riemann-Hilbert correspondence into a correspondence of 6-functor formalisms. Cohomological smoothness would transfer along such a correspondence which adds additional motivation to \Cref{thm:char}.

\subsection{Overview}
This paper is organised into four sections.\par
In section 2 we recall the notion of 6-functor formalisms in the sense of \cite{Mann} and the formal groundwork that we will need later on. Following \cite{Scholze1}, we construct the étale $6$-functor formalism in a generality which allows us to pass to the absolute weak normalisation (which we also briefly recall from \cite{rydh} in this context), using the results of P. Hamacher in \cite{Hamacher}. We then discuss the notions of smooth objects and cohomologically smooth and cohomologically étale morphisms and prove important criteria and stability properties. Here we essentially follow the arguments given in \cite{Mann2}, \cite{Scholze1} and \cite{Zav}. This is also intended as a small overview over the different (but mostly equivalent) formal notions and their properties in these three sources, and the relations between them. While some of the statements do not seem to be recorded in the literature yet, there is hardly a claim of originality in this section. \par
Section 3 is concerned with cohomologically étale morphisms in the étale 6-functor formalism and in particular our geometric characterisation of them via the absolute weak normalisation. After observing some important properties of absolutely weakly normal schemes, we first give a description of the \textit{cohomologically unramified} morphisms (i.e. morphisms whose diagonal is cohomologically étale, cf. \Cref{def:coh unram}); namely, we prove that they are exactly the topologically unramified ones. We then show that topologically unramified morphisms are quasi-finite (\Cref{cor:coh-et implies quasi-finite}) and that a quasi-finite cohomologically smooth morphism is open (\Cref{prop:quasi-finite coh sm open}). All in all, we see that cohomologically étale morphisms are universally open and topologically unramified as they are both cohomologically smooth and cohomologically unramified. We then use the descent results for perfect schemes from \cite{arc} to prove that a morphism with these properties is étale after absolute weak normalisation (\Cref{thm:qf open unram awn etale}). To finish the proof of \Cref{cor:coh sm quasifinite coh etale}, we prove that a quasi-finite morphism is cohomologically étale if and only if it is cohomologically smooth. We end the section with a digression on quasi-finite morphisms in positive characteristic. Namely, we show that they are cohomologically étale over a stratification of the target (\Cref{cor:coh et on stratum}) which will have an interesting consequence for the discussion of smooth objects (\Cref{prop:some sm observation}).\par
Using the results from section 3, we then answer our main question from above in section 4: For this, we first show that $\mb A_k^n \to \Spec k$ is not $\mc D_p$-cohomologically smooth for any $n > 0$ by an explicit calculation and then use this fact to characterise the cohomologically smooth morphisms to a field. By stability of cohomologically smooth morphisms under base change, this shows that $\mc D_p$-cohomologically smooth morphisms are quasi-finite (\Cref{cor:coh-smooth implies quasi-finite}).\par
Finally, we employ a similar strategy for the description of smooth objects in section 5: We first deal with the absolute case, i.e. the objects which are smooth for morphisms to the base field. By an explicit calculation, we prove that $\mb F_p$ is not smooth for $\mb A_k^1 \to \Spec k$ (\Cref{prop:1 not P1 sm}) and can deduce the characterisation of smooth objects from this (\Cref{prop:sm object char absolute}), mostly by formal arguments. By stability of smooth objects under base change, this readily implies the statement about the relative case (\Cref{cor:sm obj quasi-finite}).
\vspace{1em}

\subsection{Acknowledgements}
    This paper grew out of my master's thesis, so first and foremost I want to thank Johannes Anschütz who suggested the main question and was a thoughtful and caring advisor throughout. This paper owes a huge intellectual debt to his ideas. I also want to thank Peter Scholze for his insightful lectures on six functor formalisms in the winter semester 22/23 which introduced me to this highly fascinating topic, and I am very grateful to Lucas Mann for kindly answering some intricate questions about abstract six functor formalisms. A special thanks goes to my friend and fellow student Tim Kuppel for countless illuminating discussions.
\newpage

\section{Abstract 6-functor formalisms}

\subsection{The definition}
We recall the definition of an abstract 6-functor formalism. This requires some preparation. We adopt most of the terminology and notation from \cite[A.5]{Mann}.

\begin{definition}[{\cite[A.5.1]{Mann}}]
A geometric setup is a pair $\mathfrak G := (\mc C, E)$, where $\mc C$ is an $\infty$-category with all finite limits and $E$ is a collection of homotopy classes of edges in $\mc C$ with the following two properties:
\begin{enumerate}[label=\roman*)]
\item $E$ contains all isomorphisms and is stable under composition.
\item $E$ is stable under pullback along arbitrary morphisms in $\mc C$.
\end{enumerate}
If $E$ satisfies also
\begin{enumerate}[label=\roman*)]
    \setcounter{enumi}{2}
    \item For $f \in E$ and $g$ any morphism, $f \circ g \in E$ implies $g \in E$.
\end{enumerate}
then we call $(\mc C, E)$ a \textit{good} geometric setup.
We write $\mc C(\mathfrak G) := \mc C$ and $E(\mathfrak G) := E$.
\end{definition}

\begin{remark}
The condition that $\mc C$ has finite limits is not part of L. Mann's definition of a geometric setup. However, this assumption simplifies the discussion of general 6-functor formalisms and is satisfied in the cases relevant for us in this paper. In general, for Mann's definition of a 6-functor formalism, one only needs to require existence of pullbacks for morphisms in $E$. The notion of good geometric setups is non-standard.
\end{remark}

\begin{example}
    The category $\mc C$ of qcqs schemes (or a slice thereof) with $E$ the class of all separated, finite type morphisms is a geometric setup. We will later extend the class $E$ slightly, cf. \Cref{cor:et geom setup}.
\end{example}

Given a geometric setup $(\mc C, E)$, one can construct the symmetric monoidal \textit{$\infty$-category $\mathrm{Corr}(\mc C)^{\tensor}_{E,all}$ of correspondences associated to $(\mc C, E)$}. We refer to \cite[A.5]{Mann} for the definition and record only some important facts about this category:
\begin{itemize}
    \item The underlying $\infty$-category $\mathrm{Corr}(\mc C)_{E,all}$ of $\mathrm{Corr}(\mc C)^{\tensor}_{E,all}$ has as objects the objects of $\mc C$.
    \item Morphisms $Y \to X$ in $\mathrm{Corr}(\mc C)_{E,all}$ are given by diagrams
    \[\begin{tikzcd}
        & Y' \\
        Y && X
        \arrow[from=1-2, to=2-1]
        \arrow["{\in E}", from=1-2, to=2-3]
    \end{tikzcd}\]
    where $Y' \to X$ is a morphism in $E$. For two morphisms $Z \leftarrow Z' \to Y$ and $Y \leftarrow Y' \to X$ there is a 2-simplex
    \[\begin{tikzcd}[/tikz/cells={/tikz/nodes={shape=asymmetrical
        rectangle,text width=1.5cm,text height=1ex,text depth=0.15ex,align=center}}]
        && {Z'\times_Y Y'} \\
        & {Z'} && {Y'} \\
        {Z} && Y && X
        \arrow["{\in E}", from=2-4, to=3-5]
        \arrow[from=2-2, to=3-1]
        \arrow["{\in E}", from=2-2, to=3-3]
        \arrow[from=2-4, to=3-3]
        \arrow[from=1-3, to=2-2]
        \arrow["{\in E}", from=1-3, to=2-4]
    \end{tikzcd}\]
    witnessing that $Z \leftarrow Z' \times_Y Y' \to X$ is a composition of the two morphisms.
    \item The symmetric monoidal structure $\tensor$ on $\mathrm{Corr}(\mc C)_{E,all}$ is on objects just given by the cartesian product $X \tensor Y = X \times Y$.\footnote{For simplicity, we describe monoidal structures and (lax) symmetric monoidal functors here in a naive way as well and refer for the correct definition (as a coCartesian fibration/$\infty$-operad) to \cite[A.5.4]{Mann}.} Given two morphisms $X \leftarrow X' \to S$ and $Z \leftarrow Z' \to Y$, the induced morphism $Z \tensor X \to Y \tensor S$ is given by $Z \times X \leftarrow Z' \times X' \to Y \times S$. %
\end{itemize}

In particular, we have the following three kinds of morphisms in $\mathrm{Corr}(\mc C)_{E,all}$:
\begin{enumerate}[leftmargin=2cm,label=(\arabic*)]
\item For a map $f:Y \to X$ in $\mc C$ a map $X \to Y$ given by the diagram $X \leftarrow Y = Y$.
\item For a map $f:Y \to X$ in $E$ a map $Y \to X$ given by the diagram $Y = Y \to X$.
\item For $X\in \mc C$, a morphism $X \tensor X \to X$ induced by the diagram $X \times X \overset{\Delta}{\leftarrow} X \to X$.
\end{enumerate}

\begin{remark}
Note that in particular there is an embedding of symmetric monoidal $\infty$-categories
$$\mc C^\mathrm{op} \hookrightarrow \mathrm{Corr}(\mc C)_{E,all}^{\tensor}$$
given by the contravariant functor which sends an object to itself and a morphism $Y \to X$ to the corresponding morphism of type (1) in the above list. Here, $\mc C^\text{op}$ is equipped with the cocartesian monoidal structure.
\end{remark}

Let us now recall the definition of a 3-functor formalism.

\begin{definition}[{\cite[Definition A.5.6]{Mann}}]\label{def:3 functor formalism}
Let $(\mc C,E)$ be a geometric setup. A 3-functor formalism is a lax symmetric monoidal functor
$$\mc D: \mathrm{Corr}(\mc C)_{E,all}^{\tensor} \to \mathrm{Cat}_\infty$$
where $\mathrm{Cat}_\infty$ is equipped with the cartesian symmetric monoidal structure.
\end{definition}

The three functors are then induced by the three maps in $\mathrm{Corr}(\mc C)_{E,all}^{\tensor}$ mentioned above:

\begin{enumerate}[leftmargin=2cm,label=(\arabic*)]
    \item The map $X \to Y$ given by the diagram $X \overset{f}\leftarrow Y = Y$ induces the \textit{pullback} functor $f^*: \mc D(X) \to D(Y)$.
    \item The map $Y \to X$ given by the diagram $Y = Y \overset{f}\to X$ induces the \textit{lower shriek} functor $f_!: \mc D(Y) \to \mc D(X)$.
    \item The morphism $X \tensor X \to X$ induces the \textit{tensor product} $\tensor: \mc D(X) \times \mc D(X) \to \mc D(X \times X) \to \mc D(X)$ where the first map is induced by the lax monoidality of $\mc D$ and the second map is given by $\Delta^*$. This equips $\mc D(X)$ with a symmetric monoidal structure $\tensor$.
\end{enumerate}
\begin{notation}
    $f^*, f_!$ and $\tensor$ are sometimes also denoted $\mc D^*(f)$, $\mc D_!(f)$ and $\tensor^{\mc D}$ to indicate the 6-functor formalism that induces them.
\end{notation}

\Cref{def:3 functor formalism} encodes both base change and projection formula into the functoriality of $\mc D$:
\begin{proposition}\label{prop:BC and PF}
    Let $\mc D$ be a 3-functor formalism on a geometric setup $(\mc C, E)$. Then:
    \begin{enumerate}[label=\roman*)]
        \item (Base Change) For every cartesian diagram
            \[\begin{tikzcd}
                Y' \rar["f'"] \dar["g'"] & X' \dar["g"] \\
                Y \rar["f"] & X
            \end{tikzcd}\]
            with $g \in E$ there is a natural isomorphism of functors $\mc D(X') \to \mc D(Y)$
            $$f^* g_! \cong g'_!  {f'}^* $$
        \item (Projection Formula) For every $f: Y \to X$ in $E$, there is a natural isomorphism of functors $\mc D(Y) \times \mc D(X) \to \mc D(X)$
        $$f_!(\blank \tensor f^* \blank) \cong f_! \blank \tensor \blank$$
    \end{enumerate}
\end{proposition}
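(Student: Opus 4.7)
The plan is to deduce both assertions purely from the functoriality of $\mc D : \mathrm{Corr}(\mc C)_{E,all}^{\tensor} \to \mathrm{Cat}_\infty$ as a lax symmetric monoidal functor, by identifying each side of both equations with the image under $\mc D$ of a single (composed) morphism in $\mathrm{Corr}(\mc C)_{E,all}$. This is the whole point of Mann's definition: relations such as base change and the projection formula become tautological once they are reformulated as equalities of composable spans in the correspondence category.

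For (i), I would compute both $f^* \circ g_!$ and $g'_! \circ (f')^*$ at the level of Corr-morphisms using the three types of morphisms listed after \Cref{def:3 functor formalism}. Explicitly, $g_!$ is the span $X' = X' \xrightarrow{g} X$ and $f^*$ is the span $X \xleftarrow{f} Y = Y$, so the composition formula in Corr forms the pullback $X' \times_X Y$, which by the cartesian hypothesis equals $Y'$; this yields the span $X' \xleftarrow{f'} Y' \xrightarrow{g'} Y$. Stability of $E$ under pullback ensures $g' \in E$, so this is a legal morphism in Corr. The dual computation for $g'_! \circ (f')^*$ produces the same span (the intermediate pullback being $Y' \times_{Y'} Y' = Y'$). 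Applying $\mc D$ then yields the desired natural isomorphism.

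For (ii), I would argue similarly, but now tracking the symmetric monoidal structure. The tensor product on $\mc D(X)$ factors as the lax monoidality map $\mc D(X) \times \mc D(X) \to \mc D(X \times X)$ followed by $\Delta_X^*$, where $\Delta_X^*$ corresponds to the Corr-span $X \times X \xleftarrow{\Delta_X} X = X$. Rewriting both $f_!(\blank \tensor f^* \blank)$ and $f_!(\blank) \tensor \blank$ as $\mc D$ applied to a composition of three elementary Corr-morphisms (in two different orders), a direct pullback calculation shows that both orders collapse to the single Corr-morphism
\[
Y \times X \xleftarrow{(\mathrm{id}_Y,\,f)} Y \xrightarrow{f} X.
\]
Thus both functors agree with the composite $\mc D(Y) \times \mc D(X) \to \mc D(Y \times X) \to \mc D(X)$, where the first map is the lax monoidality of $\mc D$ and the second is induced by the above span. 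The fact that lax monoidality is only a natural transformation (not an equivalence) does not affect the argument, because both sides factor through the same lax monoidality map.

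The main point to get right is the bookkeeping of pullback squares at the level of Corr, especially tracking how $\mc D$ interacts with the products appearing in the symmetric monoidal structure in (ii). There is no genuine obstacle here, since the $\infty$-categorical construction of $\mathrm{Corr}(\mc C)_{E,all}^{\tensor}$ in \cite{Mann} is designed precisely so that these coherences reduce to a single equality of spans, with all higher coherences packaged automatically into the functoriality of $\mc D$.
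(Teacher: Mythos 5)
Your proof is correct and follows essentially the same approach as the paper: for (i), it identifies $f^*g_!$ and $g'_!f'^*$ with the same span $X' \xleftarrow{f'} Y' \xrightarrow{g'} Y$ in $\mathrm{Corr}(\mc C)_{E,all}$ using the cartesian hypothesis, which is exactly the paper's argument via the two $2$-simplices. The only difference is that you actually work out part (ii) — identifying both sides with the composite through $\mc D(Y\times X)$ and the span $Y\times X \xleftarrow{(\mathrm{id}_Y,f)} Y \xrightarrow{f} X$ — whereas the paper dismisses (ii) as "similar" and cites \cite[A.5.8]{Mann}; your explicit pullback bookkeeping for that case is a useful addition and is correct.
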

\begin{proof}
    We only show $i)$, the argument for $ii)$ is similar (cf. \cite[A.5.8]{Mann}). By definition, $f^*$ is induced by the $1$-simplex $a:=(X \overset{f}\leftarrow Y = Y)$ and $g_!$ is induced by the $1$-simplex $b:=(X' = X' \overset{g}\to X)$ in $\mathrm{Corr}(\mc C)_{E,all}^{\tensor}$. By the definition of $\mathrm{Corr}(\mc C)_{E,all}^{\tensor}$ there is a 2-simplex witnessing that the morphism $c = (X' \leftarrow Y' \to Y)$ is a composition $a \circ b$ (as the diagram is cartesian, cf. the description of $\mathrm{Corr}(\mc C)_{E,all}^{\tensor}$ above); but another 2-simplex realises $c$ as a composition of $X' \leftarrow Y' = Y'$ and $Y' = Y' \to Y$, which correspond under $\mc D$ to $g'^*$ and $f'_!$.
\end{proof}

\begin{corollary}[Künneth Formula]\label{cor:Kuenneth}
    Let $\mc D$ be a 3-functor formalism on a geometric setup $(\mc C, E)$. For every cartesian diagram
            \[\begin{tikzcd}
                Z \times_X Y \rar["f'"] \dar["g'"] \drar{h} & Z \dar["g"] \\
                Y \rar["f"] & X
            \end{tikzcd}\]
        with $f,g \in E$ (and thus $h \in E$) and for all $A \in \mc D(Y)$ and $B \in \mc D(Z)$ there is a natural isomorphism
        $$h_!(g'^* A \tensor f'^* B) \cong f_! A \tensor g_! B$$
\end{corollary}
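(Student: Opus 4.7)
The plan is to chain together the two operations provided by \Cref{prop:BC and PF}, namely the projection formula and base change, to rewrite $f_! A \otimes g_! B$ as $h_!(g'^* A \otimes f'^* B)$, together with the functoriality of $\mc D$, which gives $h_! \cong g_! \circ f'_!$ since $h = g \circ f'$ is a composition in $E$ (so corresponds to the composition of the two shriek-morphisms in $\mathrm{Corr}(\mc C)_{E,all}$).

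Concretely, I would start from $f_! A \otimes g_! B$ and apply the projection formula for $g \in E$ (to the variable $B$) to rewrite this as $g_!(B \otimes g^* f_! A)$. Then I would use the base change isomorphism applied to the cartesian square in the statement, where $f \in E$, to replace $g^* f_! A$ by $f'_! g'^* A$, yielding $g_!(B \otimes f'_! g'^* A)$. Finally, since $f' \in E$ (being the pullback of $f$ along $g$, stability of $E$ under pullback applies), I apply the projection formula for $f'$ to pull the factor $B$ inside: $f'_!(g'^* A \otimes f'^* B) \cong f'_! g'^* A \otimes B$. Combining these and using $h_! \cong g_! f'_!$, we obtain the desired isomorphism $h_!(g'^* A \otimes f'^* B) \cong f_! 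A \otimes g_! B$.

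There is no real obstacle here; this is a purely formal manipulation. The only mild subtlety is in making each of the isomorphisms natural in both variables $A$ and $B$ and ensuring they assemble into an isomorphism of bifunctors rather than merely a pointwise isomorphism. This is however immediate since each of the three isomorphisms used (projection formula twice and base change once) is natural by \Cref{prop:BC and PF}, and the functoriality $h_! \cong g_! \circ f'_!$ is natural by construction of the 3-functor formalism as a functor out of $\mathrm{Corr}(\mc C)_{E,all}^{\tensor}$.
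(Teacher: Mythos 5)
Your proof is correct and is essentially the paper's proof up to the symmetry of the cartesian square: the paper factors $h = f \circ g'$ and runs the chain projection formula ($g'$) $\to$ base change $\to$ projection formula ($f$) starting from the left-hand side, whereas you factor $h = g \circ f'$ and run the mirror chain starting from the right-hand side. The two are the same argument after swapping the roles of $(f, A)$ and $(g, B)$.
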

\begin{proof}
    We use base change and projection formula (\Cref{prop:BC and PF}) to calculate
    $$h_!(g'^* A \tensor f'^* B) \cong f_! g'_! (g'^* A \tensor f'^* B) \cong f_! (A \tensor g'_! f'^* B) \cong f_! (A \tensor f^* g_! B) \cong f_! A \tensor g_! B$$
\end{proof}

To get a $6$-functor formalism, it remains to require the existence of adjoints:
\begin{definition}[{\cite[Definition A.5.7]{Mann}}]\label{def:6 functor formalism}
    Let $(\mc C,E)$ be a geometric setup. A \textit{6-functor formalism} is a 3-functor formalism 
    $$\mc D: \mathrm{Corr}(\mc C)_{E,all}^{\tensor} \to \mathrm{Cat}_\infty$$
    such that for all $X \in \mc C$ the symmetric monoidal $\infty$-category $\mc D(X)$ is closed and all $f^*: \mc D(X) \to \mc D(Y)$ and $f_!: \mc D(Y) \to \mc D(X)$ admit right adjoints. We write $\iHom$ for the internal Hom, $f_*$ for the right adjoint of $f^*$ and $f^!$ for the right adjoint of $f_!$.
\end{definition}

Naturally, we expect statements dual to \Cref{prop:BC and PF} for the adjoints $f_*, f^!$ and $\iHom$.

\begin{proposition}\label{PF-adj-isos}
    Let $\mc D$ be a 6-functor formalism on a geometric setup $(\mc C, E)$. Then:
    \begin{enumerate}[label=\roman*)]
        \item (Adjoint Base Change) For every cartesian diagram
        \[\begin{tikzcd}
            Y' \rar["f'"] \dar["g'"] & X' \dar["g"] \\
            Y \rar["f"] & X
        \end{tikzcd}\]
        with $g \in E$ there is a natural isomorphism of functors $\mc D(Y) \to \mc D(X')$
        $$g^! f_* \cong f'_* g'^! $$
        \item Let $f: Y \to X$ be a morphism in $E$.
        \begin{itemize}
            \item (Interior Pullback) There is a natural isomorphism of functors $\mc D(X) \times \mc D(X) \to \mc D(Y)$ 
            $$f^!\iHom(\blank,\blank) = \iHom(f^*\blank,f^!\blank)$$
            \item (Verdier Duality) There is a natural isomorphism of functors $\mc D(Y) \times \mc D(X) \to \mc D(X)$ 
            $$f_*\iHom(\blank,f^!\blank) \cong \iHom(f_! \blank,\blank)$$
        \end{itemize}
    \end{enumerate}
\end{proposition}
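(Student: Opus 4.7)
The plan is to deduce all three statements purely formally from the base change and projection formula of \Cref{prop:BC and PF}, together with the adjunctions guaranteed by the definition of a 6-functor formalism.

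For part $i)$, I would observe that \Cref{prop:BC and PF}$\,i)$ gives a natural isomorphism $f^* g_! \cong g'_! f'^*$. The left hand side is a composition whose right adjoint is $g^! f_*$, while the right hand side has right adjoint $f'_* g'^!$. Since passing to right adjoints is functorial (in the appropriate $\infty$-categorical sense) and sends natural isomorphisms to natural isomorphisms, one obtains the desired natural isomorphism $g^! f_* \cong f'_* g'^!$.

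For the two statements in part $ii)$, I would argue via the enriched Yoneda lemma. For Interior Pullback, the goal is to identify $f^! \iHom(B,C)$ with $\iHom(f^* B, f^! C)$. Given any $A \in \mc D(Y)$, I compute
\[
\Hom(A, f^! \iHom(B,C)) \cong \Hom(f_! A, \iHom(B,C)) \cong \Hom(f_! A \tensor B, C),
\]
where the first isomorphism is the $(f_!, f^!)$-adjunction and the second is the internal Hom adjunction on $\mc D(X)$. Applying the projection formula $f_!(A \tensor f^* B) \cong f_! A \tensor B$ and the $(f_!, f^!)$-adjunction again yields
\[
\Hom(f_! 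A \tensor B, C) \cong \Hom(A \tensor f^* B, f^! C) \cong \Hom(A, \iHom(f^* B, f^! C)).
\]
Yoneda then produces the required natural isomorphism. The argument for Verdier Duality is entirely analogous: for $C \in \mc D(X)$ one unfolds $\Hom(C, f_* \iHom(A, f^! B))$ using the $(f^*, f_*)$-adjunction and internal Hom, then applies the projection formula and the $(f_!, f^!)$-adjunction to obtain $\Hom(C, \iHom(f_! A, B))$.

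There is not really a hard step here; the content is entirely formal once \Cref{prop:BC and PF} is in hand. The only subtlety worth being careful about is that all of these manipulations must be carried out at the level of $\infty$-categories, where the Yoneda lemma and the passage to adjoint functors both hold but require the natural transformations being inverted to be coherently natural. This is automatic from the fact that the isomorphisms in \Cref{prop:BC and PF} come from specific $2$-simplices in $\mathrm{Corr}(\mc C)^{\tensor}_{E, all}$, so $\mc D$ transports them to coherent natural isomorphisms of functors between $\infty$-categories, to which the above adjoint and Yoneda arguments apply without modification.
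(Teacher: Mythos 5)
Your proof is correct and takes exactly the same approach as the paper: the paper's proof is the one-line remark that all three assertions follow from \Cref{prop:BC and PF} by passing to adjoints and applying Yoneda, and you have simply spelled out the routine adjunction chases that the paper leaves implicit.
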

\begin{proof}
    Using the adjunctions, all assertions are equivalent to the corresponding statements in \Cref{prop:BC and PF} by the Yoneda lemma.
\end{proof}

\begin{remark}\label{rem:6-functor slices}
    Let $\mc D$ be a 3- (resp. 6-)functor formalism on a geometric setup $\mathfrak G = (\mc C, E)$ and $X \in \mc C$. Then $\mc D$ induces a 3- (resp. 6-)functor formalism on the slice geometric setup $\mathfrak G_{/X} := (\mc C_{/X}, E_{/X})$ (where $E_{/X}$ are the edges in $\mc C_{X}$ which lie in $E$). Indeed, this is just $\mc D$ precomposed with the lax symmetric monoidal functor $\mathrm{Corr}(\mc C_{/X})_{E_{/X},all}^{\tensor} \to \mathrm{Corr}(\mc C)_{E,all}^{\tensor}$ coming from the natural lax symmetric monoidal functor $(\mc C_{/X}^\mathrm{op})^\amalg \to (\mc C^\mathrm{op})^\amalg$.
\end{remark}

\subsection{Construction of 6-functor formalisms}

In practice, for a morphism $f: Y \to X$, one often wants to define $f_!$ as $p_! j_!$ where $f = p \circ j$ is a decomposition of $f$ into a morphism $p$ for which $p^*$ admits a right adjoint $p_!$ and a morphism $j$ for which $j^*$ admits a left adjoint $j_!$ (e.g. compactifications in étale cohomology). This motivates the following strategy to construct 3- or 6-functor formalisms: First construct a functor $\mc D_0: \mc C^{op} \to \mathrm{Cat}_\infty$, inducing the functors $f^*$. Choose subclasses $I$ and $P$ of morphisms in $E$ for which $f^*$ admits a left resp. right adjoint. Under the right conditions on $I$, $P$ and $E$, one can then define the functor $f_!$ for $f$ in $I$ or $P$ as the respective adjoint and extend this to all morphisms in $E$, yielding an extension of $\mc D_0$ to the desired 3- or 6-functor formalism $\mc D: \mathrm{Corr}(\mc C)_{E,all}^{\tensor} \to \mathrm{Cat}_\infty$. This was made precise by L. Mann in \cite[Appendix A]{Mann}.

\begin{definition}{\cite[Definition A.5.9]{Mann}}\label{def:suitable decomposition}
    Let $(\mc C, E)$ be a geometric setup. A suitable decomposition of $E$ is a pair $I,P \subseteq E$ of subclasses with the following properties:
    \begin{enumerate}[label = \roman*)]
        \item Every $f\in E$ is of the form $f = p \circ j$ for some $j \in I$ and some $p \in P$.
        \item Every morphism $f \in I \cap P$ is $n$-truncated for some $n \geq -2$ (which may depend on $f$).\footnote{As $\mc C$ has finite limits, here the following inductive definition is available (cf. \cite[Lemma 5.5.6.15]{HTT}): isomorphisms are $-2$-truncated and a morphism is called $n$-truncated if its diagonal is $n-1$-truncated. In particular, if $\mc C$ is (the nerve of) a 1-category, then this is automatic.}
        \item $I$ and $P$ contain all identity morphisms and are stable under pullback along arbitrary morphisms in $\mc C$.
        \item Given $f: Y \to X$ and $g: Z \to Y$ in $\mc C$ such that $f \in I$ (resp. $f \in P$), then $g \in I$ (resp. $g \in P$) if and only if $f \circ g \in I$ (resp. $f \circ g \in P$).
    \end{enumerate}
\end{definition}

\begin{example}\label{xmpl:small-E-setup}
    Let $(\mc C, E)$ be the geometric setup of qcqs schemes with all separated, finite type morphisms as $E$. Let $I$ be the subclass of all open immersions and $P$ the subclass of all proper morphisms. Then by Nagata's theorem on compactifications (cf. \cite[\texttt{0F41}]{Stacks}), every $f \in E$ can be factored as $f = p \circ j$ where $j \in I$ and $p \in P$. Every morphism $f \in \mc C$ is $0$-truncated and $I$ and $P$ are stable under pullback. Moreover, $iv)$ clearly holds for both $I$ and $P$. So $I$ and $P$ are a suitable decomposition of $E$.
\end{example}

We make a small definition to gather all the data in one object.

\begin{definition}
    We call a quadruple $\mathfrak{G} = (\mc C, E, I, P)$ a \textit{construction setup} if $(\mc C, E)$ is a geometric setup and $I,P \subseteq E$ is a suitable decomposition of $E$. We will sometimes write e.g. $I(\mathfrak G)$ to denote the set $I$ associated to $\mathfrak{G}$ and similarly for $\mc C, E$ and $P$.
\end{definition}

\begin{remark}\label{rem:construction setup}
    This is non-standard terminology. Note that if $\mathfrak{G}=(\mc C, E, I, P)$ is a construction setup, then $(\mc C, E)$ is a good geometric setup: Indeed, by the usual argument (cf. e.g. the proof of \Cref{prop:G_k suitable decomp}) this is equivalent to the question if the diagonal $\Delta_g$ lies in $E$ if $g$ does. We can write $g=p \circ j$ where $p \in P$ and $j \in I$. Then $\Delta_{p\circ j}$ is the composition of $\Delta_j$ with a base change of $\Delta_p$. But since $I$ and $P$ form a suitable decomposition, $\Delta_j \in I$ and $\Delta_p \in P$ and thus $\Delta_g = \Delta_{p\circ j} \in E$ as $E$ is stable under pullback and composition.
\end{remark}

\begin{proposition}[{\cite[Proposition A.5.10]{Mann}}\label{prop:construct 6 functors}]
    Let $\mathfrak G=(\mc C, E, I, P)$ be a construction setup. Let $\mc D_0: (\mc C^\mathrm{op})^\amalg \to \mathrm{Cat}_\infty$ be a lax symmetric monoidal functor, inducing for every $f:Y \to X$ the pullback functor $f^*: \mc D(X) \to \mc D(Y)$. Assume that the following two conditions are satisfied:
    \begin{enumerate}[label = \roman*)]
        \item For every $j:U \to X$ in $I$, $j^*$ admits a left adjoint $j_!$ satisfying base change and projection formula.\footnote{Because of the adjointness of $j_!$ and $j^*$ this is really a condition, not a datum: for both base change and projection formula there are natural maps between the respective functors. For example, here, we have a natural map $j_!(\blank \tensor j^* \blank) \to j_! \blank \tensor \blank$ adjoint to the map $\blank \tensor j^* \blank \to j^*(j_! \blank \tensor \blank) = j^* j_! \blank \tensor j^* \blank$ induced by the unit. The construction for morphisms $p \in P$ is similar, with the difference that the transformations go in the other direction.}
        \item For every $p:Y \to X$ in $P$, $p^*$ admits a right adjoint $p_!$ satisfying base change and projection formula.
    \end{enumerate}
    Moreover, assume the following holds:
    \begin{enumerate}[label = \roman*)]
        \setcounter{enumi}{2}
        \item For every cartesian diagram
        \[\begin{tikzcd}
            U' \rar["j'"] \dar["p'"] & X' \dar["p"] \\
            U \rar["j"] & X
        \end{tikzcd}\]
        with the horizontal morphisms in $I$ and vertical morphisms in $P$, the natural map $j_! p'_! \to p_! j'_!$ is an isomorphism.\footnote{This is the map adjoint to $p'_! \to p'_! j'^* j'_! \xleftarrow{\sim} j^* p_! j'_!$ where the first map is the unit of the $j_!$-$j^*$-adjunction and the second comes from the natural morphism $j^* p_! \to p'_! j'^*$ which is an isomorphism by $ii)$.}
    \end{enumerate}
    Then $\mc D_0$ can be extended to a $3$-functor formalism $\mc D$ such that $f_!:=\mc D_!(f)$ agrees with the respective adjoint of $f^*$ for morphisms $f$ in $I$ or $P$.\\
    If moreover $\mc D(X)$ is a closed $\infty$-category for all $X \in \mc C$, $f^*$ has a right adjoint for all $f$ in $\mc C$ and $f_!$ itself admits a right adjoint $f^!$ for $f \in P$, then $\mc D$ is already a $6$-functor formalism.
\end{proposition}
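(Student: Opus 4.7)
The plan is to follow the strategy of L. Mann, which itself refines the earlier construction of Liu--Zheng. The idea is to build the functor $\mc D: \mathrm{Corr}(\mc C)^{\tensor}_{E,all} \to \mathrm{Cat}_\infty$ by first extending $\mc D_0$ along the two ``halves'' of the suitable decomposition and then gluing the pieces together using hypothesis (iii).

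First, using hypothesis (i), I would extend $\mc D_0$ to a lax symmetric monoidal functor $\mc D_I: \mathrm{Corr}(\mc C)^{\tensor}_{I,all} \to \mathrm{Cat}_\infty$ which sends a morphism $j \in I$ to the left adjoint $j_!$ of $j^*$. The base change and projection formula in hypothesis (i) are precisely what is needed to upgrade this assignment to a functor out of the correspondence category (and into a lax monoidal one, given the lax monoidality of $\mc D_0$). Symmetrically, hypothesis (ii) yields an extension $\mc D_P: \mathrm{Corr}(\mc C)^{\tensor}_{P,all} \to \mathrm{Cat}_\infty$ via the right adjoints $p_!$ for $p \in P$. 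These two partial extensions restrict to $\mc D_0$ on $\mc C^\mathrm{op}$, so they agree on the intersection $I \cap P$ up to coherent homotopy.

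The main step, and the hardest part, is to glue $\mc D_I$ and $\mc D_P$ into a single 3-functor formalism $\mc D$ on $\mathrm{Corr}(\mc C)^{\tensor}_{E,all}$. For $f \in E$ factored as $f = p \circ j$ with $j \in I$ and $p \in P$, one sets $f_! := p_! \circ j_!$; hypothesis (iii) supplies the Beck--Chevalley isomorphism that makes this independent of the decomposition. The non-trivial point is that in the $\infty$-categorical setting one must check the entire tower of higher coherences, not just the commutativity of a single square. This is exactly where the truncatedness assumption on morphisms in $I \cap P$ enters: it bounds the homotopy type of the space of decompositions and allows the coherent extension to be built inductively along a Postnikov-type tower. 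This inductive construction is the technical heart of the argument and is worked out carefully in \cite[Appendix A.5]{Mann}, so I would simply cite it once the hypotheses above are matched.

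Given the 3-functor formalism $\mc D$, the 6-functor statement follows formally. Each $\mc D(X)$ has an internal $\iHom$ by the closedness hypothesis, and $f_*$ exists as the right adjoint of $f^*$ by assumption. For $f = p \circ j$ with $j \in I$, $p \in P$, the composite $f_! = p_! j_!$ admits the right adjoint $f^! := j^* p^!$, using that $j_! \dashv j^*$ already exhibits $j^*$ as the right adjoint of $j_!$, and that $p^!$ right adjoint to $p_!$ is provided by the hypothesis. This equips $\mc D$ with all the data required by \Cref{def:6 functor formalism}.
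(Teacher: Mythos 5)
The proposal is correct and takes essentially the same approach as the paper: the paper's proof is simply a one-line citation of \cite[Proposition A.5.10]{Mann}, and you likewise defer the technical gluing step (the coherent extension over $\mathrm{Corr}(\mc C)^\tensor_{E,all}$, where truncatedness enters) to the same reference. Your additional outline of the strategy and the observation that $f^! = j^* p^!$ for $f = p \circ j$ in the 6-functor upgrade are accurate and consistent with Mann's argument, but do not constitute a genuinely different route.
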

\begin{proof}
See \cite[Proposition A.5.10]{Mann}.
\end{proof}

We call a 6-functor formalism that arises from the construction in \Cref{prop:construct 6 functors} a \textit{6-functor formalism on the construction setup $\mathfrak G$}.

\subsection{The 6 functors for étale sheaves}

Now we want to define the étale 6-functor formalism on the category $\Sch$ of qcqs schemes. As in \Cref{xmpl:small-E-setup} we could choose all separated, finite type morphisms as the class $E$ of our geometric setup. However, we need a slight extension of this setting which enables us to pass to the \textit{absolute weak normalisation} in our formalism.

\begin{definition}[{\cite[Definition B.1]{rydh}}]
    A scheme $X$ is called \textit{absolutely weakly normal (a.w.n.)} if $X$ is reduced and every separated universal homeomorphism $\pi: X' \to X$ is an isomorphism if $X'$ is reduced.
\end{definition}

\begin{lemma}\label{lmm:awn-adjoint}
    The inclusion $\mathrm{Awn} \to \Sch$ of the full subcategory of absolutely weakly normal schemes admits a right adjoint $(\blank)^\mathrm{awn}$, called the absolute weak normalisation. It is given by sending $X$ to the initial object $X^\mathrm{awn}$ of the category $\mc U_X$ of universal homeomorphisms $X' \to X$ and a morphism $Y \to X$ to the morphism $Y^\mathrm{awn} \to Y \times_X X^\mathrm{awn} \to X^\mathrm{awn}$ (note that $Y \times_X X^\mathrm{awn} \to Y$ is a universal homeomorphism).
\end{lemma}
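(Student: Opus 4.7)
The core input is the existence of $X^{\mathrm{awn}}$ together with its structural universal homeomorphism to $X$; this is established by Rydh in \cite{rydh}, affine-locally via the absolute weak normalisation of rings (and in positive characteristic it recovers the perfection). I would invoke this construction at the outset. Note that every universal homeomorphism of schemes is integral, hence affine and in particular separated, so the separatedness hypothesis built into the definition of a.w.n.\ applies automatically to every morphism in $\mathcal{U}_X$.

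The first real step is to check that $X^{\mathrm{awn}}$ is initial in $\mathcal{U}_X$. Given $\pi: X' \to X$ a universal homeomorphism, base change yields a universal homeomorphism $X^{\mathrm{awn}} \times_X X' \to X^{\mathrm{awn}}$. Its reduction is a separated universal homeomorphism from a reduced scheme onto the a.w.n.\ scheme $X^{\mathrm{awn}}$, hence an isomorphism by definition of a.w.n. Inverting this and composing with the second projection produces a morphism $X^{\mathrm{awn}} \to X'$ over $X$. For uniqueness, two such lifts $\phi_1, \phi_2$ have as equalizer a closed subscheme of $X^{\mathrm{awn}}$ (since $\pi$ is separated) whose underlying space is all of $X^{\mathrm{awn}}$ (as $\pi$ is radicial, so any two lifts of the same map to $X$ coincide set-theoretically); reducedness of $X^{\mathrm{awn}}$ forces $\phi_1 = \phi_2$.

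The rest proceeds formally. For $f: Y \to X$ the base change $Y \times_X X^{\mathrm{awn}} \to Y$ is a universal homeomorphism, so the initial property of $Y^{\mathrm{awn}}$ in $\mathcal{U}_Y$ yields a unique morphism $Y^{\mathrm{awn}} \to Y \times_X X^{\mathrm{awn}}$ over $Y$; composing with the projection defines $f^{\mathrm{awn}}$, and functoriality $(g \circ f)^{\mathrm{awn}} = g^{\mathrm{awn}} \circ f^{\mathrm{awn}}$ follows again from uniqueness. For the adjunction one needs, for any a.w.n.\ $Z$ and map $g: Z \to Y$ in $\Sch$, a unique lift $Z \to Y^{\mathrm{awn}}$; this is exactly the initial property applied to the universal homeomorphism $Z \times_Y Y^{\mathrm{awn}} \to Z$, using that $Z^{\mathrm{awn}} = Z$ because the identity makes $Z$ initial in $\mathcal{U}_Z$ when $Z$ is a.w.n.

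The main obstacle I anticipate is the existence of $X^{\mathrm{awn}}$, which is non-trivial in mixed characteristic and genuinely relies on Rydh's local construction; in positive characteristic the perfection makes the geometry essentially transparent. Once the scheme $X^{\mathrm{awn}}$ is available, everything reduces to exploiting the defining property of an a.w.n.\ scheme through the \emph{pullback--reduce--invert} pattern used above.
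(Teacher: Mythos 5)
Your argument is correct, and while the decisive technical move is shared with the paper's proof, you reach the adjunction by a genuinely more self-contained route. The paper leans on the Stacks project not just for the existence of $X^\mathrm{awn}$ but also for its initiality in $\mc U_X$ and for the limit description $X^\mathrm{awn} \cong \ilim_{X' \in \mc U'_X} X'$; it then rewrites $\Hom(Y, X^\mathrm{awn}) \cong \ilim_{X' \in \mc U_X} \Hom(Y,X')$ and reduces the adjunction to showing that this limit maps isomorphically to $\Hom(Y,X)$, which is precisely the unique-section observation: for $Y$ a.w.n.\ and $g : X' \to X$ a universal homeomorphism, the base change $Y \times_X X' \to Y$ admits a unique section because $Y \cong Y^\mathrm{awn}$. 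You instead take only existence (from Rydh) as input, re-derive initiality of $X^\mathrm{awn}$ in $\mc U_X$ directly from the definition of a.w.n.\ via the pullback--reduce--invert pattern (with uniqueness from the closed-equalizer argument, using that universal homeomorphisms are integral, hence separated, and radicial), get $Z^\mathrm{awn}=Z$ for $Z$ a.w.n.\ as a special case, and then deduce the adjunction from initiality without ever invoking a limit formula. Both proofs ultimately rest on the same fact -- a.w.n.\ schemes lift uniquely through universal homeomorphisms -- and neither has a gap; the trade-off is that your version re-proves what the paper outsources to the cited Stacks tag, buying self-containedness at the cost of some length.
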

\begin{proof}
    By \cite[\texttt{0EUS}]{Stacks}, $X^\mathrm{awn}$ exists indeed for all $X$ and $X \mapsto X^\mathrm{awn}$ yields a functor $\Sch \to \mathrm{Awn}$. In fact, it is proven there that the initial object is given by $(\ilim_{X' \in \mc U'_X} X' \to X)$ (where the limit is taken in $\Sch$) with $\mc U'_X$ the category of universal homeomorphisms $X' \to X$ of finite type. Thus, for an a.w.n. scheme $Y$ it follows that $\Hom(Y, X^\mathrm{awn}) \cong \ilim_{X'\to X \in \mc U_X} \Hom(Y, X')$ and it remains to show that $$\ilim_{X'\to X \in \mc U_X} \Hom(Y, X') \to \Hom(Y,X)$$ is an isomorphism. Note that for a morphism $f: Y \to X$, the base change $g': Y' \to Y$ of any universal homeomorphism $g: X' \to X$ is again a universal homeomorphism and thus there is a unique section $Y \cong Y^\mathrm{awn} \to Y'$ of $g'$ since $Y$ is a.w.n.. In particular, there is a unique $Y \to X'$ such that $Y \to X' \to X$ is $f$. This proves both surjectivity and injectivity.
\end{proof}

We note that in positive characteristic the absolute weak normalisation has a particularly nice description:

\begin{proposition}\label{prop:perf and awn}
  A scheme $X$ in characteristic $p>0$ is a.w.n. if and only if the Frobenius endomorphism $\phi: X \to X$ is an isomorphism, i.e. $X$ is perfect. In particular, the absolute weak normalisation agrees with the perfection $(\blank)^\mathrm{perf}$, i.e. the functor mapping $X \mapsto \ilim_\phi X$.
\end{proposition}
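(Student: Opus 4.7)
The proposition splits into two halves: the equivalence ``absolutely weakly normal $\Leftrightarrow$ perfect'' in characteristic $p$, and the identification of $(\blank)^{\mathrm{awn}}$ with $(\blank)^{\mathrm{perf}}$. I would treat them in this order, as the second is a formal consequence of the first.

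For the direction ``a.w.n.\ $\Rightarrow$ perfect'', I would simply apply the defining property of a.w.n.\ to the absolute Frobenius $\phi : X \to X$ itself: it is affine and hence separated, it is a universal homeomorphism, and its source is the reduced scheme $X$. The a.w.n.\ hypothesis then forces $\phi$ to be an isomorphism.

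For ``perfect $\Rightarrow$ a.w.n.'', first I would observe that a perfect scheme is reduced: affine-locally on $\Spec A$ the Frobenius $a \mapsto a^p$ is an isomorphism and hence injective, so $a^{p^n} = 0$ yields $a = 0$ by iteration. Now let $\pi : X' \to X$ be a separated universal homeomorphism with $X'$ reduced. The claim that $\pi$ is an isomorphism is local on $X$, so I reduce to a ring map $f : A \to B$ with $\Spec B \to \Spec A$ a universal homeomorphism, $A$ perfect, $B$ reduced. The kernel of such an $f$ is contained in the nilradical of $A$, hence zero, giving injectivity. For surjectivity, the key ingredient is the standard local description of universal homeomorphisms in characteristic $p$: every $b \in B$ satisfies $b^{p^n} = f(a)$ for some $n \geq 0$ and some $a \in A$. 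Writing $a = a_0^{p^n}$ via perfection of $A$, the Frobenius binomial identity gives $(b - f(a_0))^{p^n} = 0$ in $B$, and reducedness of $B$ forces $b = f(a_0) \in \im(f)$.

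The identification of the two functors is then formal. By the first part, a.w.n.\ schemes and perfect schemes are the same in characteristic $p$, so it suffices to show that $(\blank)^{\mathrm{perf}}$ is right adjoint to the inclusion of perfect schemes into schemes of characteristic $p$. For $Y$ perfect, $\Hom(Y, \ilim_\phi X) = \ilim_\phi \Hom(Y,X)$, and the transition ``post-compose with $\phi_X$'' agrees with ``pre-compose with $\phi_Y$'' by naturality of the absolute Frobenius; since $\phi_Y$ is an isomorphism, each transition map is a bijection, so the limit collapses to $\Hom(Y,X)$. Uniqueness of right adjoints (via \Cref{lmm:awn-adjoint}) then gives $(\blank)^{\mathrm{awn}} = (\blank)^{\mathrm{perf}}$. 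The main obstacle is the surjectivity step: it rests on the non-trivial fact that characteristic $p$ universal homeomorphisms satisfy ``$b^{p^n}$ lies in the image of $f$''. Once this is cited or proven directly, everything else is either immediate from the definitions or a purely formal adjunction argument.
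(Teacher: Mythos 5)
Your proof is correct, and it diverges from the paper's argument in the ``perfect $\Rightarrow$ a.w.n.'' direction. The paper proves this direction structurally: it cites Bhatt's result that a universal homeomorphism $Y \to X$ induces an isomorphism $Y^{\mathrm{perf}} \to X^{\mathrm{perf}}$, then chains the isomorphisms $X^{\mathrm{awn}} \cong (X^{\mathrm{awn}})^{\mathrm{perf}} \cong X^{\mathrm{perf}} \cong X$ and invokes that $X^{\mathrm{awn}}$ is a.w.n.\ by construction. You instead reduce to the affine case (which is legitimate once one recalls that universal homeomorphisms are integral, hence affine --- worth stating) and argue element-wise: injectivity of $A \to B$ because $A$ is reduced (your lemma that perfect schemes are reduced is a short but necessary preliminary the paper does not need to isolate), and surjectivity from the fact that in characteristic $p$ any universal homeomorphism $\operatorname{Spec} B \to \operatorname{Spec} A$ has $b^{p^n} \in f(A)$ for every $b \in B$, combined with perfectness of $A$ and reducedness of $B$ via the Frobenius binomial trick. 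Both routes ultimately lean on the same non-trivial input about characteristic-$p$ universal homeomorphisms, just phrased differently: Bhatt's lemma on perfections versus the ``$p$-power falls into the image'' description; as you note yourself, that description is a theorem that must be cited, not a tautology. Your treatment of the adjunction-identification is in fact slightly more explicit than the paper's (which merely asserts that perfection is right adjoint to the inclusion of perfect schemes), since you spell out that the tower of Frobenii collapses when the test object $Y$ is perfect. Overall: same skeleton, but a more hands-on, self-contained middle step versus the paper's leaner argument that outsources the work to a cited lemma and the universal property of $X^{\mathrm{awn}}$.
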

\begin{proof}
    The second statement follows from the first by \Cref{lmm:awn-adjoint}, noting that the perfection is right adjoint to the inclusion of perfect schemes. Let us prove the first statement. The "only if" is clear from the definition, as the Frobenius is a universal homeomorphism. For the converse, let $X$ be perfect. Then one can use that for a universal homeomorphism $Y \to X$ the induced map $\ilim_\phi Y = Y^\mathrm{perf} \to X^\mathrm{perf}$ is an isomorphism (cf. \cite[Lemma 3.8]{bhatt-perfect}); in particular, $X^\mathrm{awn} \cong (X^\mathrm{awn})^\mathrm{perf} \cong X^\mathrm{perf} \cong X$, where the first isomorphism comes from the proven direction, the second one from the fact that $X^\mathrm{awn} \to X$ is a universal homeomorphism and the last one is implied by the assumption. Thus $X$ is a.w.n..
\end{proof}

We want the class $E$ of our geometric setup to contain all separated morphisms of finite type and their absolute weak normalisations; on the other hand, we want to apply \Cref{prop:construct 6 functors}, so compactifications as in \Cref{xmpl:small-E-setup} should still be available. As absolute weak normalisations of finite type morphisms are in general not at all of finite type, we need to weaken the assumptions in Nagata's theorem. The following result by P. Hamacher solves this problem.

\begin{definition}{\cite[Definition 1.4]{Hamacher}}
A morphism $f:Y \to X$ is called \textit{of finite expansion} if $X$ can be covered by opens $U_i$ such that there are $V_j$ covering $Y$ with the following property: each $f|_{V_j}$ factors as $V_j \to \mb A^n_{U_i} \to U_i \to X$ for some $i$ and some $n$ such that $V_j \to \mb A^n_{U_i}$ is integral.
\end{definition}

\begin{theorem}\label{thm:fin expansion comp}
Let $f:Y \to X$ be a separated morphism of finite expansion between qcqs schemes. Then $f = p \circ j$ where $j: Y \to \bar{Y}$ is an open immersion and $p: \bar{Y} \to X$ is separated and universally closed.
\end{theorem}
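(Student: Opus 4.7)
The strategy is to mimic Nagata's compactification theorem (cf. \Cref{xmpl:small-E-setup}), replacing the role of finite type morphisms by morphisms of finite expansion and the role of proper morphisms by separated universally closed morphisms. The crucial extra input is that an integral morphism is automatically affine, separated, and universally closed, so the ``integral'' part of a finite-expansion decomposition is already a candidate for the compact part; only the affine-space factor $\mb A^n \to \Spec \mb Z$ needs to be compactified in the usual way via $\mb P^n$.

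First, I would treat the local situation. Using that $X$ and $Y$ are qcqs and that $f$ is of finite expansion, one finds a finite cover $\{U_i\}$ of $X$ by affine opens and a finite cover $\{V_j\}$ of $Y$ refining $f^{-1}(U_i)$ such that each $f|_{V_j}$ factors as $V_j \xrightarrow{g_j} \mb A^{n_j}_{U_i} \to U_i$ with $g_j$ integral. Since $g_j$ is affine, $V_j = \rSpec_{\mb A^{n_j}_{U_i}}(\mc A_j)$ for a quasi-coherent $\mc O_{\mb A^{n_j}_{U_i}}$-algebra $\mc A_j$ that is integral over $\mc O_{\mb A^{n_j}_{U_i}}$. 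Writing $\iota: \mb A^{n_j}_{U_i} \hookrightarrow \mb P^{n_j}_{U_i}$ for the standard open immersion, let $\mc B_j$ be the integral closure of $\mc O_{\mb P^{n_j}_{U_i}}$ inside $\iota_* \mc A_j$, and set $\bar V_j := \rSpec_{\mb P^{n_j}_{U_i}}(\mc B_j)$. Then $\bar V_j \to \mb P^{n_j}_{U_i}$ is integral, hence separated and universally closed, so composing with the proper morphism $\mb P^{n_j}_{U_i} \to U_i$ produces a separated and universally closed morphism $\bar V_j \to U_i$. Since $\mc A_j$ is already integrally closed over $\mc O_{\mb A^{n_j}_{U_i}}$ (being integral over it), the restriction of $\mc B_j$ along $\iota$ recovers $\mc A_j$, i.e.\ $V_j \hookrightarrow \bar V_j$ is the preimage of $\mb A^{n_j}_{U_i}$ and in particular an open immersion.

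The second step is to globalise, and this is where the main obstacle lies: the local compactifications $\bar V_j \to U_i$ will in general not agree on overlaps, so one cannot naively glue them. This is resolved by the classical Nagata-style patching argument (as carried out, e.g., by Conrad or in \cite[\texttt{0F41}]{Stacks}), using that both $f$ and the $\bar V_j \to X$ are separated to take scheme-theoretic images/closures of $Y$ inside suitable fiber products of the $\bar V_j$ over $X$. The separation hypothesis on $f$ ensures that the diagonal $Y \to Y \times_X Y$ is a closed immersion, which lets one make sense of the closure of the graph of $Y$ inside $\prod_j \bar V_j$ (over $X$) on appropriate open pieces; induction on the number of patches then produces a global $\bar Y$ together with an open immersion $j: Y \hookrightarrow \bar Y$ and a map $p: \bar Y \to X$.

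Finally, one verifies that the glued map $p: \bar Y \to X$ is separated and universally closed. Separation is automatic from the local separation of $\bar V_j \to U_i$ once the gluing has been set up with closed-immersion diagonals. Universal closedness is checked locally on the base: after base change along any $X' \to X$, the pulled-back $\bar Y_{X'} \to X'$ is covered by the pulled-back $\bar V_j$'s, each of which is universally closed over $U_i$, and one uses the valuative or topological criterion on each patch. The hardest point is thus the gluing step, which requires the finite-expansion hypothesis only insofar as it guarantees the existence of the local integral factorisations used in step one.
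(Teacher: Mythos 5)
The paper itself gives no proof of this statement: it simply cites \cite[Theorem 1.17]{Hamacher}, so there is no in-paper argument to compare against. Your local step is sound: $\iota_*\mc A_j$ is quasi-coherent because $\iota$ is a qcqs open immersion, the integral closure $\mc B_j$ of $\mc O_{\mb P^{n_j}_{U_i}}$ in $\iota_*\mc A_j$ is again quasi-coherent and integral (so $\bar V_j \to U_i$ is separated and universally closed), and $\mc B_j|_{\mb A^{n_j}_{U_i}}=\mc A_j$ since integral closure is compatible with restriction to opens and $\mc A_j$ coincides with the integral closure of $\mc O_{\mb A^{n_j}_{U_i}}$ in $\mc A_j$ itself, by integrality.

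The gluing step, however, is a genuine gap rather than a formality. The patching arguments you invoke (Conrad; \cite[\texttt{0F41}]{Stacks}) are proved for \emph{finite type} morphisms, and their intermediate lemmas use that hypothesis in an essential way: the common-refinement step passes through blow-ups and scheme-theoretic closures whose behaviour is controlled precisely because the intermediate schemes remain of finite type over the base, and none of this transfers automatically once the local compactifications $\bar V_j$ are merely universally closed. There is a further wrinkle you elide: your $\bar V_j$ compactify the pieces $V_j$ over the affine opens $U_i \subseteq X$, not $Y$ over $X$, so assembling a single compactification of $Y$ already requires an induction on the size of the cover whose inductive step is another incarnation of the gluing lemma (and the resulting $p$ is not locally on $X$ equal to some $\bar V_j \to U_i$, so ``separation is automatic'' does not follow as stated). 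Your closing remark that the finite-expansion hypothesis ``only matters insofar as it guarantees the existence of the local integral factorisations'' is exactly what would have to be proved; verifying that the Nagata machinery survives the weakening from finite type to finite expansion is the substance of Hamacher's theorem, and a blind appeal to the classical argument does not close the gap.
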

\begin{proof}
See \cite[Theorem 1.17]{Hamacher}.
\end{proof}

\begin{example}
    Clearly, all integral morphisms and all finite type morphisms are of finite expansion. Using the stability under composition proven below, this already shows that absolute weak normalisations of finite type morphisms are of finite expansion, using that for a morphism $f: Y\to X$ $f^\mathrm{awn}$ factors as $Y^\mathrm{awn} \cong (Y \times_{X} X^\mathrm{awn})^{\mathrm{awn}} \to Y \times_{X} X^\mathrm{awn} \to X^\mathrm{awn}$ (note that $(\blank)^\mathrm{awn}$ commutes with all limits by \Cref{lmm:awn-adjoint}) where the first map is integral (as it is a universal homeomorphism, cf. \cite[\texttt{04DF}]{Stacks}) and the second one is a base change of $f$.
\end{example}

\begin{proposition}
    Let $S$ be the set of morphisms in $\Sch$ which are of finite expansion. Then:
    \begin{enumerate}[label=\roman*)]
        \item $S$ is stable under composition.
        \item $S$ is stable under pullback along arbitrary morphisms in $\Sch$.
    \end{enumerate}
\end{proposition}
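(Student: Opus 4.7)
The plan is to handle (ii) first, since it is essentially formal, and then to give a pointwise argument for (i). For (ii), let $f:Y\to X$ be of finite expansion, with witnessing covers $X=\bigcup U_i$, $Y=\bigcup V_j$, and integral factorizations $V_j\to\mb A^{n_j}_{U_{i(j)}}$. Given an arbitrary $g:X'\to X$, I pull the whole diagram back along $g$: the opens $U_i\times_X X'$ cover $X'$, the opens $V_j\times_X X'$ cover $Y\times_X X'$, and each base-changed map $V_j\times_X X'\to\mb A^{n_j}_{U_{i(j)}\times_X X'}$ remains integral since integrality is stable under base change. This directly witnesses $Y\times_X X'\to X'$ as being of finite expansion.

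For (i), let $g:Z\to Y$ and $f:Y\to X$ both be of finite expansion; it suffices to show that every $z\in Z$ admits an affine open neighbourhood $W\subset Z$ and an affine open $U\subset X$ containing $f(g(z))$, together with an integral factorization $W\to\mb A^N_U\to U\hookrightarrow X$ for some $N$. One may assume without loss of generality that all witnessing opens on both sides are affine (any integral morphism into $\mb A^n_U$ with $U$ affine forces the source to be affine, so this is a cosmetic refinement). Write $y:=g(z)$, $x:=f(y)$, and pick affine opens $U_i\ni x$, $V_j\ni y$ from $f$'s data with $V_j\to\mb A^{n_j}_{U_i}$ integral, and affine opens $V'_k\ni y$, $W_l\ni z$ from $g$'s data with $W_l\to\mb A^{m_l}_{V'_k}$ integral. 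Choose a basic open $D_{V_j}(a)\subset V_j\cap V'_k$ of $V_j$ containing $y$ (possible because $V_j\cap V'_k$ is open in the affine $V_j$), and set $W:=W_l\cap g^{-1}(D_{V_j}(a))$. Since $W_l\to V'_k$ is affine and $D_{V_j}(a)\subset V'_k$ is an affine open subset, $W$ is affine, and base change yields an integral map $W\to\mb A^{m_l}_{D_{V_j}(a)}$.

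The final step is to promote $D_{V_j}(a)\to\mb A^{n_j}_{U_i}$ to an integral morphism into a slightly larger affine space. This is the standard trick: $D_{V_j}(a)$ sits as a closed subscheme of $V_j\times\mb A^1$ cut out by the equation $at=1$, and $V_j\times\mb A^1\to\mb A^{n_j+1}_{U_i}$ is integral as a base change of $V_j\to\mb A^{n_j}_{U_i}$, so $D_{V_j}(a)\to\mb A^{n_j+1}_{U_i}$ is a composition of integral maps, hence integral. One further base change gives $\mb A^{m_l}_{D_{V_j}(a)}\to\mb A^{m_l+n_j+1}_{U_i}$ integral, and chaining everything produces the desired integral factorization $W\to\mb A^{m_l+n_j+1}_{U_i}\to U_i\hookrightarrow X$. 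The main obstacle throughout is precisely the mismatch between the two covers of $Y$ coming from $f$ and $g$: integrality is fragile under open immersions, and what makes the argument go through is the observation above that passage to a basic open of an affine costs exactly one additional affine coordinate in the integral factorization.
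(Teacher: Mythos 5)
Your argument for (ii) is the same as the paper's: base-change all the defining data and observe that integrality is stable under pullback.

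For (i), the paper is much terser. It records only the core computation: given global integral-over-affine-space factorizations $Z\to\mb A^m_Y\to Y$ and $Y\to\mb A^n_X\to X$, the composite $Z\to\mb A^m_Y\cong\mb A^m_X\times_X Y\to\mb A^m_X\times_X\mb A^n_X\cong\mb A^{m+n}_X$ is integral, and it states that (i) ``reduces to'' this observation without explaining how. Your proof spells out that reduction, and in doing so isolates the real subtlety that the paper leaves implicit: the cover $\{V_j\}$ of $Y$ from $f$'s data and the cover $\{V'_k\}$ of $Y$ from $g$'s data need not match, and restricting an integral morphism $V_j\to\mb A^{n_j}_{U_i}$ to a smaller open is no longer integral. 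Your resolution --- pass to a basic affine open $D_{V_j}(a)\subseteq V_j\cap V'_k$, realize it as the closed subscheme $V(at-1)\subseteq\mb A^1_{V_j}$, and thereby obtain an integral $D_{V_j}(a)\to\mb A^{n_j+1}_{U_i}$ at the cost of one extra affine coordinate --- is exactly what is needed to make the paper's ``reduces to'' honest. The underlying idea (compose the two integral factorizations via base change) is the same, but your version is more complete, and all the affineness checks ($W_l\to V'_k$ affine implies $W$ affine, etc.) are correct.
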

\begin{proof}
$ii)$ is clear by stability of both parts of the defining factorisations under base change. $i)$ reduces to the following observation: if we have factorisations $Z \to \mb A^m_Y \to Y$, $Y \to \mb A^n_X \to X$ where the first morphism is integral, then $Z \to \mb A^m_Y \cong \mb A^m_X \times_X Y \to \mb A^m_X \times_X \mb A^n_X \cong \mb A^{n+m}_X$ is integral as well, again by stability of integral morphisms under base change.
\end{proof}

\begin{corollary}\label{cor:et geom setup}
    Let $E$ be the set of separated morphisms of finite expansion. Then $(\Sch,E)$ is a geometric setup.
\end{corollary}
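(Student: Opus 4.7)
The plan is to verify directly the three defining properties of a geometric setup, all of which reduce to standard facts about qcqs schemes combined with the proposition immediately preceding the corollary.

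First, I would note that the category $\Sch$ of qcqs schemes admits all finite limits: fiber products of schemes exist in general, and the qcqs property is preserved under finite limits of qcqs schemes (equalizers are closed immersions into fiber products, hence qcqs). This handles the ambient structure requirement.

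Next, for condition i) of a geometric setup, I would observe that every isomorphism $f:Y \to X$ is separated, and is of finite expansion since the trivial factorization $Y \xrightarrow{\sim} \mathbb{A}^0_X = X \xrightarrow{\mathrm{id}} X$ exhibits $f$ as a composition of an integral map with a projection from affine space. Stability of $E$ under composition then follows by combining two facts: separated morphisms compose to separated morphisms (standard), and morphisms of finite expansion compose to morphisms of finite expansion, which is exactly part i) of the preceding proposition.

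Finally, for condition ii), stability under pullback is likewise a conjunction of two known facts: separated morphisms are stable under arbitrary base change (standard), and morphisms of finite expansion are stable under arbitrary base change by part ii) of the preceding proposition. Since both properties are preserved independently, so is their conjunction.

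There is no real obstacle here — the work was entirely done in the preceding proposition, and the corollary merely packages those two stability statements together with the trivial observation about isomorphisms. The only point that requires any attention is ensuring that isomorphisms genuinely fit into the finite expansion framework, but this is immediate from the definition.
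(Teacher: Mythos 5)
Your proposal is correct and follows the same route as the paper: the corollary is immediate from the stability of separated morphisms under composition and base change together with the preceding proposition on finite-expansion morphisms. The small details you add (the finite-limits hypothesis for $\Sch$ and the explicit check that isomorphisms are of finite expansion via the trivial factorization through $\mathbb{A}^0_X$) are correct and merely make explicit what the paper leaves implicit.
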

\begin{proof}
This follows from the previous proposition as separated morphisms are stable under composition and pullback.
\end{proof}

Now \Cref{thm:fin expansion comp} dictates the choice of $I,P \subseteq E$.

\begin{proposition}\label{prop:G_k suitable decomp}
    Let $E$ be the set of separated morphisms of finite expansion. Let $I$ be the class of open immersions and $P$ the class of universally closed maps in $E$. Then $I$ and $P$ form a suitable decomposition of $E$, inducing a construction setup which we will denote by $\mathfrak G$.
\end{proposition}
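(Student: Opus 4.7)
The plan is to verify the four axioms of \Cref{def:suitable decomposition} for the pair $(I,P)$ inside the geometric setup $(\Sch,E)$. Axiom (i) is precisely \Cref{thm:fin expansion comp}, provided one checks that the $p$ produced there lies in $P$ and not merely in the class of separated universally closed morphisms. Axiom (ii) is automatic because $\Sch$ is a $1$-category, so every morphism is $0$-truncated. Axiom (iii) is also immediate: open immersions and universally closed morphisms both contain all identities and are stable under arbitrary base change, and stability of finite expansion under pullback was established above.

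For (i), I would invoke \Cref{thm:fin expansion comp} directly and note that the compactification $\bar Y$ is obtained (locally on $X$) as a closed subscheme of a relative projective space, so $p\colon \bar Y\to X$ is itself of finite expansion, hence $p\in P$. This is the step I expect to require the most care, as it depends on reading the construction in \cite{Hamacher} rather than just the statement.

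The bulk of the argument is axiom (iv), the two-out-of-three property for both $I$ and $P$. For $I$: if $f\in I$ is an open immersion, it is a monomorphism, so for any $g\colon Z\to Y$ the graph $\Gamma_g\colon Z\to Z\times_X Y$ is an isomorphism (as a pullback of the isomorphism $\Delta_f$); under this identification $g$ becomes a base change of $f\circ g$ along $f$, and since open immersions are stable under base change and composition, both directions follow. For $P$ I would use the standard graph trick: factor $g$ as $Z\xrightarrow{\Gamma_g}Z\times_X Y\xrightarrow{\mathrm{pr}_Y}Y$. When $f$ is separated, $\Gamma_g$ is a closed immersion (as a pullback of $\Delta_f$), hence integral, and therefore lies in $P$; when $f\circ g\in P$, the projection $\mathrm{pr}_Y$ is a base change of $f\circ g$ along $f$, so it is separated, universally closed, and of finite expansion. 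Composing yields $g\in P$, while the forward direction is immediate from stability of the three defining properties under composition.

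The main obstacle is the point flagged under (i): one must ensure that the compactification constructed by Hamacher can be arranged to be of finite expansion, not just separated and universally closed, so that $P$ is genuinely the right ``proper'' half of the decomposition. Everything else reduces to the graph trick plus bookkeeping of base change and composition stability.
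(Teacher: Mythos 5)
Your proof follows the same structure as the paper's; in particular the graph-trick argument for axiom (iv) is exactly what the paper does, factoring $g$ through $\Gamma_g$ (a base change of $\Delta_f$, hence an isomorphism in the $I$-case and a closed immersion --- so integral, so in $P$ --- in the $P$-case) and $\mathrm{pr}_Y$ (a base change of $f\circ g$). Your flag on axiom (i) is a fair catch: as stated, \Cref{thm:fin expansion comp} only asserts that $p$ is separated and universally closed, not that it is of finite expansion, so the theorem as quoted does not literally give $p\in P\subseteq E$; Hamacher's original Theorem~1.17 does supply this, and the paper's proof relies on it silently. One caveat on your suggested justification: saying the compactification is locally a closed subscheme of a relative projective space is the Nagata-for-finite-type picture and would make $p$ proper of finite type, which is strictly stronger than what Hamacher's finite-expansion construction actually delivers --- the correct statement is simply that Hamacher's $p$ is itself of finite expansion.
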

\begin{proof}
We need to check $i) - iv)$ in \Cref{def:suitable decomposition}. $i)$ is \Cref{thm:fin expansion comp}. $ii)$ is clear as all morphisms in $\mc C(\mathfrak G')$ are $0$-truncated. $iii)$ is obvious as well for both $I$ and $P$. It remains to show $iv)$. The "only if" part is clear. The "if" part follows as usual from considering the commutative diagram
\[\begin{tikzcd}
  Z & {Y \times_X Z} & Z \\
  & Y & X
  \arrow["\Gamma", from=1-1, to=1-2]
  \arrow["f", from=2-2, to=2-3]
  \arrow["{f \circ g}", from=1-3, to=2-3]
  \arrow[from=1-2, to=1-3]
  \arrow[from=1-2, to=2-2]
  \arrow["g"', from=1-1, to=2-2]
\end{tikzcd}\]
Since the graph $\Gamma$ is a base change of $\Delta_f$, this implies the statement for morphisms in $I$ as then $\Delta_f$ is an isomorphism, and for morphisms in $P$ as then $\Delta_f$ is a closed immersion.
\end{proof}

Now we can consider 6-functor formalisms on the construction setup $\mathfrak G$. The central example for us is étale cohomology with $\mb F_p$-module coefficients. More generally, one can allow modules over arbitrary torsion rings. For a torsion ring $\Lambda$ and $X \in \mc C(\mathfrak G)$ we consider the $\infty$-derived category $\mc D(X_\et, \Lambda)$ of étale sheaves of $\Lambda$-modules on $X$\footnote{That is, the category obtained from the homotopy category $\mc K(X_\et, \Lambda)$ of the abelian category of étale sheaves of $\Lambda$-modules by declaring the quasi-isomorphisms to be weak equivalences. This agrees with the category of hypersheaves $\mathrm{HypShv}(X_\et,D(\Lambda))$, cf. \cite[Corollary 2.1.2.3]{SAG}.}, which is a stable, presentable $\infty$-category with the standard $t$-structure. By general topos theory, a morphism $f: Y \to X$ induces a pullback functor $f^*: \mc D(X_\et, \Lambda) \to \mc D(Y_\et,\Lambda)$. We need to construct the corresponding $\mc D_{\Lambda,0}: \mc C(\mathfrak G)^\mathrm{op} \to \mathrm{Cat}_\infty$ and check the conditions in \Cref{prop:construct 6 functors}. For the construction of $\mc D_{\Lambda,0}: \mc C(\mathfrak G)^\mathrm{op} \to \mathrm{Cat}_\infty$ we refer to \cite[Section 2]{LiuZheng}. The symmetric monoidal structure $\tensor$ on $\mc D(X_\et, \Lambda)$ comes from deriving the usual tensor product $\tensor$ on $\mathrm{Shv}(X_\et,\Lambda)$, the category of étale sheaves of $\Lambda$-modules on $X$. Accordingly, its right adjoint $\iHom$ is obtained by deriving the $\iHom$ in $\mathrm{Shv}(X_\et,\Lambda)$. Similarly, for a morphism $f$ in $\mc C(\mathfrak G)$, deriving $f_*$ yields a right adjoint to $f^*$, which we will also denote by $f_*$. \par

\begin{theorem}\label{thm:et 6 functors}
$\mc D_{\Lambda,0}$ extends to a 6-functor formalism $\mc D_\Lambda$ on $\mathfrak G$.
\end{theorem}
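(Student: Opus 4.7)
The plan is to invoke \Cref{prop:construct 6 functors} applied to the construction setup $\mathfrak G$ and the lax symmetric monoidal functor $\mc D_{\Lambda,0}$. First, the ``internal'' structure on each $\mc D(X_\et, \Lambda)$ is classical: the derived tensor product is closed with the derived internal Hom as its right adjoint, and for any morphism $f$ the pullback $f^*$ has as right adjoint the derived pushforward $f_*$ by general topos theory. So it remains to verify conditions $i)$--$iii)$ of \Cref{prop:construct 6 functors}, together with the existence of a right adjoint $p^!$ to $p_!$ for each $p \in P$.

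For $j \in I$ an open immersion, extension-by-zero on the abelian level is exact and left adjoint to $j^*$; deriving gives $j_!$, and base change and the projection formula for $j_!$ are classical (and reduce quickly to the abelian statements since $j_!$ is exact). The main work is condition $ii)$: for $p \in P$, i.e.\ a separated, universally closed morphism of finite expansion, we need $p^*$ to admit a right adjoint $p_! := p_*$ satisfying base change and the projection formula. This is exactly the extension of proper base change from the classical proper case to morphisms of finite expansion proven by Hamacher in \cite{Hamacher}, which is the reason we work with $E$ as defined in \Cref{cor:et geom setup}. Condition $iii)$ will then follow formally: on a cartesian square with horizontal arrows in $I$ and vertical arrows in $P$, the comparison $j_! p'_! \to p_! j'_!$ can be verified by adjunction using the base change isomorphism along $j$, which itself is a consequence of $i)$ and $ii)$.

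For the 6-functor upgrade, the only remaining point is the existence of a right adjoint $p^!$ to $p_! = p_*$ for $p \in P$. Since $\mc D(X_\et, \Lambda)$ is presentable and $p_*$ preserves small colimits (with torsion coefficients Hamacher's proper base change in particular ensures commutation with filtered colimits), the adjoint functor theorem supplies $p^!$, and \Cref{prop:construct 6 functors} then delivers the sought 6-functor formalism $\mc D_\Lambda$.

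The main obstacle is condition $ii)$: pushing proper base change beyond the classical finite type setting. The whole point of enlarging the class $E$ to include morphisms of finite expansion (rather than just of finite type) is to accommodate absolute weak normalisations of finite type morphisms, and the price to pay is that one must invoke Hamacher's extension of proper base change in place of the standard one. Everything else should reduce to routine abelian sheaf theory and formal consequences of the adjunctions already set up.
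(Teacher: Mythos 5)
Your overall strategy is the same as the paper's, which simply defers to Scholze's Theorem 7.16; that reference in turn constructs the formalism by applying exactly the Mann-style construction result (\Cref{prop:construct 6 functors}) to $\mathfrak G$. So spelling out conditions $i)$--$iii)$ plus the existence of $p^!$ via presentability and the adjoint functor theorem is a faithful unrolling of what the citation does.

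There is one concrete misattribution that hides the real work. You assert that the needed proper base change and projection formula for $p \in P$ (separated, universally closed, of finite expansion) ``is exactly the extension of proper base change \dots proven by Hamacher.'' That is not what \cite{Hamacher} provides. What the paper uses from Hamacher --- and cites (Theorem 1.17) --- is the compactification statement \Cref{thm:fin expansion comp}, which is what makes $(I,P)$ a suitable decomposition of $E$ in the sense of \Cref{def:suitable decomposition}; without it, one couldn't even set up \Cref{prop:construct 6 functors} for $\mathfrak G$. The proper base change theorem for morphisms in $P$ is a separate, nontrivial point: the classical statement (SGA4) is for proper, hence finite type, morphisms, whereas elements of $P$ are merely universally closed and separated of finite expansion. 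Extending proper base change and the projection formula to this generality requires a genuine argument (a limit/approximation reduction to the finite type case, or Gabber's version for qcqs universally closed morphisms), and this is precisely the content your sketch treats as a black box. The remaining points --- exactness of extension-by-zero for $j \in I$, the formal deduction of condition $iii)$, and the adjoint-functor-theorem construction of $p^!$ using bounded cohomological dimension with torsion coefficients --- are fine, but again the bound on cohomological dimension you invoke is itself a consequence of the (correctly sourced) proper base change for $P$, not of Hamacher's compactification result.
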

\begin{proof}
    We refer to \cite[Theorem 7.16]{Scholze1}. In loc.\ cit.\ the statement is proven for the assignment $X \mapsto \mathrm{HypShv}(X_\et, \hat D_{pf}(\mb Z))$ (where $\hat D_{pf}(\mb Z)$ is the full subcategory of torsion complexes in $D(\mb Z)$), but the same argument applies mutatis mutandis to $\mc D_{\Lambda,0} = \mathrm{HypShv}((\blank)_\et, D(\Lambda))$.\par
\end{proof}

The étale site (and thus étale cohomology) is invariant under universal homeomorphisms (\cite[\texttt{04DZ}, \texttt{03SI}]{Stacks}). This makes the following observation particularly relevant:

\begin{lemma}\label{lmm:awn-univ-hom}
  A morphism $f$ in $\Sch$ is a universal homeomorphism if and only if $f^\mathrm{awn}$ is an isomorphism.
\end{lemma}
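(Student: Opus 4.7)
The plan is to prove each direction separately using two key structural facts visible from \Cref{lmm:awn-adjoint}: the counit $X^\mathrm{awn} \to X$ is itself a universal homeomorphism (being an inverse limit of finite-type universal homeomorphisms), and $(\blank)^\mathrm{awn}$ commutes with fiber products as a right adjoint to a fully faithful inclusion. I will also use that universal homeomorphisms are integral \cite[\texttt{04DF}]{Stacks}, hence separated, and the defining property of an a.w.n.\ scheme $W$: every separated universal homeomorphism from a reduced scheme to $W$ is an isomorphism.

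For the forward direction, assume $f: Y \to X$ is a universal homeomorphism. Applying $(\blank)^\mathrm{awn}$ to the base change $g \colon Y \times_X X^\mathrm{awn} \to X^\mathrm{awn}$ of $f$, fiber-product preservation together with the idempotency $(X^\mathrm{awn})^\mathrm{awn} = X^\mathrm{awn}$ yields $(Y \times_X X^\mathrm{awn})^\mathrm{awn} = Y^\mathrm{awn} \times_{X^\mathrm{awn}} X^\mathrm{awn} = Y^\mathrm{awn}$, so $g^\mathrm{awn}$ is identified with $f^\mathrm{awn}$. This reduces us to showing $g^\mathrm{awn}$ is an isomorphism, where $g$ is a universal homeomorphism (base change of $f$) into the a.w.n.\ scheme $X^\mathrm{awn}$. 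But $g^\mathrm{awn}$ factors as $Y^\mathrm{awn} \to Y \times_X X^\mathrm{awn} \xrightarrow{g} X^\mathrm{awn}$ through the counit, a composition of two universal homeomorphisms; hence $g^\mathrm{awn}$ is a separated universal homeomorphism from the reduced scheme $Y^\mathrm{awn}$ to the a.w.n.\ scheme $X^\mathrm{awn}$. By the defining property of a.w.n., $g^\mathrm{awn}$ is an isomorphism.

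For the converse, suppose $f^\mathrm{awn}$ is an isomorphism. It suffices to show that the base change $Y \times_X X' \to X'$ is a homeomorphism of underlying spaces for every morphism $X' \to X$. By fiber-product preservation, $(Y \times_X X')^\mathrm{awn} \cong Y^\mathrm{awn} \times_{X^\mathrm{awn}} (X')^\mathrm{awn}$, and the projection to $(X')^\mathrm{awn}$ is a base change of $f^\mathrm{awn}$, hence itself an isomorphism. In the commutative square whose vertical arrows are the counits $(Y \times_X X')^\mathrm{awn} \to Y \times_X X'$ and $(X')^\mathrm{awn} \to X'$ and whose top arrow is this isomorphism, three sides are universal homeomorphisms and therefore underlying homeomorphisms, which forces the bottom arrow $Y \times_X X' \to X'$ to be a homeomorphism as well. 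I expect no serious obstacle; the only subtle point is identifying $f^\mathrm{awn}$ in the forward direction as the awn of $Y \times_X X^\mathrm{awn} \to X^\mathrm{awn}$, which is where the right-adjointness of $(\blank)^\mathrm{awn}$ is essential.
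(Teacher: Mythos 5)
Your proof is correct and takes essentially the same approach as the paper's: both directions hinge on the counit $X^\mathrm{awn} \to X$ being a universal homeomorphism and, for the forward direction, on the defining property of a.w.n.\ schemes applied to the separated universal homeomorphism $f^\mathrm{awn}\colon Y^\mathrm{awn} \to X^\mathrm{awn}$. Your detour through $g = f \times_X X^\mathrm{awn}$ and fiber-product preservation is a mild elaboration of the direct factorization $f^\mathrm{awn} = g \circ (\text{counit})$ that the paper invokes by saying "clearly $f^\mathrm{awn}$ is a universal homeomorphism".
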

\begin{proof}
  Clearly, if $f: Y \to X$ is a universal homeomorphism then so is $f^\mathrm{awn}$. But $Y^\mathrm{awn}, X^\mathrm{awn}$ are reduced and a.w.n., so $f^\mathrm{awn}$ must be an isomorphism by definition. The converse is clear since the natural morphisms $X^\mathrm{awn} \to X$ are universal homeomorphisms.
\end{proof}

\begin{proposition}\label{prop:awn colocalisation}
    The functor $(\blank)^\mathrm{awn}$ from \Cref{lmm:awn-adjoint} exhibits the full subcategory $\mathrm{Awn}$ of a.w.n.\ schemes in $\Sch$ as the colocalisation of $\Sch$ at the universal homeomorphisms.
\end{proposition}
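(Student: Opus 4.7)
The plan is to verify that the coreflection $(\blank)^\mathrm{awn}$ satisfies the universal property of the localisation of $\Sch$ at the class $W$ of universal homeomorphisms. Since \Cref{lmm:awn-adjoint} already provides an adjunction between the fully faithful inclusion $\iota: \mathrm{Awn} \hookrightarrow \Sch$ and $(\blank)^\mathrm{awn}$, the remaining task is to match $W$ with the class of morphisms inverted by $(\blank)^\mathrm{awn}$ and then to check the universal property of such a coreflective localisation.

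Concretely, I would proceed in three short steps. First, the inclusion $\iota$ is tautologically fully faithful as $\mathrm{Awn}$ is a full subcategory, so $(\blank)^\mathrm{awn}$ is already a coreflector in the ordinary categorical sense. Second, I would invoke \Cref{lmm:awn-univ-hom} to identify the class of morphisms inverted by $(\blank)^\mathrm{awn}$ precisely with $W$. Third, I would verify the universal property directly: given any functor $F: \Sch \to \mc E$ sending universal homeomorphisms to isomorphisms, I would use that the counit $\varepsilon_X: X^\mathrm{awn} \to X$ is itself a universal homeomorphism (being realised in \Cref{lmm:awn-adjoint} as the cofiltered inverse limit of finite type universal homeomorphisms over $X$). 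Hence $F(\varepsilon_X)$ is an isomorphism, natural in $X$, and this yields a factorisation $F \cong (F \circ \iota) \circ (\blank)^\mathrm{awn}$. Uniqueness of the factorisation is then automatic, since any candidate $\bar F: \mathrm{Awn} \to \mc E$ with $\bar F \circ (\blank)^\mathrm{awn} \cong F$ must equal $F \circ \iota$ on $\mathrm{Awn}$, where $(\blank)^\mathrm{awn}|_\mathrm{Awn} \cong \mathrm{id}$.

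Since both key ingredients (\Cref{lmm:awn-adjoint} and \Cref{lmm:awn-univ-hom}) are already in hand, I anticipate no real obstacle. The only point worth spelling out explicitly is that the counit $\varepsilon_X$ really is a universal homeomorphism: this is not automatic from abstract adjoint-functor considerations, but instead follows from the explicit description of $X^\mathrm{awn}$ as the cofiltered inverse limit of affine universal homeomorphisms of finite type, for which being a universal homeomorphism is preserved in the limit.
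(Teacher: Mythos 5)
Your proposal is correct and rests on the same two ingredients as the paper (\Cref{lmm:awn-adjoint} for the fully faithful left-adjoint inclusion, and \Cref{lmm:awn-univ-hom} to identify the inverted morphisms with the universal homeomorphisms). The only real difference is that where you verify the universal property of the colocalisation by hand—observing that the counit $\varepsilon_X\colon X^\mathrm{awn}\to X$ lies in $W$ so that any $W$-inverting functor factors uniquely—the paper simply invokes the general criterion \cite[Example 1.3.4.3]{HA} that a functor admitting a fully faithful (left) adjoint exhibits the target as the localisation at the morphisms it inverts. Both routes are fine; yours is self-contained, the paper's is shorter and automatically delivers the universal property at the level of $\infty$-categories, which is what is actually needed downstream in \Cref{prop:awn 6 functors}. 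One small tidying remark: you do not need the cofiltered-limit description to see that $\varepsilon_X$ is a universal homeomorphism; \Cref{lmm:awn-adjoint} already produces $X^\mathrm{awn}$ as the initial object of the category of universal homeomorphisms over $X$, so $\varepsilon_X$ is one by construction.
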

\begin{proof}
    By \Cref{lmm:awn-univ-hom}, a morphism $f$ in $\Sch$ is a universal homeomorphism if and only if $f^\mathrm{awn}$ is an isomorphism. By \Cref{lmm:awn-adjoint} the inclusion $\mathrm{Awn} \to \Sch$ is left-adjoint to the absolute weak normalisation and so by \cite[Example 1.3.4.3]{HA} $\mathrm{Awn}^\mathrm{op} \simeq \Sch^\mathrm{op}[(W^\mathrm{op})^{-1}]$ where $W$ is the class of universal homeomorphisms in $\Sch$.
\end{proof}

\begin{proposition}\label{prop:awn 6 functors}
    Let $\mc D_\Lambda$ be the 6-functor formalism on $\mathfrak G = (\Sch, E, I, P)$ from above. Let $\mathrm{Awn}$ denote the full subcategory of a.w.n.\ schemes in $\Sch$ and $E^\mathrm{awn}$ the classes of morphisms in $\mathrm{Awn}$ which lie in $E$. Then $\mc D_\Lambda$ factors as $\mc D_\Lambda^\mathrm{awn} \circ \mc P$ where $\mc D_\Lambda^\mathrm{awn}$ is the restriction of $\mc D_\Lambda$ to the geometric setup $\mathfrak G^\mathrm{awn} =(\mathrm{Awn}, E^\mathrm{awn})$ and $\mc P: \mathrm{Corr}(\Sch)_{E, all} \to \mathrm{Corr}(\mathrm{Awn})_{E^\mathrm{awn}, all}$ is the lax symmetric monoidal functor induced by $(\blank)^\mathrm{awn}: \Sch_k \to \mathrm{Awn}$.
\end{proposition}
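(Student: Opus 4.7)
The plan is to construct $\mc P$, verify that $\mc D_\Lambda$ inverts the canonical maps $X^\mathrm{awn} \to X$, and combine these into the factorisation. For the first step, I would verify that $(\blank)^\mathrm{awn}$ preserves finite limits (immediate, as a right adjoint by \Cref{lmm:awn-adjoint}) and the class $E$: given $f:Y \to X$ in $E$, the factorisation
\[ Y^\mathrm{awn} \simeq (Y \times_X X^\mathrm{awn})^\mathrm{awn} \to Y \times_X X^\mathrm{awn} \to X^\mathrm{awn} \]
expresses $f^\mathrm{awn}$ as the composition of an integral universal homeomorphism with a base change of $f$, both in $E$. Since pullbacks in the coreflective subcategory $\mathrm{Awn}$ are computed by applying $(\blank)^\mathrm{awn}$ to pullbacks in $\Sch$, this descends to the desired (strongly) symmetric monoidal functor $\mc P$ between correspondence categories.

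Next, I would show that the counit $\epsilon_X: X^\mathrm{awn} \to X$ is an integral universal homeomorphism (being the inverse limit of the finite universal homeomorphisms from the proof of \Cref{lmm:awn-adjoint}), hence lies in $P$, and that both $\epsilon_X^*$ and $(\epsilon_X)_!$ are equivalences: the former by invariance of the étale site under universal homeomorphisms, and the latter since $(\epsilon_X)_! = (\epsilon_X)_*$ is right adjoint to the equivalence $\epsilon_X^*$.

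Finally I would assemble the natural equivalence $\mc D_\Lambda \simeq \mc D_\Lambda^\mathrm{awn} \circ \mc P$. On objects it is given by $\epsilon_X^*$; for a span $Y \xleftarrow{g} Y' \xrightarrow{f} X$ with $f \in E$, naturality splits into the $g^*$-square (commuting by functoriality applied to $g \circ \epsilon_{Y'} = \epsilon_Y \circ g^\mathrm{awn}$) and the $f_!$-square, which follows from base change along the cartesian square pulling $f$ back along $\epsilon_X$, combined with the fact that the universal homeomorphism $(Y')^\mathrm{awn} \to Y' \times_X X^\mathrm{awn}$ is inverted by $\mc D_\Lambda$. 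The main obstacle is lifting these $1$-categorical naturalities to an equivalence of lax symmetric monoidal $\infty$-functors; the cleanest conceptual route is via the universal property of the colocalisation \Cref{prop:awn colocalisation}, lifted to the correspondence category $\mathrm{Corr}(\Sch)_{E,all}^{\tensor}$ using the invariance of $\mc D_\Lambda$ under universal homeomorphisms established in the previous step.
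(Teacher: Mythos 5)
Your plan gets all the preliminary facts right: $(\blank)^\mathrm{awn}$ preserves finite limits as a right adjoint, it preserves $E$ via the factorisation through $Y\times_X X^\mathrm{awn}$, the counit $\epsilon_X$ is an integral universal homeomorphism in $P$, and $\epsilon_X^*$ and $(\epsilon_X)_!=(\epsilon_X)_*$ are mutually inverse equivalences. These observations are all part of the argument. You also correctly identify where the actual difficulty lies: turning the pointwise equivalences and the two commuting squares for $g^*$ and $f_!$ into a coherent equivalence of lax symmetric monoidal $\infty$-functors $\mathrm{Corr}(\Sch)^{\tensor}_{E,\mathrm{all}}\to\mathrm{Cat}_\infty$.

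The gap is that the proposed resolution of that difficulty is not worked out and is not clearly available. You suggest lifting the universal property of the colocalisation (\Cref{prop:awn colocalisation}) ``to the correspondence category,'' but $\mathrm{Corr}(\blank)$ does not commute with (co)localisation of the underlying $\infty$-category, and it is not apparent that $\mc P$ exhibits $\mathrm{Corr}(\mathrm{Awn})^\tensor_{E^\mathrm{awn},\mathrm{all}}$ as a localisation of $\mathrm{Corr}(\Sch)^\tensor_{E,\mathrm{all}}$ at any identifiable class of morphisms. Knowing that $\mc D_\Lambda$ inverts the two spans $X\leftarrow X^\mathrm{awn}= X^\mathrm{awn}$ and $X^\mathrm{awn}=X^\mathrm{awn}\to X$ is necessary but does not by itself yield a factorisation through $\mc P$ with all the required higher coherences and monoidal data. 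The paper instead resolves this by unwinding Mann's construction in \cite[Proposition A.5.10]{Mann}: one first factors the lax symmetric monoidal functor $\mc D_{\Lambda,0}:(\Sch^\mathrm{op})^\amalg\to\mathrm{Cat}_\infty$ through $(\mathrm{Awn}^\mathrm{op})^\amalg$ using \Cref{prop:awn colocalisation} on the underlying $\infty$-category (where the universal property really does apply), and then tracks this factorisation through the trisimplicial-set machinery $\delta^*_{3,\set{1,2,3}}(\mc C_\amalg)_{I^-,P^-,\mathrm{all}}\to\delta^*_{3,\set{3}}(\mc C_\amalg)_{I^-,P^-,\mathrm{all}}$, crucially using that $(\blank)^\mathrm{awn}$ preserves the classes $I$ and $P$ and cartesian squares so that the diagonal functor and the passage to adjoints in each direction are compatible with the factorisation. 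That technical passage is precisely the content your plan leaves open.
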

\begin{proof}
    First note that $(\blank)^\mathrm{awn}$ induces a lax symmetric monoidal functor $(\Sch^\mathrm{op})^\amalg \to (\mathrm{Awn}^\mathrm{op})^\amalg$ (as a right adjoint, the absolute weak normalisation commutes with all products). Let $\mc D_{\Lambda,0}$ denote the restriction of $\mc D_\Lambda$ to $\Sch^\mathrm{op}$ and $\mc D_{\Lambda,0}^\mathrm{awn}$ the restriction of $\mc D_\Lambda^\mathrm{awn}$ to $\mathrm{Awn}^\mathrm{op}$. As pullback along universal homeomorphisms $Y \to X$ induces equivalences of categories $\mc D_\Lambda(X) \simeq \mc D_\Lambda(Y)$, $\mc D_{\Lambda,0}$ factors as $\mc D_{\Lambda,0}^\mathrm{awn} \circ (\blank)^\mathrm{awn}$ by \Cref{prop:awn colocalisation}. Now by definition $\mc D_\Lambda$ is obtained from $\mc D_{\Lambda,0}$ via the construction result \Cref{prop:construct 6 functors}. The restricted formalism $\mc D_\Lambda^\mathrm{awn}$ can be obtained from $\mc D_{\Lambda,0}^\mathrm{awn}$ via \Cref{prop:construct 6 functors} as well, using that $(\mathrm{Awn}, E^\mathrm{awn}, I^\mathrm{awn},P^\mathrm{awn})$ is still a construction setup (where $I^\mathrm{awn}$, $P^\mathrm{awn}$ are the classes of morphisms in $\mathrm{Awn}$ which lie in $I$ or $P$ respectively; in other words, they are the image of the classes $I$ and $P$ under $(\blank)^\mathrm{awn}$). We will sketch how this implies the claim by following the proof of the construction result in \cite[Proposition A.5.10]{Mann}: It is shown there that the construction of $\mc D_\Lambda$ resp. $\mc D_\Lambda^\mathrm{awn}$ is equivalent to the construction of a functor\footnote{We use the notation from \cite[Proposition A.5.10]{Mann}.}
    $$\delta^*_{3,\set{3}}(\mc C_\amalg)_{I^-,P^-,all} \to \mathrm{Cat}_\infty$$
    where $\mc C$ is $\Sch$ or $\mathrm{Awn}$, respectively (and $I^-$,$P^-$ are the respective classes). To construct this functor, one starts with the composed functor
    $$\delta^*_{3,\set{1,2,3}}(\mc C_\amalg)_{I^-,P^-,all} \to (\mc C^\mathrm{op})^\amalg \to \mathrm{Cat}_\infty$$
    where the first functor is the diagonal functor and the second one is $\mc D_{\Lambda,0}$ resp. $\mc D_{\Lambda,0}^\mathrm{awn}$. The trisimplical set $\delta^*_{3,\set{1,2,3}}(\mc C_\amalg)_{I^-,P^-,all}$ can be roughly described as follows: Its objects are the objects of $\mc C_\amalg := (\mc C^\mathrm{op})^{\amalg,\mathrm{op}}$ and its morphisms are cubes made out of cartesian squares, such that all the morphisms in vertical resp. horizontal direction lie in $I^-$ resp. $P^-$, with the morphisms in all directions being inverted. The above functor then sends such a cube to the composition of pullback functors along all three dimension. By our observations above we have a factorisation
    \begin{align*}
        \delta^*_{3,\set{1,2,3}}((\Sch)_\amalg)_{I^-,P^-,all} \to (\Sch^\mathrm{op})^\amalg \to (\mathrm{Awn}^\mathrm{op})^\amalg \to \mathrm{Cat}_\infty,
    \end{align*}
    writing $\mc D_{\Lambda,0} = \mc D_{\Lambda,0}^\mathrm{awn} \circ (\blank)^\mathrm{awn}$. Note that $(\blank)^\mathrm{awn}$ preserves morphisms in $I$ and $P$ and so the functor $\delta^*_{3,\set{1,2,3}}((\Sch)_\amalg)_{I^-,P^-,all} \to (\Sch^\mathrm{op})^\amalg \to (\mathrm{Awn}^\mathrm{op})^\amalg$ agrees with the functor
    $$\delta^*_{3,\set{1,2,3}}((\Sch)_\amalg)_{I^-,P^-,all} \to \delta^*_{3,\set{1,2,3}}((\mathrm{Awn})_\amalg)_{I^-,P^-,all} \to \mathrm{Awn}^\mathrm{op}$$
    where the first functor sends an object to its absolute weak normalisation and a cube to the corresponding cube obtained by applying $(\blank)^\mathrm{awn}$ to all edges; the second functor is again the diagonal. Now the desired functor
    $$\delta^*_{3,\set{3}}((\mc C)_\amalg)_{I^-,P^-,all} \to \mathrm{Cat}_\infty$$
    is obtained by inverting the morphisms in the vertical and horizontal direction, which is done by passing to the respective adjoints in $\mathrm{Cat}_\infty$. So we get an induced morphism
    $$\delta^*_{3,\set{3}}((\Sch)_\amalg)_{I^-,P^-,all} \to \delta^*_{3,\set{3}}((\mathrm{Awn})_\amalg)_{I^-,P^-,all} \to \mathrm{Awn}^\mathrm{op} \to \mathrm{Cat}_\infty$$
    which gives, by going backwards through the equivalences of simplicial sets, a factorisation
    $$\mathrm{Corr}(\Sch)_{E, all} \to \mathrm{Corr}(\mathrm{Awn})_{E^\mathrm{awn}, all} \to \mathrm{Cat}_\infty$$
    where the first functor $\mc P$ sends objects to their absolute weak normalisation and a diagram (corresponding to an $n$-simplex) to the diagram where all morphisms are replaced by their absolute weak normalisation, and the second functor is $\mc D^\mathrm{awn}_\Lambda$. 
\end{proof}

To conclude this subsection, let us quickly settle the question what the dualisable objects in $\mc D_\Lambda(X)$ for $X \in \mc C(\mathfrak G)$ look like. This will be important later.

\begin{remark}\label{rem:dualizable-perfect}
    \begin{enumerate}[label=\roman*)]
        \item In $\mc D_\Lambda(X)$, the notions of dualisable objects, perfect objects (in the sense of \cite[\texttt{08G5}]{Stacks}) and objects which are étale locally constant with perfect values agree. The first two are equivalent by \cite[\texttt{0FPP}]{Stacks}. It is also clear that objects which are locally constant with perfect values are perfect in the sense of \cite{Stacks}, i.e. étale locally strictly perfect (in the sense of \cite[\texttt{085L}]{Stacks}). Finally, the fact that dualisable objects are locally constant with perfect values is shown in \cite[Remark 6.3.27]{dualizable-perfect}. 
        \item It follows from $i)$ that the support of a dualisable object $A \in \mc D_\Lambda(X)$ is both open and closed. Indeed, as $A$ is locally constant, it is étale locally supported either everywhere or nowhere and thus, as étale morphisms are open, the corresponding étale cover yields a decomposition $X= U \amalg V$ into two open subsets such that $U$ is the support of $A$.
    \end{enumerate}
\end{remark}

\subsection{Cohomologically smooth morphisms}

Let us now finally recall the notion of "Poincaré duality" in a 6-functor formalism.

\begin{definition}[{\cite[Definition 5.1]{Scholze1}}]
    Let $\mathfrak G = (\mc C, E)$ be a geometric setup and $\mc D$ a 6-functor formalism on $\mathfrak G$. Then a morphism $f:Y \to X$ in $E$ is called ($\mc D$-)\textit{cohomologically smooth} if it satisfies the following conditions:
    \begin{enumerate}[label=\roman*)]
        \item The natural map $f^* \blank \tensor f^! 1_X \to f^! \blank$ adjoint to $f_!(f^* \blank \tensor f^! 1_X) = \blank \tensor f_! f^! 1_X \to \blank$ is an isomorphism.
        \item The object $f^! 1_X \in \mc D(Y)$ (called the \textit{dualising complex}) is invertible.
        \item If $g: X' \to X$ is any morphism in $\mc C$ and $f'$ the pullback of $f$ along $g$, then $f'$ satisfies $i)$ and $ii)$.
        \item In the situation of $ii)$, the natural map $(g')^* f^! 1_X \to (f')^! 1_{X'}$ is an isomorphism where $g'$ is the pullback of $g$ along $f$.
    \end{enumerate}
\end{definition}
\begin{remark}
    \begin{enumerate}[label=\roman*)]\label{rem:coh smooth}
        \item Without condition $ii)$, this is a special case of the notion of $f$-smooth objects which we will discuss below. Actually, we will see there that condition $i)$ and $iii)$ are already implied by $iv)$; and it is even enough to check $iv)$ for $g=f$, cf. \Cref{cor:sm obj sm mor} below.
        \item If $f:Y \to X$ is cohomologically smooth, then there is a left adjoint for $f^*$ (given by $f_!(\blank \tensor f^! 1_X)$) and a right adjoint for $f^!$ (given by $f_*(\blank \tensor (f^! 1_X)^{-1})$). The existence of the former again follows from $iv)$ alone.
    \end{enumerate}
\end{remark}
\begin{example}\label{xmpl:coh smooth example}
    \begin{enumerate}[label=\roman*)]
        \item If $\mc D$ is a 6-functor formalism on a construction setup $\mathfrak G$, then all morphisms $f\in I(\mathfrak G)$ are cohomologically smooth (as $f^* = f^!$).
        \item Any universal homeomorphism $f\in E(\mathfrak G)$ is $\mc D_\Lambda$-cohomologically smooth (where $\Lambda$ is any torsion ring). This follows from the invariance of the étale site under universal homeomorphisms (\cite[\texttt{03SI}]{Stacks}). Indeed, note that $f \in P(\mathfrak G)$ (a universal homeomorphism is integral), so $f_! = f_*$ and since $(f^*,f_*)$ (as underived functors) defines a equivalence of topoi the right adjoint of $f_*$ is $f^!=f^*$. In fact, this shows that universal homeomorphisms are even cohomologically étale, see \Cref{def:coh et prop} below.
        \item  If $\ell \not= p = \mathrm{char} \ k$, then any smooth morphism $f: Y \to X \in E(\mathfrak G)$ is $\mc D_{\mb F_\ell}$-cohomologically smooth, using \Cref{thm:sm coh sm crit} below and e.g. \cite[Remark 6.1.9]{Zav}. If $f$ is additionally of pure dimension $d$ and $Y$ and $X$ are of finite type over $\Spec k$, then $f^! \mb F_\ell$ identifies with the Tate twist $\mu_\ell(d)[2d]$, i.e. $\mu_\ell^{\tensor d}$ for the étale sheaf $\mu_\ell$ of $\ell$-th roots of unity. In particular if $k= \bar k$ and $f: Y \to \Spec k$ is smooth of dimension $d$, then for any sheaf $A$ on $Y$ we have
        \begin{align*}
            \Hom(H_c^i(Y,A),\mb F_\ell) &\cong \Hom(f_! A, \mb F_\ell[-i]) \cong \Hom(A, f^! \mb F_\ell[-i])  \\ 
            & \cong \Hom(\mb F_\ell, \iHom(A, f^* \mb F_\ell \tensor \mu_\ell(d)[2d-i]) \\ 
            &\cong H^{2d-i}(Y, \iHom(A, \mb F_\ell) \tensor \mu_\ell(d))
        \end{align*}
        which is usually written as $H^i_c(Y,A)^\lor \cong H^{2d-i}(Y,A^\lor(d))$; this is Poincaré duality.
    \end{enumerate}
\end{example}

The properties of cohomologically smooth morphisms are most naturally discussed in the context of the more general notion of \textit{smooth objects} for a morphism $f$.

\subsection{Smooth and proper objects}\label{section:sm obj}

Intuitively, smooth and proper objects are the answer to the following question: Let $\mc D$ be a 6-functor formalism on a geometric setup $\mathfrak G$. Given a morphism $f: Y \to X$ in $E(\mathfrak G)$, for which objects $A,B \in \mc D(Y)$ is the functor $f_!(A \tensor \blank)$ "universally" left resp. right adjoint to the functor $f^* \blank \tensor B$? It turns out that these adjunctions can be modelled in another 2-category, called the \textit{Lu-Zheng category} $LZ_{\mc D}$, the \textit{category of cohomological correspondences} or simply the \textit{magical 2-category}. For a precise construction we refer to \cite[Definition 2.2.3]{Zav}. We assume that $E(\mathfrak G)$ contains all morphisms in $\mc C(\mathfrak G)$. Then $LZ_{\mc D}$ can be described as follows:
\begin{itemize}
    \item The objects of $LZ_{\mc D}$ are the objects of $\mc C(\mathfrak G)$.
    \item Given objects $X,Y$ of $LZ_{\mc D}$ the category of morphisms $Y \to X$ is $\mc D(Y \times X)$.
    \item Given morphisms $\mc D(Z \times Y) \ni A: Z \to Y$, $\mc D(Y \times X) \ni B: Y \to X$ the composition $\mc D(Z \times X) \ni B \circ A: Z \to X$ is defined as the convolution
    $$B\star A:= p_{ZX,!}(p_{YX}^* B \tensor p_{ZY}^* A)$$
    where the $p_{??}$ denote the respective projections from $Z \times Y \times X$. The identity morphism $id_X \in \mc D(X \times X)$ is given by $\Delta_{X,!} 1_X$.
\end{itemize}

We quickly recall the notions of adjunctions in 2-categories and adjoint functors of $\infty$-categories.

\begin{definition}\cite[\href{https://kerodon.net/tag/02CG}{\texttt{02CG}}, \href{https://kerodon.net/tag/02EK}{\texttt{02EK}}, \href{https://kerodon.net/tag/02EN}{\texttt{02EN}}]{kerodon}\label{def:adj in 2cat}
    \begin{enumerate}[label=\roman*)]
            \item Let $\mc C$ be a 2-category. An adjunction between two morphisms $f \in \mc C(X,Y)$ and $g \in \mc C(Y,X)$ is a pair $(\eta, \epsilon)$ of a morphism $\eta: id_X \to g f$ in $\mc C(X,X)$ and a morphism $\epsilon: fg \to id_Y$ in $\mc C(Y,Y)$ such that the two compositions
            $$f \overset{f \eta}\to fgf \overset{\epsilon f} \to f \quad \mathrm{and} \quad g \overset{\eta g}\to gfg \overset{g \epsilon}\to g$$
            are the identities on $f \in \mc C(X,Y)$ and $g \in \mc C(Y,X)$ respectively (here we implicitly used the unit and associativity constraints of the 2-category $\mc C$).
        \item Let $\mc C$ and $\mc D$ be two $\infty$-categories. An adjunction of two functors $F: \mc C \to \mc D$ and $G: \mc D \to \mc C$ is an adjunction between $[F]$ and $[G]$ in the homotopy $2$-category $h_2(\mathrm{Cat}_\infty)$.
    \end{enumerate}
\end{definition}

Now assume $\mc C(\mathfrak G)$ has a final object $S$. Then we can consider the 2-functor $h_S = \Hom(S, \blank)$ from $LZ_\mc D$ to $\mathrm{Cat}$.\footnote{cf. \cite{2Cat} for the theory of representable functors in 2-categories.} By definition, this sends an object $X \in LZ_{\mc D}$ to $\mc D(X)$ and a morphism $\mc D(Y \times X) \ni A: Y \to X$ to the functor $FM_A: \mc D(Y) \to \mc D(X)$ mapping $M$ to $A \star M = p_{X,!}(A \tensor p_Y^* M)$, called the \textit{Fourier-Mukai functor with kernel $A$}. In particular, if $X=S$ and $f: Y \to X$, $FM_A$ is exactly the functor $f_!(A \tensor \blank)$; on the other hand, if we consider $A$ as morphism $X \to Y$, it is the functor $A \tensor f^* \blank$. As the 2-functor $h_S$ preserves adjunctions, an adjunction of morphisms $\mc D(X) \ni A: X \to S$ and $\mc D(X) \ni B: S\to X$ thus yields an adjunction of functors $f_!(A \tensor \blank)$ and $B \tensor f^* \blank$. The following definition is now sufficiently motivated:

\begin{definition}\label{def:sm obj}
    Let $\mc D$ be a 6-functor formalism on a geometric setup $\mathfrak G = (\mc C, E)$ and let $f:Y \to X \in E$. Let $\mc C_{E/X}$ be the full subcategory of the slice category $\mc C_{/X}$ consisting of objects with the structure morphism in $E$, and $\mc D_{E/X}$ the restriction of $\mc D$ to $\mc C_{E/X}$.\footnote{See \Cref{rem:6-functor slices}.} Then an object $A \in \mc D(Y)$ is called
    \begin{enumerate}[label=\roman*)]
        \item $f$-smooth (w.r.t. $\mc D$) if $A$ as a morphism $Y \to X$ in $LZ_{\mc D_{E/X}}$ admits a right adjoint $B$.
        \item $f$-proper (w.r.t. $\mc D$) if $A$ as a morphism $X \to Y$ in $LZ_{\mc D_{E/X}}$ admits a right adjoint $B$.
    \end{enumerate}
\end{definition}
\begin{remark}
    \begin{enumerate}[label=\roman*)]
    \item Both properties are stable under base change, as base change defines a 2-functor of the corresponding Lu-Zheng categories, cf. e.g. the proof of \cite[Proposition 7.7]{Mann2}.
    \item Observe that for a morphism $f:Y \to X$ in $E(\mathfrak G)$, the functor $f_!(A \tensor \blank)$ always admits the right adjoint $\iHom(A, f^! \blank)$; in particular, if $A$ is $f$-smooth, there is a natural isomorphism of functors $\iHom(A, f^! \blank) \cong B \tensor f^* \blank$. Applying this to $1_X$, we see that necessarily $B \cong \iHom(A,f^! 1_X)=:\mb D_f(A)$, the \textit{Verdier dual} of $A$. One can now wonder if it suffices conversely to supply a natural isomorphism $\iHom(A, f^! \blank) \cong \mb D_f(A) \tensor f^* \blank$ for $A$ to be $f$-smooth. The problem is that this condition is not necessarily stable under base change; however, being a bit more careful with this idea yields \Cref{cor:coh sm generalization} below.
    \end{enumerate}
\end{remark}

In the following we stay in the situation of \Cref{def:sm obj} and restrict the discussion to $f$-smooth objects as the discussion of $f$-proper objects is dual to this in a certain sense (cf. \cite[Remark 6.3]{Scholze1}).\par
We can think about an adjunction in 2-categories in different ways and the different viewpoints yield useful criteria for $f$-smoothness. The first way is really just the definition of adjunctions in 2-categories (\Cref{def:adj in 2cat}): to define an adjunction between morphisms $A: Y \to X$, $B: X \to Y$ in $LZ_{\mc D_{E/X}}$ is to provide a unit $\Delta_{Y,!} 1_Y = id_Y \to B \circ A = p_1^*B \tensor p_2^* A$ and a counit $f_!(A \tensor B) = A \circ B \to id_X = 1_X$ such that the triangle identities hold.

\begin{definition}
    An $A$-trace-cycle theory on $f$ is a triple $(B, \alpha, \beta)$ of
    \begin{enumerate}[label=\roman*)]
        \item an object $B \in \mc D(Y)$,
        \item an \textit{$A$-trace morphism}
                $$\alpha: f_! (A \tensor B) \to 1_Y,$$
        \item an \textit{$A$-cycle map}
                $$\beta: \Delta_{Y,!} 1_Y \to p_2^* B \tensor p_1^* A,$$
    \end{enumerate}
    such that the corresponding "triangle identity" diagrams commute.\footnote{For an explicit description of these triangles, see \cite[Lecture VI]{Scholze1}.} Here $p_i$ denote the projections $Y \times_X Y \to Y$ and $\Delta_Y$ the diagonal.
\end{definition}

\begin{lemma}\label{lmm:rel trace cycle}
An object $A$ is $f$-smooth if and only if $f$ admits an $A$-trace-cycle theory.
\end{lemma}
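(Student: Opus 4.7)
The plan is to show that this equivalence is a direct unpacking of the definition of an adjunction in the 2-category $LZ_{\mc D_{E/X}}$, once one computes the relevant compositions explicitly in the slice setting. Recall that by \Cref{def:sm obj}, $A$ is $f$-smooth exactly when $A$, viewed as a $1$-morphism $Y \to X$ in $LZ_{\mc D_{E/X}}$, admits a right adjoint $B: X \to Y$. By \Cref{def:adj in 2cat}, such an adjunction amounts to: an object $B \in \mc D(Y)$ (since in the slice setting $B$ lives in $\mc D(X \times_X Y) = \mc D(Y)$), a unit $\eta: \mathrm{id}_Y \to B \circ A$, a counit $\epsilon: A \circ B \to \mathrm{id}_X$, both satisfying the two triangle identities.

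The heart of the argument is then to identify these data with the data of an $A$-trace-cycle theory. Using the composition formula for $LZ_{\mc D}$ applied to the slice $\mc C_{/X}$ (so that products become fiber products over $X$), one computes:
\begin{itemize}
    \item $\mathrm{id}_Y = \Delta_{Y/X,!} 1_Y \in \mc D(Y \times_X Y)$ and $\mathrm{id}_X = 1_X \in \mc D(X)$;
    \item $B \circ A \in \mc D(Y \times_X Y)$ equals $p_2^* B \tensor p_1^* A$, where $p_i: Y \times_X Y \to Y$ are the two projections (here the "middle" projection to $X$ is trivial, so no lower-shriek appears);
    \item $A \circ B \in \mc D(X \times_X X) = \mc D(X)$ equals $f_!(A \tensor B)$ (the middle object over which we fiber is $Y$, giving the projection $f: Y \to X$ and hence a lower-shriek along $f$).
\end{itemize}
Under these identifications, the unit $\eta$ is literally a morphism $\Delta_{Y,!} 1_Y \to p_2^* B \tensor p_1^* A$, i.e.\ a cycle map $\beta$; and the counit $\epsilon$ is literally a morphism $f_!(A \tensor B) \to 1_X$, i.e.\ a trace map $\alpha$ (up to the apparent typographical $1_Y$ vs.\ $1_X$ in the definition).

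The remaining point is the matching of triangle identities. The two triangle identities for $(\eta, \epsilon)$, namely the compositions $A \to A \circ B \circ A \to A$ and $B \to B \circ A \circ B \to B$ being identities in $\mc D(Y \times_X X) = \mc D(Y)$ respectively $\mc D(X \times_X Y) = \mc D(Y)$, unwind (again via the composition formula and base change along the various squares inside $Y \times_X Y \times_X Y$) precisely to the two "triangle identity" diagrams imposed in the definition of an $A$-trace-cycle theory. Running the computation in reverse produces, from any $A$-trace-cycle theory $(B, \alpha, \beta)$, an adjunction between $A$ and $B$ in $LZ_{\mc D_{E/X}}$.

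The only real obstacle is bookkeeping: one must work in the slice $LZ_{\mc D_{E/X}}$ rather than in $LZ_{\mc D}$, keep track of which projections are being pulled back along and which are being pushed forward along, and check that the triangle identities unwound in this way coincide with the diagrams referenced in \cite[Lecture VI]{Scholze1}. This is routine but notationally delicate; no new six-functor input beyond base change and the projection formula (\Cref{prop:BC and PF}) is required.
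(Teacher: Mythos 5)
Your proof is correct and is precisely what the paper means by "this follows by unravelling the definitions": you identify $B \circ A = p_2^* B \otimes p_1^* A$ and $A \circ B = f_!(A \otimes B)$ in the slice Lu--Zheng category, so that the unit and counit of an adjunction are exactly a cycle map and trace map, and the 2-categorical triangle identities coincide with the trace-cycle triangle identities. The only issues you flag (the $1_Y$ vs.\ $1_X$ slip in the trace morphism, and the $p_1/p_2$ labelling of the cycle map) are indeed minor typographical/convention inconsistencies in the paper's definitions, not gaps in the argument.
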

\begin{proof}
    This follows by unravelling the definitions.
\end{proof}
\begin{remark}\label{rem:smooth symmetry}
    Note the symmetry in this definition: $(B, \alpha, \beta)$ is an $A$-trace-cycle theory if and only if $(A, \alpha, \beta)$ is a $B$-trace-cycle theory on $f$. In particular, if $A$ is $f$-smooth the right adjoint of $A$ is $f$-smooth as well, with right adjoint $A$. In particular, if $A$ is $f$-smooth, then $\mb D_f(\mb D_f(A)) = A$.
\end{remark}

\begin{corollary}\label{cor:id sm is dualizable}
    $A \in \mc D(X)$ is $id_X$-smooth if and only if it is dualisable.
\end{corollary}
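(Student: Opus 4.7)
The plan is to invoke the trace-cycle characterisation of \Cref{lmm:rel trace cycle} and observe that in the case $f = id_X$ the data of a trace-cycle theory reduces exactly to the data of a dual pair in the symmetric monoidal category $(\mc D(X),\tensor,1_X)$. First I would specialise the geometric data: for $f = id_X$ the self-product $Y\times_X Y$ collapses to $X$, the diagonal $\Delta_Y$ becomes $id_X$, both projections $p_1,p_2$ coincide with $id_X$, and $f_!$ is the identity functor.

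With these identifications in hand, an $A$-trace-cycle theory on $id_X$ is precisely a triple $(B,\alpha,\beta)$ consisting of an object $B \in \mc D(X)$, a counit $\alpha: A \tensor B \to 1_X$ and a unit $\beta: 1_X \to B \tensor A$, subject to the triangle identities. But this is exactly the data witnessing that $A$ is dualisable in $\mc D(X)$ with dual $B$. Conversely, any dual pair $(A,B)$ in $\mc D(X)$ manifestly produces such an $A$-trace-cycle theory, so \Cref{lmm:rel trace cycle} gives both implications.

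The only bookkeeping one has to be careful with is that composition of endomorphisms of $(X \overset{id}{\to} X)$ in $LZ_{\mc D_{E/X}}$ really does reduce to the tensor product, and that the identity $1$-morphism becomes $1_X$. This is immediate from the explicit formula $B \star A = p_{XX,!}(p^*B \tensor p^*A)$: all three projections from $X\times_X X\times_X X = X$ are identities, and $\Delta_{id,!}1_X = 1_X$. I do not foresee any genuine obstacle; once the identifications are laid out the equivalence is formal, and symmetry of $\tensor$ dissolves any ambiguity about the order of the factors in $B\circ A$ versus $A\circ B$.
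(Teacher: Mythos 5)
Your argument is correct and is the same as the paper's, which likewise reduces the claim via \Cref{lmm:rel trace cycle} to the observation that an $A$-trace-cycle theory on $id_X$ is precisely a duality datum between $A$ and $B$; you simply spell out the collapse of the geometric data ($Y\times_X Y \cong X$, $\Delta = p_1 = p_2 = id_X$, $(id_X)_! = id$) more explicitly than the paper does.
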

\begin{proof}
    This follows from \Cref{lmm:rel trace cycle} as an $A$-trace cycle theory $(B, \alpha, \beta)$ on $id_X$ is precisely the datum of a duality between $A$ and $B$.
\end{proof}

Another way to think about the adjunction (see \cite[\href{https://kerodon.net/tag/02D2}{\texttt{02D2}}]{kerodon}) is to ask for a 2-morphism $\epsilon: f_!(A \tensor B) = A \circ B \to id_X = 1_X$ such that for all $T: Z \to Y$, $S: Z \to X$ in $LZ$, the natural map $\Hom(T, B \circ S) \to \Hom(A \circ T, S)$ is an isomorphism.\footnote{The map is induced by taking the \textit{left adjunct} of a given map $\beta: T \to B \circ S$; this is the composition $A \circ T \overset{id_A \circ \beta}\Rightarrow A \circ (B \circ S) \Rightarrow (A \circ B) \circ S \overset{\epsilon \circ id_S} \Rightarrow S$.} As noted above, for the adjunction to exist, we must necessarily have $B \cong \mb D_f(A)$. Now we always have a map $\epsilon: f_!(A \tensor \mb D_f(A)) \to 1_X$, namely the one adjoint to the identity on $\mb D_f(A) = \iHom(A, f^! 1_X)$. Moreover, applying the definitions, we see that
$$\Hom(A \circ T, S) = \Hom(p_{ZX,!}(p_{YX}^* A \tensor p_{ZY}^* T), S) \cong \Hom(T, p_{ZY,*} \iHom(p_{YX}^*A, p_{ZX}^! S))$$
where we used the adjunctions of the six functors, and
$$\Hom(T,B \circ S) = \Hom(T, p_{ZY,!}(p_{XY}^* B \tensor p_{ZX}^* S))$$
Also note that $p_{ZY} = id$ as $X$ is final. All these observations culminate in the following result:
\begin{flushleft}
\begin{lemma}\label{lmm:sm obj crit}
In the situation of \Cref{def:sm obj}, the following are equivalent:
\begin{enumerate}[label=\roman*)]
    \item $A \in \mc D(Y)$ is $f$-smooth.
    \item For all morphisms $Z \to X$ and $S \in \mc D(Z)$, the natural map
    $$p_{Y}^* \mb D_f(A) \tensor p_{Z}^* S \to \iHom(p_{Y}^*A, p_{Z}^! S)$$
    induced by the identifications above under Yoneda is an isomorphism.\footnote{Here, $p_Y,p_Z$ denote the projections from $Z \times_X Y$. A calculation yields that this is the map adjoint to the natural map $p_{Z,!}(p_Y^* A \tensor p_Y^* \mb D_f(A) \tensor p_Z^* S) = p_{Z,!}p_Y^*( A \tensor \mb D_f(A)) \tensor S = f^* f_!(A \tensor \mb D_f(A)) \tensor S \to f^* f_! f^! 1_X \tensor S \to S$ where we used the projection formula and base change and the counits of the $\tensor$-$\iHom$- and $f_!$-$f^!$-adjunction.}
    \item \cite[Proposition 6.6]{Scholze1} The natural map
    $$p_2^* \mb D_f(A) \tensor p_1^* A \to \iHom(p_2^*A, p_1^! A)$$
    coming from the map in $ii)$ by choosing $Z = Y$ and $S = A$ is an isomorphism. In fact, it suffices to check this after applying $\Delta^!$ and taking global sections.
\end{enumerate}
\end{lemma}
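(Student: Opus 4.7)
My plan is to split the three-way equivalence into (i) $\Leftrightarrow$ (ii) and (ii) $\Leftrightarrow$ (iii), with the latter proceeding via (iii) $\Rightarrow$ (i) $\Rightarrow$ (ii). The equivalence (i) $\Leftrightarrow$ (ii) is a purely formal unwinding of what it means for $A: Y \to X$ to admit a right adjoint in the 2-category $LZ_{\mc D_{E/X}}$. By the standard characterisation of 2-categorical adjunctions, $A \dashv B$ is equivalent to specifying a counit $\epsilon: A \circ B \to \mathrm{id}_X$ such that, for every pair of test 1-morphisms $T: Z \to Y$ and $S: Z \to X$ (i.e.\ $T \in \mc D(Z \times_X Y)$ and $S \in \mc D(Z)$, using that $X$ is terminal in the slice $\mc C_{E/X}$), the induced map $\Hom(T, B \circ S) \to \Hom(A \circ T, S)$ is a bijection. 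Using the composition formula in $LZ$, the $f_!$-$f^!$ adjunction, and the $\tensor$-$\iHom$ adjunction, both Hom spaces rewrite as Hom sets in $\mc D(Z \times_X Y)$ with source $T$ and respective targets $p_Y^* B \tensor p_Z^* S$ and $\iHom(p_Y^* A, p_Z^! S)$. Taking $Z = X$, $T = \mathrm{id}$, $S = 1_X$ forces $B \cong \mb D_f(A)$, and then a Yoneda argument in $T$ shows that the adjunction condition is equivalent to the natural map in (ii) being an isomorphism for every $(Z, S)$.

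The implication (ii) $\Rightarrow$ (iii) is immediate by specialising $Z = Y$ with structure map $f$ and $S = A$. For the converse (iii) $\Rightarrow$ (ii), I would build an $A$-trace-cycle theory $(B, \alpha, \beta)$ on $f$ and invoke \Cref{lmm:rel trace cycle}. Set $B := \mb D_f(A)$, take $\alpha: f_!(A \tensor \mb D_f(A)) \to f_! f^! 1_X \to 1_X$ to be the canonical trace built from the $\tensor$-$\iHom$ and $f_!$-$f^!$ counits, and define the cycle map $\beta: \Delta_{Y,!} 1_Y \to p_2^* \mb D_f(A) \tensor p_1^* A$ as the image of $\mathrm{id}_A \in \Hom(A,A)$ under the chain of isomorphisms $\Hom(A,A) \cong \Hom(p_{1,!} \Delta_{Y,!} A, A) \cong \Hom(\Delta_{Y,!} 1_Y \tensor p_2^* A, p_1^! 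A) \cong \Hom(\Delta_{Y,!} 1_Y, p_2^* \mb D_f(A) \tensor p_1^* A)$, where the first iso uses $p_1 \circ \Delta_Y = \mathrm{id}_Y$, the second combines the projection formula for $\Delta_{Y,!}$ with the $f_!$-$f^!$ adjunction, and the third combines the $\tensor$-$\iHom$ adjunction with the isomorphism from (iii). By \Cref{lmm:rel trace cycle} the triangle identities will then promote $A$ to an $f$-smooth object, giving (i), whence (ii) by the equivalence already established.

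The parenthetical in (iii) that iso-ness of the natural transformation can be verified after applying $\Delta^!$ and taking global sections is a by-product of the same construction: via the $\Delta_{Y,!}$-$\Delta^!$ adjunction and the chain of identifications above, the map in (iii) applied to $\Delta^!$ and $\Gamma(Y, \blank)$ is literally the map $\Hom(\Delta_{Y,!} 1_Y, p_2^* \mb D_f(A) \tensor p_1^* A) \to \Hom(A,A)$ reading off a cycle class as an endomorphism of $A$, so that iso-ness there amounts to the canonical cycle class on the left hitting $\mathrm{id}_A$ on the right, which is exactly the single check needed to extract the trace-cycle theory.

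The main obstacle I expect is the verification of the triangle identities for $(B, \alpha, \beta)$. These are diagrammatic identities threading base change, the projection formula, and the various canonical (co)units through $p_Y^* \mb D_f(A) \tensor p_Z^* S$ and $\iHom(p_Y^* A, p_Z^! S)$; the delicate point is confirming that the candidate $\beta$ extracted from (iii) via the adjunction chain is precisely the one compatible with the trace $\alpha$ coming from the $f_!$-$f^!$ counit, so that the two triangle compositions really collapse to the identity. Once this is in place, the rest of the argument is formal.
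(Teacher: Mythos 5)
Your (i) $\Leftrightarrow$ (ii) argument and the specialisation (ii) $\Rightarrow$ (iii) match the paper's line of reasoning, up to the paper's extra care in checking that the Yoneda identification $B \cong \mathbb D_f(A)$ is compatible with the chosen counit (the paper makes this explicit with a small commutative square; you state it without proof, which is a minor but real gloss).

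The genuine gap is in (iii) $\Rightarrow$ (i). Your plan is to manufacture a candidate trace-cycle theory $(\mathbb D_f(A), \alpha, \beta)$ and then invoke \Cref{lmm:rel trace cycle}, but that lemma only applies once the triangle identities are established, and you never prove them — you explicitly flag this as ``the main obstacle'' and leave it open. That is precisely the hard part of the implication, and without it the argument does not close. The paper avoids this altogether by citing the adjunction criterion of \cite[\href{https://kerodon.net/tag/02CY}{\texttt{02CY}}]{kerodon}: to produce an adjunction $A \dashv \mathbb D_f(A)$ from the counit $\epsilon$, it suffices to check that the induced map $\Hom(T, \mathbb D_f(A) \circ S) \to \Hom(A \circ T, S)$ is a bijection for the two particular pairs $(T,S) = (\mathbb D_f(A), \mathrm{id}_X)$ (automatic from the definition of $\mathbb D_f(A)$ and $\epsilon$) and $(T,S) = (\Delta_! 1_Y, A)$ (which is exactly condition (iii)). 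This converts the triangle-identity verification into two Hom-set checks that follow formally. To repair your proof you would need either to carry out the triangle-identity diagram chase in full, or to replace the trace-cycle packaging with a criterion of the \texttt{02CY} type that reduces adjunction to a bijectivity condition you can actually verify from (iii).
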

\end{flushleft}
\begin{proof}
$i) \Rightarrow ii)$ follows from the discussion above by Yoneda; the only caveat is that one needs to ensure that the identification $B \cong \mb D_f(A)$ is compatible with the choice of counits. To see this, note that by the above discussion for $\mc D(Y) \ni T: X \to Y$ and $S=id=1_X$ the natural isomorphism $\Hom(T,B\circ S) \to \Hom(A\circ T, S)$ corresponds to an isomorphism $\Hom(T,B) \to \Hom(T,\mb D_f(A))$. By Yoneda, this induces an isomorphism $\phi: B \to \mb D_f(A)$ and a quick chase of the identifications reveals that it is given by the morphism adjoint to the counit $\epsilon: f_!(B \tensor A) \to 1_X$. But then from the commutative diagram
\[\begin{tikzcd}
    B \rar{\phi} \dar{\phi} & \mb D_f(A) \dar{id}\\
    \mb D_f(A) \rar{id} & \mb D_f(A)
\end{tikzcd}\]
we see that the map $\epsilon$ factors through the morphism $f_!(\mb D_f(A) \tensor A) \to 1$ adjoint to the identity on $\mb D_f(A)$ via $f_!(\phi \tensor A) = \phi \circ A$ by passing to adjoint morphisms in the horizontal direction.\par
$iii)$ is a special case of $ii)$, so it remains to show $iii) \Rightarrow i)$. By the proof of \cite[\href{https://kerodon.net/tag/02CY}\texttt{02CY}]{kerodon}, for $A$ to be left adjoint to $\mb D_f(A)$, it suffices to check that the natural map $\Hom(T, \mb D_f(A) \circ S) \to \Hom(A \circ T, S)$ is an isomorphism for the pairs $(T,S)=(\mb D_f(A), id_X)$ and $(T,S) = (id_Y = \Delta_! 1_Y, A)$. But for the first pair this is automatic from the definition of $\mb D_f(A)$ and $\epsilon$ and for the second pair this is by construction implied by condition $iii)$.
\end{proof}

\begin{remark}
After applying $\Delta^!$ to the map in $iii)$ in \Cref{lmm:sm obj crit} above, it becomes a morphism $$\Delta^! (p_2^* \mb D_f(A) \tensor p_1^* A) \to \iHom(A, A)$$ by the interior pullback formula (\Cref{PF-adj-isos}). The bifunctor $\Delta^!(p_2^* \blank \tensor p_1^* \blank)$ is known as the \textit{!-tensor product} $\tensor^!$ (and appears e.g. in the work of Gaitsgory-Rozenblyum, \cite{gaitsgory}). Using this notation, the criterion takes a more concise form: An object $A$ is $f$-smooth if and only if the natural map
$$\mb D_f(A) \tensor^! A \to \iHom(A,A)$$
is an isomorphism.
\end{remark}

\begin{corollary}\label{cor:sm distinguished}\label{cor:sm retracts}
    Assume that all $\mc D(X)$ are stable $\infty$-categories. Then:
    \begin{enumerate}[label=\roman*)]
        \item If two objects in a distinguished triangle are $f$-smooth, so is the third.
        \item $f$-smooth objects are stable under retracts.
    \end{enumerate}
\end{corollary}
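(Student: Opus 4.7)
The plan is to invoke criterion $ii)$ of Lemma \ref{lmm:sm obj crit}, which states that $f$-smoothness of $A$ amounts to asking that for every morphism $Z \to X$ in $\mc C$ and every $S \in \mc D(Z)$, the natural transformation
$$\eta_A^{Z,S}: p_Y^* \mb D_f(A) \tensor p_Z^* S \to \iHom(p_Y^* A, p_Z^! S)$$
is an equivalence in $\mc D(Z \times_X Y)$. Fixing such a pair $(Z, S)$, the key observation I would record is that both sides are functors of $A \in \mc D(Y)^{\mathrm{op}}$ that are \emph{exact}, in the sense that they send cofiber sequences in $A$ to fiber sequences: the right-hand side because $\iHom(-, p_Z^! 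S)$ is the opposite of a left adjoint; the left-hand side because $\mb D_f(-) = \iHom(-, f^! 1_X)$ is contravariantly exact for the same reason, while $p_Y^*$ and $(-) \tensor p_Z^* S$ are exact as functors of stable $\infty$-categories. Moreover, $\eta^{Z,S}$ is natural in $A$.

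With these preliminaries, both parts follow by standard manipulations in stable $\infty$-categories. For $i)$, a distinguished triangle $A_1 \to A_2 \to A_3$ in $\mc D(Y)$ induces, by the exactness observations above, distinguished triangles on either side of $\eta^{Z,S}$, and naturality of $\eta^{Z,S}$ in $A$ promotes the three arrows $\eta_{A_1}^{Z,S}, \eta_{A_2}^{Z,S}, \eta_{A_3}^{Z,S}$ to a morphism of distinguished triangles in $\mc D(Z \times_X Y)$; the two-out-of-three property for equivalences between distinguished triangles then closes the argument after quantifying over $(Z,S)$. For $ii)$, if $A$ is a retract of $B$ via $i:A\to B$, $r:B\to A$ with $r\circ i \simeq \mathrm{id}_A$, then functoriality of $\eta^{Z,S}$ in $A$ exhibits $\eta_A^{Z,S}$ as a retract of $\eta_B^{Z,S}$ in the arrow category $\mathrm{Fun}(\Delta^1, \mc D(Z \times_X Y))$, and retracts of equivalences are equivalences in any $\infty$-category; so $f$-smoothness of $B$ forces $\eta_A^{Z,S}$ to be an equivalence for each $(Z,S)$ and hence $A$ to be $f$-smooth.

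The step I would expect to require the most care is promoting the pointwise equivalences to a genuine map of cofiber sequences in the stable $\infty$-category $\mc D(Z \times_X Y)$, rather than merely a commutative square in its triangulated homotopy category. However, this is automatic from the setup: $\eta^{Z,S}$ is already a natural transformation between exact functors $\mc D(Y)^\mathrm{op} \to \mc D(Z \times_X Y)$, so it assembles coherently into a morphism of fiber sequences at the $\infty$-categorical level without any further bookkeeping, and the arguments above apply directly.
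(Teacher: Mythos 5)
Your proof is correct and follows exactly the same route as the paper: both invoke condition $ii)$ of \Cref{lmm:sm obj crit} and observe that the natural transformation $p_Y^* \mb D_f(\blank) \tensor p_Z^* S \to \iHom(p_Y^*\blank, p_Z^! S)$ is a natural map of exact functors of $A$, from which both assertions follow by standard stability arguments. The paper's proof is just a one-line version of what you wrote.
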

\begin{proof}
    The transformation $p_{Y}^* \mb D_f(\blank) \tensor p_{Z}^* S \to \iHom(p_{Y}^*\blank, p_{Z}^! S)$ in condition $ii)$ of \Cref{lmm:sm obj crit} is a natural map of exact functors. This implies both statements.
\end{proof}

The following corollary mimics the definition of cohomologically smooth morphisms:
\begin{corollary}\label{cor:coh sm generalization}
    In the situation of \Cref{def:sm obj}, an object $A$ is $f$-smooth if and only if the following hold
    \begin{enumerate}[label=\roman*)]
        \item The natural map $\mb D_f(A) \tensor f^* \blank \to \iHom(A, f^! \blank)$ adjoint to $$f_!(A \tensor \mb D_f(A) \tensor f^* \blank) \overset{PF} \cong f_!(A \tensor \mb D_f(A)) \tensor \blank \to f_!f^! 1_X \tensor \blank \to \blank$$ is an isomorphism.
        \item If $g: X' \to X$ is any morphism in $\mc C$, $f'$ the pullback of $f$ along $g$, then $f'$ satisfies $i)$ with $A$ replaced by $g'^* A$ where $g': Y' \to Y$ is the pullback of $g$ along $f$.
        \item In the situation of $ii)$, the natural map $(g')^* \mb D_f(A) \to \mb D_{f'}((g')^*A)$ (the map from \Cref{lmm:sm obj crit} for $S=1_{Y'}$) is an isomorphism.
    \end{enumerate}
\end{corollary}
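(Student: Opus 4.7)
The plan is to deduce the equivalence directly from criterion (ii) of \Cref{lmm:sm obj crit}, which characterises $f$-smoothness by asking that, for every $g: Z \to X$ and $S \in \mc D(Z)$, the natural map
$$\Phi_{g,S}\colon p_Y^* \mb D_f(A) \tensor p_Z^* S \to \iHom(p_Y^* A, p_Z^! S)$$
(with $p_Y, p_Z$ the projections from $Z \times_X Y$) is an isomorphism. Both directions will amount to careful bookkeeping: the three conditions (i)--(iii) are well-chosen specialisations that jointly suffice.

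For the ``only if'' direction, I will assume $A$ is $f$-smooth and extract each condition from the criterion. Taking $Z = X$ (so $p_Y = \mathrm{id}_Y$ and $p_Z = f$) collapses $\Phi_{g,S}$ to the map in (i). Condition (ii) then follows because $f$-smoothness is stable under base change (cf. the remark after \Cref{def:sm obj}), so $(g')^*A$ is $f'$-smooth and the preceding argument applies verbatim to the base-changed pair. For (iii), I will apply the criterion to the original $f$ with $Z = X'$ and $S = 1_{X'}$; since $(f')^* 1_{X'} \cong 1_{Y'}$, the specialisation reads $(g')^* \mb D_f(A) \to \iHom((g')^* A, (f')^! 1_{X'}) = \mb D_{f'}((g')^* A)$, which matches the map described in (iii).

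For the ``if'' direction, I will assume (i)--(iii) and verify the full criterion. Given $g: Z \to X$ and $S \in \mc D(Z)$, write $f': Y' \to Z$ for the pullback of $f$, so the projections from $Z \times_X Y = Y'$ are $p_Y = g'$ and $p_Z = f'$. Condition (iii) supplies an isomorphism $(g')^* \mb D_f(A) \cong \mb D_{f'}((g')^* A)$, and condition (ii) (which is (i) for the base-changed pair $(f', (g')^*A)$) supplies $\mb D_{f'}((g')^* A) \tensor (f')^* S \cong \iHom((g')^* A, (f')^! S)$. Tensoring the first with $(f')^* S = p_Z^* S$ and composing with the second yields an isomorphism of the required shape.

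The main obstacle, and the only genuinely non-trivial step, is verifying that this composite actually agrees with the canonical $\Phi_{g,S}$. Both maps ultimately descend from the counit $f_!(A \tensor \mb D_f(A)) \to 1_X$ (the one adjoint to the identity of $\mb D_f(A)$), together with base change and the projection formula; the comparison will therefore reduce to a diagram chase confirming that the natural map in (iii) is constructed compatibly with these canonical data. Once this coherence is pinned down, \Cref{lmm:sm obj crit}(ii) applies and $A$ is $f$-smooth.
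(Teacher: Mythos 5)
Your argument is essentially correct and follows the same overall strategy as the paper, namely reducing both directions to \Cref{lmm:sm obj crit}. The one structural difference is in the "if" direction: you verify the full criterion \Cref{lmm:sm obj crit}(ii) for arbitrary $(g,Z,S)$ by stitching together conditions (ii) and (iii) of the corollary, whereas the paper exploits the stronger formulation \Cref{lmm:sm obj crit}(iii) — which only requires the single instance $g=f$, $Z=Y$, $S=A$ — and so only needs conditions (ii) and (iii) of the corollary for $g=f$. Your route is a bit less economical but entirely sound, since criterion (ii) of the lemma certainly implies (i). The "coherence" point you flag as the only non-trivial step is real, but it is present in the paper's proof too: the paper resolves it in the "only if" direction by exhibiting the commuting triangle
\[\begin{tikzcd}
  {\Hom(f_!(M \tensor A), N)} & {\Hom(M, \iHom(A,f^!N))} \\
  & {\Hom(M, \mb D_f(A) \tensor f^*N)}
  \arrow[from=2-2, to=1-2]
  \arrow["\cong"', from=2-2, to=1-1]
  \arrow["\cong", from=1-1, to=1-2]
\end{tikzcd}\]
together with the footnoted description of the natural transformation in \Cref{lmm:sm obj crit}(ii), and in the "if" direction it likewise appeals to "by construction." So you have identified exactly the right thing to check; pinning it down would complete the argument at the same level of detail as the paper.
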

\begin{proof}
For the "only if" part, we only need to show that $i)$ and $ii)$ hold if $A$ is f-smooth by \Cref{lmm:sm obj crit}. By stability under base change, it even suffices to show $i)$. But this follows because
\[\begin{tikzcd}
  {\Hom(f_!(M \tensor A), N)} & {\Hom(M, \iHom(A,f^!N))} \\
  & {\Hom(M, \mb D_f(A) \tensor f^*N)}
  \arrow[from=2-2, to=1-2]
  \arrow["\cong"', from=2-2, to=1-1]
  \arrow["\cong", from=1-1, to=1-2]
\end{tikzcd}\]
commutes for all $M,N$ by construction.\footnote{More precisely the identity on $\mb D_f(A) \tensor f^*N$ maps to the map $f_!(\mb D_f(A) \tensor f^*N \tensor A) \overset{PF} \cong f_!(\mb D_f(A) \tensor A) \tensor N \to 1 \tensor N$ via $\epsilon \tensor id$ where $\epsilon$ is the counit of the adjunction in LZ, i.e. given by $f_!(\mb D_f(A) \tensor A) \to f_! f^! 1_X \to 1_X$. But this is precisely how we defined the natural map in $i)$.} For the "if" part, we use $ii)$ and $iii)$ with $g=f$ to see that $iii)$ in \Cref{lmm:sm obj crit} holds.
\end{proof}

As smooth and proper objects are actually morphisms in a (2-)category, it is a natural question if and how they can be composed. Indeed, suppose we are given a commutative triangle
\[\begin{tikzcd}
    Z \rar["g"] \drar[swap,"h"] & Y \dar["f"] \\
    & X
\end{tikzcd}\]
Then, given a smooth/proper object for $g$ and one for $f$ resp. $h$, one can wonder if they can be composed to a smooth/proper object for $h$ resp. $f$. By definition, being $g$-smooth/proper is a statement about adjunctions in $LZ_{\mc D_{E/Y}}$ as opposed to $LZ_{\mc D_{E/X}}$ for $h$ and $f$. So one needs an appropriate 2-functor $LZ_{\mc D_{E/Y}} \to LZ_{\mc D_{E/X}}$ which allows to compose the given adjoints in $LZ_{\mc D_{E/X}}$. Such a functor exists and is given by the functor $\mc T$ which is the forgetful functor (along $f$) on objects and sends a morphism $A \in \mc D(S \times_Y T) = \Hom_{LZ_{\mc D_{E/Y}}}(S,T)$ to $\iota_! A \in \mc D(S \times_X T) = \Hom_{LZ_{\mc D_{E/X}}}(S,T)$ where $\iota$ is the obvious map $S \times_Y T \to S \times_X T$ (cf. the proof of \cite[Proposition 7.11]{Mann2}). This allows us to prove the following:

\begin{lemma}\label{lmm:sm obj composition}
Let $\mc D$ be a 6-functor formalism on a geometric setup $\mathfrak G$. Let $g: Z \to Y$ and $f: Y \to X$ be two morphisms in $E(\mathfrak G)$ and $h$ their composition. Let $A \in \mc D(Y)$ and $B, C \in \mc D(Z)$. Then:
\begin{enumerate}[label=\roman*)]
    \item If $B$ is $g$-smooth and $A$ is $f$-smooth, $g^* A \tensor B$ is $h$-smooth.
    \item If $B$ is $g$-proper and $C$ is $h$-smooth, then $g_!(C \tensor B)$ is $f$-smooth.
    \item If $A$ is $f$-proper and $B$ is $g$-proper, then $B \tensor g^* A$ is $h$-proper.
    \item If $C$ is $h$-proper and $B$ is $g$-smooth, then $g_!(B \tensor C)$ is $f$-proper.
\end{enumerate}
More precisely, in the respective cases we have adjunctions
\begin{enumerate}[label=\roman*)]
    \item $g^* A \tensor B \dashv \mb D_g(B) \tensor g^* \mb D_f(A)$
    \item $g_!(C \tensor B) \dashv g_!(\pmb D_g(B) \tensor \mb D_h(C))$
    \item $B \tensor g^* A \dashv g^* \pmb D_f(A) \tensor \pmb D_g(B)$
    \item $g_!(B \tensor C) \dashv g_!(\pmb D_h(C) \tensor \mb D_g(B))$
\end{enumerate}
where $\pmb D_s(T)$ denotes the right adjoint of an $s$-proper object $T$.\footnote{Similar as in the case of smooth objects, one can show that necessarily $\pmb D_s(T) \cong p_1^*\iHom(p_2^*T, \Delta_! 1)$, cf. \cite[Proposition 6.8]{Scholze1}.}
\end{lemma}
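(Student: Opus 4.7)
The plan is to derive all four statements by functoriality from the $2$-functor $\mc T : LZ_{\mc D_{E/Y}} \to LZ_{\mc D_{E/X}}$ sketched just before the lemma (constructed in detail in the proof of \cite[Proposition 7.11]{Mann2}). Recall that on objects $\mc T$ simply forgets the $Y$-structure (composing with $f$), and on a morphism $U \in \mc D(S \times_Y T) = \Hom_{LZ_{\mc D_{E/Y}}}(S,T)$ it returns $\iota_! U \in \mc D(S \times_X T) = \Hom_{LZ_{\mc D_{E/X}}}(S,T)$, where $\iota : S \times_Y T \to S \times_X T$ is the natural map (a base change of the diagonal $\Delta_{Y/X}$, hence a closed immersion since $f$ is separated, so in particular a morphism in $E$). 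Since any $2$-functor between $2$-categories preserves adjunctions (apply it to unit, counit and triangle identities), $\mc T$ turns a $g$-(smooth or proper) adjunction in $LZ_{\mc D_{E/Y}}$ into an adjunction in $LZ_{\mc D_{E/X}}$, which can then be composed with the second adjunction (for $f$ or $h$) already living in $LZ_{\mc D_{E/X}}$.

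I will spell out case $i)$; the other three are analogous. Starting from the adjunctions $B \dashv \mb D_g(B)$ as morphisms $Z \rightleftarrows Y$ in $LZ_{\mc D_{E/Y}}$ and $A \dashv \mb D_f(A)$ as morphisms $Y \rightleftarrows X$ in $LZ_{\mc D_{E/X}}$, applying $\mc T$ to the former and composing yields $A \circ \mc T(B) \dashv \mc T(\mb D_g(B)) \circ \mb D_f(A)$ as morphisms $Z \rightleftarrows X$ in $LZ_{\mc D_{E/X}}$, which by definition exhibits $A \circ \mc T(B)$ as $h$-smooth with dual $\mc T(\mb D_g(B)) \circ \mb D_f(A)$. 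To match the stated formulas one unwinds the convolution law in $LZ$ using the projection formula and base change: writing $\iota : Z = Z \times_Y Y \hookrightarrow Z \times_X Y$ for the graph of $g$ (so that $p_Y \circ \iota = g$ and $p_Z \circ \iota = \mathrm{id}_Z$), a short calculation gives
\[
A \circ \mc T(B) = p_{Z,!}\bigl(p_Y^* A \tensor \iota_! B\bigr) = p_{Z,!}\iota_!\bigl(g^* A \tensor B\bigr) = g^* A \tensor B,
\]
and the symmetric computation (now with the graph $Z = Z \times_Y Y \hookrightarrow Y \times_X Z$) identifies $\mc T(\mb D_g(B)) \circ \mb D_f(A)$ with $\mb D_g(B) \tensor g^* \mb D_f(A)$.

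Cases $ii), iii)$ and $iv)$ proceed by the same recipe: apply $\mc T$ to the given adjunction over $Y$, compose with the second adjunction in $LZ_{\mc D_{E/X}}$, and identify the resulting composites with $g_!(C \tensor B)$, $B \tensor g^* A$ and $g_!(B \tensor C)$ (and their asserted duals) by the same projection formula / base change manipulation, the direction in which each object is viewed in the respective $LZ$ being dictated by whether it is smooth or proper. The only non-formal ingredient is the existence and $2$-functoriality of $\mc T$ itself, which is the real substance of the argument and the reason we defer to \cite{Mann2}; everything else is a bookkeeping unwinding of the definitions.
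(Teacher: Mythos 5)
Your proof is correct and follows essentially the same route as the paper: apply the forgetful $2$-functor $\mc T$ to the $g$-adjunction, use that $2$-functors preserve adjunctions, compose with the remaining adjunction in $LZ_{\mc D_{E/X}}$, and identify the convolutions $A \circ \mc T(B)$ etc.\ via the projection formula. One small inaccuracy: your parenthetical justification for $\iota \in E$ (``a closed immersion since $f$ is separated'') is scheme-specific, whereas the lemma is stated for an arbitrary geometric setup; in the abstract setting $\iota$ lies in $E$ automatically, because the Lu-Zheng construction already requires $E$ to contain every morphism of the slice category, so no separatedness argument is needed.
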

\begin{proof}
We only prove $i)$ and $ii)$ as the arguments for $iii)$ and $iv)$ are dual to this. In $i)$, $B$ is considered as a morphism $Z \to Y$ and so we can consider the composition $A \circ \iota_! B$ where $\iota$ is the natural morphism $Z \to Z \times_X Y$, i.e. the graph of $g$. As noted above $\mc T$ defines a $2$-functor, so $\iota_! B$ will be left adjoint to $\iota_! \mb D_g(B)$\footnote{We are a bit sloppy here and do not distinguish between $Z \times Y$ and $Y \times Z$; but of course we have to be careful in which direction the morphisms given by the objects go.} and thus $A \circ \iota_! B$ will be left adjoint to the composition of right adjoints $\iota_! \mb D_g(B) \circ \mb D_f(A)$. Now we just calculate
$$A \circ \iota_! B \cong p_{Z,!}(p_Y^* A \tensor \iota_! B) \overset{PF}\cong p_{Z,!} \iota_!(\iota^* p_Y^* A \tensor B) \cong g^* A \tensor B$$
as claimed. Unravelling the composition of right adjoints in a similar way, we get $\iota_! \mb D_g(B) \circ \mb D_f(A) \cong \mb D_g(B) \tensor g^* \mb D_f(A)$.\par
In $ii)$ $B$ is considered as a morphism $Y \to Z$ and so we consider the composition
$$C \circ \iota_! B = p_{Y_!}(p_Z^* C \tensor \iota_! B) \overset{PF}\cong g_!(C \tensor B)$$
and the similar calculation for the right adjoints gives $\iota_! \mb D_g(B) \circ \mb D_h(C) \cong g_!(\pmb D_g(B) \tensor \mb D_h(C))$.
\end{proof}

\begin{corollary}\label{cor:dualizable+smooth}
In the situation of \Cref{def:sm obj}, assume that $A$ is $f$-smooth. Then for every dualisable object $L \in \mc D(Y)$, $A \tensor L$ is $f$-smooth. If $C$ is $f$-proper and $B$ is $f$-smooth, then $f_!(B \tensor C)$ is dualisable.
\end{corollary}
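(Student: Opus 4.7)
The plan is to deduce both claims directly from the composition lemma \ref{lmm:sm obj composition} by inserting identity morphisms into the triangle, together with the identification ``dualisable = $\mathrm{id}$-smooth'' provided by Corollary \ref{cor:id sm is dualizable}.

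For the first statement, I would apply Lemma \ref{lmm:sm obj composition}(i) to the degenerate triangle
\[\begin{tikzcd}
    Y \rar["\mathrm{id}_Y"] \drar[swap,"f"] & Y \dar["f"] \\
    & X
\end{tikzcd}\]
i.e.\ take $g = \mathrm{id}_Y$ and $h = f$. By Corollary \ref{cor:id sm is dualizable}, the dualisable object $L$ is $\mathrm{id}_Y$-smooth, and $A$ is $f$-smooth by hypothesis; so the lemma yields that $\mathrm{id}_Y^* A \tensor L = A \tensor L$ is $f$-smooth, as desired.

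For the second statement, I would apply Lemma \ref{lmm:sm obj composition}(ii) to the degenerate triangle
\[\begin{tikzcd}
    Y \rar["f"] \drar[swap,"f"] & X \dar["\mathrm{id}_X"] \\
    & X
\end{tikzcd}\]
i.e.\ with $g = f$, the outer morphism taken to be $\mathrm{id}_X$, and the composite $h = f$. Then $C \in \mc D(Y)$ playing the role of the $g$-proper input is $f$-proper by hypothesis, and $B \in \mc D(Y)$ playing the role of the $h$-smooth input is $f$-smooth by hypothesis. The lemma then asserts that $f_!(B \tensor C)$ is $\mathrm{id}_X$-smooth in $\mc D(X)$, which by Corollary \ref{cor:id sm is dualizable} means exactly that $f_!(B \tensor C)$ is dualisable.

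There is no real obstacle here: once one notices that \emph{dualisable objects are precisely the $\mathrm{id}$-smooth ones}, both statements are just instances of Lemma \ref{lmm:sm obj composition} for triangles where one of the edges is an identity. The only care needed is to match up the correct case of the four-part composition lemma with the correct role assignment (smooth vs.\ proper) for $B$ and $C$.
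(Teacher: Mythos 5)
Your proposal is correct and follows exactly the same route as the paper's proof: both invoke Corollary \ref{cor:id sm is dualizable} together with Lemma \ref{lmm:sm obj composition}, using the degenerate factorisations $f = f \circ \mathrm{id}_Y$ for the first claim and $f = \mathrm{id}_X \circ f$ for the second. The only difference is that you spell out the role-matching in Lemma \ref{lmm:sm obj composition}(ii) explicitly, which the paper leaves to the reader.
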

\begin{proof}
    This is a consequence of \Cref{cor:id sm is dualizable} and \Cref{lmm:sm obj composition} by writing $f = id \circ f$ and $f = f \circ id$.
\end{proof}

An interesting property of smooth objects is that they are compact under (relatively) mild assumptions.

\begin{proposition}\label{prop:sm obj compact}
    In the situation of \Cref{def:sm obj}, assume that $\Delta_{f,!} 1_Y$ is compact. Then any $f$-smooth object $A \in \mc D(Y)$ is compact.
\end{proposition}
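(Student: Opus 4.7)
The plan is to reduce compactness of $A$ to compactness of $\Delta_{f,!} 1_Y$ by exploiting the adjunction $A \dashv \mb D_f(A)$ in the Lu-Zheng 2-category $LZ_{\mc D_{E/X}}$ provided by $f$-smoothness.

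First I would unpack what the adjunction gives on morphism categories. Since $A$ is $f$-smooth, \Cref{lmm:rel trace cycle} yields a right adjoint $\mb D := \mb D_f(A) \in \mc D(Y)$ to $A$, viewed as a morphism $Y \to X$ in $LZ_{\mc D_{E/X}}$. Applying the adjunction isomorphism with source $T = \mathrm{id}_Y = \Delta_{f,!} 1_Y \in \mc D(Y \times_X Y)$ and target $N \in \mc D(Y) = \mathrm{Hom}_{LZ}(Y,X)$, and computing the composition $\mb D \circ N = p_2^* \mb D \tensor p_1^* N$ (with $p_i: Y \times_X Y \to Y$ the two projections) from the formula for convolution in $LZ$, I obtain a natural isomorphism
$$\Hom_{\mc D(Y)}(A, N) \;\cong\; \Hom_{\mc D(Y \times_X Y)}\bigl(\Delta_{f,!} 1_Y, \; p_2^* \mb D \tensor p_1^* N\bigr).$$

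Next, I would verify that the right-hand side of this formula is, as a functor of $N$, a composition of functors that behave well with respect to filtered colimits. The functor $N \mapsto p_2^* \mb D \tensor p_1^* N$ preserves arbitrary colimits: $p_1^*$ is a left adjoint (to $p_{1,*}$), hence colimit-preserving, and tensoring with the fixed object $p_2^* \mb D$ preserves colimits because $\mc D(Y \times_X Y)$ is closed symmetric monoidal. Meanwhile, by the hypothesis of the proposition, $\Hom_{\mc D(Y \times_X Y)}(\Delta_{f,!} 1_Y, \blank)$ preserves filtered colimits. Composing these observations with the displayed isomorphism shows that $\Hom_{\mc D(Y)}(A, \blank)$ preserves filtered colimits, which is precisely compactness of $A$.

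The only genuinely non-trivial step is the first one: correctly identifying $\mathrm{id}_Y$ as $\Delta_{f,!} 1_Y$ (note that the diagonal in $LZ_{\mc D_{E/X}}$ is the relative diagonal over $X$, i.e.\ $\Delta_f$) and computing $\mb D \circ N$ via the convolution formula to get $p_2^* \mb D \tensor p_1^* N$. Once this bookkeeping is in place, the remainder is formal from standard properties of left adjoints and closed monoidal structures, so there is no substantive obstacle beyond keeping track of the fiber products over $X$.
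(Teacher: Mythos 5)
Your proof is correct and arrives at exactly the same key formula as the paper, namely $\Hom(A,N)\cong\Hom(\Delta_{f,!}1_Y,\,p_2^*\mb D_f(A)\tensor p_1^*N)$, followed by the same conclusion from compactness of $\Delta_{f,!}1_Y$ and colimit-preservation of $p_1^*$ and $p_2^*\mb D_f(A)\tensor\blank$. The only difference is cosmetic: you obtain the displayed isomorphism directly from the Lu--Zheng adjunction with $T=\mathrm{id}_Y$, while the paper reaches it by instead invoking the object-level criterion in \Cref{lmm:sm obj crit}, applying $\Delta_f^!$ with interior pullback, and then using the $\Delta_{f,!}\dashv\Delta_f^!$ adjunction -- two equivalent unwindings of the same 2-categorical fact.
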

\begin{proof}
    Assume $A$ is $f$-smooth. Then by \Cref{lmm:sm obj crit} we have for all $S \in \mc D(Y)$ that $p_2^* \mb D_f(A) \tensor p_1^* S \cong \iHom(p_2^* A,p_1^! S)$. Applying $\Delta_f^!$ on both sides yields $\Delta_f^!(p_2^* \mb D_f(A) \tensor p_1^* S) \cong \iHom(A,S)$ by interior pullback (\Cref{PF-adj-isos}). Thus
    $$\Hom(A,S) \cong \Hom(1_Y, \iHom(A,S)) \cong \Hom(1_Y, \Delta_f^!(p_2^* \mb D_f(A) \tensor p_1^* S)) \cong \Hom(\Delta_{f,!} 1_Y, p_2^* \mb D_f(A) \tensor p_1^* S)$$
    and the last term commutes with colimits in $S$ as $\Delta_{f,!} 1_Y$ is compact by assumption.
\end{proof}

\begin{example}\label{rem:sm obj compact assumption}
    We will see in \Cref{cor:et 6 sm obj compact} that if $k$ is an algebraically closed field, the assumption of \Cref{prop:sm obj compact} holds for all morphisms $f \in E(\mathfrak G)$ between schemes of finite type over $k$ and 6-functor formalisms $\mc D_\Lambda$ on $\mathfrak G$.
\end{example}

Let us also note the following descent result for smooth objects.

\begin{proposition}\label{prop:sm obj descent}
    Let $\mc D$ be a 6-functor formalism on a geometric setup $\mathfrak G$. Let $g: X' \to X$ be a map of universal $\mc D^*$-descent.\footnote{That is, $g$ induces \[\mc D(X) \overset{\sim} \longrightarrow \left(\begin{tikzcd}[ampersand replacement=\&,column sep=1.5em]
        {\mc D(X')} \& {\mc D(X'\times_X X')} \& {...}
        \arrow[shift right, from=1-1, to=1-2]
        \arrow[shift left, from=1-1, to=1-2]
        \arrow[from=1-2, to=1-3]
        \arrow[shift left=2, from=1-2, to=1-3]
        \arrow[shift right=2, from=1-2, to=1-3]
    \end{tikzcd}\right)\] where the arrows are given by the respective pullbacks, and the same holds for any base change of $g$} If $f: Y \to X \in E(\mathfrak G)$ and $g': Y' \to X'$ is the pullback of $g$ along $f$, then $A \in \mc D(Y)$ is $f$-smooth if and only if $g'^*A \in \mc D(Y')$ is $f'$-smooth, where $f'$ denotes the pullback of $f$ along $g$.
\end{proposition}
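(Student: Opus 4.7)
The \emph{only if} direction is immediate from stability of $f$-smoothness under base change (noted in the remark after Definition \ref{def:sm obj}). For the \emph{if} direction, the plan is to construct an $A$-trace-cycle theory on $f$ by descending the given trace-cycle data for $g'^* A$ along the \v{C}ech nerve of $g$.

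Write $X'_\bullet$ for the \v{C}ech nerve of $g : X' \to X$ and set $Y'_n := Y \times_X X'_n$, $f'_n := f \times_X X'_n$, with $A_n$ the pullback of $A$ to $Y'_n$. By hypothesis $A_0 = g'^* A$ is $f'_0$-smooth, and stability of smoothness under base change yields $f'_n$-smoothness of each $A_n$. Lemma \ref{lmm:rel trace cycle} then supplies level-wise trace-cycle data $(B_n, \alpha_n, \beta_n)$ with $B_n = \mb D_{f'_n}(A_n) \in \mc D(Y'_n)$, $\alpha_n \in \mc D(X'_n)$, and $\beta_n \in \mc D(Y'_n \times_{X'_n} Y'_n)$. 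The face maps in each of the relevant \v{C}ech nerves are themselves base changes of $g$, hence of universal $\mc D^*$-descent, so $\mc D(Y)$, $\mc D(X)$, and $\mc D(Y \times_X Y)$ can be recovered as the respective limits along these nerves. Verifying that the $(B_n, \alpha_n, \beta_n)$ are compatible with the face maps produces descent data and therefore a triple $(B, \alpha, \beta)$ in $\mc D(Y)$, $\mc D(X)$, $\mc D(Y \times_X Y)$. The triangle identities for this triple can then be checked after pullback to each level, where they hold by construction, so by Lemma \ref{lmm:rel trace cycle} $A$ is $f$-smooth.

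\textbf{Main obstacle.} The principal technical point is verifying the compatibility of the level-wise data with the face maps. For $B_n$, the required isomorphism $d^* B_{n-1} \simeq B_n$ follows from Corollary \ref{cor:coh sm generalization}(iii) applied to the $f'_{n-1}$-smooth object $A_{n-1}$. For the trace $\alpha_n$ --- canonically adjoint to the identity on $B_n$ --- compatibility is formal from naturality of the $f_!$-$f^!$-adjunction. The subtle piece is the cycle $\beta_n$: the identity $d^* \beta_{n-1} = \beta_n$ is enforced by \emph{uniqueness of adjunctions}, since with the counit fixed the unit is determined, and both sides provide the unit of the same adjunction in the base-changed Lu-Zheng $2$-category over $X'_n$. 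Assembling the higher coherences and verifying the triangle identities then proceeds in the same spirit via further applications of Corollary \ref{cor:coh sm generalization}(iii) and conservativity along the nerve.
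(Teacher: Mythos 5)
The paper's own proof is only a one-line citation to \cite[Corollary 7.8]{Mann2}, so a direct comparison of techniques is not possible; I will instead evaluate your argument on its own terms.

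Your \emph{only if} direction is correct and complete. For the \emph{if} direction, the strategy of descending an $A$-trace-cycle theory along the \v Cech nerve of $g$ is a natural and reasonable plan, and your key observations are sound: stability under base change makes all $A_n$ smooth; \Cref{cor:coh sm generalization}(iii) identifies $\tilde d^* B_{n-1}$ with $B_n$ because $A_{n-1}$ is already known to be $f'_{n-1}$-smooth (which neatly sidesteps the fact that in general $g'^*\mb D_f(A)$ and $\mb D_{f'}(g'^*A)$ need not agree); and uniqueness of the unit once the counit is fixed is the right reason why the $\beta_n$ are forced.

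The genuine gap is in the word ``produces'': you obtain level-wise data $(B_n,\alpha_n,\beta_n)$ together with $1$-simplex compatibilities and then assert that this yields descent data. In the $\infty$-categorical setting the limit $\mc D(Y) \simeq \lim_\Delta \mc D(Y'_\bullet)$ (and likewise for $\mc D(X)$, $\mc D(Y\times_X Y)$) requires a fully coherent cosimplicial family, not just compatibility across face maps; and for the morphisms $\alpha_n$, $\beta_n$ the mapping spaces in the limit are computed as totalisations of mapping spaces, so a point of the limit again requires coherent higher data. For $\beta_n$ in particular, ``uniqueness of the unit given the counit'' is an assertion in the homotopy $2$-category; to assemble it into a point of the totalisation one needs the stronger contractibility statement (the space of units filling out a given counit, if nonempty, is contractible), not just that two candidates agree in $\pi_0$. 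You flag this in the last sentence but do not resolve it, and it is precisely where the real work lies; the rest of the argument is essentially formal. A complete proof along these lines must either supply the coherence by a functorial construction (e.g.\ exhibiting $B_\bullet$ as the image of $A_\bullet$ under a functor defined uniformly over $\Delta$), or replace the trace-cycle descent with a statement that the whole Lu-Zheng $2$-category diagram over the \v Cech nerve satisfies descent and that adjunctions are detected in such limits.
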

\begin{proof}
    This follows from the argument given in \cite[Corollary 7.8]{Mann2}. %
\end{proof}

Finally, we have the following very useful result on stability of smooth objects under retracts which generalises \Cref{cor:sm retracts} $ii)$:

\begin{proposition}\label{prop:sm retracts}
    Let $\mc D$ be a 6-functor formalism on a geometric setup $\mathfrak G$.
    Consider a diagram
    \[\begin{tikzcd}
        Z \rar["i"] \drar[swap,"h"] & Y \dar["f"] \\
        & X
    \end{tikzcd}\]
    with $f, h \in E(\mathfrak G)$ such that there is some $r: Y \to Z$ over $X$ with $r\circ i = id$. Let $A \in \mc D(Y)$ be $f$-smooth and $C \in \mc D(Z)$. If there are maps $i^* A \to C$ and $r^* C \to A$ such that $C = i^* r^* C \to i^* A \to C$ is the identity, then $C$ is $h$-smooth (and its right adjoint is a retract of $r_! \mb D_f(A)$).
\end{proposition}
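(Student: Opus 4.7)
The plan is to work in the Lu--Zheng 2-category $LZ := LZ_{\mc D_{E/X}}$ and build the adjunction witnessing $h$-smoothness of $C$ as a retract of the given adjunction $A \dashv \mb D_f(A)$. Writing $A^R := \mb D_f(A)$, the morphisms $i:Z\to Y$ and $r:Y\to Z$ of $\mc C_{/X}$ lift canonically to $1$-morphisms in $LZ$ with $r\circ i = id_Z$; unwinding the convolution formula one identifies $A\circ i \simeq i^*A$ and $C\circ r\simeq r^*C$ in the relevant Hom-objects, and $r\circ A^R\simeq r_!A^R$ via projection formula. Under these identifications the whiskering $\beta\circ i$ agrees with $i^*\beta$, so the hypothesis becomes $\alpha\circ(\beta\circ i) = id_C$ as $2$-morphisms in $LZ(Z,X) = \mc D(Z)$.

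Set $B_0 := r_!A^R$ and define candidate counit and unit for a tentative adjunction $C\dashv B_0$ by
\[
\hat\epsilon := \epsilon\circ(\beta\cdot A^R)\colon CB_0 \Rightarrow id_X, \qquad \hat\eta := (r\cdot A^R\cdot\alpha)\circ(r\cdot \eta\cdot i)\colon id_Z = r\circ i \Rightarrow B_0 C,
\]
where $(\eta,\epsilon)$ is the adjunction $A\dashv A^R$ (the cycle and trace maps witnessing $f$-smoothness of $A$). An interchange-law computation rewrites $(\hat\epsilon\cdot C)\circ(C\cdot\hat\eta)$ as $(\epsilon\cdot C)\circ(A\cdot\tilde\alpha)\circ(\beta\cdot i)$, where $\tilde\alpha := (A^R\cdot\alpha)\circ(\eta\cdot i)\colon i\Rightarrow A^R C$ is the $\dashv$-mate of $\alpha$. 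The defining property of mates, $\alpha = (\epsilon\cdot C)\circ(A\cdot\tilde\alpha)$, then collapses this composition to $\alpha\circ(\beta\circ i) = id_C$, so one of the two triangle identities for $C\dashv B_0$ holds.

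A standard $2$-categorical calculation—using naturality of $\hat\eta$ applied to $B_0\hat\epsilon$ and naturality of $\hat\epsilon$ applied to $\hat\eta B_0$, and then invoking the whiskered first triangle identity---shows that $e := (B_0\cdot\hat\epsilon)\circ(\hat\eta\cdot B_0)\colon B_0\Rightarrow B_0$ is an idempotent $2$-endomorphism of $B_0$ in $\mc D(Z)$. Since $\mc D(Z)$ is a stable and hence idempotent-complete $\infty$-category, $e$ splits as $B_0\twoheadrightarrow B_C\hookrightarrow B_0$, and the factored $2$-morphisms supply a genuine adjunction $C\dashv B_C$ in $LZ$ with $B_C$ a retract of $r_!\mb D_f(A)$ by construction. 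To upgrade this adjunction in the fibre $LZ_{\mc D_{E/X}}$ to full $h$-smoothness, I would observe that each ingredient---the projection formula, the base change isomorphism $P_2^*r_!\simeq (id_Z\times r)_!P_Y^*$ used to identify $B_0\circ C$, the mate construction, the hypothesised splitting, and the idempotent split---is natural under arbitrary pullback $X'\to X$ (pullback functors between stable $\infty$-categories are exact and so preserve retracts), so the entire construction goes through universally on base changes; by \Cref{cor:coh sm generalization} this yields $h$-smoothness.

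The principal technical obstacle is the interchange-law diagram chase reducing the first triangle identity to $\alpha\circ i^*\beta = id_C$; the rest of the argument is formal, with the only residual subtlety being the verification that the splitting of $e$ really behaves well under base change, which is ensured by exactness of pullback functors in the stable $\infty$-categorical setting.
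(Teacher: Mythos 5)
The paper gives no argument of its own — it simply cites Scholze's Prop.~6.17 — so there is no written proof to compare against, but your route is exactly the expected one: lift $i$ and $r$ to $1$-morphisms $[i],[r]$ in $LZ_{\mc D_{E/X}}$ with $[r]\circ[i]=\mathrm{id}_Z$, identify $A\circ[i]\simeq i^*A$, $C\circ[r]\simeq r^*C$, $[r]\circ\mb D_f(A)\simeq r_!\mb D_f(A)$ by base change and projection formula, manufacture a unit/counit pair $(\hat\eta,\hat\epsilon)$ for a prospective adjunction $C\dashv r_!\mb D_f(A)$ from the retract data $\alpha,\beta$ and the adjunction $A\dashv\mb D_f(A)$, verify the $C$-side triangle identity (which collapses, via the mate identity, to the hypothesis $\alpha\circ i^*\beta=\mathrm{id}_C$), observe that the other composite $e=(B_0\hat\epsilon)\circ(\hat\eta B_0)$ is an idempotent (a routine interchange-law computation using the first triangle identity), and split $e$ to obtain a genuine right adjoint which is by construction a retract of $r_!\mb D_f(A)$. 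I checked the interchange and mate computations and they are correct.

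Two small points of caution. First, the final paragraph about ``upgrading the adjunction in $LZ_{\mc D_{E/X}}$ to full $h$-smoothness'' is superfluous: by \Cref{def:sm obj}, $C$ being $h$-smooth \emph{means} that $C$, viewed as a $1$-morphism $Z\to X$ in $LZ_{\mc D_{E/X}}$, admits a right adjoint, so once you have $C\dashv B_C$ in that $2$-category there is nothing left to check; the base-change behaviour is already encoded in the $LZ$-formalism, and the invocation of \Cref{cor:coh sm generalization} is unnecessary. Second, ``stable and hence idempotent-complete'' is not literally correct: stability alone does not force idempotent-completeness of an $\infty$-category, and it is presentability (or at least countable coproducts, as in HA~1.2.4.6) that guarantees every homotopy idempotent splits. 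This is unproblematic for the étale formalisms the paper actually uses, where each $\mc D_\Lambda(X)$ is presentable, but \Cref{def:6 functor formalism} by itself does not impose this, so one should be aware that the idempotent-splitting step implicitly uses that the hom-categories of $LZ_{\mc D_{E/X}}$ are idempotent-complete.
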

\begin{proof}
See \cite[Proposition 6.17]{Scholze1}.
\end{proof}

\subsection{Properties of cohomologically smooth morphisms}

We can now return to the special case of cohomologically smooth morphisms. We fix a 6-functor formalism $\mc D$ on a geometric setup $\mathfrak G$.

\begin{definition}[{\cite[Definition 3.2.4]{Zav}}]
    Let $(f: Y \to X) \in E(\mathfrak G)$. A trace-cycle theory on $f$ is a $1_Y$-trace-cycle theory $(\omega_f, tr_f, cl_f)$ on $f$ such that $\omega_f$ is invertible.
\end{definition}

\begin{corollary}\label{cor:sm obj sm mor}
Let $(f: Y \to X) \in E(\mathfrak G)$. Then the following are equivalent:
\begin{enumerate}[label=\roman*)]
    \item $f$ is cohomologically smooth.
    \item $1_Y$ is $f$-smooth and $\mb D_f(1) = \iHom(1_Y, f^! 1_X) = f^! 1_X$ is invertible.
    \item $f^! 1_X$ is invertible and the natural map $p_2^* f^! 1_X \to p_1^! 1_Y$ is an isomorphism, where $p_1,p_2$ denote the projections $Y \times_X Y \to Y$.
    \item $f$ admits a trace-cycle theory.
\end{enumerate}
\end{corollary}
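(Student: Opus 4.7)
The plan is to obtain all four equivalences by specialising the general smoothness theory developed earlier in Section~\ref{section:sm obj} to the object $A = 1_Y$, observing that in this case $\mb D_f(1_Y) = \iHom(1_Y, f^!1_X) = f^!1_X$ and the various natural maps that appear in the smooth object criteria collapse to precisely the maps built into the definition of cohomological smoothness.

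I would first establish (ii)~$\Leftrightarrow$~(iv). This is essentially \Cref{lmm:rel trace cycle}: by that lemma, $1_Y$ is $f$-smooth iff $f$ admits a $1_Y$-trace-cycle theory. If $(B, \alpha, \beta)$ is such a theory, then by the general observation (before \Cref{def:sm obj}) the right adjoint is forced to be $B \cong \mb D_f(1_Y) = f^!1_X$. So (ii) just adds to (iv) the requirement that $\omega_f := f^!1_X$ be invertible, which is exactly the extra condition in the definition of trace-cycle theory.

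Next I would show (ii)~$\Leftrightarrow$~(iii). This is a direct application of criterion (iii) of \Cref{lmm:sm obj crit}: $A$ is $f$-smooth iff the natural map $p_2^*\mb D_f(A) \tensor p_1^* A \to \iHom(p_2^* A, p_1^! A)$ is an isomorphism. Plugging $A = 1_Y$, the tensor factor $p_1^* 1_Y$ is the monoidal unit on $Y \times_X Y$, and $\iHom(p_2^* 1_Y, p_1^! 1_Y) = p_1^!1_Y$ by the unit property of $\iHom$. The map becomes precisely $p_2^* f^!1_X \to p_1^!1_Y$, so this condition together with invertibility of $f^!1_X$ is exactly (iii).

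Finally, for (i)~$\Leftrightarrow$~(ii), I would invoke \Cref{cor:coh sm generalization} with $A = 1_Y$. Its three conditions then read: (i$'$) the natural map $f^!1_X \tensor f^* \blank \to f^!\blank$ is an isomorphism; (ii$'$) the analogous map holds after arbitrary base change; (iii$'$) the base-change map $(g')^* f^!1_X \to (f')^!1_{X'}$ is an isomorphism. These are literally the conditions (i), (iii), (iv) in the definition of $\mc D$-cohomologically smooth, while invertibility of $f^!1_X$ corresponds to condition (ii) of that definition. Thus (i) holds iff $1_Y$ is $f$-smooth and $f^!1_X$ is invertible, which is (ii).

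There is no real obstacle here; the statement is mostly bookkeeping. The only point one has to be careful about is that the natural maps appearing in \Cref{lmm:sm obj crit}, \Cref{cor:coh sm generalization}, and in the definition of cohomological smoothness really coincide when $A=1_Y$ (as opposed to being related by a non-canonical isomorphism). This is traced through by remembering that in each case the relevant map is adjoint to a morphism built from $f_!f^!1_X \to 1_X$ via the projection formula, so the unit $1_Y$ contributes trivially and the two maps are literally the same.
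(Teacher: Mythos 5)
Your proposal is correct and takes exactly the same route as the paper: the paper's proof is the one-line citation ``This is just \Cref{lmm:rel trace cycle}, \Cref{lmm:sm obj crit} and \Cref{cor:coh sm generalization} for $A=1_Y$ and with the additional assumption that $\mb D_f(A) = f^! 1_X$ is invertible,'' and your write-up simply unpacks that one-liner, attributing each of the three equivalences to the corresponding general result with $A = 1_Y$. The only slightly garbled spot is the sentence comparing (ii) and (iv) --- both already contain the invertibility requirement (one via the definition of trace-cycle theory, the other explicitly) --- but the intended argument (\Cref{lmm:rel trace cycle} matches the ``$1_Y$ is $f$-smooth'' part with the existence of a $1_Y$-trace-cycle theory, and invertibility of $f^!1_X$ matches invertibility of $\omega_f$) is clearly what you mean and is correct.
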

\begin{proof}
    This is just \Cref{lmm:rel trace cycle}, \Cref{lmm:sm obj crit} and \Cref{cor:coh sm generalization} for $A=1_Y$ and with the additional assumption that $\mb D_f(A) = f^! 1_X$ is invertible.
\end{proof}

\begin{corollary}\label{cor:coh sm composition}
    Cohomologically smooth morphisms are stable under composition.
\end{corollary}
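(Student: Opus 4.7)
The plan is to reduce everything to the already-established smooth-object composition Lemma \ref{lmm:sm obj composition} via the characterisation of cohomological smoothness in Corollary \ref{cor:sm obj sm mor}. Concretely, let $g: Z \to Y$ and $f: Y \to X$ be two cohomologically smooth morphisms in $E(\mathfrak G)$, and set $h := f \circ g$. Note $h \in E(\mathfrak G)$ since $E$ is stable under composition. By Corollary \ref{cor:sm obj sm mor}, cohomological smoothness of $g$ (resp.\ $f$) is equivalent to saying that $1_Z$ is $g$-smooth with invertible dual $\mb D_g(1_Z) = g^! 1_Y$, and $1_Y$ is $f$-smooth with invertible dual $\mb D_f(1_Y) = f^! 1_X$. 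So to show $h$ is cohomologically smooth, it suffices to verify the two conditions of Corollary \ref{cor:sm obj sm mor}(ii) for $h$, namely that $1_Z$ is $h$-smooth and that $h^! 1_X$ is invertible.

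For $h$-smoothness of $1_Z$, I would apply Lemma \ref{lmm:sm obj composition}(i) with $A = 1_Y$ and $B = 1_Z$. The lemma yields that $g^* 1_Y \tensor 1_Z \cong 1_Z$ is $h$-smooth, and provides an explicit right adjoint
\[
\mb D_h(1_Z) \;\cong\; \mb D_g(1_Z) \tensor g^* \mb D_f(1_Y) \;\cong\; g^! 1_Y \tensor g^* f^! 1_X.
\]
Since $g^! 1_Y$ is invertible by hypothesis, and $g^* f^! 1_X$ is the pullback of an invertible object (hence invertible, as $g^*$ is symmetric monoidal), the tensor product of the two is invertible. Thus $h^! 1_X = \mb D_h(1_Z)$ is invertible.

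Combining these two observations, Corollary \ref{cor:sm obj sm mor}(ii) is satisfied for $h$, so $h$ is cohomologically smooth.

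There is no real obstacle here: the statement is a formal consequence of the smooth-object formalism already set up, and in fact the only thing one needs beyond Lemma \ref{lmm:sm obj composition} is to observe that the pullback $g^*$ preserves invertible objects (because $g^*$ is a symmetric monoidal functor, so it sends $\tensor$-invertible objects to $\tensor$-invertible objects). The conceptual work has been front-loaded into the smooth-object framework, which is why this corollary follows immediately.
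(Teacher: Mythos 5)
Your proposal is correct and follows precisely the paper's own proof: both cite \Cref{cor:sm obj sm mor} to reduce to $1_Z$ being $h$-smooth with invertible Verdier dual, apply \Cref{lmm:sm obj composition}(i) with $A = 1_Y$, $B = 1_Z$, and then observe that pullback preserves invertible objects. You have merely written out the details that the paper leaves implicit.
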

\begin{proof}
    This follows immediately from \Cref{cor:sm obj sm mor} and \Cref{lmm:sm obj composition} $i)$ as pullback preserves invertible objects.
\end{proof}

\begin{corollary}\label{cor:Verdier trick}
    Assume $(f: Y \to X) \in E(\mathfrak G)$ is cohomologically smooth. Then $f^! 1_X \cong \Delta^* p_1^! 1_Y$.
\end{corollary}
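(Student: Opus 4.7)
The plan is to deduce this directly from the smooth-object criterion recorded in \Cref{cor:sm obj sm mor}(iii). That criterion says that, for a cohomologically smooth $f$, the natural base-change map
\[
p_2^* f^! 1_X \longrightarrow p_1^! 1_Y
\]
is an isomorphism, where $p_1,p_2: Y \times_X Y \to Y$ are the two projections. This map is exactly the comparison map coming from the cartesian square
\[\begin{tikzcd}
Y \times_X Y \rar["p_2"] \dar["p_1"] & Y \dar["f"] \\
Y \rar["f"] & X
\end{tikzcd}\]
which exists because $1_Y$ is $f$-smooth (this is part (ii) of \Cref{cor:sm obj sm mor}).

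Next, I apply the diagonal pullback $\Delta^*$ to both sides. Since $p_2 \circ \Delta = \mathrm{id}_Y$, the left-hand side becomes
\[
\Delta^* p_2^* f^! 1_X \;\cong\; (p_2 \circ \Delta)^* f^! 1_X \;=\; f^! 1_X,
\]
while the right-hand side is $\Delta^* p_1^! 1_Y$ by definition. Combining, $f^! 1_X \cong \Delta^* p_1^! 1_Y$.

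There is no real obstacle here: the content is entirely encoded in the equivalent characterisation \Cref{cor:sm obj sm mor}(iii) together with the triviality $p_2 \circ \Delta = \mathrm{id}_Y$. The only thing to be careful about is that $\Delta^*$ is well-defined as a pullback on the six-functor formalism (which it is, since pullback is defined for all morphisms in $\mc C(\mathfrak G)$), and that the chosen isomorphism is the natural base-change one, so that applying $\Delta^*$ yields the stated identification.
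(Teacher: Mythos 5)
Your proof is correct and follows exactly the same route as the paper: the paper's proof is the single line "Apply $\Delta^*$ to the isomorphism in $iii)$ of \Cref{cor:sm obj sm mor}," which is precisely what you do, using $p_2 \circ \Delta = \mathrm{id}_Y$ to simplify the left-hand side.
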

\begin{proof}
Apply $\Delta^*$ to the isomorphism in $iii)$ of \Cref{cor:sm obj sm mor}.
\end{proof}
\begin{remark}
    This fact is called "Verdier's diagonal trick" and reduces the computation of the dualising object to cases where the morphism has a section. This is used in \cite{Zav} to deduce a formula for the dualising complex in sufficiently nice settings.
\end{remark}

\begin{lemma}\label{lmm:upper shriek for invertibles}
    Let $(f: Y \to X) \in E(\mathfrak G)$ and $L \in \mc D(X)$ dualisable. Then for all $A \in \mc D(X)$, $f^! (L \tensor A) \cong f^* L \tensor f^! A$.
\end{lemma}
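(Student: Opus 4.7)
The statement asserts that $f^!$ satisfies a projection-formula-type identity against dualisable objects pulled back from the base. My plan is to first construct the natural comparison map $f^*L \tensor f^! A \to f^!(L \tensor A)$ and then verify it is an isomorphism using the dualisability of $L$ together with the ordinary projection formula for $f_!$.

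For the construction of the map, I would proceed by adjunction: starting from the counit $f_! f^! A \to A$, tensoring with $L$, and using the projection formula (\Cref{prop:BC and PF}) to rewrite $f_! f^! A \tensor L \cong f_!(f^! A \tensor f^* L)$. This yields a morphism $f_!(f^! A \tensor f^* L) \to L \tensor A$ whose right adjunct is the desired natural map $f^* L \tensor f^! A \to f^!(L \tensor A)$.

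To show this is an isomorphism, I plan to use Yoneda: test against an arbitrary $M \in \mc D(Y)$. On the target side, $\Hom(M, f^!(L \tensor A)) \cong \Hom(f_! M, L \tensor A)$ by the $f_!\dashv f^!$ adjunction. Since $L$ is dualisable with dual $L^\vee$, tensor-hom with $L$ is an equivalence, so this equals $\Hom(f_! M \tensor L^\vee, A)$. Applying the projection formula again (backwards) identifies $f_! M \tensor L^\vee \cong f_!(M \tensor f^* L^\vee)$, and then the $f_! \dashv f^!$ adjunction followed by the fact that $f^*$ is symmetric monoidal (so $f^* L^\vee$ is a dual of $f^* L$) gives $\Hom(M, f^* L \tensor f^! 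A)$. Unwinding, these identifications are exactly the ones used to build the natural map, so the Yoneda argument produces an isomorphism agreeing with the candidate.

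The only subtle point is the bookkeeping to ensure that the chain of adjunction-isomorphisms I use in the Yoneda computation really corresponds to the natural map constructed in the first step. This is routine: every identification is induced either by the counit $f_! f^! \to \mathrm{id}$, the unit/counit of the duality $L \tensor L^\vee \to 1 \to L^\vee \tensor L$, or the projection formula. Since each of these is a canonical natural transformation, compatibility is forced by naturality. I expect no genuine obstacle beyond this bookkeeping; no geometric input is needed, only the formal structure of the 6-functor formalism and the hypothesis that $L$ is dualisable.
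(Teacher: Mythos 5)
Your proposal is correct and takes essentially the same route as the paper: the paper's proof is exactly the Yoneda chain $\Hom(\blank, f^!(L\tensor A)) \cong \Hom(f_!(\blank)\tensor K, A) \cong \Hom(f_!(\blank\tensor f^*K), A) \cong \Hom(\blank, f^*L\tensor f^!A)$ with $K$ the dual of $L$, using the $f_!\dashv f^!$ adjunction, the projection formula, and that pullback preserves duality. Your extra step of first exhibiting the comparison map and then checking it agrees with the Yoneda isomorphism is careful bookkeeping that the paper leaves implicit, but the argument is the same.
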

\begin{proof}
    Let $K$ be the dual of $L$. As pullback preserves duality, this follows by Yoneda since
    \begin{align*}
        \Hom(\blank,f^!(L \tensor A)) & \cong \Hom(f_!(\blank) \tensor K,A) \overset{\text{PF}}{\cong} \Hom(f_!(\blank \tensor f^* K),A) \cong \Hom(\blank, f^* L \tensor f^! A)
    \end{align*}
\end{proof}

If $\mathfrak G$ is a good geometric setup, then for a cohomologically smooth morphism $f$ the inverse of the dualising complex can be described in terms of the diagonal $\Delta_f$.

\begin{corollary}\label{cor:dual to diagonal}
    Assume $\mathfrak G$ is a good geometric setup and $(f: Y \to X) \in E(\mathfrak G)$ is cohomologically smooth. Then $(f^! 1_X)^{-1} \cong \Delta^! 1_{Y \times_X Y}$. In particular, $f^* = f^! \blank \tensor \Delta^! 1_{Y \times_X Y}$.
\end{corollary}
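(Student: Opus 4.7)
The plan is to bootstrap the formula from the isomorphism $p_2^* f^! 1_X \cong p_1^! 1_Y$ supplied by \Cref{cor:sm obj sm mor} $iii)$, by applying $\Delta_f^!$ to both sides and using that $f^! 1_X$ is invertible. The assumption that $\mathfrak G$ is good ensures that $\Delta_f \in E(\mathfrak G)$, so that $\Delta_f^!$ is defined in the first place: indeed, since $f$ and hence the projection $p_1$ lies in $E$, and $p_1 \circ \Delta_f = \mathrm{id}_Y \in E$, the third axiom of a good geometric setup forces $\Delta_f \in E$.

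The key computation is as follows. On the right-hand side of $p_2^* f^! 1_X \cong p_1^! 1_Y$, functoriality gives
\[
\Delta_f^! p_1^! 1_Y \cong (p_1 \circ \Delta_f)^! 1_Y = \mathrm{id}_Y^! 1_Y = 1_Y.
\]
On the left-hand side, $f^! 1_X$ is invertible by cohomological smoothness, so $p_2^* f^! 1_X$ is a dualisable (in fact invertible) object of $\mc D(Y \times_X Y)$. By \Cref{lmm:upper shriek for invertibles},
\[
\Delta_f^!(p_2^* f^! 1_X) \cong \Delta_f^* p_2^* f^! 1_X \tensor \Delta_f^! 1_{Y \times_X Y} \cong f^! 1_X \tensor \Delta_f^! 1_{Y \times_X Y},
\]
using $p_2 \circ \Delta_f = \mathrm{id}_Y$. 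Combining the two computations yields $f^! 1_X \tensor \Delta_f^! 1_{Y\times_X Y} \cong 1_Y$, which identifies $\Delta_f^! 1_{Y \times_X Y}$ as the tensor inverse of $f^! 1_X$.

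For the second assertion, cohomological smoothness gives the natural isomorphism $f^! A \cong f^* A \tensor f^! 1_X$ for every $A \in \mc D(X)$; tensoring with $(f^! 1_X)^{-1}$ and substituting what we just proved gives
\[
f^* A \cong f^! A \tensor (f^! 1_X)^{-1} \cong f^! A \tensor \Delta_f^! 1_{Y \times_X Y}.
\]
There is no real obstacle here beyond checking that $\Delta_f \in E$, so that the invocation of \Cref{lmm:upper shriek for invertibles} is legitimate; the rest is a direct application of results already in hand.
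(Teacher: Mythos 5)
Your proof is correct and takes essentially the same route as the paper: the paper first packages the application of $\Delta^*$ to the isomorphism $p_2^* f^! 1_X \cong p_1^! 1_Y$ as Verdier's diagonal trick (\Cref{cor:Verdier trick}) and then contracts with $\Delta^! 1_{Y\times_X Y}$ via \Cref{lmm:upper shriek for invertibles}, whereas you apply $\Delta_f^!$ directly to both sides of the same isomorphism and use \Cref{lmm:upper shriek for invertibles} to unpack the left-hand side — the two calculations are identical up to order of operations, and both correctly justify $\Delta_f \in E$ from goodness of $\mathfrak G$.
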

\begin{proof}
    First note that if $\mathfrak G$ is good, $\Delta^!$ indeed exists because $\Delta$ as a section of a morphism in $E$ is contained in $E$. Then note that by \Cref{cor:Verdier trick} $f^! 1_X \tensor \Delta^! 1_{Y \times_X Y} \cong \Delta^* p_1^! 1_Y \tensor \Delta^! 1_{Y \times_X Y} \cong \Delta^! p_1^! 1_Y \cong 1_Y$ where we used \Cref{lmm:upper shriek for invertibles} and invertibility of $p_1^! 1_Y$.
\end{proof}

\begin{lemma}\label{lmm:coh sm interior pull}
    For a cohomologically smooth morphism $(f:Y \to X) \in E(\mathfrak G)$ and $A, B \in \mc D(X)$ we have $f^* \iHom(A,B) \cong \iHom(f^* A, f^* B)$.
\end{lemma}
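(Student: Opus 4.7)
The plan is to reduce the statement to the interior pullback formula (\Cref{PF-adj-isos}, ii) combined with the defining property of cohomological smoothness, namely that $f^!\cong f^*(\blank)\tensor f^!1_X$ with $f^!1_X$ invertible.

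First, I would invoke interior pullback to obtain a natural isomorphism
\[
    f^! \iHom(A,B) \cong \iHom(f^* A, f^! B).
\]
This is available because $f \in E(\mathfrak G)$. Then I would rewrite both sides using the isomorphism $f^!C \cong f^*C \tensor f^!1_X$ of condition $i)$ in the definition of cohomological smoothness. The left hand side becomes $f^*\iHom(A,B) \tensor f^!1_X$, while on the right hand side $f^!B$ becomes $f^*B \tensor f^!1_X$. Since $f^!1_X$ is invertible, hence dualisable, tensoring with it commutes with $\iHom$ in the second slot (a formal consequence of Yoneda and the projection formula for dualisable objects, or equivalently \Cref{lmm:upper shriek for invertibles} with $f=\mathrm{id}$). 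Thus the right hand side becomes $\iHom(f^*A, f^*B) \tensor f^!1_X$.

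Combining these, we obtain an isomorphism
\[
    f^* \iHom(A,B) \tensor f^!1_X \cong \iHom(f^*A, f^*B) \tensor f^!1_X,
\]
and tensoring with the inverse $(f^!1_X)^{-1}$ yields the desired isomorphism. To be careful, I would verify that the composite isomorphism produced this way is indeed the canonical map $f^* \iHom(A,B) \to \iHom(f^*A, f^*B)$ adjoint to the evaluation map; this is a routine Yoneda check which traces back to compatibility of the interior pullback isomorphism with the unit/counit data, and it is the only step that requires slight care. Alternatively, one can give the entire argument as a Yoneda computation using that cohomological smoothness furnishes a left adjoint $f_\natural := f_!(\blank \tensor f^!1_X)$ of $f^*$, and then the naturality of the iso is automatic.
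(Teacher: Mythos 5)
Your proposal is correct and follows essentially the same route as the paper: apply interior pullback to get $f^!\iHom(A,B)\cong\iHom(f^*A,f^!B)$, rewrite both sides via the cohomological-smoothness identity $f^!\cong f^*(\blank)\tensor f^!1_X$, pull the invertible object $f^!1_X$ out of the second $\iHom$ slot, and cancel. The paper's proof is exactly this chain of isomorphisms, stated in one display; your extra remark about checking that the composite agrees with the canonical map is a reasonable point the paper silently passes over.
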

\begin{proof}
    This follows from the interior pullback formula (\Cref{PF-adj-isos}):
    \begin{align*}
        f^* \iHom(A,B) \tensor f^! 1_X \cong f^! \iHom(A,B) \cong \iHom(f^* A,f^! B) &\cong \iHom(f^* A, f^* B \tensor f^! 1_X) \\
        &\cong \iHom(f^* A, f^* B) \tensor f^! 1_X
    \end{align*}
    where we used that $f^! 1_X$ is invertible and can thus be pulled out of the internal $\Hom$ functor.
\end{proof}

Cohomologically smooth morphisms can be used to check the smoothness of objects:

\begin{theorem}\label{thm:sm-obj-is-loc}
    Consider a collection of commutative diagram
    \[\begin{tikzcd}
            Z_i \arrow["g_i"]{r} \arrow[swap,"h_i"]{rd} & Y \arrow["f"]{d} \\
            & X
        \end{tikzcd}\]
    for $i \in I$ of morphisms in $E(\mathfrak G)$ with all $g_i$ cohomologically smooth and $A \in \mc D(Y)$, where $I$ is some index set. If $(g_i^*)_{I}: \mc D(Y) \to \prod_I \mc D(Z_i)$ is conservative, then the following are equivalent:
    \begin{enumerate}[label=\roman*)]
        \item  $A$ is $f$-smooth.
        \item  $g_i^*A$ is $h_i$-smooth for all $i$.
    \end{enumerate}
    Moreover, if $i)$ or $ii)$ holds then the Verdier-dual $\mb D_{h_i}(g_i^*A)$ of $g_i^*A$ is given by $g_i^! \mb D_f(A)$ and $\mb D_f(A)$ is invertible if and only if $\mb D_{h_i}(g_i^* A)$ is invertible for all $i$.
\end{theorem}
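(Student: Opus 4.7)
The plan is to deduce (i) $\Leftrightarrow$ (ii) from the composition formula for smooth objects (\Cref{lmm:sm obj composition}) in one direction, and from conservativity plus \Cref{cor:coh sm generalization} in the other. The identity $\mb D_{h_i}(g_i^* A) = g_i^! \mb D_f(A)$ is an entirely formal consequence of interior pullback (\Cref{PF-adj-isos}, part ii): since $g_i \in E(\mathfrak G)$ and $h_i = f \circ g_i$, one has
\[
g_i^! \mb D_f(A) \;=\; g_i^! \iHom(A, f^! 1_X) \;\cong\; \iHom(g_i^* A, g_i^! f^! 1_X) \;=\; \iHom(g_i^* A, h_i^! 1_X) \;=\; \mb D_{h_i}(g_i^* A),
\]
with no smoothness hypothesis on $A$ required. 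In particular this formula holds unconditionally on both sides of the equivalence.

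The direction (i) $\Rightarrow$ (ii) is then immediate from \Cref{lmm:sm obj composition}(i): since $g_i$ is cohomologically smooth, $1_{Z_i}$ is $g_i$-smooth by \Cref{cor:sm obj sm mor}, hence $g_i^* A \cong g_i^* A \tensor 1_{Z_i}$ is $h_i$-smooth, with Verdier dual $\mb D_{g_i}(1_{Z_i}) \tensor g_i^* \mb D_f(A) = g_i^! 1_Y \tensor g_i^* \mb D_f(A) = g_i^! \mb D_f(A)$, matching the formula above.

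For (ii) $\Rightarrow$ (i), I would verify condition (i) of \Cref{cor:coh sm generalization}: the natural morphism $\mb D_f(A) \tensor f^* M \to \iHom(A, f^! M)$ in $\mc D(Y)$ is an isomorphism for every $M \in \mc D(X)$. By conservativity of $(g_i^*)_I$, it suffices to check this after applying $g_i^*$ for each $i$. The left side pulls back to $g_i^* \mb D_f(A) \tensor h_i^* M$. For the right side, \Cref{lmm:coh sm interior pull} (cohomological smoothness of $g_i$) gives $g_i^* \iHom(A, f^! M) \cong \iHom(g_i^* A, g_i^* f^! M)$; writing $g_i^* (\blank) = g_i^! (\blank) \tensor (g_i^! 1_Y)^{-1}$ and $h_i^! = g_i^! f^!$ yields $g_i^* f^! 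M = h_i^! M \tensor (g_i^! 1_Y)^{-1}$, so the invertible factor pulls out of $\iHom$ and leaves $\iHom(g_i^* A, h_i^! M) \tensor (g_i^! 1_Y)^{-1}$. Applying \Cref{cor:coh sm generalization}(i) to the $h_i$-smooth object $g_i^* A$ (and $h_i$) rewrites this as $\mb D_{h_i}(g_i^* A) \tensor h_i^* M \tensor (g_i^! 1_Y)^{-1}$, which by the identity $\mb D_{h_i}(g_i^* A) = g_i^* \mb D_f(A) \tensor g_i^! 1_Y$ collapses to $g_i^* \mb D_f(A) \tensor h_i^* M$, matching the left side. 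A chase of the construction of the natural transformation shows the map itself induces this identification. The base-change clauses (ii) and (iii) of \Cref{cor:coh sm generalization} are handled by rerunning the same argument after base change along $g: X' \to X$, using that both cohomological smoothness of $g_i$ and $h_i$-smoothness of $g_i^* A$ are base-change stable. The invertibility statement is then formal: for $L \in \mc D(Y)$, invertibility is characterised by the evaluation $L \tensor \iHom(L, 1_Y) \to 1_Y$ being an isomorphism, and this map commutes with $g_i^*$ by \Cref{lmm:coh sm interior pull}, so conservativity reduces invertibility of $\mb D_f(A)$ to invertibility of each $g_i^* \mb D_f(A)$, which differs from $\mb D_{h_i}(g_i^* A)$ by the already-invertible $g_i^! 1_Y$.

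The step I expect to be the main obstacle is the base-change clause for (ii) $\Rightarrow$ (i): after pulling back along $g: X' \to X$, the family $((g_i')^*)_I$ on $\mc D(Y')$ need not be conservative a priori, whereas the argument above uses conservativity in an essential way. One remedy is to invoke \Cref{prop:sm obj descent} when $\bigsqcup_i g_i$ (or a suitable refinement) is a map of universal $\mc D^*$-descent, since universal $\mc D^*$-descent propagates conservativity through base change. Alternatively, one can sidestep base change by instead verifying the single-diagram criterion of \Cref{lmm:sm obj crit}(iii) on $Y \times_X Y$ directly, pulling back along the cohomologically smooth map $g_i \times_X g_i \colon Z_i \times_X Z_i \to Y \times_X Y$; the advantage is that (iii) characterises $f$-smoothness without an explicit base-change hypothesis, at the cost of having to arrange a conservativity statement on $Y \times_X Y$ from the one on $Y$.
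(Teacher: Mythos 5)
Your direction (i) $\Rightarrow$ (ii), the identification $\mathbb{D}_{h_i}(g_i^*A) \cong g_i^!\mathbb{D}_f(A)$ via interior pullback, and the invertibility argument all match the paper's reasoning and are correct. The gap is in (ii) $\Rightarrow$ (i), and it is exactly the one you flag, but your two proposed remedies both fall short of repairing it without importing extra hypotheses.

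Your primary route through \Cref{cor:coh sm generalization} fails at clause (iii), since conservativity of $((g_i')^*)_I$ after base change along an arbitrary $g\colon X' \to X$ is not available; your suggested fix via \Cref{prop:sm obj descent} requires $\bigsqcup_i g_i$ to be of universal $\mc D^*$-descent, which is strictly stronger than the hypothesis of the theorem and not something you can arrange in general. Your alternative route via \Cref{lmm:sm obj crit}(iii) is the right one, but you stop short by assuming you must verify the map on $Y \times_X Y$ and hence need a conservativity statement there. You do not: the last sentence of \Cref{lmm:sm obj crit}(iii) says it suffices to check the map \emph{after applying $\Delta_f^!$}, which lands you in $\mc D(Y)$, where the given conservativity of $(g_i^*)_I$ applies directly. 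That is what the paper does. The remaining (nontrivial) work is then to show that, for each $i$, applying $g_i^*$ to the map
\[
\Delta_f^!\bigl(p_{Y,2}^*\mathbb{D}_f(A) \otimes p_{Y,1}^*A\bigr) \longrightarrow \iHom(A,A)
\]
in $\mc D(Y)$ identifies (via \Cref{lmm:coh sm interior pull}, \Cref{lmm:upper shriek for invertibles}, and the formula $(g_i\times_X g_i)^!1_{Y\times_X Y} \cong p_{Z,1}^*g_i^!1_Y \otimes p_{Z,2}^*g_i^!1_Y$) with the analogous map
\[
\Delta_{h_i}^!\bigl(p_{Z,2}^*\mathbb{D}_{h_i}(g_i^*A) \otimes p_{Z,1}^*g_i^*A\bigr) \longrightarrow \iHom(g_i^*A,g_i^*A),
\]
which is an isomorphism by hypothesis. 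This bookkeeping is essentially the manipulation you sketched for \Cref{cor:coh sm generalization}(i), just executed on the diagonal-pulled-back map rather than on the parametrised family, which is what makes the conservativity hypothesis usable as stated.
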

\begin{proof}
    If $A$ is $f$-smooth, then $g_i^*A$ is $h_i$-smooth for all $i$ by \Cref{lmm:sm obj composition}. To show the converse, we want to use the criterion from \Cref{lmm:sm obj crit} iii). Assume $g_i^* A$ is $h_i$-smooth for all $i$ and consider the natural map
    $$\Delta_f^!(p_{Y,2}^* \mb D_f(A) \tensor p_{Y,1}^* A) \to \Delta_f^! \iHom(p_{Y,2}^* A, p_{Y,1}^! A) = \iHom(A,A)$$
    where the identification on the right follows from the interior pullback formula (\Cref{PF-adj-isos}). We need to show that this map is an isomorphism, and since $(g_i^*)_{I}$ is conservative, we may check this after pulling back along each of the $g_i^*$. Thus, wlog.\ assume $I=\set{0}$ and write $g:=g_0$, $h.=h_0$, $Z:=Z_0$. Then the right hand side identifies with $\iHom(g^* A, g^* A)$ by \Cref{lmm:coh sm interior pull}. We claim that the pullback of the left hand side agrees with $\Delta_h^!(p_{Z,2}^* \mb D(g^* A) \tensor p_{Z,1}^* g^* A)$. Indeed, note that $\mb D_h(g^* A) \cong \iHom(g^* A, h^! 1_X) \cong g^! \iHom(A, f^! 1_X) \cong g^! \mb D_f(A)$ by interior pullback. Then, using \Cref{lmm:upper shriek for invertibles},
    \begin{align*}
        \Delta_h^!(p_{Z,2}^* g^! \mb D_f(A) \tensor p_{Z,1}^* g^* A) &\cong \Delta_h^!(p_{Z,2}^* g^* \mb D_f(A) \tensor p_{Z,1}^* g^* A \tensor p_{Z,2}^* g^! 1_Y)\\
        &\cong\Delta_h^!(p_{Z,2}^* g^* \mb D_f(A) \tensor p_{Z,1}^* g^* A) \tensor g^! 1_Y \\
        &\cong \Delta_h^!(g'^* p_{Y,2}^* \mb D_f(A) \tensor g'^* p_{Z,1}^* A) \tensor g^! 1_Y
    \end{align*}
    where $g': Z \times_X Z \to Y \times_X Y$ is the obvious map, given by the composition $Z \times_X Z \to Z \times_X Y \to Y \times_X Y$ of two base changes of $g$; in particular it is cohomologically smooth and one calculates $g'^! 1_{Y \times_X Y} = (p_{Z,1}^* g^! 1_Y) \tensor (p_{Z,2}^* g^! 1_Y)$. So
    \begin{align*}
        \Delta_h^!(g'^*(p_{Y,2}^* \mb D_f(A) \tensor p_{Z,1}^* A)) \tensor g^! 1_Y &= \Delta_h^! g'^! (p_{Y,2}^* \mb D_f(A) \tensor p_{Z,1}^* A) \tensor (\Delta_h^* g'^! 1_Y)^{-1} \tensor g^! 1_Y \\
        &\cong g^! \Delta_f^! (p_{Y,2}^* \mb D_f(A) \tensor p_{Z,1}^* A) \tensor (g^! 1_Y)^{-1} \\
        &\cong g^* \Delta_f^! (p_{Y,2}^* \mb D_f(A) \tensor p_{Z,1}^* A)
    \end{align*}
    as claimed. We have shown that pullback along $g$ yields a map
    $$\Delta_h^!(p_{Z,2}^* \mb D_h(g^* A) \tensor p_{Z,1}^* g^* A) \to \Delta_h^! \iHom(p_{Z,2}^* A, p_{Z,1}^! A)$$
    It remains to show that it agrees (up to homotopy coherence) with the natural map from \Cref{lmm:sm obj crit} which is an isomorphism by assumption. This can be done by an explicit calculation.\par
    The first statement about the Verdier duals follows from interior pullback (\Cref{PF-adj-isos}). The "only if" in the second statement is then obvious as all $g_i^! 1_Y$ are invertible by assumption. For the "if" direction we need to check that the natural map $\mb D_f(A) \tensor \iHom(\mb D_f(A), 1_Y) \to 1_Y$ is an isomorphism which is true as by \Cref{lmm:coh sm interior pull} its pullback along each $g_i$ is an isomorphism (as $g_i^* \mb D_f(A) \cong g_i^! \mb D_f(A) \tensor (g_i^! 1_Y)^{-1}$ is invertible by the first statement).
\end{proof}

In particular, cohomological smoothness is cohomologically smooth-local on the source in the following sense:

\begin{corollary}\label{sm-is-loc}
    Consider a collection of commutative diagrams
    \[\begin{tikzcd}
            Z_i \arrow["g_i"]{r} \arrow[swap,"h_i"]{rd} & Y \arrow["f"]{d} \\
            & X
        \end{tikzcd}\]
    for $i \in I$ of morphisms in $E(\mathfrak G)$ with all $g_i$ cohomologically smooth and $A \in \mc D(Y)$, where $I$ is some index set. If $(g_i^*)_{I}: \mc D(Y) \to \prod_I \mc D(Z_i)$ is conservative, then the following are equivalent:
    \begin{enumerate}[label=\roman*)]
        \item  All $h_i$ are cohomologically smooth.
        \item  $f$ is cohomologically smooth.
    \end{enumerate}
\end{corollary}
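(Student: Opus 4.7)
The plan is to deduce this directly from \Cref{thm:sm-obj-is-loc} applied to the monoidal unit $A = 1_Y$. By \Cref{cor:sm obj sm mor}, cohomological smoothness of a morphism $f \in E(\mathfrak G)$ is equivalent to the conjunction of two conditions: $1_Y$ is $f$-smooth, and $\mb D_f(1_Y) = f^! 1_X$ is invertible. The analogous characterisation applies to each $h_i$, so it suffices to show that these two conditions transfer simultaneously between $f$ and the collection $(h_i)_{i \in I}$.

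First I would observe that the pullback functors $g_i^*$ are symmetric monoidal, hence $g_i^* 1_Y \cong 1_{Z_i}$. Applying \Cref{thm:sm-obj-is-loc} with $A = 1_Y$ then yields that $1_Y$ is $f$-smooth if and only if $1_{Z_i}$ is $h_i$-smooth for every $i$. Moreover, the final sentence of that theorem asserts that, under either of these equivalent conditions, $\mb D_f(1_Y) = f^! 1_X$ is invertible if and only if $\mb D_{h_i}(1_{Z_i}) = h_i^! 1_X$ is invertible for every $i$; here the identifications $\mb D_{h_i}(g_i^* 1_Y) \cong g_i^! \mb D_f(1_Y)$ promised by the theorem reduce, via $h_i = f \circ g_i$ and the resulting $h_i^! \cong g_i^! f^!$, to the tautology $h_i^! 1_X \cong g_i^! (f^! 1_X)$.

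Combining these two transfers shows that $f$ is cohomologically smooth if and only if every $h_i$ is cohomologically smooth, proving the equivalence of (i) and (ii).

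Since the deduction is formal from the already established \Cref{thm:sm-obj-is-loc} and \Cref{cor:sm obj sm mor}, there is no real obstacle here; the only point worth verifying explicitly is the compatibility of Verdier duals under composition invoked above, but this is immediate from functoriality of $(\blank)^!$ along the factorisation $h_i = f \circ g_i$.
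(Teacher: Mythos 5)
Your proof is correct and follows exactly the paper's route, which simply cites \Cref{cor:sm obj sm mor} together with \Cref{thm:sm-obj-is-loc}; you have merely unpacked the combination—applying the latter to $A = 1_Y$ (using $g_i^* 1_Y \cong 1_{Z_i}$) and invoking its invertibility transfer statement—which is precisely what the one-line proof in the paper intends.
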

\begin{proof}
    This follows immediately from \Cref{cor:sm obj sm mor} and \Cref{thm:sm-obj-is-loc}.
\end{proof}

\subsection{Cohomologically étale and cohomologically proper morphisms}

A cohomologically étale morphism is, roughly speaking, a morphism $f$ for which $f^!$ and $f^*$ agree "universally" (with the additional property that this condition is stable under taking the diagonal). For example, consider $\mc D_{\mb F_\ell}$ on $\mathfrak G$ from \Cref{thm:et 6 functors}. We certainly expect that $f^! = f^*$ for any étale morphism; and indeed, we will see below that any étale morphism is cohomologically étale in this setting (whence the name). %

\begin{definition}\label{def:coh et prop}\cite[Definitions 6.10 and 6.12]{Scholze1}
    Let $\mc D$ be a 6-functor formalism on a construction setup $\mathfrak G$ and let $(f:Y \to X) \in E(\mathfrak G)$. Then
    \begin{enumerate}[label=\roman*)]
        \item $f$ is called ($\mc D$-)\textit{cohomologically étale} if it is an isomorphism or it is $n$-truncated for some $n$ with $\Delta_f$ cohomologically étale and $1_Y$ $f$-smooth.
        \item $f$ is called ($\mc D$-)\textit{cohomologically proper} if it is an isomorphism or it is $n$-truncated for some $n$ with $\Delta_f$ cohomologically proper and $1_Y$ $f$-proper.
    \end{enumerate}
\end{definition}
\begin{remark}
    This is an inductive definition which works by \cite[Lemma 5.5.6.15]{HTT}: A morphism is $n$-truncated if and only if its diagonal is $n-1$-truncated. But a $-2$-truncated morphism is an isomorphism. Note that $\Delta_f \in E$ if $f \in E$ as the construction setup $\mathfrak G$ is in particular a good geometric setup, cf. \Cref{rem:construction setup}.
\end{remark}
\begin{example}\label{lmm:proper is coh proper}
    If $\mc D$ is a 6-functor formalism on a construction setup $\mathfrak G=(\mc C, E, I, P)$ then all $n$-truncated (for some $n \geq -2$) morphisms in $I$ are cohomologically étale and all $n$-truncated morphisms in $P$ are cohomologically proper: As $\Delta_f \in I$ (resp. $\Delta_f \in P$) if $f \in I$ (resp. $f \in P$) this follows from the fact that $1$ is $f$-smooth (resp. $f-$proper) if $f \in I$ (resp. $f \in P$).
\end{example}
We will now again restrict the discussion to the case of cohomologically étale morphisms.\\
To make the terminology complete, we introduce a non-standard notion.
\begin{definition}\label{def:coh unram}
    In the situation of \Cref{def:coh et prop}, we call $f$ \textit{cohomologically unramified} if $\Delta_f$ is cohomologically étale.
\end{definition}

\begin{lemma}\cite[Proposition 6.13]{Scholze1}\label{lmm:coh et coh sm unram}
    In the situation of \Cref{def:coh et prop}, $f$ is cohomologically étale if and only if it is cohomologically smooth and cohomologically unramified. More precisely, if $f$ is cohomologically unramified, then there is a natural transformation $f^! \to f^*$ and the following are equivalent:
    \begin{enumerate}[label=\roman*)]
        \item $f^! 1_X \to 1_Y$ is an isomorphism on global sections.
        \item $f^! \to f^*$ is an isomorphism.
        \item $f$ is cohomologically smooth.
        \item $f$ is cohomologically étale.
    \end{enumerate}
\end{lemma}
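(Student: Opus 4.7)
The plan is to construct the natural transformation $f^! \to f^*$ from cohomological unramifiedness, then close the equivalences via the cycle $(iv) \Rightarrow (iii) \Rightarrow (ii) \Rightarrow (i) \Rightarrow (iv)$. The last implication is the technical heart; the other three are formal consequences of material already developed in the paper.

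Construction of $\eta \colon f^! \to f^*$. Since $\Delta_f$ is cohomologically étale, inductively $\Delta_f^! \cong \Delta_f^*$, and in particular $\Delta_{f,!}$ becomes a left adjoint of $\Delta_f^*$ so that there is a counit $\Delta_{f,!}\Delta_f^* \to \mathrm{id}_{Y \times_X Y}$. I would produce $\eta$ by combining this counit with base change along the projections $p_1, p_2 \colon Y \times_X Y \to Y$ together with the identities $p_1 \Delta_f = p_2 \Delta_f = \mathrm{id}_Y$.

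For the formal directions, first assume $(iv)$. Then $1_Y$ is $f$-smooth, and \Cref{cor:coh sm generalization}(iii) applied to $g = f$ gives the base-change isomorphism $p_2^* f^! 1_X \cong p_1^! 1_Y$. Pulling back along $\Delta_f$ and using $\Delta_f^! = \Delta_f^*$ yields
\[
    f^! 1_X \cong \Delta_f^* p_2^* f^! 1_X \cong \Delta_f^* p_1^! 1_Y \cong \Delta_f^! p_1^! 1_Y = (p_1 \Delta_f)^! 1_Y = 1_Y,
\]
which is invertible. Combined with $1_Y$ being $f$-smooth, \Cref{cor:sm obj sm mor} gives $(iii)$; the cohomological smoothness condition $f^* \tensor f^! 1_X \xrightarrow{\sim} f^!$ then reduces to $f^* \xrightarrow{\sim} f^!$, yielding $(ii)$, and $(i)$ follows by evaluation at $1_X$. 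Conversely, $(iii)$ combined with the standing cohomological unramifiedness gives $(iv)$: by \Cref{cor:sm obj sm mor}, $f$ cohomologically smooth implies $1_Y$ is $f$-smooth, which is exactly the remaining piece in the definition of cohomologically étale.

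The main obstacle will be $(i) \Rightarrow (iv)$. Assume $\eta_{1_X}$ is an isomorphism on global sections. It suffices to show that $1_Y$ is $f$-smooth, for then $(iv)$ follows from the definition combined with the cohomological unramifiedness hypothesis. By \Cref{lmm:rel trace cycle}, this reduces to producing a $1_Y$-trace-cycle theory $(f^! 1_X, \mathrm{tr}, \mathrm{cl})$ for $f$. I would take $\mathrm{tr}$ to be the counit $f_! f^! 1_X \to 1_X$. For the cycle class $\mathrm{cl} \colon \Delta_{f,!} 1_Y \to p_2^* f^! 1_X$, use the adjunction $\Delta_{f,!} \dashv \Delta_f^* = \Delta_f^!$ to identify it with a morphism $1_Y \to \Delta_f^* p_2^* f^! 1_X = f^! 1_X$; the hypothesis in $(i)$ supplies an isomorphism $\Hom(1_Y, f^! 1_X) \cong \mathrm{End}(1_Y)$, and we take the morphism corresponding to $\mathrm{id}_{1_Y}$. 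The hard part will be verifying the two triangle identities for this trace-cycle theory: I expect this to reduce to a careful diagram chase combining the counit $\Delta_{f,!}\Delta_f^* \to \mathrm{id}$, base change along $p_1$ and $p_2$, and the compatibility that the chosen cycle corresponds to $\mathrm{id}_{1_Y}$ under the global sections identification in $(i)$.
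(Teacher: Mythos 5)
Your setup is on the right track and the formal implications $(iv)\Rightarrow(iii)\Rightarrow(ii)\Rightarrow(i)$ together with the shortcut $(iii)\Rightarrow(iv)$ are essentially the paper's, including the induction on truncatedness to get $\Delta_f^! = \Delta_f^*$. The problem is in the part you yourself flag as the technical heart: $(i)\Rightarrow(iv)$. You set up a candidate trace-cycle theory $(f^!1_X,\mathrm{tr},\mathrm{cl})$ and then write that you ``expect'' the triangle identities to ``reduce to a careful diagram chase.'' That expectation is the entire content of the step, and it is not at all routine: you would essentially be re-proving \Cref{lmm:sm obj crit}(iii), whose proof in the paper runs through the Kerodon criterion for adjunctions in $2$-categories (checking the adjunction identity on two specific test pairs) rather than a direct verification of the two triangle identities. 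In particular, there is no reason to believe your choice of cycle — the unique preimage of $\mathrm{id}_{1_Y}$ under the global-sections isomorphism — automatically satisfies both triangle identities without an argument; that is precisely what has to be proven, and it does not ``reduce'' to anything more elementary than what \Cref{lmm:sm obj crit}(iii) already packages.

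The paper avoids all of this by observing that \Cref{lmm:sm obj crit}(iii) says exactly that $A$ is $f$-smooth iff the canonical map $p_2^*\mb D_f(A)\tensor p_1^*A \to \iHom(p_2^*A,p_1^!A)$ becomes an isomorphism after applying $\Delta_f^!$ and taking global sections. With $A=1_Y$ and the inductive identification $\Delta_f^!=\Delta_f^*$, that map is precisely your $\eta_{1_X}\colon f^!1_X\to 1_Y$, and the global-sections condition is precisely $(i)$. So $(i)\Leftrightarrow(1_Y$ is $f$-smooth$)$ is a direct quotation of \Cref{lmm:sm obj crit}(iii), and combined with the standing cohomological unramifiedness this gives $(i)\Leftrightarrow(iv)$. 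You should replace your trace-cycle construction by this one-line invocation of the already-established criterion; as written, your argument leaves its hardest step unproved, and the machinery to fill it is already in the paper.

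One smaller point: your step $(iii)\Rightarrow(ii)$ uses $f^!1_X\cong 1_Y$, which you derived from $(iv)$ earlier in the chain. That is fine as long as you are explicit that the chain is $(iv)\Rightarrow(iii)\Rightarrow(ii)\Rightarrow(i)$ all carrying the consequence $f^!1_X\cong 1_Y$ along; stated as a freestanding implication $(iii)\Rightarrow(ii)$ it would need a separate argument for the invertibility and triviality of the dualising complex. Similarly, you should check (or at least note that it is a coherence point) that the isomorphism $f^*\cong f^!$ you obtain from $(iii)$ and $f^!1_X\cong 1_Y$ is inverse to the specific natural transformation $\eta$ you constructed, since $(ii)$ is a statement about that particular $\eta$.
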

\begin{proof}
    By definition, a cohomologically étale morphism is cohomologically unramified, so the first statement really follows from the second.\par
    Assume $f$ is cohomologically unramified. Then it is $n$-truncated for some $n$. We argue by induction on $n$. If $n=-2$ then $f$ is an isomorphism and all statements hold (as $I(\mathfrak G)$ contains all isomorphisms). Now assume by induction that $ii)$ holds for $\Delta_f$. We have a natural transformation
    $$p_2^* f^!  \to p_1^! f^* $$
    adjoint to $p_{1,!} p_2^* f^! = f^* f_! f^! \to f^*$, where we used the projection formula. In fact, applied to $1_X$ this is the map from \Cref{cor:sm obj sm mor} resp. \Cref{lmm:sm obj crit}. Applying $\Delta_f^* = \Delta_f^!$ on both sides yields the promised map $f^! \to f^*$. Thus $i) \Leftrightarrow iv)$ by \Cref{lmm:sm obj crit} and the defintion. The same result also proves $i) \Rightarrow iii)$ as it implies that already $f^! 1_X = 1_Y$. But then also $i) \Rightarrow ii)$ by definition of cohomological smoothness and thus $i) \Leftrightarrow ii)$. It remains to show $iii) \Rightarrow iv)$, but this is obvious from the definition by \Cref{cor:sm obj sm mor}.
\end{proof}

\begin{proposition}\label{prop:coh unram composition bc}
Cohomologically unramified morphisms are stable under base change and composition.
\end{proposition}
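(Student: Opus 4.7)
The plan is to do a simultaneous induction on the truncation level, proving both stabilities together (and bootstrapping the analogous stabilities for cohomologically étale morphisms via \Cref{lmm:coh et coh sm unram}).

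The geometric inputs are the standard identifications of diagonals. If $f\colon Y\to X$ is cohomologically unramified and $g\colon X'\to X$ is arbitrary, then for the base change $f'\colon Y'\to X'$ one has a cartesian square
\[\begin{tikzcd}
Y' \rar["\Delta_{f'}"] \dar & Y \dar["\Delta_f"] \\
Y'\times_{X'} Y' \rar & Y\times_X Y
\end{tikzcd}\]
so $\Delta_{f'}$ is a pullback of $\Delta_f$. If in addition $g\colon Z\to Y$ is cohomologically unramified (with $f, g$ composable), then $\Delta_{fg}$ factors as
\[Z \xrightarrow{\Delta_g} Z\times_Y Z \xrightarrow{\iota} Z\times_X Z\]
where $\iota$ is a base change of $\Delta_f\colon Y\to Y\times_X Y$ along the natural map $Z\times_X Z\to Y\times_X Y$. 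So in both cases the question reduces to stabilities of cohomologically étale morphisms.

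I would organise the induction on the truncation level $n$ of $\Delta_f$ (so $f$ is $(n+1)$-truncated), proving at stage $n$ the following combined statement: any cohomologically étale morphism whose diagonal is at most $(n-1)$-truncated is stable under arbitrary base change, and the class of such morphisms is stable under composition. The base case $n=-2$ is trivial since then the morphisms in question are isomorphisms. For the inductive step, using \Cref{lmm:coh et coh sm unram}, a cohomologically étale morphism is the same data as a cohomologically smooth and cohomologically unramified morphism. Cohomological smoothness is stable under base change by definition and under composition by \Cref{cor:coh sm composition}; $f$-smoothness of $1$ (the smooth-object content of cohomological smoothness at the level needed) behaves well under base change by the general stability of smooth objects noted after \Cref{def:sm obj}, and compositions are handled by \Cref{lmm:sm obj composition}(i). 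So it remains to propagate cohomological unramifiedness, and this is exactly where the above two diagonal identifications are used: $\Delta_{f'}$ being a pullback of $\Delta_f$ is cohomologically étale by the induction hypothesis (at level $n-1$) applied to base change, and $\Delta_{fg}=\iota\circ\Delta_g$ is a composition of two morphisms that are cohomologically étale of truncation level $\leq n-1$ (one by hypothesis, the other by the just-established base change stability at level $n-1$), hence cohomologically étale by the induction hypothesis applied to composition.

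The only subtlety, and in some sense the main point to verify, is that the truncation bookkeeping closes up correctly: the diagonals arising in the inductive step are indeed one level less truncated than the morphisms being analysed, so the induction hypothesis is legitimately available. Once this is in place, the desired stabilities for cohomologically unramified morphisms follow immediately by unwinding the definition at level $n$.
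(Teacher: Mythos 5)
Your proposal is correct and follows essentially the same strategy as the paper's proof: induction on the truncation level, using the same two diagonal identifications for base change and composition, with \Cref{lmm:coh et coh sm unram} and \Cref{cor:coh sm composition} translating between cohomological unramifiedness, étaleness, and smoothness. The only difference is cosmetic — you phrase the inductive invariant in terms of cohomologically étale morphisms whereas the paper states it directly for cohomologically unramified ones, but by \Cref{lmm:coh et coh sm unram} these formulations are interchangeable once the smoothness side is observed to be automatic.
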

\begin{proof}
    Let $f: Y \to X$ be a cohomologically unramified morphism, in particular $n$-truncated for some $n$. We induct on $n \geq -2$: If $f$ is an isomorphism, then any base change is an isomorphism and thus cohomologically unramified; moreover, compositions $g \circ f$ are cohomologically unramified for all cohomologically unramified $g$. Now assume stability under base change holds for all $n-1$-truncated $f$. If $f$ is $n$-truncated and $g:X' \to X$ is any morphism and $f'$ is the pullback of $f$ along $g$, then $\Delta_{f'}$ is the base change of $\Delta_f$ (which is cohomologically étale by assumption and $n-1$-truncated) along $Y' \times_{X'} Y' \to Y \times_X Y$, so it is cohomologically étale by the induction hypothesis (and \Cref{lmm:coh et coh sm unram}). This proves the first assertion. Now assume that the second assertion holds for compositions $g\circ f$ with $f$ $n-1$-truncated. If $f$ is $n$-truncated and $g:X \to S$ is cohomologically unramified, then $\Delta_{g \circ f}$ can be factored as $\phi \circ \Delta_f: Y \to Y \times_X Y \to Y \times_S Y$, where $\phi$ is the base change of $\Delta_g$ along $(f,f)$; by the first assertion $\phi$ is thus cohomologically unramified, but it is also cohomologically smooth since $\Delta_g$ is and so it is cohomologically étale by \Cref{lmm:coh et coh sm unram}. By induction the composition $\phi \circ \Delta_f$ is cohomologically unramified (since $\Delta_f$ is $n-1$-truncated and cohomologically étale) and cohomologically smooth by \Cref{cor:coh sm composition}. But then it is cohomologically étale.
\end{proof}
\begin{corollary}\label{cor:coh et composition bc}
    Cohomologically étale morphisms are stable under base change and composition.
\end{corollary}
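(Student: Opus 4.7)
The plan is to deduce this corollary essentially immediately from the two results that directly precede it, combined with the characterisation in \Cref{lmm:coh et coh sm unram}. That lemma tells us that being cohomologically étale is equivalent to being simultaneously cohomologically smooth and cohomologically unramified. So it suffices to verify that both of these weaker properties are stable under base change and composition; the corollary then follows by combining the two stabilities.

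For cohomologically unramified morphisms, stability under both base change and composition is exactly the content of \Cref{prop:coh unram composition bc}, so there is nothing to do there. For cohomologically smooth morphisms, stability under composition is \Cref{cor:coh sm composition}, while stability under base change is literally built into the definition of cohomological smoothness (condition $iii)$ in the definition requires the property to persist after pullback along any morphism in $\mc C$). Thus for a cohomologically étale $f: Y \to X$ and an arbitrary morphism $g: X' \to X$ in $\mc C$, the base change $f'$ is cohomologically smooth by the definition and cohomologically unramified by \Cref{prop:coh unram composition bc}, hence cohomologically étale by \Cref{lmm:coh et coh sm unram}. Similarly, given cohomologically étale $f: Y \to X$ and $g: X \to S$ in $E(\mathfrak G)$, the composition $g\circ f$ is cohomologically smooth by \Cref{cor:coh sm composition} and cohomologically unramified by \Cref{prop:coh unram composition bc}, and again \Cref{lmm:coh et coh sm unram} finishes the argument.

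There is essentially no obstacle here: the proof is a one-line bookkeeping exercise assembling previously established facts. The only minor point to take care of is that \Cref{lmm:coh et coh sm unram} applies in both directions (being coh.\ étale implies coh.\ smooth and coh.\ unramified, and conversely the conjunction of these implies coh.\ étale), so we may freely trade the two formulations, which is exactly what the argument does.
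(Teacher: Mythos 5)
Your proof is correct and matches the paper's argument exactly: both combine \Cref{lmm:coh et coh sm unram} with the stability of cohomologically smooth morphisms under base change (built into the definition) and composition (\Cref{cor:coh sm composition}), and the stability of cohomologically unramified morphisms under both operations (\Cref{prop:coh unram composition bc}). The paper's proof is a one-liner citing the same three ingredients; you have simply spelled out the bookkeeping more explicitly.
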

\begin{proof}
    This is immediate from \Cref{cor:coh sm composition}, \Cref{prop:coh unram composition bc} and \Cref{lmm:coh et coh sm unram}.
\end{proof}

Let us quickly make some additional observations about cohomologically unramified morphisms:

\begin{remark}\label{rem:unram}
    In the situation of \Cref{def:coh et prop}, let $f: Y \to X \in E(\mathfrak G)$ be a cohomologically unramified morphism.
    \begin{enumerate}[label=\roman*)]
        \item Cohomological smoothness can be cancelled along $f$: A morphism $g: Z \to Y$ is cohomologically smooth (resp. étale) if and only if $f\circ g$ is. This follows from stability of cohomological smooth morphisms under base change and composition by the usual argument (cf. the proof of \Cref{prop:G_k suitable decomp}).
        \item Let $A, S \in \mc D(Z)$. Then the natural map from \Cref{lmm:sm obj crit} becomes $\mb D_f(A) \tensor S \to \iHom(A,S)$, using that $\Delta_f^! = \Delta_f^*$, and so $A$ is $f$-smooth if and only if this map is an isomorphism for all $S$ (where $S=A$ suffices). In particular, if $A$ is $f$-smooth, its Verdier dual agrees with the naive dual; but then the the case $S = A$ says that $A$ is dualisable.\footnote{One checks that the natural map is indeed just adjoint to evaluation tensored with $A$; it is an isomorphism if and only if $A$ is dualisable, cf. \cite[Lemma 6.2]{Mann2} or combine \Cref{cor:id sm is dualizable} and \Cref{lmm:sm obj crit}.} Thus the $f$-smooth objects are exactly the dualisable $A$ for which $\mb D_f(A) \to \iHom(A,1)$ (induced by the natural map $f^! 1 \to 1$ from \Cref{lmm:coh et coh sm unram}) is an isomorphism. In particular, if $f$ is cohomologically étale, then $f$-smooth objects are exactly the dualisable ones.
    \end{enumerate}
\end{remark}

The following result is helpful to find non-trivial examples of cohomologically étale morphisms.

\begin{lemma}\label{lmm:descent coh et}
    In the situation of \Cref{def:coh et prop}, consider a cartesian square
    \[\begin{tikzcd}
        Y' \rar["f'"] \dar["g'"] & X' \dar["g"] \\
        Y \rar["f"] & X
    \end{tikzcd}\]
    with $g$ of universal $\mc D^*$-descent. Then $f'$ is cohomologically étale if and only if $f$ is.
\end{lemma}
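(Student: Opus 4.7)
The forward direction is immediate from stability of cohomologically étale morphisms under base change (\Cref{cor:coh et composition bc}), so I would focus on the backward direction: given that $f'$ is cohomologically étale, conclude $f$ is. I would proceed by induction on the truncation level $n$ of $f$; the base case $n=-2$ (where $f$ is itself an isomorphism) is trivial.

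For the inductive step, I would use the characterisation from \Cref{lmm:coh et coh sm unram} and show separately that $f$ is cohomologically smooth and cohomologically unramified. Cohomological smoothness splits by \Cref{cor:sm obj sm mor} into showing (a) $1_Y$ is $f$-smooth, and (b) $f^!1_X$ is invertible. For (a), apply \Cref{prop:sm obj descent} to $A=1_Y$ with the descent morphism $g$: since $g'^*1_Y = 1_{Y'}$ is $f'$-smooth by hypothesis, $1_Y$ is $f$-smooth. Granted this, 2-functoriality of base change on the Lu--Zheng category (which is also how one obtains stability of smoothness under base change, cf. the proof of \cite[Corollary~7.8]{Mann2}) gives the identification $g'^* f^!1_X \simeq \mb D_{f'}(1_{Y'}) = f'^!1_{X'}$. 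The right-hand side is invertible because $f'$ is cohomologically smooth, and $g'$ is itself of universal $\mc D^*$-descent as a base change of $g$, so (b) reduces to the general statement that invertibility descends along universal $\mc D^*$-descent.

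For cohomological unramifiedness of $f$, I would note the cartesian square
\[\begin{tikzcd}
    Y' \rar["\Delta_{f'}"] \dar["g'"] & Y' \times_{X'} Y' \dar["h"] \\
    Y \rar["\Delta_f"] & Y \times_X Y
\end{tikzcd}\]
together with the identification $Y' \times_{X'} Y' \simeq (Y \times_X Y) \times_X X'$, which exhibits $h$ as a base change of $g$, hence of universal $\mc D^*$-descent. As $\Delta_f$ is $(n-1)$-truncated and $\Delta_{f'}$ is cohomologically étale (since $f'$ is), the induction hypothesis applied to the pair $(\Delta_f, h)$ yields that $\Delta_f$ is cohomologically étale, as needed.

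The main obstacle is the descent of invertibility used in (b). What is needed is that if $g'^*L$ is invertible and $g'$ is of universal $\mc D^*$-descent, then $L$ is invertible. This rests on reading the hypothesis as descent for symmetric monoidal $\infty$-categories (which is reasonable since $\mc D^*$ is symmetric monoidal and base change preserves the monoidal structure), so that the Picard groupoid of $\mc D(Y)$ equals the limit of the Picard groupoids over the \v{C}ech nerve of $g'$; then invertibility of $L$ can be checked on the cover. Provided this is built into the notion of universal $\mc D^*$-descent in use, the remaining arguments are formal consequences of the results already established earlier in the section.
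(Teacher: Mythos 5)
Your inductive skeleton and the use of \Cref{prop:sm obj descent} for $f$-smoothness of $1_Y$ are exactly what the paper has in mind, but you make the argument harder than it needs to be by routing through \Cref{lmm:coh et coh sm unram} and \Cref{cor:sm obj sm mor}. The definition of cohomologically étale (\Cref{def:coh et prop}) is literally: $f$ is $n$-truncated with $\Delta_f$ cohomologically étale and $1_Y$ $f$-smooth --- there is no separate invertibility clause. So once you have $1_Y$ $f$-smooth (via \Cref{prop:sm obj descent}) and $\Delta_f$ cohomologically étale (via the induction hypothesis applied to the square for $\Delta_f$, with $h\colon Y'\times_{X'}Y'\to Y\times_X Y$ a base change of $g$ as you observe), you are already finished. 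The whole of your step (b), and with it the ``main obstacle'' you flag, simply does not arise. The invertibility of the dualising object is a consequence rather than a hypothesis: \Cref{lmm:coh et coh sm unram} shows that for a cohomologically unramified $f$, $1_Y$ being $f$-smooth already forces $f^!1_X\cong 1_Y$.

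Your sketched descent of invertibility is plausible --- since the monoidal structure is compatible with the descent data, invertibility can indeed be checked on a $\mc D^*$-descent cover --- but as you acknowledge it is not a proof, and you hedge with ``provided this is built into the notion of universal $\mc D^*$-descent in use.'' The clean resolution is not to build it in, but to notice that the definition never asked for it.
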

\begin{proof}
    This follows from the definition and \Cref{prop:sm obj descent}.
\end{proof}

The following is the most important application for us:

\begin{proposition}[{\cite[Proposition 7.18]{Scholze1}}]\label{prop:et is coh et}
    Let $\Lambda$ be any torsion ring and $\mc D_\Lambda$ the 6-functor formalism on $\mathfrak G$ from \Cref{thm:et 6 functors}. Then all étale morphisms $f:Y \to X$ in $E(\mathfrak G)$ are $\mc D_\Lambda$-cohomologically étale.
\end{proposition}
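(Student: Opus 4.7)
The proof proceeds in three steps, each reducing the problem further. First, since $f$ is étale it is in particular unramified, so the diagonal $\Delta_f: Y \to Y \times_X Y$ is an open immersion; it thus lies in $I(\mathfrak G)$ and is cohomologically étale by \Cref{lmm:proper is coh proper}. Hence $f$ is cohomologically unramified in the sense of \Cref{def:coh unram}, and by \Cref{lmm:coh et coh sm unram} it suffices to prove that $f$ is cohomologically smooth, or equivalently that the canonical map $f^! 1_X \to 1_Y$ is an isomorphism on global sections.

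Second, the question is local on the source in the étale topology by \Cref{sm-is-loc} (applied with a suitable cohomologically smooth cover) together with \Cref{lmm:descent coh et}. By the standard étale structure theorem, any étale morphism of finite type factors étale-locally on the source as an open immersion followed by a finite étale morphism. Open immersions are cohomologically étale as they lie in $I(\mathfrak G)$, so by stability of cohomological étaleness under composition (\Cref{cor:coh et composition bc}) the proof reduces to the case where $f$ is finite étale.

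Third, finite étale morphisms are handled by induction on the degree $d$. The base case $d = 1$ is trivial, as $f$ is then an isomorphism. For $d \geq 2$, pulling back $f$ along itself gives $f': Y \times_X Y \to Y$, again finite étale of degree $d$; but now $\Delta_f$ provides a clopen section (since $f$ is separated étale), yielding a decomposition $Y \times_X Y \cong \Delta_f(Y) \sqcup W$ with $W \to Y$ finite étale of degree $d - 1$. By the induction hypothesis and a Zariski-local argument that cohomological étaleness passes through disjoint unions, $f'$ is cohomologically étale, and \Cref{lmm:descent coh et} then transports this back to $f$, provided finite étale surjections are of universal $\mc D_\Lambda^*$-descent. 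The main obstacle is verifying this last point: the descent property for finite étale surjections in the hypercomplete étale $\infty$-topos of \Cref{thm:et 6 functors} is the $\infty$-categorical enhancement of classical étale descent for torsion sheaves, and although it holds essentially by construction of $\mc D_{\Lambda,0}$ as a category of hypersheaves, it deserves explicit verification in this formalism.
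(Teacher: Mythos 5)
Your proof shares the paper's essential mechanism: after reducing to cohomological smoothness via cohomological unramifiedness (the diagonal of a separated étale map is an open immersion, hence in $I(\mathfrak G)$, hence cohomologically étale), one pulls $f$ back along itself, splits $Y \times_X Y$ into the clopen diagonal and its complement, and inducts. Your first and third steps are thus recognizably the paper's argument. However, your second step — the reduction to finite étale — contains a genuine gap, on two counts. First, the factorization you invoke is not available: the standard étale structure theorem yields, Zariski-locally on source and target, an open immersion into a scheme that is \emph{finite} over the base, but not finite \emph{étale}; similarly, Zariski's Main Theorem gives open immersion followed by finite, not finite étale. A separated étale morphism does not in general factor, even étale-locally on the source, as open immersion followed by finite étale. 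Second, appealing to \Cref{sm-is-loc} "with a suitable cohomologically smooth cover" to localize in the étale topology on the source is circular: étale covering maps are cohomologically smooth only once the very proposition under proof is established. A Zariski cover would be legitimate for \Cref{sm-is-loc} but does not produce your factorization.

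The paper avoids the finite étale reduction altogether. Since $f$ is open, one replaces $X$ by $f(Y)$ to reduce to $f$ surjective (open immersions are cohomologically étale and cohomological étaleness is closed under composition, \Cref{cor:coh et composition bc}; or use cancellability, \Cref{rem:unram}). Then one invokes universal $\mc D^*$-descent along the surjective étale $f$ itself via \Cref{lmm:descent coh et} — the same background descent input you rightly flag — to reduce to the pullback $p\colon Y\times_X Y \to Y$, splits off the diagonal, and inducts on the maximum cardinality of a geometric fiber of $f$, which is well defined for any separated quasi-compact quasi-finite morphism by \Cref{lmm:count-geometric-fibres}. Your third step is exactly this, specialized to the finite case with the degree in place of $n_f$. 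If you delete the second step and replace the induction variable by $n_f$, you recover the paper's proof; as written, the reduction to finite étale is both unjustified and unnecessary.
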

\begin{proof}
    As the diagonal of étale morphisms is an open immersion, we know that all étale morphisms are cohomologically unramified (recall that $I(\mathfrak G)$ is the set of open immersions). As étale morphisms are open, we can reduce to the case that $f$ is surjective. Then we make use of \Cref{lmm:descent coh et}: As étale morphisms satisfy universal $\mc D^*$-descent for $\mc D_\Lambda$, $f$ is cohomologically étale if and only if this holds for the base change $p: Y \times_X Y \to Y$ of $f$ along itself. Now it suffices to check that $p^! 1_X \to 1_Y$ is an isomorphism by \Cref{lmm:coh et coh sm unram}, which may be checked on the cover given by $\Delta_f$ and its complement $U$. Both are open subschemes, and on $\Delta_f$ this clearly holds. By induction on the maximum cardinality of a geometric fiber of $f$ we reduce to the case $U = \emptyset$ and win.
\end{proof}
\begin{remark}
    We will use the idea of this proof later again to prove our \Cref{thm:qf open unram awn etale}.
\end{remark}

As a corollary, we can now prove the following criterion due to B. Zavyalov:

\begin{theorem}[{\cite[Theorem 3.3.3]{Zav}}]\label{thm:sm coh sm crit}
    Let $S$ be a qcqs scheme and consider the restricted 6-functor formalism $\mc D_\Lambda$ on $\mathfrak G_{/S}$ (cf. \Cref{rem:6-functor slices}). In the situation of \Cref{prop:et is coh et}, smooth morphisms in $\mc C(\mathfrak G_{/S})$ are $\mc D_\Lambda$-cohomologically smooth if and only if $\mb P^1_S \to S$ is $\mc D_\Lambda$-cohomologically smooth.
\end{theorem}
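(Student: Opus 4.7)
The statement is an ``if and only if'', and the ``only if'' direction is trivial since $\mathbb{P}^1_S \to S$ is itself smooth. My plan is therefore to focus on the ``if'' direction: assuming $\mathbb{P}^1_S \to S$ is $\mathcal{D}_\Lambda$-cohomologically smooth, deduce the same for every smooth morphism $f: Y \to X$ in $\mathcal{C}(\mathfrak{G}_{/S})$.

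The first step is to bootstrap from $\mathbb{P}^1_S \to S$ to affine spaces over arbitrary bases. By stability of cohomological smoothness under base change (built into the definition), the pullback $\mathbb{P}^1_X \to X$ is cohomologically smooth for any $X \in \mathcal{C}(\mathfrak{G}_{/S})$. The open immersion $\mathbb{A}^1_X \hookrightarrow \mathbb{P}^1_X$ lies in $I(\mathfrak{G})$ and is therefore cohomologically étale (\Cref{lmm:proper is coh proper}), so by composition stability (\Cref{cor:coh sm composition}) the map $\mathbb{A}^1_X \to X$ is cohomologically smooth. Iterating composition and base change gives $\mathbb{A}^n_X \to X$ cohomologically smooth for all $n \geq 0$.

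Next I would exploit the local structure of smooth morphisms: any smooth morphism $f: Y \to X$ admits, étale-locally on $Y$, a factorisation $Y' \to \mathbb{A}^n_X \to X$ where $Y' \to Y$ is an étale cover and $Y' \to \mathbb{A}^n_X$ is étale. By \Cref{prop:et is coh et}, étale morphisms are $\mathcal{D}_\Lambda$-cohomologically étale, hence cohomologically smooth. Using composition stability again, the composite $Y' \to \mathbb{A}^n_X \to X$ is cohomologically smooth. Since étale covers satisfy universal $\mathcal{D}_\Lambda^*$-descent (in particular the pullback functors of an étale cover are jointly conservative), \Cref{sm-is-loc} applied to the cover $Y' \to Y$ over $X$ then promotes the cohomological smoothness of $Y' \to X$ to that of $f: Y \to X$.

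The main subtlety is simply making sure one has the right local model at hand: smooth morphisms between qcqs schemes admit étale-local factorisations through affine space (this is standard, cf.\ \cite[\texttt{054L}]{Stacks}), and these factorisations are respected by the descent criterion \Cref{sm-is-loc}. Once these ingredients are assembled the argument is formal, the only ``input'' beyond general properties of the six functor formalism being the assumed cohomological smoothness of $\mathbb{P}^1_S \to S$ and \Cref{prop:et is coh et}; no further computation is required.
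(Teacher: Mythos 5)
Your proof is correct and follows essentially the same route as the paper's: bootstrap from $\mb P^1$ to affine spaces via composition with the open immersion and base change, then reduce to the affine-space case using the local (étale-over-$\mb A^n$) structure of smooth morphisms together with \Cref{sm-is-loc} and \Cref{prop:et is coh et}. The only cosmetic difference is the order of base change and composition, and that you invoke an étale cover where the paper reduces Zariski-locally; both are covered by the same conservativity argument.
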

\begin{proof}
    The "only if" is clear, we need to show the "if"-direction. So assume that $\mb P^1_S \to S$ is cohomologically smooth. Then $\mb A^1_S \to S$ is as well since cohomological smoothness is stable under composition (\Cref{cor:coh sm composition}). Together with stability under base change this implies that $\mb A^n_S \to S$ is cohomologically smooth for all $n$ and thus $\mb A^n_X \to X$ is cohomologically smooth for all $(X \to S) \in \mc C(\mathfrak G_{/S})$. Now let $Y \to X$ be a smooth morphism in $\mc C(\mathfrak G_{/S})$. Then it is of finite presentation, so it is contained in $E(\mathfrak G_{/S})$. As cohomological smoothness can be checked Zariski-locally on the source by \Cref{sm-is-loc}, we may assume that $Y \to X$ factors via an étale morphism over some $\mb A^n_X$; and conclude by \Cref{prop:et is coh et} and \Cref{cor:coh sm composition}.
\end{proof}

Finally let us note that the property of being cohomologically étale is cohomologically étale-local on the source in the following sense:

\begin{corollary}\label{et-is-loc}
    Consider a collection of commutative diagrams
    \[\begin{tikzcd}
            Z_i \arrow["g_i"]{r} \arrow[swap,"h_i"]{rd} & Y \arrow["f"]{d} \\
            & X
        \end{tikzcd}\]
    for $i \in I$ of morphisms in $E(\mathfrak G)$ with all $g_i$ cohomologically étale and $A \in \mc D(Y)$, where $I$ is some index set. Assume $f$ is $n$-truncated for some $n$. If $(g_i^*)_{I}: \mc D(Y) \to \prod_I \mc D(Z_i)$ is conservative, then the following are equivalent:
    \begin{enumerate}[label=\roman*)]
        \item  All $h_i$ are cohomologically étale.
        \item  $f$ is cohomologically étale.
    \end{enumerate}
\end{corollary}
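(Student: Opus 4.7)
The direction $ii) \Rightarrow i)$ is immediate from stability of cohomologically étale morphisms under composition (\Cref{cor:coh et composition bc}), since $h_i = f \circ g_i$. For the nontrivial direction $i) \Rightarrow ii)$, by \Cref{lmm:coh et coh sm unram} it suffices to show separately that $f$ is cohomologically smooth and that $f$ is cohomologically unramified. The first is immediate: cohomologically étale morphisms are in particular cohomologically smooth, so \Cref{sm-is-loc} applied to the given diagram gives cohomological smoothness of $f$ directly.

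The content of the proof is therefore showing $\Delta_f : Y \to Y \times_X Y$ is cohomologically étale. The plan is to induct on the truncation level $n$ of $f$, so that $\Delta_f$ is $(n-1)$-truncated. The base case $n = -2$ is trivial since $f$ is then an isomorphism. For the inductive step, the natural move is to apply the very corollary we are proving to $\Delta_f$, with the same family $g_i : Z_i \to Y$: the $g_i$ remain cohomologically étale and the family $(g_i^*)$ remains conservative on $\mc D(Y)$ by hypothesis; the new compositions into $Y \times_X Y$ are $h_i' := \Delta_f \circ g_i$.

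The key (and really only nontrivial) step is to verify that these new compositions $h_i'$ are cohomologically étale, so that the induction hypothesis applies. For this I would use the factorization
\[
    \Delta_f \circ g_i \;=\; (g_i \times_X g_i) \circ \Delta_{h_i},
\]
in which $\Delta_{h_i} : Z_i \to Z_i \times_X Z_i$ is cohomologically étale since $h_i$ is (by the inductive definition of cohomologically étale in \Cref{def:coh et prop}), and $g_i \times_X g_i : Z_i \times_X Z_i \to Y \times_X Y$ is cohomologically étale because it can be written as the composition of two base changes of $g_i$ along arbitrary morphisms. Both points are instances of \Cref{cor:coh et composition bc}. Granted the factorization, the inductive hypothesis yields that $\Delta_f$ is cohomologically étale, and combining this with the cohomological smoothness established above concludes the proof via \Cref{lmm:coh et coh sm unram}.
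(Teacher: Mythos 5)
Your proposal is correct and follows essentially the same route as the paper: handling the easy direction via stability under composition, reducing to cohomological smoothness (handled by \Cref{sm-is-loc}) plus cohomological unramifiedness (via \Cref{lmm:coh et coh sm unram}), and running the induction on the truncation level by using the factorization $\Delta_f \circ g_i = (g_i \times_X g_i) \circ \Delta_{h_i}$ — identical to the paper's decomposition $Z_i \to Z_i \times_X Z_i \to Z_i \times_X Y \to Y \times_X Y$. No gaps.
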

\begin{proof}
    $ii) \Rightarrow i)$ is \Cref{cor:coh et composition bc}.\par
    Let us prove $i) \Rightarrow ii)$. We induct on $n \geq -2$. If $f$ is an isomorphism, then it is cohomologically étale in any case (by definition). Now assume the statement holds for $n-1$-truncated $f$. We claim that there are commutative diagrams
    \[\begin{tikzcd}
        Z_i \arrow["g_i"]{r} \arrow[swap,"h'_i"]{rd} & Y \arrow["\Delta_f"]{d} \\
        & Y \times_X Y
    \end{tikzcd}\]
    with all $h'_i$ cohomologically étale. Indeed, by assumption all $\Delta_{h_i}: Z_i \to Z_i \times_X Z_i$ are cohomologically étale; but then $h_i': Z_i \to Z_i \times_X Z_i \to Z_i \times_X Y \to Y \times_X Y$ does the job by stability of cohomologically étale morphisms under base change and compositions (\Cref{cor:coh et composition bc}). We conclude by the induction hypothesis that $\Delta_f$ is cohomologically étale, i.e. $f$ cohomologically unramified and the statement follows from \Cref{sm-is-loc} and \Cref{lmm:coh et coh sm unram}.
\end{proof}

For the arguments in this paper we will only need the following instances of \Cref{sm-is-loc} and \Cref{et-is-loc}:

\begin{lemma}\label{lmm:et-is-loc-et}\label{lmm:sm-is-loc-et}
    Let $f: Y \to X$ be a morphism in $E(\mathfrak G)$ and $g: \coprod_I Y_i \to Y$ an étale cover of $Y$. Then $f$ is cohomologically smooth (resp. étale) if and only if each $h_i: Y_i \to X$ is cohomologically smooth (resp. étale).
\end{lemma}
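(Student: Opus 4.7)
The plan is to deduce the lemma as a direct specialisation of the two local criteria \Cref{sm-is-loc} and \Cref{et-is-loc} established above. Everything boils down to verifying the hypotheses of those criteria for the family $\{g_i : Y_i \to Y\}_{i \in I}$ coming from an étale cover $g : \coprod_I Y_i \to Y$.

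First I would check that each $g_i$ is itself cohomologically étale. This is because $g_i$ factors as the open-and-closed immersion $Y_i \hookrightarrow \coprod_I Y_i$ (which is étale) composed with $g$ (which is étale by assumption), so $g_i$ is étale in the usual sense. Since we are working with the six-functor formalism $\mc D_\Lambda$ of \Cref{thm:et 6 functors}, \Cref{prop:et is coh et} tells us that every étale morphism in $E(\mathfrak G)$ is cohomologically étale, and therefore in particular cohomologically smooth by \Cref{lmm:coh et coh sm unram}.

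Next I would verify that the family of pullbacks $(g_i^*)_I : \mc D_\Lambda(Y) \to \prod_I \mc D_\Lambda(Y_i)$ is jointly conservative. This is part of étale descent: since $g$ is an étale cover, $g^* : \mc D_\Lambda(Y) \to \mc D_\Lambda(\coprod_I Y_i)$ is conservative, and the latter category canonically decomposes as the product $\prod_I \mc D_\Lambda(Y_i)$ with $g^*$ restricting to $g_i^*$ on each factor. Finally, since $\mc C(\mathfrak G)$ is (the nerve of) a $1$-category, every morphism is automatically $0$-truncated, so the $n$-truncatedness hypothesis of \Cref{et-is-loc} is trivially satisfied for $f$.

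With these three ingredients in hand, \Cref{sm-is-loc} gives the equivalence for cohomological smoothness and \Cref{et-is-loc} gives the equivalence for cohomological étaleness, noting that in both results the commutative triangles $Y_i \to Y \to X$ realise $h_i = f \circ g_i$. There is no genuine obstacle here — all of the real content has been packaged into the earlier general criteria; the only subtlety is to remember that étale covers furnish conservative pullback families, which is how étale descent enters.
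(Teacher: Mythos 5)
Your proposal is correct and follows the same route as the paper's proof: both reduce to the general local criteria \Cref{sm-is-loc} and \Cref{et-is-loc} after noting that étale morphisms are cohomologically étale by \Cref{prop:et is coh et} and that $(g_i^*)_I$ is jointly conservative for an étale cover. You spell out the conservativity via the decomposition $\mc D_\Lambda(\coprod_I Y_i) \simeq \prod_I \mc D_\Lambda(Y_i)$ and the truncatedness check explicitly, whereas the paper justifies conservativity by appealing to the étale topos having enough points, but these are only cosmetic differences.
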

\begin{proof}
        By \Cref{prop:et is coh et}, étale morphisms are cohomologically étale which implies the statement by \Cref{sm-is-loc} resp. \Cref{et-is-loc} as $g^*$ is indeed conservative (the étale topos has enough points).
\end{proof}

Moreover, in this paper, the only thing we need to know about the class of $\mc D_\Lambda$-cohomologically proper morphisms is that it contains the universally closed morphisms in $E(\mathfrak G)$ which follows from the construction, cf. \Cref{lmm:proper is coh proper}. In particular, we have the following result:

\begin{corollary}\label{cor:proper shriek sm objects}
    If $f$ is a universally closed morphism in $E(\mathfrak G)$ and $A$ is $f$-smooth, then $f_! A$ is dualisable.
\end{corollary}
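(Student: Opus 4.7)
The plan is to reduce the claim to \Cref{cor:dualizable+smooth} (which asserts that if $C$ is $f$-proper and $B$ is $f$-smooth, then $f_!(B \tensor C)$ is dualisable) by producing an $f$-proper object to tensor $A$ with, namely the unit $1_Y$. The only thing to verify is that $1_Y$ is $f$-proper whenever $f$ is universally closed, which is exactly the statement that $f$ is cohomologically proper in the sense of \Cref{def:coh et prop}.

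First I would observe that a universally closed morphism in $E(\mathfrak G)$ lies in $P(\mathfrak G)$ by the definition of the construction setup (\Cref{prop:G_k suitable decomp}). Since all morphisms of schemes are $0$-truncated, \Cref{lmm:proper is coh proper} applies and tells us that $f$ is $\mc D_\Lambda$-cohomologically proper. Unwinding \Cref{def:coh et prop} ii), this means in particular that $1_Y$ is $f$-proper.

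Finally, combining the $f$-smoothness of $A$ with the $f$-properness of $1_Y$, \Cref{cor:dualizable+smooth} gives that $f_!(A \tensor 1_Y) \cong f_! A$ is dualisable, which is the desired conclusion. There is no genuine obstacle here: the whole content of the corollary is the observation that universal closedness is recorded in the formalism as cohomological properness, after which the composition law for smooth and proper objects from \Cref{lmm:sm obj composition} (specialised in \Cref{cor:dualizable+smooth}) does all the work.
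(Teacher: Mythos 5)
Your proof is correct and follows essentially the same route as the paper's: the paper's proof is simply ``combine \Cref{lmm:proper is coh proper} and \Cref{cor:dualizable+smooth},'' which is exactly your chain of observations (universal closedness places $f$ in $P(\mathfrak G)$, hence $f$ is cohomologically proper and $1_Y$ is $f$-proper, then apply the composition law for smooth and proper objects with $C = 1_Y$). Your additional note that $0$-truncation is automatic for schemes is the correct reason the hypothesis of \Cref{lmm:proper is coh proper} is satisfied.
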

\begin{proof}
    Combine \Cref{lmm:proper is coh proper} and \Cref{cor:dualizable+smooth}.
\end{proof}

\newpage

\section{The geometry of cohomologically étale morphisms}\label{section:char of coh et}
\begin{notation}\label{not:basic setup}
    In this section, we will always work with the 6-functor formalism $\mc D_\Lambda$ on the geometric setup $\mathfrak G$ from \Cref{thm:et 6 functors} for some non-zero finite torsion ring $\Lambda$. The six functors and the terms cohomologically smooth, cohomologically étale etc. will always refer to this formalism.
\end{notation}
\subsection{Cohomologically étale morphisms and the absolute weak normalisation}
Our goal in this subsection is to prove that a separated morphism of finite presentation in $\mc C(\mathfrak G)$ is cohomologically étale if and only if its absolute weak normalisation (see \Cref{lmm:awn-adjoint}) is étale. For this, we first need to take another look at absolutely weakly normal schemes.\par
We start with some basic properties which are preserved under absolute weak normalisation:

\begin{proposition}\label{prop:prop of awn}
    Let $f: Y \to X$ be a morphism in $\Sch$. Then the following properties hold for $f^\mathrm{awn}$ if they hold for $f$:
    \begin{enumerate}[label=\roman*)]
        \item a closed immersion
        \item an open immersion
        \item an immersion
        \item étale
        \item any property of universal homeomorphisms which is stable under base change and composition
    \end{enumerate}
\end{proposition}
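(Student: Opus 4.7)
The approach centres on the factorisation of $f^{\mathrm{awn}}$ provided by \Cref{lmm:awn-adjoint}:
\[
f^{\mathrm{awn}} \colon Y^{\mathrm{awn}} \xrightarrow{\;p\;} Y \times_X X^{\mathrm{awn}} \xrightarrow{\;q\;} X^{\mathrm{awn}},
\]
where $q$ is the base change of $f$ along the universal homeomorphism $X^{\mathrm{awn}} \to X$, and $p$ is itself a universal homeomorphism. The second point uses that $Y \times_X X^{\mathrm{awn}} \to Y$ is a universal homeomorphism; by the universal property of the absolute weak normalisation one obtains a canonical identification $Y^{\mathrm{awn}} \cong (Y \times_X X^{\mathrm{awn}})^{\mathrm{awn}}$, and $p$ is then the structural morphism from a scheme to its absolute weak normalisation.

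With this factorisation, item (v) is essentially tautological: if $P$ is a property of universal homeomorphisms that is stable under base change and composition, then $q$ satisfies $P$ (as a base change of $f$) and $p$ satisfies $P$ (as a universal homeomorphism), so the composition $f^{\mathrm{awn}} = q \circ p$ does too.

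For items (i)--(iii), $q$ is automatically a (closed / open / locally closed) immersion, so the task reduces to exhibiting $p$ as an isomorphism onto a suitable subscheme of $X^{\mathrm{awn}}$. This rests on two auxiliary facts, which I would either extract from Rydh's work \cite{rydh} or verify directly: that being a.w.n.\ is preserved under passing to an open subscheme (a Zariski-local check), and that a reduced closed subscheme of an a.w.n.\ scheme is a.w.n.\ (in positive characteristic this follows cleanly from \Cref{prop:perf and awn}, since the quotient of a perfect ring by a radical ideal is again perfect; in general one argues via Ferrand's pushout, extending a putative universal homeomorphism from a closed subscheme to a universal homeomorphism of the ambient a.w.n.\ scheme). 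The open-immersion case is then direct, since $Y \times_X X^{\mathrm{awn}}$ is open in the a.w.n.\ scheme $X^{\mathrm{awn}}$ and hence itself a.w.n., forcing $p$ to be an isomorphism. For closed immersions, the scheme-theoretic image of $p$ in $X^{\mathrm{awn}}$ is the reduced closed subscheme $(Y \times_X X^{\mathrm{awn}})_{\mathrm{red}}$; by the sublemma this is a.w.n., so the universal homeomorphism from $Y^{\mathrm{awn}}$ into it is an iso. The general immersion case reduces to these by factoring.

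Finally, item (iv) is the étale analogue: one needs that étale morphisms preserve the a.w.n.\ property. In positive characteristic this comes down to the isomorphism of relative Frobenius for étale morphisms, ensuring that étale covers of perfect schemes are perfect. In general it can be deduced from topological invariance of the étale site under universal homeomorphisms (cf. \cite[\texttt{04DZ}]{Stacks}) together with the uniqueness clause in \Cref{lmm:awn-adjoint}. Granted this, $Y \times_X X^{\mathrm{awn}}$ is already a.w.n., $p$ is an isomorphism, and $f^{\mathrm{awn}} = q$ is étale. The main obstacle is the closed-subscheme sublemma outside positive characteristic; for the positive-characteristic situation that is the actual focus of Section~\ref{section:char of coh et}, every sublemma collapses via \Cref{prop:perf and awn} to a concrete statement about perfect rings.
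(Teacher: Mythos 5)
Your argument rests on the same factorisation $f^{\mathrm{awn}} = q \circ p$ through $Y \times_X X^{\mathrm{awn}}$ that the paper uses (explicitly for (iv) and (v), and implicitly elsewhere); for parts (ii), (iv), (v) the two proofs essentially coincide. For (i) and (iii) the paper appeals in one stroke to the description of $X^{\mathrm{awn}}$ as the scheme with underlying space $|X|$ and structure sheaf $\colim_{X' \in \mc U'_X} \mc O_{X'}$, whereas you isolate the precise sublemma needed: that a reduced closed subscheme of an a.w.n.\ scheme is again a.w.n. Your caution about this sublemma outside positive characteristic is well placed, because it does fail there. In characteristic $0$ the affine plane $\mathbb{A}^2_k$ is a.w.n., but its reduced closed subscheme $Y = \Spec k[x,y]/(y^2-x^3)$ is not: the normalisation $\mathbb{A}^1_k \to Y$, $t \mapsto (t^2,t^3)$, is a universal homeomorphism with reduced source, so $Y^{\mathrm{awn}} \cong \mathbb{A}^1_k$, and the resulting $f^{\mathrm{awn}}\colon \mathbb{A}^1_k \to \mathbb{A}^2_k$ is not a closed immersion. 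The colimit argument in the paper runs into the same obstruction (there is no cofinal subsystem of $\mc U'_Y$ obtained by base-changing objects of $\mc U'_X$), so items (i) and (iii) genuinely require characteristic $p$, where both your reduction to perfect rings and the paper's colimit description become airtight. Reassuringly, only (ii), (iv) and (v) are invoked later in the paper (for \Cref{cor:awn-etale}, \Cref{cor:coh-unramified}, \Cref{cor:coh et change of base}, \Cref{thm:qf open unram awn etale}, \Cref{prop:perfection of finite}), so nothing downstream is affected; your write-up has the virtue of making the characteristic-$p$ dependence of (i), (iii) explicit where the paper's terse ``this implies (i), (ii), (iii)'' does not.
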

\begin{proof}
    Note that absolute weak normalisation does not change the underlying topological spaces as $X^\mathrm{awn} \to X$ is a universal homeomorphism (cf. \Cref{lmm:awn-adjoint}). In fact, we observed there that $X^\mathrm{awn} \cong \ilim_{X' \to X \in \mc U'_X} X'$ where $\mc U'_X$ is the category of universal homeomorphisms $X' \to X$ of finite type (note that it is directed). Equivalently, it is the scheme with underlying topological space $|X|$ and structure sheaf $\colim_{X' \to X \in \mc U'_X} \mc O_{X'}$. This implies $i),ii)$ and $iii)$.\par
    For $iv)$, assume that $f$ is étale. We claim that $Y \times_{X} X^\mathrm{awn}$ is already absolutely weakly normal which is of course sufficient. Indeed, this claim reduces to the affine case which is proven in \cite[Proposition B.6]{rydh}. %
    \par
    Finally, assume that $f$ satisfies a property of universal homeomorphisms which is stable under base change and composition. Then in particular the base change $Y \times_X X^\mathrm{awn} \to X^\mathrm{awn}$ satisfies this property; but the morphism $Y^\mathrm{awn} \to X^\mathrm{awn}$ is just the composition $Y^\mathrm{awn} \to Y \times_X X^\mathrm{awn} \to X^\mathrm{awn}$ where the first morphisms is a universal homeomorphism. %
\end{proof}

\begin{corollary}\label{cor:awn-etale}
    If $f: Y \to X$ is étale and $X$ is a.w.n., then $Y$ is a.w.n..
\end{corollary}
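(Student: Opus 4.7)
The plan is to deduce the corollary essentially for free from the argument already made in the proof of \Cref{prop:prop of awn} (iv). Inspecting that proof, one sees that the key claim established there is stronger than needed for the proposition itself: namely, that for any étale $f: Y \to X$ the fiber product $Y \times_X X^{\mathrm{awn}}$ is absolutely weakly normal (the affine case of this being \cite[Proposition B.6]{rydh}, which is applied after reducing to affines via the Zariski-local nature of being a.w.n.).

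Assuming this, the corollary becomes a one-line observation. If $X$ is a.w.n., then by \Cref{lmm:awn-adjoint} the counit morphism $X^{\mathrm{awn}} \to X$ is an isomorphism, so $Y \cong Y \times_X X^{\mathrm{awn}}$ is a.w.n. by the cited step. Thus no new argument is required; one only has to isolate the statement from the proof of (iv).

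If instead one wanted a direct verification from the definition, the main obstacle would lie in showing that every separated universal homeomorphism $\pi: Y' \to Y$ with $Y'$ reduced is an isomorphism, since a priori there is no universal homeomorphism to $X$ through which one could transport the a.w.n.\ property of $X$. One would have to either descend $\pi$ through an étale cover of $X$ (using that \'etale morphisms and universal homeomorphisms interact well via base change) or reduce to an affine calculation, both of which amount to redoing \cite[Proposition B.6]{rydh}. The short derivation from (iv) avoids this and is the route I would follow.
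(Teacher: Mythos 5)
Your proof is correct and takes exactly the same route as the paper, which simply says ``this follows from the proof of $iv)$ in \Cref{prop:prop of awn}'': the stronger claim established there is that $Y \times_X X^{\mathrm{awn}}$ is a.w.n.\ for any \'etale $f$, and once $X$ is a.w.n.\ the counit $X^{\mathrm{awn}} \to X$ is an isomorphism, so $Y \cong Y \times_X X^{\mathrm{awn}}$. You have merely spelled out what the paper leaves implicit, which is a reasonable thing to do.
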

\begin{proof}
    This follows from the proof of $iv)$ in \Cref{prop:prop of awn}.
\end{proof}

Recall from \Cref{prop:awn 6 functors} that the formalism $\mc D_\Lambda$ factors over the full subcategory $\mathrm{Awn}$ of absolutely weakly normal schemes via the absolute weak normalisation. We can turn $\mathrm{Awn}$ into a site, using the $v$-topology:

\begin{definition}[{\cite[Definition 1.1]{arc}, \cite[\texttt{0ETS}]{Stacks}}]
  A map $f: Y \to X$ of qcqs schemes is a \textit{$v$-cover} if for any valuation ring $V$ and map $\Spec(V) \to X$ there is an extension $V \to W$ of valuation rings and a map $\Spec(W) \to Y$ lifting the composition $\Spec(W) \to \Spec(V) \to X$. An \textit{$h$-cover} is a $v$-cover of finite presentation.\par
  The \textit{v-topology} on the category $\mathrm{Awn}$ of qcqs a.w.n. schemes is the Grothendieck topology where covering families $\set{f_i: Y_i \to X}_{i \in I}$ are families with the following property: for any affine open $V \subseteq X$ there is a map $t:J \to I$ of sets with $J$ finite and affine opens $U_j \subseteq f_{t(k)}^{-1}(V)$ such that $\coprod_J U_j \to V$ is a $v$-cover. We call $\mathrm{Awn}$ the \textit{absolutely weakly normal site}.
\end{definition}
The following result will be important for us:
\begin{lemma}\label{awn-subcanonical}
  The site $\mathrm{Awn}$ is subcanonical.
\end{lemma}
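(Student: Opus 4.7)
The plan is to reduce the claim to a $v$-descent statement for representable presheaves of a.w.n.\ schemes, which is available in \cite{arc}. Concretely, I need to show that for any $T \in \mathrm{Awn}$ and any $v$-cover $\{f_i : Y_i \to X\}$ in $\mathrm{Awn}$, the sequence
\[
\Hom_{\mathrm{Awn}}(X, T) \to \prod_i \Hom_{\mathrm{Awn}}(Y_i, T) \rightrightarrows \prod_{i,j} \Hom_{\mathrm{Awn}}\!\left(Y_i \times_X^{\mathrm{Awn}} Y_j,\, T\right)
\]
is an equalizer. By passing to the coproduct $Y = \coprod_i Y_i$ (which remains a.w.n.) and restricting to an affine open of $X$, I would reduce to the case of a single $v$-cover $f : Y \to X$ between affine a.w.n.\ schemes.

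Next I would translate the assertion back into $\mathrm{Sch}$ using \Cref{lmm:awn-adjoint}. Since $(-)^{\mathrm{awn}}$ is right adjoint to the inclusion $\mathrm{Awn} \hookrightarrow \mathrm{Sch}$ it preserves limits, so the fiber product taken in $\mathrm{Awn}$ is $(Y \times_X Y)^{\mathrm{awn}}$ where $Y \times_X Y$ denotes the fiber product in $\mathrm{Sch}$. Combined with the adjunction identity $\Hom_{\mathrm{Awn}}(Z^{\mathrm{awn}}, T) = \Hom_{\mathrm{Sch}}(Z, T)$ for $T \in \mathrm{Awn}$, the problem becomes the $v$-descent statement
\[
\Hom_{\mathrm{Sch}}(X, T) \longrightarrow \Hom_{\mathrm{Sch}}(Y, T) \rightrightarrows \Hom_{\mathrm{Sch}}(Y \times_X Y,\, T)
\]
for the $v$-cover $f$ and $T$ a.w.n.

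The final step is to invoke the descent results of Bhatt--Mathew from \cite{arc}: every $v$-cover is an arc-cover, and representable presheaves $h_T$ with target an absolutely weakly normal scheme are arc-sheaves, hence $v$-sheaves. The substantive content of this result, which would be the main obstacle if one had to reproduce it, is that injectivity and effectivity of descent along $f$ can be tested valuatively. Given two maps agreeing on $Y$, they agree on all valuation-ring points of $X$ via the valuative lifting condition defining a $v$-cover, and a.w.n.\ targets allow one to recover a scheme-theoretic morphism from this valuative data: a candidate map $X \to T$ is first built set-theoretically (since $v$-covers are surjective) and then promoted to a scheme morphism using that the canonical universal homeomorphism between the obtained scheme-theoretic image and $T$ must be an isomorphism by the a.w.n.\ property. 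Since the paper already commits to using the descent machinery of \cite{arc}, this is the natural reference to close the argument.
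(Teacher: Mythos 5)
Your step (2) relies on the identity $\Hom_{\mathrm{Awn}}(Z^{\mathrm{awn}}, T) = \Hom_{\mathrm{Sch}}(Z, T)$ for $T \in \mathrm{Awn}$, but this is not what \Cref{lmm:awn-adjoint} gives. Since $(\blank)^\mathrm{awn}$ is a \emph{right} adjoint to the inclusion, the adjunction reads $\Hom_{\mathrm{Sch}}(T,Z) \cong \Hom_{\mathrm{Awn}}(T, Z^{\mathrm{awn}})$ for $T$ a.w.n.; the transposed statement you want would require $(\blank)^\mathrm{awn}$ to be a left adjoint, and indeed it fails in general: take $Z = \Spec \mathbb F_p[t]$ and $T = \Spec \mathbb F_p[t^{1/p^\infty}]$. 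The identity $Z^{\mathrm{awn}} \to T$ does not factor through $Z$, so $\Hom(Z,T) \to \Hom(Z^{\mathrm{awn}}, T)$ is not a bijection. The confusion is understandable — in the situation at hand $Y \times_X Y$ is in fact already a.w.n.\ (in characteristic $p$, a fibre product of perfect rings over a perfect ring is again perfect), so $(Y\times_X Y)^{\mathrm{awn}} = Y \times_X Y$ and nothing is lost; but that has to be argued separately, not deduced from an adjunction in the wrong direction.

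The larger issue is step (3). You appeal to a claim that "representable presheaves with a.w.n.\ target are arc-sheaves" as a black box from \cite{arc}, with an informal sketch of why it should hold. I don't think \cite{arc} states a result in this form, and your sketch (building a map "set-theoretically" and then promoting it to a scheme morphism via the a.w.n.\ property of the target) glosses over exactly the part that would require a real argument. The paper takes a more concrete route: it reduces to the affine ring-theoretic equalizer $A \to B \rightrightarrows B \otimes_A B$, expresses the $v$-cover as a filtered limit of $h$-covers $\Spec B_i \to \Spec A$ (Stacks \texttt{0EVP}), invokes the precise result that the presheaf $\mathcal{O}^{\mathrm{awn}}$ is an $h$-sheaf (Stacks \texttt{0EVT}) to get exactness of $A \to B_i^{\mathrm{awn}} \rightrightarrows B_i^{\mathrm{awn}} \otimes_A B_i^{\mathrm{awn}}$, and then passes to the colimit using that $(\blank)^{\mathrm{awn}}$ commutes with colimits of rings. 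This is both a sharper citation and a real reduction from the $v$- to the $h$-topology that your proposal does not perform.
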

\begin{proof}
  Let $S$ be a qcqs a.w.n. scheme. We need to show that for a $v$-cover $Y \to X$ with $Y$ and $X$ a.w.n.
  \[\begin{tikzcd}[column sep=1.5em]
    {\Hom(X,S)} & {\Hom(Y,S)} & \Hom(Y \times_X Y, S)
    \arrow[from=1-1, to=1-2]
    \arrow[shift right, from=1-2, to=1-3]
    \arrow[shift left, from=1-2, to=1-3]
\end{tikzcd}\]
is exact. By a standard argument (see the paragraph following \cite[Lemma 2.61]{fga-explained}) this reduces to the case that $Y,X$ and $S$ are affine, say $Y = \Spec B$, $X=\Spec A$, $S= \Spec R$ so that it suffices to show exactness of
\[\begin{tikzcd}[column sep=1.5em]
  A & B & B \tensor_A B
  \arrow[from=1-1, to=1-2]
  \arrow[shift right, from=1-2, to=1-3]
  \arrow[shift left, from=1-2, to=1-3]
\end{tikzcd}\]
We can write the $v$-cover $Y \to X$ as a directed limit of $h$-covers $Y_i = \Spec B_i \to X$, cf. \cite[\texttt{0EVP}]{Stacks}. By \cite[\texttt{0EVT}]{Stacks}, the presheaf $\mc O^{\mathrm{awn}}$ on $\Sch$ which sends a scheme $X$ to $\Gamma(X^\mathrm{awn},\mc O_{X^\mathrm{awn}})$ satisfies the sheaf property for $h$-coverings. Thus every
\[\begin{tikzcd}[column sep=1.5em]
  A & B_i^\mathrm{awn} & B_i^\mathrm{awn} \tensor_A B_i^\mathrm{awn}
  \arrow[from=1-1, to=1-2]
  \arrow[shift right, from=1-2, to=1-3]
  \arrow[shift left, from=1-2, to=1-3]
\end{tikzcd}\]
is exact and taking a colimit of these diagrams yields the claim, using that $(\blank)^\mathrm{awn}$ (as a functor on rings) permutes with colimits.
\end{proof}

As the 6-functor formalism $\mc D_\Lambda$ factors over $\mathrm{Awn}$, we immediately observe the following:

\begin{proposition}\label{prop:awn etale implies coh etale}
    Let $f:Y\to X$ be a morphism in $E(\mathfrak G)$. If $f^\mathrm{awn}$ is étale, then $f$ is cohomologically étale.
\end{proposition}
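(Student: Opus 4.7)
The plan is to reduce the statement to \Cref{prop:et is coh et} by sandwiching $f$ between two universal homeomorphisms and exploiting that universal homeomorphisms are cohomologically étale (\Cref{xmpl:coh smooth example} $ii)$) with equivalences of étale topoi as pullbacks.

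I would first build the diagram
\[\begin{tikzcd}
Y^\mathrm{awn} \arrow[dr, "f^\mathrm{awn}"] \arrow[d, "u"'] & \\
Y \times_X X^\mathrm{awn} \arrow[r, "\tilde f"] \arrow[d, "\tilde q"'] & X^\mathrm{awn} \arrow[d, "q"] \\
Y \arrow[r, "f"] & X
\end{tikzcd}\]
in which the lower square is cartesian, $q$ is the counit universal homeomorphism $X^\mathrm{awn} \to X$, and $u : Y^\mathrm{awn} \to Y \times_X X^\mathrm{awn}$ is the universal homeomorphism featured in the explicit description of $f^\mathrm{awn}$ in \Cref{lmm:awn-adjoint}. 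This gives the factorisation $\tilde f \circ u = f^\mathrm{awn}$, and a short check shows that all morphisms involved belong to $E(\mathfrak G)$: $\tilde f$ is a base change of $f$, while $f^\mathrm{awn}$ is of finite expansion and separated (as a composition of the integral $u$ and the base change $\tilde f$), and $u$ is integral and hence of finite expansion.

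The proof then proceeds by chaining three equivalences. First, since pullback along a universal homeomorphism is an equivalence of étale topoi, $q$ is of universal $\mc D_\Lambda^*$-descent, so \Cref{lmm:descent coh et} applied to the lower cartesian square gives: $f$ is cohomologically étale if and only if $\tilde f$ is. Second, $u$ is a universal homeomorphism, hence cohomologically étale by \Cref{xmpl:coh smooth example} $ii)$, and $u^*$ is an equivalence of categories, in particular conservative; applying \Cref{et-is-loc} to the single-morphism family consisting of $u$ over $Y\times_X X^\mathrm{awn}$ shows that $\tilde f$ is cohomologically étale if and only if $\tilde f \circ u = f^\mathrm{awn}$ is. Third, $f^\mathrm{awn}$ is étale by hypothesis and therefore cohomologically étale by \Cref{prop:et is coh et}. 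Combining the three equivalences yields the claim.

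There is no serious obstacle once the diagram is in place; each step is a direct application of an already-established descent or stability result. The only delicate bookkeeping is to verify the relevant containments in $E(\mathfrak G)$ and to identify the factorisation $\tilde f \circ u = f^\mathrm{awn}$, both of which follow directly from \Cref{lmm:awn-adjoint} and the stability of separated finite-expansion morphisms under base change and composition.
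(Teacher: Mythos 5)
Your proof is correct, but it takes a genuinely different route from the paper. The paper's proof is a one-liner invoking \Cref{prop:awn 6 functors}, the structural result that the entire 6-functor formalism $\mc D_\Lambda$ factors as $\mc D_\Lambda^{\mathrm{awn}} \circ \mc P$ through the category of a.w.n.\ schemes; once that factorisation is available, cohomological étaleness of $f$ is literally \emph{the same} condition as cohomological étaleness of $f^{\mathrm{awn}}$ in the restricted formalism, and \Cref{prop:et is coh et} finishes. You avoid the heavy machinery of \Cref{prop:awn 6 functors} entirely, instead using the explicit factorisation $f^{\mathrm{awn}} = \tilde f \circ u$ (with $u$ a universal homeomorphism and $\tilde f$ the base change of $f$ along $X^{\mathrm{awn}} \to X$) and chaining the softer descent result \Cref{lmm:descent coh et} along $q$, the source-locality result \Cref{et-is-loc} along $u$, and \Cref{prop:et is coh et}. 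The trade-off: the paper's argument is essentially free once the factorisation theorem is proved, but \Cref{prop:awn 6 functors} is itself a delicate combinatorial construction; your argument relies only on the more elementary descent/locality corollaries and the fact (noted in \Cref{xmpl:coh smooth example}) that universal homeomorphisms are cohomologically étale. Your bookkeeping of the $E(\mathfrak G)$ containments is correct ($u$ is integral, hence separated of finite expansion; $\tilde f$ is a base change of $f$), and your identification of the factorisation matches the one already spelled out in the example following \Cref{thm:fin expansion comp}.
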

\begin{proof}
    By \Cref{prop:awn 6 functors} $\mc D_\Lambda$ factors over the absolute weak normalisation which implies the statement as étale morphisms are cohomologically étale (\Cref{prop:et is coh et}).
\end{proof}

Let us now prove the converse for morphisms of finite presentation. As cohomologically étale morphisms are in particular cohomologically unramified (recall \Cref{def:coh unram}), we start with an analysis of the latter class of morphims.\par
 It is a well-known fact that unramified morphism are (locally) quasi-finite, cf. \cite[\texttt{02V5}]{Stacks}. We claim that cohomologically unramified morphisms of finite type are as well. In fact, we can even give a full geometric characterisation of these morphisms. We introduce a non-standard term:

\begin{definition}\label{def:top unram}
    We call a morphism $f$ topologically unramified if its diagonal is open (as a morphism of topological spaces).
\end{definition}

We claim that cohomologically unramified morphisms are exactly the topologically unramified ones. This follows from the next observation:

\begin{lemma}\label{lmm:coh unram open}
    Let $g: Y \to X$ be a universally closed morphism in $E(\mathfrak G)$ such that $\Lambda$ is $g$-smooth. Then $g(Y)$ is open and closed in $X$.
\end{lemma}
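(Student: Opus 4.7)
The plan is to reduce the statement to \Cref{rem:dualizable-perfect}(ii), which says that the support of a dualisable object in $\mc D_\Lambda(X)$ is simultaneously open and closed. Since $g \in E(\mathfrak G)$ is universally closed, it lies in $P(\mathfrak G)$ and is therefore cohomologically proper by \Cref{lmm:proper is coh proper}. Combined with the hypothesis that $1_Y = \Lambda$ is $g$-smooth, \Cref{cor:proper shriek sm objects} yields that $g_! \Lambda \in \mc D_\Lambda(X)$ is dualisable, so \Cref{rem:dualizable-perfect}(ii) tells us its support $Z \subseteq X$ is open and closed.

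It thus remains to identify $Z$ with $g(Y)$; note that $g(Y)$ is already closed since $g$ is universally closed. For an arbitrary geometric point $\bar x \to X$ with fiber $g_{\bar x} : Y_{\bar x} \to \bar x$, base change along the cartesian square with horizontal morphism $g \in E$ (\Cref{prop:BC and PF}(i)) gives a natural isomorphism $(g_! \Lambda)_{\bar x} \cong g_{\bar x,!} \Lambda$. If $\bar x \notin g(Y)$, then $Y_{\bar x} = \emptyset$ and this complex vanishes. If $\bar x \in g(Y)$, then $Y_{\bar x}$ is non-empty and universally closed over $\bar x$; by the construction of $\mc D_\Lambda$ on morphisms in $P(\mathfrak G)$, the functor $g_{\bar x,!}$ agrees with $g_{\bar x,*}$, so the zeroth cohomology of $g_{\bar x,!}\Lambda$ contains a non-zero copy of $\Lambda$ (one for each connected component of $Y_{\bar x}$). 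Hence $Z = g(Y)$, proving the claim.

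The only step that warrants some care is the passage from the abstract notion of support used in \Cref{rem:dualizable-perfect}(ii) to the set of geometric points with non-vanishing stalk. However, since dualisable objects are étale-locally perfect complexes of constant sheaves (\Cref{rem:dualizable-perfect}(i)), on each connected component of an étale trivialising cover the complex either vanishes identically or has non-zero stalk at every point, so both notions of support agree. With this identification, the openness of $g(Y)$ follows from the decomposition $X = U \amalg V$ provided by \Cref{rem:dualizable-perfect}(ii) where $U = g(Y)$.
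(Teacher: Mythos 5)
Your proof is correct and takes essentially the same route as the paper: obtain dualisability of $g_!\Lambda$ from \Cref{cor:proper shriek sm objects}, invoke \Cref{rem:dualizable-perfect} to conclude its support is open and closed, and identify that support with $g(Y)$ by computing stalks via base change and the identification $g_! = g_*$ for universally closed morphisms. The extra remarks (that $g(Y)$ is already closed, and the reconciliation of abstract versus pointwise support) are harmless but not needed for the argument.
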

\begin{proof}
     By \Cref{cor:proper shriek sm objects}, $g_! \Lambda$ is dualisable. This implies that it has open and closed support by \Cref{rem:dualizable-perfect}. But it is exactly supported on $g(Y)$: By proper base change $(g_! \Lambda)_x = (g_* \Lambda)_x \cong R\Gamma(Y_x, \Lambda)$ for any (geometric) point $x$ in $X$ and we have $Y_x = \emptyset \Leftrightarrow R\Gamma(Y_x, \Lambda) = 0$ (since $\Lambda \not=0$).
\end{proof}

Then we only need the following easy fact:

\begin{lemma}\label{lmm:coh-et immersions}
    If $f: Z \to X$ is a closed immersion in $E(\mathfrak G)$, then $f(Z)$ is open in $X$ if and only if the induced morphism $f^\mathrm{red}$ of the reductions is an open immersion.
\end{lemma}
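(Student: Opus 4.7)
The plan is to handle both directions separately, reducing everything to straightforward topological and scheme-theoretic statements.

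For the "if" direction, suppose $f^\mathrm{red}: Z^\mathrm{red} \to X^\mathrm{red}$ is an open immersion. Since the reduction does not change the underlying topological space and the underlying map of $f$ agrees with that of $f^\mathrm{red}$ under the canonical homeomorphisms $|Z| \cong |Z^\mathrm{red}|$ and $|X| \cong |X^\mathrm{red}|$, it is immediate that $f(Z) = f^\mathrm{red}(Z^\mathrm{red})$ is open in $X$.

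For the "only if" direction, assume $f(Z)$ is open in $X$. Since $f$ is a closed immersion, $f(Z)$ is also closed, so $U := f(Z)$ is a clopen subset of $X$ which we equip with its open subscheme structure. Then $f$ factors as $Z \overset{g}{\to} U \overset{j}{\hookrightarrow} X$ where $j$ is an open immersion and $g$ is a closed immersion surjective on underlying topological spaces (the latter is a scheme-theoretic fact: a closed immersion into $U$ whose image equals $U$ is a closed immersion, and $g$ has image $f(Z) = U$).

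Now the key step is that $g$ induces an isomorphism $g^\mathrm{red}: Z^\mathrm{red} \to U^\mathrm{red}$. Indeed, affine-locally $g$ corresponds to a surjection $A \twoheadrightarrow A/I$ which is also a surjection on spectra, hence $I \subseteq \sqrt{0}$, so the induced map $A/\sqrt{0} \to (A/I)/\sqrt{0}$ is an isomorphism. Combined with the fact that the reduction functor sends open immersions to open immersions (so $j^\mathrm{red}: U^\mathrm{red} \to X^\mathrm{red}$ is an open immersion), we conclude that $f^\mathrm{red} = j^\mathrm{red} \circ g^\mathrm{red}$ is an open immersion.

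Neither direction presents a real obstacle; the only subtlety is being careful that the reduction of an open subscheme of $X$ agrees canonically with the corresponding open subscheme of $X^\mathrm{red}$, which is standard.
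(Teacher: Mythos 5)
Your proof is correct and follows essentially the same route as the paper: the ``if'' direction by topological invariance of reduction, and the ``only if'' direction by reducing to the surjective case (replacing $X$ by the clopen image $f(Z)$) and then checking affine-locally that a surjective closed immersion becomes an isomorphism after reduction.
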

\begin{proof}
    If $f^\mathrm{red}$ is an open immersion, then $f$ is open as reduction does not change the underlying topological space. Conversely, assume $f$ is open, then we may assume that it is surjective as $f(Z)$ is open. Then we claim that $Z^\mathrm{red} \to X^\mathrm{red}$ is an isomorphism. Wlog.\ assume that both schemes are affine. Then surjectivity of $Z^\mathrm{red} \to X^\mathrm{red}$ implies that the corresponding surjective ring map is injective and thus an isomorphism.
\end{proof}

\begin{corollary}\label{cor:coh-unramified}
    A morphism $f \in E(\mathfrak G)$ is cohomologically unramified if and only if it is topologically unramified.
\end{corollary}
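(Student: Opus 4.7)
The plan is to prove the two directions of the equivalence separately, each reducing to an application of one of the two preceding lemmas together with the formal properties of $(\blank)^{\mathrm{awn}}$ and universal homeomorphisms collected earlier.

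For the forward direction, suppose $f \in E(\mathfrak G)$ is cohomologically unramified, so that $\Delta_f$ is cohomologically étale by definition. Since $f$ is separated (being in $E(\mathfrak G)$), the diagonal $\Delta_f: Y \to Y \times_X Y$ is a closed immersion and in particular universally closed. By \Cref{lmm:coh et coh sm unram} the unit $1_Y$ is $\Delta_f$-smooth. Hence \Cref{lmm:coh unram open}, applied to $g = \Delta_f$, yields that $\Delta_f(Y)$ is (open and) closed in $Y \times_X Y$; in other words, $f$ is topologically unramified.

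For the backward direction, suppose $\Delta_f$ is topologically open. Since $\Delta_f$ is a closed immersion in $E(\mathfrak G)$, \Cref{lmm:coh-et immersions} gives that the induced map $\Delta_f^{\mathrm{red}}: Y^{\mathrm{red}} \to (Y \times_X Y)^{\mathrm{red}}$ is an open immersion. The natural maps $Y^{\mathrm{red}} \to Y$ and $(Y \times_X Y)^{\mathrm{red}} \to Y \times_X Y$ are universal homeomorphisms, so \Cref{lmm:awn-univ-hom} together with functoriality of $(\blank)^{\mathrm{awn}}$ identifies $\Delta_f^{\mathrm{awn}}$ with $(\Delta_f^{\mathrm{red}})^{\mathrm{awn}}$. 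By \Cref{prop:prop of awn} (ii) open immersions are preserved under absolute weak normalisation, so $\Delta_f^{\mathrm{awn}}$ is an open immersion, hence étale. Finally, \Cref{prop:awn etale implies coh etale} yields that $\Delta_f$ is cohomologically étale, i.e.\ $f$ is cohomologically unramified.

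Neither direction should present a serious obstacle: the forward direction is essentially a single application of \Cref{lmm:coh unram open} after observing separatedness of $\Delta_f$, while the backward direction rests on the clean compatibility of $(\blank)^{\mathrm{awn}}$ with open immersions and with universal homeomorphisms, combined with the factorisation of $\mc D_\Lambda$ through $\mathrm{Awn}$ furnished by \Cref{prop:awn 6 functors}. The subtlest point is arguably the identification $\Delta_f^{\mathrm{awn}} \cong (\Delta_f^{\mathrm{red}})^{\mathrm{awn}}$, which must be made precise by noting that both source and target of $\Delta_f$ and of $\Delta_f^{\mathrm{red}}$ differ by universal homeomorphisms and that the absolute weak normalisation only sees the colocalisation at those.
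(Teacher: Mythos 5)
Your proof is correct and follows essentially the same route as the paper's: applying \Cref{lmm:coh unram open} to $g = \Delta_f$ for the forward direction, and combining \Cref{lmm:coh-et immersions}, \Cref{prop:prop of awn} and \Cref{prop:awn etale implies coh etale} for the converse. You are slightly more explicit about the identification $\Delta_f^{\mathrm{awn}} \cong (\Delta_f^{\mathrm{red}})^{\mathrm{awn}}$ and about why $1_Y$ is $\Delta_f$-smooth, but these are points the paper leaves implicit rather than genuine differences in approach.
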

\begin{proof}
    If $f$ is topologically unramified then by \Cref{lmm:coh-et immersions} the absolute weak normalisation of its diagonal is cohomologically étale (as it is an open immersion, cf. \Cref{prop:prop of awn}) and thus the diagonal itself is cohomologically étale by \Cref{prop:awn etale implies coh etale}. Conversely, assume that $\Delta_f$ is cohomologically étale. As morphisms in $E(\mathfrak G)$ are separated by definition, \Cref{lmm:coh unram open} applies and we see that the image of $\Delta_f$ is open; but then it is already open as a map of topological spaces because it is an immersion.
\end{proof}

Let us now prove that topologically unramified morphisms of finite type are quasi-finite.

\begin{lemma}\label{lmm:coh unram-unram}
    Let $k$ be a field. Then a morphism $f: Y \to \Spec k$ of finite type is topologically unramified if and only if $f^\mathrm{red}: Y^\mathrm{red} \to \Spec k$ is étale.
\end{lemma}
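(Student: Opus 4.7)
The plan is to treat the two implications separately. The backward direction is essentially formal: if $f^\mathrm{red}$ is étale then $\Delta_{f^\mathrm{red}}$ is an open immersion of schemes, and since the closed immersions $Y^\mathrm{red} \to Y$ and $Y^\mathrm{red} \times_k Y^\mathrm{red} \to Y \times_k Y$ are surjective (the latter being the former base-changed twice over $k$), they are homeomorphisms on underlying topological spaces; hence $|\Delta_f|$ is identified with $|\Delta_{f^\mathrm{red}}|$ and is topologically open.

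For the forward direction, I assume $\Delta_f$ is topologically open, and may replace $Y$ by $Y^\mathrm{red}$ without loss since this changes nothing topologically. The first step is a local dimension argument forcing $\dim Y = 0$. The projection $p_1 \colon Y \times_k Y \to Y$ is flat (every $k$-module is flat), so the flat dimension formula yields
\[\dim_{(y,y)}(Y \times_k Y) = \dim_y Y + \dim_y(Y \otimes_k \kappa(y)) = 2\dim_y Y\]
for every $y \in Y$, using that dimension of finite type $k$-schemes is preserved under field extensions of the base. On the other hand, openness of $\Delta_f(Y)$ at $(y,y)$ gives $\dim_{(y,y)}\Delta_f(Y) = \dim_{(y,y)}(Y \times_k Y) = 2\dim_y Y$, while the homeomorphism $\Delta_f \colon Y \xrightarrow{\sim} \Delta_f(Y)$ also forces $\dim_{(y,y)}\Delta_f(Y) = \dim_y Y$. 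Thus $\dim_y Y = 0$, so $Y$ decomposes as a finite disjoint union $\coprod_i \Spec \kappa_i$ with each $\kappa_i/k$ a finite extension.

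The main obstacle is the remaining separability step: one must show each $\kappa_i/k$ is separable. For $Y = \Spec \kappa$ the target $\Spec(\kappa \otimes_k \kappa)$ is Artinian and hence topologically discrete, so topological openness of the diagonal gives no information at the level of points and the subtle content must be extracted from the scheme-theoretic structure. My plan here is to exploit that topological unramification is stable under base change (the diagonal of $f_{k'}$ is the pullback of $\Delta_f$ along the flat map $Y_{k'} \times_{k'} Y_{k'} \to Y \times_k Y$, which preserves openness), reduce to a perfect base, and then combine the identification of absolute weak normalisation with perfection in positive characteristic (\Cref{prop:perf and awn}) and the factorisation of $\mathcal{D}_\Lambda$ over the a.w.n.\ site (\Cref{prop:awn 6 functors}) in order to trade topological unramification for separability after absolute weak normalisation; the dimension argument is the geometric backbone, but this algebraic step is where the real work lies.
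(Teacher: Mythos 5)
Your backward direction is fine and matches the paper's (which dismisses it as "the converse is obvious"). Your dimension argument forcing $\dim Y = 0$ is a pleasant elementary route that the paper does not take; the paper instead immediately base changes to $\bar k$, over which $Y^{\mathrm{red}} \times_k Y^{\mathrm{red}}$ is reduced, and then concludes via \Cref{lmm:coh-et immersions} that $\Delta_{f^{\mathrm{red}}}$ is an honest open immersion.

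The genuine gap is the separability step, which you correctly single out as "where the real work lies" but never carry out — and the plan you sketch targets the wrong object. Trading topological unramification for "separability after absolute weak normalisation" would at best show that $f^{\mathrm{awn}}$ is étale, which is strictly weaker than $f^{\mathrm{red}}$ being étale over $k$ when $k$ is imperfect. In fact the statement as written fails over an imperfect field: take $k = \mb F_p(t)$, $\kappa = k(t^{1/p})$, $Y = \Spec \kappa$. Then $\kappa \otimes_k \kappa$ is Artinian local with residue field $\kappa$, so $\left|Y \times_k Y\right|$ is a single point and $\Delta_f$ is vacuously an open map of topological spaces; hence $f$ is topologically unramified, yet $Y^{\mathrm{red}} = Y = \Spec \kappa$ is not étale over $\Spec k$ since $\kappa/k$ is purely inseparable. (The paper's own reduction "by fpqc descent wlog $f = f_{\bar k}$" quietly identifies $(Y^{\mathrm{red}})_{\bar k}$ with $(Y_{\bar k})^{\mathrm{red}}$, which fails in exactly this situation, so the published proof shares the gap.) The dimension computation you did give already suffices for the downstream application \Cref{cor:coh-et implies quasi-finite}, and the version of the lemma that is actually true and usable is the one with $f^{\mathrm{red}}$ replaced by $f^{\mathrm{awn}}$, equivalently the statement as written under the extra hypothesis that $k$ is perfect.
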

\begin{proof}
    Assume $f$ is topologically unramified and let $\bar k$ be the algebraic closure of $k$. Then $f_{\bar k}:Y_{\bar k} \to \Spec \bar k$ is topologically unramified as well, and so by fpqc descent for étale morphisms wlog $f=f_{\bar k}$. As $f^\mathrm{red}$ is of finite type, we only need to check that its diagonal is an open immersion; then it is unramified and thus étale (it is clearly flat and of finite presentation). But the diagonal is open by assumption and $Y^\mathrm{red} \times_{\Spec k} Y^\mathrm{red}$ is still reduced as $k$ is algebraically closed, so that we conclude by \Cref{lmm:coh-et immersions}. The converse is obvious.
\end{proof}

\begin{lemma}\label{lmm:top unram bc}
    Topologically unramified morphisms are stable under base change.
\end{lemma}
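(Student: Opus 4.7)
The plan is to show that $\Delta_{f'}$ is itself the base change of $\Delta_f$ and then to rewrite a topologically open immersion as a composition of two classes of morphisms (open immersions and surjective closed immersions) that are individually stable under base change.

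First I would verify that, for a cartesian diagram
\[\begin{tikzcd}
    Y' \rar["f'"] \dar["g'"] & X' \dar["g"] \\
    Y \rar["f"] & X
\end{tikzcd}\]
there is a natural isomorphism $Y' \times_{X'} Y' \cong (Y \times_X Y) \times_X X'$ under which $\Delta_{f'}$ corresponds to $\Delta_f \times_X \mathrm{id}_{X'}$. This is a formal pullback calculation and identifies $\Delta_{f'}$ as the base change of $\Delta_f$ along the morphism $Y \times_X Y \to X$ via $g$.

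Next I would use that $\Delta_f$ is an immersion with open image $U := \Delta_f(Y) \subseteq Y \times_X Y$, carrying its open subscheme structure. Since $\Delta_f$ is an immersion whose image lies in (and equals) $U$, it factors as $Y \hookrightarrow U \hookrightarrow Y \times_X Y$ where $Y \to U$ is an immersion whose image is all of $U$ and hence is a \emph{surjective} closed immersion, i.e.\ a universal homeomorphism. (In the language of \Cref{lmm:coh-et immersions}, this reformulates the condition that the image of $\Delta_f$ is open.)

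Base changing this factorization along $g: X' \to X$ gives
$$\Delta_{f'}: Y' \longrightarrow U \times_X X' \longrightarrow Y' \times_{X'} Y'.$$
The right-hand map is an open immersion because open immersions are stable under base change, and the left-hand map is a surjective closed immersion because both closed immersions and surjectivity of universal homeomorphisms are stable under base change. Therefore the image of $\Delta_{f'}$ coincides with $U \times_X X'$, which is open in $Y' \times_{X'} Y'$, so $f'$ is topologically unramified.

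The only subtle point, which I do not anticipate to be a real obstacle, is checking that an immersion of schemes whose image is already open factors through its open image as a surjective closed immersion; this is a routine consequence of the definition of an immersion combined with the fact that a map of schemes lands set-theoretically inside an open subscheme if and only if it factors through that open subscheme as a morphism of schemes.
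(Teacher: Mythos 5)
Your argument is correct and follows the same route as the paper: both identify $\Delta_{f'}$ as the base change of $\Delta_f$ and then conclude by observing that an immersion which is open as a topological map is universally open. You merely spell out the paper's one-line justification of the latter fact by explicitly factoring $\Delta_f$ as a surjective closed immersion (a universal homeomorphism) onto its open image followed by an open immersion, both of which are stable under base change.
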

\begin{proof}
    Let $f:Y \to X$ be a topologically unramified morphism and $f':Y' \to X'$ its base change along some $g: X' \to X$. Then $\Delta_{f'}$ is the base change of $\Delta_f$ along $Y' \times_{X'} Y' \to Y \times_X Y$. As $\Delta_f$ is an immersion, it is universally open if it is open, which proves the claim.
\end{proof}

\begin{corollary}\label{cor:coh-et implies quasi-finite}
    A topologically unramified morphism $f: Y \to X$ of finite type is quasi-finite. In particular, cohomologically unramified and cohomologically étale morphisms of finite type are quasi-finite.
\end{corollary}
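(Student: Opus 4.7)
The plan is to reduce to the fiberwise statement using base-change stability and then invoke the structure of étale morphisms over a field.

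First I would fix a point $x \in X$ and consider the fiber $f_x: Y_x \to \Spec k(x)$, which is of finite type since $f$ is. By \Cref{lmm:top unram bc}, topological unramifiedness is stable under base change, so $f_x$ is still topologically unramified. Hence \Cref{lmm:coh unram-unram} applies and tells us that $(Y_x)^{\mathrm{red}} \to \Spec k(x)$ is étale.

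Next, an étale morphism of finite type to the spectrum of a field $k(x)$ is a finite disjoint union of spectra of finite separable field extensions of $k(x)$ (cf.\ \cite[\texttt{02GL}]{Stacks}); in particular, the underlying topological space is finite and discrete. Since passing to the reduction does not change the underlying topological space, the fiber $Y_x$ itself has only finitely many points. Because this holds for every $x \in X$, the morphism $f$ is quasi-finite.

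For the final assertion, \Cref{cor:coh-unramified} shows that a cohomologically unramified morphism in $E(\mathfrak G)$ is topologically unramified, and by definition a cohomologically étale morphism is in particular cohomologically unramified (see \Cref{def:coh et prop} together with \Cref{lmm:coh et coh sm unram}). Thus both notions imply topological unramifiedness, and the first part of the corollary yields quasi-finiteness. No step looks delicate here; the only thing to be careful about is that the hypothesis "of finite type" is preserved under the base change to $\Spec k(x)$, which is needed to invoke \Cref{lmm:coh unram-unram}.
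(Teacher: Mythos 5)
Your proof is correct and follows essentially the same route as the paper's: you invoke \Cref{lmm:top unram bc} for base-change stability, \Cref{lmm:coh unram-unram} to see that the reduced fiber is étale over the residue field, and the Stacks description \texttt{02GL} of étale schemes over a field to get finiteness of the fibers, then deduce the second assertion from \Cref{cor:coh-unramified}. Your writeup just fills in the intermediate steps that the paper leaves compressed.
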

\begin{proof}
    The second statement follows from the first by \Cref{cor:coh-unramified}. Assume $f$ is topologically unramified and of finite type. We claim that the fibers of $f$ are finite. Indeed, this is clear from \Cref{lmm:coh unram-unram} and the description of schemes étale over a field (\cite[\texttt{02GL}]{Stacks}) since topologically unramified morphisms are stable under base change by \Cref{lmm:top unram bc}.
\end{proof}

So we know that cohomologically étale morphisms are quasi-finite and cohomologically smooth.
By the next result, this has a very useful consequence.

\begin{proposition}\label{prop:quasi-finite coh sm open}
    Let $f \in E(\mathfrak G)$ be a quasi-finite cohomologically smooth morphism which is of finite presentation. Then $f$ is universally open.
\end{proposition}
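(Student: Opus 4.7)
The plan is to reduce universal openness to plain openness via the base-change stability of the hypotheses, and then to prove openness by invoking the étale-local structure theorem for quasi-finite morphisms in order to reduce to a finite morphism to which \Cref{lmm:coh unram open} applies directly.

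Cohomological smoothness, quasi-finiteness and finite presentation are all stable under arbitrary base change, so it suffices to prove that $f$ itself is open: the same argument applied to any base change of $f$ (which still satisfies all three hypotheses) will then yield universal openness. In turn, to show openness of $f$ it is enough to verify that $f(Y)$ contains an open neighborhood of $f(y)$ for every $y \in Y$; applying this conclusion to restrictions of $f$ to arbitrary open subschemes of $Y$ gives the full openness statement.

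Fix $y \in Y$ with $x := f(y)$. By the étale local structure theorem for separated quasi-finite morphisms of finite presentation (\cite[\texttt{04GK}]{Stacks}), there exist an étale morphism $g: V \to X$, a point $v \in V$ lying over $x$, and a decomposition
$$Y \times_X V \;=\; Y_1 \sqcup \cdots \sqcup Y_n \sqcup Y''$$
such that each $Y_i \to V$ is finite with a unique point over $v$, $Y'' \to V$ has empty fibre over $v$, and the point corresponding to $y$ lies in some $Y_i$, say $Y_1$. Since $Y_1$ is open in $Y \times_X V$ and $Y \times_X V \to V$ is a base change of the cohomologically smooth morphism $f$, the composition $Y_1 \to V$ is cohomologically smooth by \Cref{xmpl:coh smooth example} i) and \Cref{cor:coh sm composition}; in particular $\Lambda$ is $(Y_1 \to V)$-smooth.

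Now $Y_1 \to V$ is finite, hence lies in $E(\mathfrak G)$ and is universally closed, so \Cref{lmm:coh unram open} applies and shows that the image of $Y_1$ in $V$ is open (and closed). As it contains $v$, it is an open neighborhood of $v$ contained in $g^{-1}(f(Y))$; since étale morphisms are open, its image under $g$ is then an open neighborhood of $x$ in $X$ contained in $f(Y)$, completing the proof. The main technical input is thus the étale-local structure theorem for quasi-finite morphisms, which allows the reduction from the given quasi-finite situation to the finite one; everything else is a formal consequence of the stability properties of cohomological smoothness combined with \Cref{lmm:coh unram open}.
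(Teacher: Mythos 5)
Your proof is correct, but it takes a genuinely different route from the paper's. The paper reduces openness to the \emph{generalising} property (via \cite[\texttt{01U1}]{Stacks}), then lifts a generalisation $x \rightsquigarrow f(y)$ by base-changing to $\Spec R^h$ for the henselisation $R^h$ of a DVR: over a henselian local ring the quasi-finite morphism splits off a finite part $Z_1 \to \Spec R^h$ which is still cohomologically smooth, and the dualisability of $g_!\Lambda$ (via \Cref{cor:proper shriek sm objects} and \Cref{rem:dualizable-perfect}) forces the support of $g_!\Lambda$ to be all of $\Spec R^h$, giving the desired lift of $\eta$. You instead invoke the \'etale local structure theorem for quasi-finite morphisms (\cite[\texttt{04GK}]{Stacks}) to split off a finite cohomologically smooth piece $Y_1 \to V$ over an \'etale neighbourhood $V \to X$ of $f(y)$, and apply \Cref{lmm:coh unram open} to conclude that $Y_1 \to V$ has open image, which descends along the open map $g$ to an open neighbourhood of $f(y)$ inside $f(Y)$. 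Both arguments hinge on the same key input (dualisability of $g_!\Lambda$ for finite cohomologically smooth $g$, packaged in \Cref{lmm:coh unram open}), but your route avoids the detour through valuation rings, henselisation and the generalising criterion, giving a more direct proof of openness. One small point worth making explicit: when you pass from ``$f(Y)$ contains an open neighbourhood of each $f(y)$'' to openness of $f$ by restricting to open subschemes $U \subseteq Y$, you need $U$ to be quasi-compact so that $f|_U$ remains of finite presentation and lies in $E(\mathfrak G)$; since quasi-compact opens form a basis in any scheme this is harmless, but it should be said.
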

\begin{proof}
    As all the assumptions on $f$ are stable under base change, we only need to show that $f$ is open and it suffices to show that $f$ is generalising by the finiteness condition, cf. \cite[\texttt{01U1}]{Stacks}. So take $y \in Y$ and pick a generalisation $x \rightsquigarrow f(y)$ which is realised by $\Spec R \to X$ for some discrete valuation ring $R$. Let $R^h$ be the henselisation of $R$ and $Z$ the fiber product $\Spec R^h \times_{X} Y$; let $z \in \Spec R^h$ be the point mapping to $f(y)$ and $\eta \in \Spec R^h$ the point mapping to $x$.
    \[\begin{tikzcd}
            Z_1 \rar \drar[swap]{g} & Z \rar \dar \ar[dot corner'=south east] & Y \dar \\
            & \Spec R^h \rar & X
        \end{tikzcd}\]

    $Z \to \Spec R^h$ is cohomologically smooth and quasi-finite as $Y \to X$ is. As $R^h$ is henselian, this implies $Z \cong Z_1 \coprod Z_2$ such that $g:Z_1 \to \Spec R^h$ is finite and $Z_2 \times_{\Spec R^h} z = \emptyset$ (cf. \cite[\texttt{04GG}]{Stacks}); in particular we have $(z,y) \in g^{-1}(z)$. Moreover, $Z_1$ is the (finite) disjoint union of the spectra of its stalks at closed points (as it is finite over a henselian ring); thus, replacing $Z_1$ by its stalk at the point $(z,y)$, $Z_1 \to Z$ is still an open immersion and so $g$ remains cohomologically smooth and finite. But then by \Cref{cor:proper shriek sm objects} $g_! \Lambda$ is dualisable and so by \Cref{rem:dualizable-perfect} its support is either $\Spec R^h$ or empty since $\Spec R^h$ is connected. Note that $g_! \Lambda$ is non-zero since $(g_! \Lambda)_z \cong R\Gamma(g^{-1}(z),\Lambda) \not= 0$ (as $g^{-1}(z) \not= \emptyset$) by proper base change. So we must have $0 \not= (Rg_! \Lambda)_\eta \cong R\Gamma(g^{-1}(\eta),\Lambda)$ and thus we find a lift of $\eta$ in $Z_1$, yielding a generalisation $y' \rightsquigarrow y$ in $Y$ with $f(y') = x$.
\end{proof}
\begin{remark}
    By arguing as in \cite[Proposition 23.11]{Scholze2} and using the results from \cite[Section 6.4]{Scholze3}, the finite presentation assumption in \Cref{prop:quasi-finite coh sm open} can be dropped whenever the target $X$ satisfies the following property for each $n$: all affine $U \in (\mb A^n_X)_\et$ have $\Lambda$-cohomological dimension $\leq d$ for some fixed $d \in \mb N$ (here we also used \cite[\texttt{0F31}]{Stacks}).
\end{remark}

\begin{corollary}\label{cor:et-is-univ-open}
    Cohomologically étale morphisms of finite presentation are universally open.
\end{corollary}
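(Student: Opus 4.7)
The proof is essentially an assembly of the results just established. Here is how I would structure it.

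The plan is to reduce the statement to \Cref{prop:quasi-finite coh sm open} by verifying its hypotheses. So first I would observe that a cohomologically étale morphism $f \in E(\mathfrak G)$ is, by \Cref{lmm:coh et coh sm unram}, in particular cohomologically smooth and cohomologically unramified. The cohomological unramifiedness, combined with the finite presentation (hence finite type) assumption, lets me apply \Cref{cor:coh-et implies quasi-finite} to conclude that $f$ is quasi-finite.

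Next I would invoke \Cref{prop:quasi-finite coh sm open}: since $f$ is quasi-finite, cohomologically smooth, and of finite presentation, it is universally open. This gives the desired conclusion directly, as \Cref{prop:quasi-finite coh sm open} already phrases its conclusion in the universal form. Strictly speaking, one may also argue this by stability: any base change $f'$ of $f$ is again cohomologically étale (by \Cref{cor:coh et composition bc}) and of finite presentation, hence by the previous paragraph quasi-finite and cohomologically smooth of finite presentation, so open by \Cref{prop:quasi-finite coh sm open}. Either formulation of the argument works.

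There is no real obstacle here; the work was done in the two preceding results. The only minor point to check is that cohomological étaleness passes to base changes (which is \Cref{cor:coh et composition bc}) and that finite presentation does as well (which is standard), so the hypotheses of \Cref{prop:quasi-finite coh sm open} are genuinely preserved under pullback. This ensures the openness from \Cref{prop:quasi-finite coh sm open} upgrades to universal openness without any additional input.
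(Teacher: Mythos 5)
Your proof is correct and follows exactly the paper's approach: the paper's proof is the one-liner ``This is immediate from \Cref{cor:coh-et implies quasi-finite} and \Cref{prop:quasi-finite coh sm open},'' and you have simply made explicit the intermediate steps (cohomologically étale implies cohomologically smooth by \Cref{lmm:coh et coh sm unram}, quasi-finite by \Cref{cor:coh-et implies quasi-finite}, hence universally open by \Cref{prop:quasi-finite coh sm open}). The observation at the end about base-change stability, while harmless, is unnecessary since \Cref{prop:quasi-finite coh sm open} already concludes universal openness.
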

\begin{proof}
    This is immediate from \Cref{cor:coh-et implies quasi-finite} and \Cref{prop:quasi-finite coh sm open}.
\end{proof}

To finish the proof, we claim that if $f:Y \to X$ is a separated topologically unramified and universally open morphism of finite type with $Y$ and $X$ qcqs, then $f^\mathrm{awn}$ is étale. The idea is to use that étale morphisms satisfy $v$-descent in $\mathrm{Awn}$ to do induction on the maximum cardinality of a geometric fiber of the morphism. The induction is made possible by the following lemma:

\begin{lemma}
    \label{lmm:count-geometric-fibres}
    Let $f : Y \to X$ be a quasi-finite morphism where $Y$ is quasi-compact. Then there is some minimal $n_f \in \mb N$ such that all fibers of $f$ have at most $n_f$ points.
\end{lemma}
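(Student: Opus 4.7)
The plan is to reduce to the affine setting on both source and target and then invoke the affine form of Zariski's Main Theorem to bound each fiber by that of a finite morphism. Using quasi-compactness of $Y$, cover it by finitely many affine opens $Y = U_1 \cup \cdots \cup U_m$; since
$$|f^{-1}(x)| \le \sum_{i=1}^m |U_i \cap f^{-1}(x)|,$$
it suffices to produce, for each $i$, a uniform bound on the cardinalities of the fibers of $U_i \to X$. Thus we may reduce to the case where $Y = \Spec B$ is affine.

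Next, pick an open affine cover $X = \bigcup_j V_j$ with $V_j = \Spec A_j$. Since $f$ is of finite type (hence quasi-compact), each preimage $f^{-1}(V_j) \subseteq \Spec B$ is a quasi-compact open, and therefore a finite union of principal opens $\Spec(B_{g_{jk}})$. Because $f(Y)$ is quasi-compact, only finitely many $V_j$ are relevant, so finitely many pieces $\Spec(B_{g_{jk}}) \to \Spec A_j$ suffice. Each such map is of finite type between affine schemes with finite fibers, i.e., a quasi-finite morphism of affines. Zariski's Main Theorem then factors it as an open immersion $\Spec(B_{g_{jk}}) \hookrightarrow \Spec C_{jk}$, where $C_{jk}$ is a finite $A_j$-algebra. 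If $C_{jk}$ is generated by $N_{jk}$ elements as an $A_j$-module, then $|\Spec(C_{jk})_x| \le \dim_{\kappa(x)} C_{jk} \otimes_{A_j} \kappa(x) \le N_{jk}$ for every $x$, and the same bound descends to the open subscheme $\Spec(B_{g_{jk}})$.

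Summing the finitely many bounds $N_{jk}$ across all patches of source and target yields a uniform bound on $|f^{-1}(x)|$, and taking the least such integer produces the minimal $n_f$. The only genuinely non-trivial ingredient is the appeal to the affine form of Zariski's Main Theorem; the reduction steps follow mechanically from quasi-compactness of $Y$ together with quasi-compactness of $f$ (which holds since $f$ is of finite type). I do not expect any real obstacle here: universal boundedness of fibers for quasi-finite morphisms with quasi-compact source is a standard fact recorded in the Stacks Project, so the statement might alternatively be deduced by direct citation rather than via the above reduction.
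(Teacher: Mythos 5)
Your proposal is correct and follows essentially the same route as the paper's sketch: use quasi-compactness of $Y$ to reduce to the affine case, invoke Zariski's Main Theorem to embed each affine piece as an open in a finite scheme over the base, and then bound the fibers of the finite morphism by the number of module generators. The paper records this as a citation to the Stacks Project with exactly that outline, so there is nothing further to add.
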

\begin{proof}
This is \cite[\texttt{03JA}]{Stacks}. Using that $Y$ is quasi-compact one reduces to the case that $Y$ and $X$ are affine, then one reduces via Zariski's main theorem to the case that $f$ is finite where it becomes a straightforward calculation.
\end{proof}

Next, let us state the descent property that we want to use.

\begin{lemma}\label{lmm:arc-descent for etale morphisms}
Let $f: Y \to X$ be a separated morphism of qcqs a.w.n. schemes and $g: X' \to X$ a cover in the absolutely weakly normal site $\mathrm{Awn}$. Let $f': Y' \to X'$ be the pullback of $f$ along $g$. Then $f'$ is étale if and only if $f$ is étale.
\end{lemma}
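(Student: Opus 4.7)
The plan is as follows. The "only if" direction is immediate from the stability of étale morphisms under arbitrary base change. The content of the lemma lies in the converse, which is a $v$-descent statement for étaleness of morphisms between a.w.n.\ schemes.

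I would first reduce to the affine case. Since being étale is Zariski-local on source and target, and the $v$-topology on $\mathrm{Awn}$ admits affine refinements of any cover by definition, we may assume $X = \Spec A$, $X' = \Spec A'$, $Y = \Spec B$ and $Y' = \Spec B'$ with $B' = B \otimes_A A'$, where $A, A', B$ are a.w.n.\ rings, $A \to A'$ is a $v$-cover, and $B'$ is étale over $A'$; we wish to conclude that $B$ is étale over $A$. By subcanonicalness of $\mathrm{Awn}$ (\Cref{awn-subcanonical}), the Čech nerve of $A \to A'$ is an equaliser, so applying it to the $A$-module $B$ yields
\[
B \to B \otimes_A A' \rightrightarrows B \otimes_A A' \otimes_A A',
\]
exhibiting $B$ as the descent of $B'$ along the $v$-cover $A \to A'$ in $\mathrm{Awn}$.

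For the core step I would appeal to the descent results of Bhatt--Mathew \cite{arc}: the presheaf $R \mapsto \{\text{\'etale } R\text{-algebras}\}$ is a $v$-sheaf on a.w.n.\ rings. In positive characteristic, a.w.n.\ coincides with perfect (\Cref{prop:perf and awn}) and this is essentially the arc-descent of the finite étale site on perfect rings from \cite{arc}. In general, étaleness decomposes into flatness, local finite presentation, and unramifiedness, and each of these three ingredients can be verified separately to satisfy $v$-descent on a.w.n.\ bases, using the arc-descent results for perfect complexes and the fact that the diagonal condition defining unramifiedness is $v$-local (indeed, being an open immersion is already Zariski-local on the target).

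The main obstacle I expect is precisely the last step: while subcanonicalness of $\mathrm{Awn}$ handles the descent of the underlying $A$-algebra $B$, the nontrivial input is that each ingredient of étaleness transfers from the $v$-cover to the base in the a.w.n.\ setting. This is the substantive content extracted from \cite{arc}; once it is in place, combining it with the equaliser presentation above completes the proof.
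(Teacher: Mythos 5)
Your strategy matches the paper's: use the $v$-descent theorem for the étale site from Bhatt--Mathew together with the subcanonicalness of $\mathrm{Awn}$ to show that the descended étale scheme must agree with $Y$. The paper packages the second step as a purely formal statement (\Cref{lmm:subcanonical-descent}) about classes of morphisms classified by a sheaf on a subcanonical site, which you would need to at least implicitly re-prove. However, there is an imprecision in your equaliser step: you justify
\[
B \to B \otimes_A A' \rightrightarrows B \otimes_A A' \otimes_A A'
\]
by ``applying [the Čech equaliser of $A \to A'$] to the $A$-module $B$,'' but tensoring an equaliser with $B$ over $A$ does not preserve exactness without flatness of $B$ (and $v$-covers need not be flat). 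The display itself is nevertheless correct: one should instead apply \Cref{awn-subcanonical} directly to the $v$-cover $\Spec B' \to \Spec B$ (both sides are a.w.n.\ by \Cref{cor:awn-etale} since $B'$ is étale over the a.w.n.\ ring $A'$, and $B' \otimes_B B' \cong B \otimes_A A' \otimes_A A'$). Two further remarks: the suggested ``decompose étaleness into flatness, finite presentation, unramifiedness'' alternative is unnecessary and would require substantially more work than directly invoking the descent result for the separated étale site (\cite[Theorem 5.6]{arc}), which holds on all qcqs schemes in any characteristic; and your aside restricting the $v$-sheaf statement to positive characteristic or perfect rings is misleading for the same reason.
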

\begin{proof}
  By \cite[Theorem 5.6]{arc} the functor $\mc G: \mathrm{Sch}^\mathrm{op}_{\mathrm{qcqs},\mb F_p} \to \mathrm{Cat}_1$ that sends a qcqs scheme $X$ to the category of separated etale schemes over $X$ is a $v$-sheaf. By \Cref{cor:awn-etale}, the restriction of $\mc G$ to $\mathrm{Awn}$ sends a qcqs scheme $X$ to the category of separated etale a.w.n. schemes over $X$. As $\mathrm{Awn}$ is subcanonical by \Cref{awn-subcanonical}, the statement follows by applying the formal \Cref{lmm:subcanonical-descent} below to $\mathrm{Awn}$ and the class of separated étale morphisms.
\end{proof}

\begin{lemma}\label{lmm:subcanonical-descent}
Let $\mc C$ be a subcanonical site, $P$ a homotopy class of morphisms in $\mc C$ which is stable under base change, and $\mc F: \mc C^\mathrm{op} \to \mathrm{Cat}$ the functor sending $X$ to the category of $(Y \to X) \in P$. Assume $\mc F$ is a sheaf. Then in any cartesian diagram
\[\begin{tikzcd}
Y' \rar{f'} \dar{g'} & X' \dar{g} \\
Y \rar{f} \rar & X
\end{tikzcd}\]
where $g$ is a cover in $\mc C$, $f \in P$ if and only if $f' \in P$.
\end{lemma}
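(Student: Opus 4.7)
The direction $f \in P \Rightarrow f' \in P$ is immediate from the hypothesis that $P$ is stable under base change. For the converse, assume $f' \in P$. The plan is to use the sheaf property of $\mc F$ applied to the cover $g$ to produce a descended object $(\tilde Y \to X) \in P$, and then use subcanonicality of $\mc C$ to identify $\tilde Y$ with $Y$ over $X$.

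First I would equip $(Y' \to X') \in \mc F(X')$ with canonical descent data for the cover $g$: pulling back along either of the two projections $p_1, p_2 : X' \times_X X' \to X'$ yields $Y \times_X (X' \times_X X')$, so there is a tautological isomorphism $p_1^* Y' \cong p_2^* Y'$ satisfying the cocycle condition on the triple fiber product. Stability of $P$ under base change ensures these pullbacks all lie in $\mc F$, so this really is descent data in the sheaf $\mc F$. Effectivity of descent for $\mc F$ then produces $(\tilde Y \to X) \in \mc F(X)$ together with an isomorphism $\sigma : \tilde Y \times_X X' \cong Y'$ of objects of $\mc F(X')$ compatible with the canonical descent data.

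The main step is to upgrade $\sigma$ to an isomorphism $\tilde Y \cong Y$ in $\mc C_{/X}$. Composing $\sigma$ with the projection $Y' \to Y$ gives a morphism $\tilde Y \times_X X' \to Y$ in $\mc C$, which lies over $X$. Compatibility of $\sigma$ with the canonical descent data on both sides amounts to saying that the two pullbacks of this morphism to $\tilde Y \times_X X' \times_X X'$ agree. Since $\mc C$ is subcanonical, both $\Hom(-,Y)$ and $\Hom(-,X)$ are sheaves on $\mc C$, so our morphism descends uniquely to a morphism $\bar\sigma : \tilde Y \to Y$ in $\mc C_{/X}$. Running the same argument for $\sigma^{-1}$ and using uniqueness in the sheaf property to recognise the composites as identities shows that $\bar\sigma$ is an isomorphism in $\mc C_{/X}$. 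Since $P$ is a homotopy class and $(\tilde Y \to X) \in P$, we conclude $(Y \to X) \in P$.

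The only real subtlety is bookkeeping of the descent data: one must check that the isomorphism produced by the sheaf property matches the tautological identifications on $X' \times_X X'$, so that $\sigma$ really satisfies a cocycle condition over $\mc C$ (not just in $\mc F$) and can be descended via subcanonicality. Everything else is a formal consequence of the sheaf axioms.
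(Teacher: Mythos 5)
Your proposal is correct and follows essentially the same strategy as the paper: use sheafiness of $\mc F$ for the cover $g$ to descend $(Y' \to X')$ to an object $(\tilde Y \to X) \in \mc F(X)$, then use subcanonicality of $\mc C$ to identify $\tilde Y$ with $Y$ over $X$. The only cosmetic difference is in that last step — the paper compares the two equalizer presentations of $\Hom(Y,T)$ and $\Hom(\tilde Y,T)$ (for all test objects $T$) via the isomorphism $Y' \times_Y Y' \cong Y' \times_{\tilde Y} Y'$ and invokes Yoneda, whereas you descend $\sigma$ and $\sigma^{-1}$ directly along the cover $\tilde Y \times_X X' \to \tilde Y$ and use uniqueness to recognise the composites as identities; these two packagings of the same subcanonicality argument are interchangeable.
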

\begin{proof}
$\Rightarrow$ holds by assumption. So assume that $f' \in P$, i.e. $f' \in \mc F(X')$. Let $p_1, p_2$ denote the projections $X'' := X' \times_X X' \to X'$. By assumption, $p_1^* f', p_2^* f' \in \mc F(X'')$. Moreover, there is a canonical isomorphism
$$p_1^* Y' \cong Y' \times_X X' \cong Y \times_X X' \times_X X' \cong X' \times_X Y \times_X X' \cong X' \times_X Y' \cong p_2^* Y'$$
over $X''$. As $\mc F$ is a sheaf, this implies that there is some $(h: Z \to X) \in \mc F(X)$ (in particular $h \in P$) and an isomorphism $\phi: g'^*Z \cong Y'$ over $X'$ such that
\[\begin{tikzcd}
	{Z \times_X X' \times_X X'} & {X' \times_X Z \times_X X'} \\
	{Y \times_X X' \times_X X'} & {X' \times_X Y \times_X X'}
	\arrow["{p_1^* \phi}", from=1-1, to=2-1]
	\arrow["\cong",from=1-1, to=1-2]
	\arrow["{p_2^* \phi}"', from=1-2, to=2-2]
	\arrow["\cong", from=2-1, to=2-2]
\end{tikzcd}\]
commutes. In other words, there is an isomorphism $Y' \times_Z Y' \cong Y' \times_Y Y'$ compatible with the projections.\\
Now for any $T \in \mc C$, we have that
$$h_T(Y) \to h_T(Y') \rightrightarrows h_T(Y' \times_Y Y')$$
is exact and the same holds with $Y$ replaced by $Z$; by the observations above, we thus get an induced isomorphism $h_T(Y) \cong h_T(Z)$ natural in $T$. So $Y \cong Z$ over $X$ and thus $f \in P$.
\end{proof}

Now we are ready to prove our claim.

\begin{theorem}\label{thm:qf open unram awn etale}
Let $f: Y \to X$ be a separated, finite type morphism of qcqs schemes. If $f$ is universally open and topologically unramified, then $f^{\mathrm{awn}}$ is étale.
\end{theorem}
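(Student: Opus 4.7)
The approach is induction on the maximum cardinality $n_f$ of a geometric fibre of $f$, which is a well-defined natural number by \Cref{lmm:count-geometric-fibres} since $f$ is quasi-finite by \Cref{cor:coh-et implies quasi-finite}. First I reduce to the case that $f$ is surjective by replacing $X$ with the open image $f(Y) \subseteq X$ (open by universal openness of $f$), noting that the open immersion $f(Y) \hookrightarrow X$ remains an open immersion after $(\blank)^{\mathrm{awn}}$ by \Cref{prop:prop of awn}. The base case $n_f = 0$ is then trivial, as $Y = \emptyset$.

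For the inductive step, the plan is to show $f^{\mathrm{awn}}$ is étale by applying the $v$-descent result \Cref{lmm:arc-descent for etale morphisms} to the pullback of $f^{\mathrm{awn}}$ along itself in the category $\mathrm{Awn}$. I first verify that $f^{\mathrm{awn}}$ is a $v$-cover: given a valuation ring $V$ and a map $\Spec V \to X^{\mathrm{awn}}$, composing with $X^{\mathrm{awn}} \to X$ yields $\Spec V \to X$, and since $f$ is a surjective, universally open morphism of finite type it is already a $v$-cover in $\Sch$, so one finds an extension $V \to W$ (which one may take to be a normal, hence a.w.n., valuation ring) and a lift $\Spec W \to Y$ that corresponds via \Cref{lmm:awn-adjoint} to the desired map $\Spec W \to Y^{\mathrm{awn}}$.

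Next I compute the pullback $\pi$ of $f^{\mathrm{awn}}$ along itself in $\mathrm{Awn}$, namely $\pi: (Y^{\mathrm{awn}} \times_{X^{\mathrm{awn}}} Y^{\mathrm{awn}})^{\mathrm{awn}} \to Y^{\mathrm{awn}}$. Because the canonical map $Y^{\mathrm{awn}} \times_{X^{\mathrm{awn}}} Y^{\mathrm{awn}} \to Y \times_X Y$ is a composition of base changes of the universal homeomorphisms $Y^{\mathrm{awn}} \to Y$ and $X^{\mathrm{awn}} \to X$, hence itself a universal homeomorphism, this source agrees after $(\blank)^{\mathrm{awn}}$ with $(Y \times_X Y)^{\mathrm{awn}}$. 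The diagonal $\Delta_f: Y \to Y \times_X Y$ is a closed immersion (as $f$ is separated) with open image (as $f$ is topologically unramified); applying $(\blank)^{\mathrm{awn}}$ and using \Cref{prop:prop of awn} together with \Cref{lmm:coh-et immersions} (both source and target are now reduced), $\Delta_f^{\mathrm{awn}}$ becomes an open immersion. This provides a clopen decomposition $(Y \times_X Y)^{\mathrm{awn}} = Y^{\mathrm{awn}} \sqcup W^{\mathrm{awn}}$, where $W \subseteq Y \times_X Y$ is the open complement of $\Delta_f(Y)$, and $\pi$ splits accordingly as the identity on $Y^{\mathrm{awn}}$ and the morphism $(p|_W)^{\mathrm{awn}}: W^{\mathrm{awn}} \to Y^{\mathrm{awn}}$ coming from the restriction $p|_W: W \to Y$ of the first projection.

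To conclude, $p|_W$ inherits all the hypotheses from $f$ --- separatedness, finite type, universal openness, and topological unramifiedness (the last via \Cref{lmm:top unram bc}) --- by stability under base change along $f$ and restriction to an open subscheme, and crucially satisfies $n_{p|_W} \leq n_f - 1$ since each geometric fibre of $p|_W$ at $y \in Y$ is obtained from the geometric fibre of $f$ at $f(y)$ by removing the single point corresponding to $y$. The inductive hypothesis then forces $(p|_W)^{\mathrm{awn}}$ to be étale, so $\pi$ is étale, and \Cref{lmm:arc-descent for etale morphisms} implies $f^{\mathrm{awn}}$ is étale. I anticipate that the main obstacles will be the verification that $f^{\mathrm{awn}}$ is a $v$-cover (leveraging that $f$ itself is) and the careful identification of the awn of the scheme-theoretic fibre product $Y^{\mathrm{awn}} \times_{X^{\mathrm{awn}}} Y^{\mathrm{awn}}$ with $(Y \times_X Y)^{\mathrm{awn}}$ together with the subsequent clopen decomposition.
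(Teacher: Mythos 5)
Your proof follows the paper's argument very closely: the same reduction to the surjective case, induction on the maximum fibre cardinality $n_f$, passage to the self-pullback of $f^{\mathrm{awn}}$ in $\mathrm{Awn}$ and its clopen decomposition along the (now open) diagonal, and the observation that the off-diagonal piece has strictly smaller fibre count. Your explicit identification of the a.w.n.\ fibre product with $(Y \times_X Y)^{\mathrm{awn}}$ is a nice clarifying point that the paper handles a bit more tersely.

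There is, however, one genuine flaw in your verification that $f^{\mathrm{awn}}$ is a $v$-cover. You assert that the extension $V \to W$ of valuation rings ``may be taken to be a normal, hence a.w.n., valuation ring,'' so that the lift $\Spec W \to Y$ corresponds via \Cref{lmm:awn-adjoint} to a map $\Spec W \to Y^{\mathrm{awn}}$. This implication is false in characteristic $p>0$: normal does not imply absolutely weakly normal (e.g.\ $\mathbb{F}_p[t]$ is normal, but $\mathbb{F}_p[t] \to \mathbb{F}_p[t^{1/p}]$ is a non-trivial universal homeomorphism with reduced source, so $\mathbb{F}_p[t]$ is not a.w.n.; for valuation rings a.w.n.\ requires perfectness of the fraction field, not just integral closedness). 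Without $W$ being a.w.n.\ the adjunction does not produce a map to $Y^{\mathrm{awn}}$. But the detour is unnecessary: by the definition of the $v$-topology on $\mathrm{Awn}$ one only needs $f^{\mathrm{awn}}$ to be a $v$-cover as a morphism of schemes, and this holds because $f^{\mathrm{awn}}$ is universally open (\Cref{prop:prop of awn}) and surjective, hence a $v$-cover by \cite[Remark~2.5]{rydh} — which is what the paper does. Replacing your valuation-ring argument with this citation fixes the gap without changing anything else.
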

\begin{proof}
    First, we can reduce to the case that $f$ is surjective: $f$ is open and so we can replace $X$ by the image of $f$ in $X$ (using \Cref{prop:prop of awn}). Note that $f$ is quasi-finite by \Cref{cor:coh-et implies quasi-finite}, so we can do induction on $n_f$ from \Cref{lmm:count-geometric-fibres}: If $n_f = 0$, then $Y = \emptyset$ so the statement holds. Now assume the statement is proven for all feasible $g$ with $n_g < n_f$. By \Cref{lmm:arc-descent for etale morphisms} it suffices to show that the base change $p$ of $f^\mathrm{awn}$ along itself is étale, as $f^\mathrm{awn}$ is universally open (by \Cref{prop:prop of awn}) and surjective, and in particular a $v$-cover by \cite[Remark 2.5]{rydh}. Since $f$ is topologically unramified, the diagonal of its absolute weak normalisation is an open immersion by \Cref{lmm:coh-et immersions} and we have that $Y^\mathrm{awn} \times_{X^\mathrm{awn}} Y^\mathrm{awn} \cong Y^\mathrm{awn} \coprod U_\Delta$ where $U_\Delta$ is the complement of $\Delta_{f^\mathrm{awn}}$ in $Y^\mathrm{awn} \times_{X^\mathrm{awn}} Y^\mathrm{awn}$. As $p \circ \Delta_{f^\mathrm{awn}} = id$, it thus suffices to show that the morphism $p': U_\Delta \to Y^\mathrm{awn} \times_{X^\mathrm{awn}} Y^\mathrm{awn} \to Y^\mathrm{awn}$ is étale. Note that $p'$ agrees with the absolute weak normalisation of the morphism $g: V \to Y \times_X Y \to Y$ where $V$ is the complement of the diagonal $\Delta_f$ in $Y\times_X Y$, in particular open; so $g$ is still of finite type, universally open and topologically unramified as all these properties are stable under base change and composition and satisfied by open immersions. Moreover, note that $U_\Delta$ is closed in $Y^\mathrm{awn} \times_{X^\mathrm{awn}} Y^\mathrm{awn}$, and so $U_\Delta$ and thus $V$ is again qcqs. But $n_g < n_f$, so we win by the induction hypothesis.
\end{proof}

All in all, we finally arrive at the desired statement.

\begin{theorem}\label{cor:coh et awn et}
    A separated morphism $f$ of finite presentation in $\mc C(\mathfrak G)$ is cohomologically étale if and only if $f^\mathrm{awn}$ is étale.
\end{theorem}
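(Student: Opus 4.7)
The statement is essentially a synthesis of the machinery built up in the section, so the plan is to assemble the pieces rather than prove anything substantially new.

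For the \emph{if} direction, the work is already done: \Cref{prop:awn etale implies coh etale} says that whenever $f^{\mathrm{awn}}$ is étale, $f$ itself is cohomologically étale, using that $\mc D_\Lambda$ factors through $\mathrm{Awn}$ via $(\blank)^{\mathrm{awn}}$ (\Cref{prop:awn 6 functors}) together with the fact that étale morphisms are cohomologically étale (\Cref{prop:et is coh et}). Note that the finite presentation hypothesis is not needed here.

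For the \emph{only if} direction, suppose $f:Y\to X$ is cohomologically étale and of finite presentation. The plan is to verify the geometric hypotheses of \Cref{thm:qf open unram awn etale}, i.e.\ that $f$ is separated, of finite type, universally open, and topologically unramified. Separatedness is built into $E(\mathfrak G)$, and finite type follows from finite presentation. For topological unramifiedness, use that cohomologically étale morphisms are cohomologically unramified by definition; \Cref{cor:coh-unramified} then gives topological unramifiedness. In particular, by \Cref{cor:coh-et implies quasi-finite}, $f$ is quasi-finite. Finally, since $f$ is cohomologically smooth (by \Cref{lmm:coh et coh sm unram}), quasi-finite, and of finite presentation, \Cref{prop:quasi-finite coh sm open} (or equivalently \Cref{cor:et-is-univ-open}) gives that $f$ is universally open.

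With all these properties in hand, \Cref{thm:qf open unram awn etale} applies and yields that $f^{\mathrm{awn}}$ is étale. There is no genuine obstacle to surmount at this stage; the real content sits in \Cref{thm:qf open unram awn etale}, whose proof relies on the $v$-descent statement \Cref{lmm:arc-descent for etale morphisms} and an induction on the maximal cardinality of the geometric fibres, and in the identification of cohomological unramifiedness with topological unramifiedness via \Cref{lmm:coh unram open}. The finite presentation hypothesis in the theorem is used only once, namely to invoke \Cref{prop:quasi-finite coh sm open} (or its consequence \Cref{cor:et-is-univ-open}) to deduce universal openness from cohomological smoothness plus quasi-finiteness.
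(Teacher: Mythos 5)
Your proof is correct and follows exactly the same route as the paper: the "if" direction is \Cref{prop:awn etale implies coh etale}, and the "only if" direction assembles topological unramifiedness (\Cref{cor:coh-unramified}) and universal openness (\Cref{cor:et-is-univ-open}) to invoke \Cref{thm:qf open unram awn etale}. The extra detail you supply about where the finite presentation hypothesis is used is accurate and matches the paper's implicit reasoning.
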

\begin{proof}
    If $f^\mathrm{awn}$ is étale, then $f$ is cohomologically étale by \Cref{prop:awn etale implies coh etale}. The converse follows now immediately from \Cref{thm:qf open unram awn etale} as cohomologically étale morphisms of finite presentation are universally open (\Cref{cor:et-is-univ-open}) and topologically unramified (\Cref{cor:coh-unramified}).
\end{proof}

\begin{example}
    We already mentioned in \Cref{xmpl:coh smooth example} that any universal homeomorphism is cohomologically étale, so as a sanity check note that universal homeomorphisms become isomorphisms after absolute weak normalisation (cf. \Cref{prop:awn colocalisation}).\par
    In fact, it follows from \Cref{cor:coh et awn et} that universal homeomorphisms are "building blocks" of cohomologically étale morphisms in the following sense: any finite cohomologically étale morphism $f:Y \to X$ of finite presentation is étale locally of the form $\coprod \tilde X_i \to X$ such that each $\tilde X_i \to X$ is a universal homeomorphism. Indeed, by \Cref{cor:coh et awn et} $f^\mathrm{awn}$ is finite étale and so by the étale-local structure of finite étale morphisms (cf. \cite[04HN]{Stacks}) for any $x \in |X| = |X^\mathrm{awn}|$ there is an étale neighbourhood $U \to X^\mathrm{awn}$ such that $Y^\mathrm{awn} \times_{X^\mathrm{awn}} U \cong \coprod_I \tilde U_i$ for some finite set $I$ and each $\tilde U_i \to \coprod_I \tilde U_i \to U$ is an isomorphism. Then this $U$ corresponds to an étale neighbourhood $V \to X$ of $x$ with $V^\mathrm{awn} \cong U$ as $X^\mathrm{awn} \to X$ is a universal homeomorphism. Then $(Y \times_X V)^\mathrm{awn} \cong Y^\mathrm{awn} \times_{X^\mathrm{awn}} U \cong \coprod_I U$ and thus $Y \times_X V \cong \coprod \tilde V_i$ such that each $V_i \to V$ becomes an isomorphism after absolute weak normalisation; but then each $V_i \to V$ is a universal homeomorphism, cf. \Cref{prop:awn colocalisation}.
\end{example}

Let us also note the following descent result for cohomologically étale morphisms as a corollary:

\begin{corollary}\label{cor:coh et change of base}
    Let $f: Y\to X$ be a morphism of qcqs schemes of finite presentation and let $g$ be a $v$-cover $X' \to X$. Then $g^* f$ is cohomologically étale if and only if $f$ is.
\end{corollary}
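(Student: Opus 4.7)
The plan is to invoke Theorem \ref{cor:coh et awn et} to reduce the statement to an equivalent one about étale morphisms in $\mathrm{Awn}$, where descent is available via Lemma \ref{lmm:arc-descent for etale morphisms}. Concretely, since $f$ and $g^*f$ are both of finite presentation, it suffices to prove $f^{\mathrm{awn}}$ is étale if and only if $(g^*f)^{\mathrm{awn}}$ is étale.

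First I would check that $\tilde g := g^{\mathrm{awn}} : (X')^{\mathrm{awn}} \to X^{\mathrm{awn}}$ is a $v$-cover: given a valuation ring $V$ and a map $\Spec V \to X^{\mathrm{awn}}$, composing with the universal homeomorphism $X^{\mathrm{awn}} \to X$ and applying the $v$-cover property of $g$ yields an extension $V \hookrightarrow W$ together with a lift $\Spec W \to X'$; since $W$ is normal and hence absolutely weakly normal, this factors uniquely through $(X')^{\mathrm{awn}}$, and the compatibility upstairs follows because $\Hom(\Spec W, X^{\mathrm{awn}}) = \Hom(\Spec W, X)$ by the defining adjunction of $(\blank)^{\mathrm{awn}}$. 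Next, since $(\blank)^{\mathrm{awn}}$ is a right adjoint (\Cref{lmm:awn-adjoint}) it preserves fiber products, so the pullback of $f^{\mathrm{awn}}$ along $\tilde g$ in $\mathrm{Awn}$ is the scheme $\left(Y^{\mathrm{awn}} \times_{X^{\mathrm{awn}}} (X')^{\mathrm{awn}}\right)^{\mathrm{awn}}$. The canonical map $Y^{\mathrm{awn}} \times_{X^{\mathrm{awn}}} (X')^{\mathrm{awn}} \to Y \times_X X'$ factors as a composition of base changes of the universal homeomorphisms $Y^{\mathrm{awn}} \to Y$, $(X')^{\mathrm{awn}} \to X'$, and the diagonal $X^{\mathrm{awn}} \to X^{\mathrm{awn}} \times_X X^{\mathrm{awn}}$ (which is a surjective closed immersion, since $X^{\mathrm{awn}} \to X$ is radicial and integral), so it is itself a universal homeomorphism. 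Consequently, $\left(Y^{\mathrm{awn}} \times_{X^{\mathrm{awn}}} (X')^{\mathrm{awn}}\right)^{\mathrm{awn}} \cong (Y \times_X X')^{\mathrm{awn}} = (g^*f)^{\mathrm{awn}}$.

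With these identifications in place, I would conclude by applying the formal descent Lemma \ref{lmm:subcanonical-descent} to the subcanonical site $\mathrm{Awn}$ (Lemma \ref{awn-subcanonical}) together with the sheaf of separated étale maps (shown to be a sheaf on $\mathrm{Awn}$ in the proof of Lemma \ref{lmm:arc-descent for etale morphisms} via \Cref{cor:awn-etale}): as $\tilde g$ is a $v$-cover, $f^{\mathrm{awn}}$ is separated étale if and only if its $\mathrm{Awn}$-pullback $(g^*f)^{\mathrm{awn}}$ is. The main subtlety, and the only real work, is that the pullback in $\mathrm{Awn}$ genuinely differs from the scheme-theoretic pullback — fiber products of a.w.n.\ schemes need not be a.w.n.\ (e.g., $\Spec(L \otimes_k L)$ for a purely inseparable extension $L/k$) — so one must reconcile the two via the universal homeomorphism above.
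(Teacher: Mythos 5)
Your overall strategy — reduce to the étale statement via Theorem~\ref{cor:coh et awn et}, verify that $g^{\mathrm{awn}}$ is a $v$-cover and that the $\mathrm{Awn}$-pullback of $f^{\mathrm{awn}}$ along $g^{\mathrm{awn}}$ is $(g^*f)^{\mathrm{awn}}$, then invoke $v$-descent for étale morphisms — is exactly the paper's approach, and your careful verification of the pullback identification (via the universal homeomorphism $Y^{\mathrm{awn}} \times_{X^{\mathrm{awn}}} (X')^{\mathrm{awn}} \to Y \times_X X'$) is correct and more detailed than what the paper records.

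However, there is a genuine error in your argument that $g^{\mathrm{awn}}$ is a $v$-cover. You assert that a valuation ring $W$ is ``normal and hence absolutely weakly normal.'' This is false in positive characteristic: absolute weak normality there is the same as perfectness (Proposition~\ref{prop:perf and awn}), and a valuation ring such as $\mathbb{F}_p[[t]]$ is normal but not perfect, since the Frobenius $t \mapsto t^p$ is not surjective. Consequently the adjunction $\Hom(\Spec W, (X')^{\mathrm{awn}}) = \Hom(\Spec W, X')$ (and likewise for $X$) does not apply, so neither the claimed unique factorisation through $(X')^{\mathrm{awn}}$ nor the compatibility upstairs is justified as written. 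The conclusion that $g^{\mathrm{awn}}$ is a $v$-cover is nonetheless correct, and the paper obtains it without touching valuation rings at all: $g^{\mathrm{awn}}$ factors as $(X')^{\mathrm{awn}} \to X' \times_X X^{\mathrm{awn}} \to X^{\mathrm{awn}}$, the first map is a universal homeomorphism (hence a $v$-cover, being surjective and integral) and the second is a base change of $g$, so the composition is a $v$-cover because $v$-covers are stable under base change and composition. If you want to keep a valuation-theoretic argument, you would have to extend $W$ further to get a lift to $(X')^{\mathrm{awn}}$ (using that $(X')^{\mathrm{awn}} \to X'$ is itself a $v$-cover) and then argue compatibility from reducedness of $\Spec W$ together with the fact that the diagonal of $X^{\mathrm{awn}} \to X$ is a reduction — but this is more work than the paper's route and the shortcut you took does not hold.
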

\begin{proof}
    As perfection commutes with fiber products, $(g^* f)^\mathrm{awn} = g^{\mathrm{awn},*} f^\mathrm{awn}$. Moreover, $g^\mathrm{awn}$ is still a $v$-cover by \Cref{prop:prop of awn}; indeed, universal homeomorphisms are $v$-covers and $v$-covers are stable under base change and composition (\cite[\texttt{0ETE, 0ETF}]{Stacks}). Thus, the statement follows from \Cref{lmm:arc-descent for etale morphisms} and \Cref{cor:coh et awn et}.
\end{proof}

\subsection{Cohomological smoothness for quasi-finite morphisms}

We have seen in the previous section that cohomologically étale morphisms are cohomologically smooth and quasi-finite (\Cref{cor:coh-et implies quasi-finite}). Now we want to prove that a quasi-finite cohomologically smooth morphism is already cohomologically étale. Note that this statement is true if the word "cohomologically" is removed. We need the following result:

\begin{lemma}\label{lmm:closed imm et}
    Let $X \in \mc C(\mathfrak G)$\footnote{Recall \Cref{not:basic setup}.} and consider a closed immersion $i: Z \to X$. Then $i$ is cohomologically étale if and only if $i^! \Lambda \cong \Lambda$.
\end{lemma}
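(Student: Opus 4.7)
The plan is to reduce both directions to the natural transformation $\alpha\colon i^!\Lambda \to \Lambda$ produced in \Cref{lmm:coh et coh sm unram}. Since $i$ is a closed immersion, in particular a monomorphism, the diagonal $\Delta_i$ is an isomorphism and hence (trivially) cohomologically étale, so $i$ is automatically cohomologically unramified. By \Cref{lmm:coh et coh sm unram}, the statement ``$i$ is cohomologically étale'' is therefore equivalent to ``$\alpha$ is an isomorphism''. The forward direction is immediate: if $i$ is cohomologically étale then $\alpha$ is an isomorphism, which yields $i^!\Lambda \cong \Lambda$.

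For the converse, the real task is to upgrade the abstract isomorphism $i^!\Lambda \cong \Lambda$ to the statement that the specific natural map $\alpha$ is an isomorphism. First I would identify $\alpha$ geometrically: unwinding its construction in the proof of \Cref{lmm:coh et coh sm unram} (and using that for a closed immersion both $\Delta_i$ and the two projections $Z \times_X Z \to Z$ are identities, while $i^* i_! = \mathrm{id}$ since $i_! = i_*$ is fully faithful), $\alpha$ is precisely $i^*$ applied to the counit $\epsilon\colon i_* i^!\Lambda \to \Lambda$. This counit sits in the standard recollement fiber sequence $i_* i^!\Lambda \to \Lambda \to Rj_* \Lambda_U$ in $\mc D(X,\Lambda)$, where $j\colon U \to X$ is the open complement of $i$. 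Applying the t-exact functor $i^*$ yields a fiber sequence $i^!\Lambda \overset{\alpha}\to \Lambda \to i^* Rj_* \Lambda_U$ in $\mc D(Z,\Lambda)$, whose cofiber $i^* Rj_* \Lambda_U$ lies in $\mc D^{\geq 0}$ because $Rj_*$ preserves that subcategory.

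Now assuming $i^!\Lambda \cong \Lambda$, the cohomology sheaves of $i^!\Lambda$ vanish outside degree zero and $\mc H^0(i^!\Lambda) \cong \Lambda_Z$ as sheaves of $\Lambda$-modules. The long exact sequence of cohomology sheaves attached to the above fiber sequence, combined with $\mc H^{-1}(i^* Rj_*\Lambda_U) = 0$, then forces $\mc H^0(\alpha)\colon \mc H^0(i^!\Lambda) \to \Lambda_Z$ to be injective. On every geometric stalk this becomes an injective $\Lambda$-linear map from a finite set of cardinality $|\Lambda|$ to itself, hence a bijection; so $\mc H^0(\alpha)$ is an isomorphism of sheaves. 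Since both source and target of $\alpha$ are concentrated in degree zero, this already means $\alpha$ is an isomorphism in $\mc D(Z,\Lambda)$, completing the backward direction.

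The main obstacle is the geometric identification of $\alpha$ with $i^*$ of the recollement counit, which is what makes the cofiber computable as $i^* Rj_*\Lambda_U$ and thus places a t-structure constraint on it. Once this is in place, what remains is a short finiteness argument that relies crucially on $\Lambda$ being a finite (hence Artinian) ring, so that any injective endomorphism of $\Lambda$ as a $\Lambda$-module is automatically bijective.
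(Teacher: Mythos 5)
Your proposal is correct and follows essentially the same route as the paper: identify the natural map $\alpha\colon i^!\Lambda\to\Lambda$ with (the restriction to $Z$ of) the counit $i_* i^!\Lambda\to\Lambda$, fit it into the excision/recollement triangle $i^!\Lambda\to\Lambda\to i^*Rj_*\Lambda_U$, use the fact that $i^*Rj_*\Lambda_U$ has no cohomology in negative degrees to deduce injectivity, and then use finiteness of $\Lambda$ to upgrade injectivity to bijectivity. The only cosmetic differences are that the paper obtains the triangle by dualising $j_!\Lambda\to\Lambda\to i_*\Lambda$ and checks the stronger-than-needed statement on global sections $\Lambda^{\pi_0(Z)}\to\Lambda^{\pi_0(Z)}$ (where injective multiplication by a finite-ring element forces a unit), while you work directly with the recollement fiber sequence, argue at the level of cohomology sheaves and geometric stalks, and conclude $\alpha$ is an outright isomorphism; both are valid invocations of the criterion in \Cref{lmm:coh et coh sm unram}.
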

\begin{proof}
    Clearly $i$ is cohomologically unramified. By \Cref{lmm:coh et coh sm unram} it thus remains to show the "if"-direction. For this, it suffices to prove that the natural map $i^! \Lambda \to \Lambda$ is an isomorphism on global sections by \Cref{lmm:coh et coh sm unram}. One checks that this is the map induced by restricting the counit $i_* i^! \Lambda \to \Lambda$ to $Z$. We claim that the induced map $H^0(Z, i^! \Lambda) \to H^0(Z, \Lambda)$ is always injective. Indeed, consider the dualised excision triangle
    $$\iHom(i_*i^* \Lambda, \Lambda) \to \iHom(\Lambda, \Lambda) \to \iHom(j_! \Lambda, \Lambda)$$
    where $j$ is the inclusion of the complement of $i$. The first map is dual to the unit $\Lambda \to i_* i^* \Lambda$ and thus after identifying $\iHom(i_* i^* \Lambda, \Lambda) \cong i_* \iHom(i^* \Lambda, i^! \Lambda) \cong \iHom(\Lambda, i_* i^! \Lambda) \cong i_* i^! \Lambda$ (using \Cref{PF-adj-isos}) given by the counit of the $i_*$-$i^!$-adjunction (by construction of $\mc D_\Lambda$). On the other hand, we have $\iHom(j_! \Lambda, \Lambda) \cong j_* \Lambda$ by the interior pullback formula (\Cref{PF-adj-isos}). Applying $i^*$ thus yields a triangle
    $$i^! \Lambda \to \Lambda \to i^* j_* \Lambda$$
    and the induced long exact sequence in cohomology gives $0 \to H^0(Z, i^! \Lambda) \to H^0(Z, \Lambda)$. Now if $i^! \Lambda \cong \Lambda$, then this is an injective map $\Lambda^{\pi_0(Z)} \cong H^0(Z, \Lambda) \to  H^0(Z, \Lambda) \cong \Lambda^{\pi_0(Z)}$ of $\Lambda^{\pi_0(Z)}$-modules, so it is given by multiplication with some $(x_\alpha)_{\alpha \in \pi_0(Z)}$ where $x_\alpha \in \Lambda$. Because the map is injective, multiplication with $x_\alpha$ must be an injective map $\Lambda \to \Lambda$ for all $\alpha$. But as $\Lambda$ is finite, this implies that each $x_\alpha$ is a unit and so $H^0(Z, i^! \Lambda) \to H^0(Z, \Lambda)$ is already a bijection.
\end{proof}

\begin{corollary}\label{prop:diagonal and unram}
    Let $f: Y \to X$ be a morphism in $E(\mathfrak G)$. Then $\Delta_f^! \Lambda \cong \Lambda$ if and only if $f$ is cohomologically unramified.
\end{corollary}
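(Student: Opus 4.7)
The plan is to deduce this as an essentially immediate corollary of the preceding lemma. Since $f \in E(\mathfrak G)$ is separated by definition, the diagonal $\Delta_f \colon Y \to Y \times_X Y$ is a closed immersion. Moreover, by \Cref{rem:construction setup}, $\mathfrak G$ is a good geometric setup, so $\Delta_f$ itself lies in $E(\mathfrak G)$ and thus $\Delta_f^!$ makes sense.

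By \Cref{def:coh unram}, $f$ is cohomologically unramified if and only if $\Delta_f$ is cohomologically étale. Applying \Cref{lmm:closed imm et} to the closed immersion $i = \Delta_f$, this is in turn equivalent to $\Delta_f^! \Lambda \cong \Lambda$. Combining the two equivalences yields the claim.

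There is no real obstacle here; the content is entirely packaged in \Cref{lmm:closed imm et}, which is the place where the finiteness of $\Lambda$ is genuinely used (to promote injectivity of multiplication by an element of $\Lambda$ to bijectivity). The corollary just records the convenient reformulation of cohomological unramifiedness of $f$ as a concrete condition on the dualising object of the diagonal.
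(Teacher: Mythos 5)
Your proof is correct and matches the paper's argument: the paper likewise observes that morphisms in $E(\mathfrak G)$ are separated (so $\Delta_f$ is a closed immersion) and then invokes \Cref{lmm:closed imm et} together with the definition of cohomologically unramified. You have simply spelled out the intermediate steps that the paper leaves implicit.
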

\begin{proof}
    This follows immediately from \Cref{lmm:closed imm et} as the morphisms in $E(\mathfrak G)$ are separated by definition.
\end{proof}

As a corollary, we can now deduce that in our setting we only need to identify the dualising complex of a cohomologically smooth morphism to check if it is cohomologically étale.

\begin{lemma}\label{lmm:coh etale vs coh smooth}
    Let $f:Y \to X$ be a morphism in $E(\mathfrak G)$. The following are equivalent:
    \begin{enumerate}[label=\roman*)]
    \item $f$ is cohomologically étale.
    \item $f$ is cohomologically smooth and $f^! \Lambda \cong \Lambda$.
    \item $f$ is cohomologically smooth and $f^! \Lambda$ is concentrated in degree $0$.
    \end{enumerate}
\end{lemma}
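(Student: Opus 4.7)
The implication (i) $\Rightarrow$ (ii) is immediate from \Cref{lmm:coh et coh sm unram}: a cohomologically étale morphism is by definition cohomologically smooth, and the canonical natural transformation $f^! \to f^*$ is an isomorphism, so evaluating at $1_X = \Lambda$ gives $f^! \Lambda \cong f^* \Lambda \cong \Lambda$. The implication (ii) $\Rightarrow$ (iii) is trivial. To close the cycle, it thus suffices to prove (ii) $\Rightarrow$ (i) and (iii) $\Rightarrow$ (i).

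For (ii) $\Rightarrow$ (i) the plan is to use the diagonal trick. The setup $\mathfrak G$ is a construction setup and hence a good geometric setup (\Cref{rem:construction setup}), so \Cref{cor:dual to diagonal} applies and yields $\Delta_f^! \Lambda \cong (f^! \Lambda)^{-1} \cong \Lambda$. Then \Cref{prop:diagonal and unram} shows that $f$ is cohomologically unramified, and together with cohomological smoothness, \Cref{lmm:coh et coh sm unram} gives that $f$ is cohomologically étale.

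The main content is (iii) $\Rightarrow$ (i), where one must upgrade the condition ``$f^!\Lambda$ concentrated in degree $0$'' to something strong enough to feed into the argument above. The plan is to show étale-locally that $f^!\Lambda \cong \Lambda$ and then pass to the cover. Since $f$ is cohomologically smooth, $f^!\Lambda$ is invertible, in particular dualisable, so by \Cref{rem:dualizable-perfect} it is étale locally constant with perfect value; being concentrated in degree $0$, this value is a finitely generated projective $\Lambda$-module which, since $f^!\Lambda$ is invertible, is an invertible $\Lambda$-module. Here one invokes the structural fact that $\Lambda$, being a finite and hence Artinian ring, decomposes as a finite product of local rings, so $\mathrm{Pic}(\Lambda) = 0$ and any invertible $\Lambda$-module is free. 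Consequently there is an étale cover $\{j_i : U_i \to Y\}$ with $j_i^*(f^!\Lambda) \cong \Lambda$.

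To finish, set $f_i := f \circ j_i : U_i \to X$. Each $f_i$ is cohomologically smooth (composition of $f$ with the étale, hence cohomologically étale $j_i$, using \Cref{prop:et is coh et} and \Cref{cor:coh sm composition}), and $f_i^!\Lambda = j_i^! f^!\Lambda = j_i^* f^!\Lambda \cong \Lambda$, using $j_i^! = j_i^*$ for étale $j_i$. By the already established implication (ii) $\Rightarrow$ (i), each $f_i$ is cohomologically étale. Finally, \Cref{lmm:et-is-loc-et} promotes this to $f$ being cohomologically étale, giving (iii) $\Rightarrow$ (i) and hence, via (i) $\Rightarrow$ (ii), also (iii) $\Rightarrow$ (ii). The only mildly delicate point in the whole argument will be the reduction to étale-local triviality of the invertible object $f^!\Lambda$; everything else is a short chain of preceding lemmas and corollaries.
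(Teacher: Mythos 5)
Your proof is correct and follows the same route as the paper: establish (ii)$\Rightarrow$(i) via Verdier's diagonal trick (\Cref{cor:dual to diagonal} and \Cref{prop:diagonal and unram}), and then reduce (iii) to (ii) étale-locally by using that an invertible sheaf concentrated in degree $0$ is étale-locally trivial (since $\Lambda$ is finite) together with \Cref{lmm:et-is-loc-et}. Your write-up merely makes explicit a few steps the paper leaves implicit (e.g.\ that each $f_i = f\circ j_i$ is still cohomologically smooth and that $j_i^! = j_i^*$), so no further comment is needed.
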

\begin{proof}
    $i)$ implies $ii)$ by \Cref{lmm:coh et coh sm unram}. $ii)$ is a special case of $iii)$. We can check cohomological étaleness étale locally by \Cref{lmm:et-is-loc-et}. Moreover, note that an invertible sheaf on $Y$ is étale locally isomorphic to $\Lambda$: Namely, by \Cref{rem:dualizable-perfect} this reduces to the statement that an invertible $\Lambda$-module is free which is true since $\Lambda$ is a finite ring. To finish the proof it thus suffices to show $ii) \Rightarrow i)$. But this is now immediate from \Cref{prop:diagonal and unram} (and \Cref{lmm:coh et coh sm unram}) because we have an isomorphism $\Delta_f^! \Lambda \cong (f^! \Lambda)^{-1}$ by \Cref{cor:dual to diagonal}. %
\end{proof}

Hence, we only need to prove that if $f$ is cohomologically smooth and quasi-finite, then $f^! \Lambda$ is concentrated in degree $0$. This is clear if the target is a point: Indeed, if $f: Y \to \Spec l$ is quasi-finite (thus finite, cf. \cite[\texttt{02NH}]{Stacks}) for a separably closed field $l$, then $Y^\mathrm{awn} \to (\Spec l)^\mathrm{awn} = \Spec \bar l$ is étale\footnote{Note that $Y^\mathrm{awn}\to (\Spec l)^\mathrm{awn}$ agrees with the absolute weak normalisation of the base change $Y_{\bar l} \to \Spec \bar l$ which is already étale after reduction, cf. \cite[\texttt{056V}]{Stacks}.} and so $f$ is cohomologically étale by \Cref{prop:awn etale implies coh etale}. But in fact, this observation already suffices to deduce the desired statement in general since formation of the dualising complex commutes with base change by definition of cohomologically smooth morphisms.
Together with the result of the previous section we arrive at the following characterisation of cohomologically étale morphisms:

\begin{theorem}\label{cor:coh sm quasifinite coh etale}
    Let $f:Y \to X$ be a separated morphism of finite presentation in $C(\mathfrak G)$. Then the following are equivalent:
    \begin{enumerate}[label=\roman*)]
        \item $f$ is cohomologically smooth and quasi-finite.
        \item $f$ is cohomologically étale.
        \item $f^{\mathrm{awn}}$ is étale.
    \end{enumerate}
\end{theorem}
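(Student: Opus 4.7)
The plan is to organise the argument around the one remaining piece of new content: the implication (i) $\Rightarrow$ (ii). The other directions are already in place from the previous subsection. Namely, (iii) $\Rightarrow$ (ii) is Proposition~\ref{prop:awn etale implies coh etale}, and (ii) $\Rightarrow$ (iii) is Theorem~\ref{cor:coh et awn et}. The implication (ii) $\Rightarrow$ (i) follows by combining Lemma~\ref{lmm:coh et coh sm unram} (cohomologically étale morphisms are cohomologically smooth) with Corollary~\ref{cor:coh-et implies quasi-finite} (cohomologically étale morphisms of finite type are quasi-finite). So I would open the proof by citing these reductions and then focus on (i) $\Rightarrow$ (ii).

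For this implication I would invoke Lemma~\ref{lmm:coh etale vs coh smooth}: assuming $f$ is cohomologically smooth and quasi-finite, it is enough to check that $f^!\Lambda$ is concentrated in cohomological degree $0$. The strategy is to verify this on geometric fibers, reducing to an absolute statement. First I would handle the absolute case: for a cohomologically smooth, quasi-finite morphism $g : Y' \to \Spec l$ with $l$ separably closed, quasi-finiteness together with the finite type and affine target hypotheses forces $g$ to be finite by \cite[\texttt{02NH}]{Stacks}. Base-changing along the universal homeomorphism $\Spec \bar l \to \Spec l$ (where $\bar l$ is the perfect closure) and then reducing shows via \cite[\texttt{056V}]{Stacks} that $(Y')^{\mathrm{awn}} \to (\Spec l)^{\mathrm{awn}} = \Spec \bar l$ is étale. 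Proposition~\ref{prop:awn etale implies coh etale} then yields that $g$ is cohomologically étale, so $g^!\Lambda \cong \Lambda$ by Lemma~\ref{lmm:coh et coh sm unram}.

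The general case follows formally by base change. Since formation of the dualising complex commutes with base change for cohomologically smooth morphisms, for each geometric point $\bar x \to X$ the pullback of $f^!\Lambda$ to $Y_{\bar x}$ is identified with $f_{\bar x}^!\Lambda$, where $f_{\bar x} : Y_{\bar x} \to \Spec \kappa(\bar x)$ is the fiber; this fiber is itself cohomologically smooth and quasi-finite, so the absolute case applies and gives $f_{\bar x}^!\Lambda \cong \Lambda$. Since $f^!\Lambda$ is invertible (by cohomological smoothness of $f$), Remark~\ref{rem:dualizable-perfect} says it is étale locally isomorphic to $\Lambda[n]$ for a locally constant function $n$ on $Y$; the fiber computation forces $n(\bar y) = 0$ at every geometric point $\bar y$, so $n \equiv 0$. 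Hence $f^!\Lambda \cong \Lambda$, and Lemma~\ref{lmm:coh etale vs coh smooth} concludes that $f$ is cohomologically étale. The only mildly delicate point in this plan is the identification of $f^!\Lambda|_{Y_{\bar x}}$ with $f_{\bar x}^!\Lambda$, which is immediate from condition (iv) in the definition of cohomological smoothness, so no real obstacle stands in the way.
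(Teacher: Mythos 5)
Your proof follows the same route as the paper's: the equivalences (ii) $\Leftrightarrow$ (iii) and (ii) $\Rightarrow$ (i) are cited from the same earlier results, and (i) $\Rightarrow$ (ii) is obtained by reducing to the condition that $f^!\Lambda$ is concentrated in degree $0$ via \Cref{lmm:coh etale vs coh smooth}, verifying this over geometric points (where quasi-finite over a separably closed field forces finite, so $f^{\mathrm{awn}}$ is étale and \Cref{prop:awn etale implies coh etale} applies), and propagating via the base-change compatibility of the dualising complex. Your extra remark about $f^!\Lambda$ being étale locally of the form $\Lambda[n]$ for a locally constant $n$ simply spells out a step the paper leaves implicit; the argument is otherwise identical.
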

\begin{proof}
    $ii) \Leftrightarrow iii)$ is \Cref{cor:coh et awn et} and $ii)$ implies $i)$ by \Cref{lmm:coh et coh sm unram} and \Cref{cor:coh-et implies quasi-finite}. To see that $i)$ implies $ii)$, we only need to show that $f^! \Lambda$ is concentrated in degree $0$ by \Cref{lmm:coh etale vs coh smooth}. To this end, let $x \in X$ and $\bar{x} \to X$ a corresponding geometric point. Then $f_x: Y_{\bar x} \to \bar x$ is quasi-finite and thus cohomologically étale, as observed above. Thus, as $f$ is cohomologically smooth, we have $f^! \Lambda|_{f^{-1}(x)} \cong f_x^! \Lambda \cong \Lambda$. As $x$ was arbitrary, this concludes the proof.
\end{proof}

In positive characteristic, the fact that quasi-finite morphisms to a point are cohomologically étale can also be seen as an instance of a more general observation: A separated, quasi-finite morphism $f:Y \to X$ with $X$ noetherian is cohomologically étale over a stratification of $X$.\par
This is a consequence of the following well-known property of morphisms of perfect schemes:

\begin{proposition}\label{prop:perf schemes form etale}
    If $f: Y \to X$ is a morphism of perfect schemes of characteristic $p$, then it is formally étale. In particular, it is étale if and only if it is of finite presentation.
\end{proposition}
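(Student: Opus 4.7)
The plan is to verify the formal étaleness directly from the universal property, using the invertibility of Frobenius on $Y$. Let $T_0 \hookrightarrow T$ be a square-zero (or nilpotent) closed immersion of affine $\mathbb{F}_p$-schemes and suppose we are given a commutative square with maps $h: T \to X$ and $g_0: T_0 \to Y$ with $f \circ g_0 = h|_{T_0}$. I need to produce a unique $g: T \to Y$ over $X$ extending $g_0$.

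The key observation is the following factorization of Frobenius. If $T = \Spec A$ and $T_0 = \Spec A/I$ with $I^n = 0$, then for $m$ large enough that $p^m \geq n$, the $m$-th iterate of the absolute Frobenius $F^m : A \to A$, $a \mapsto a^{p^m}$, sends $I$ into $I^{p^m} \subseteq I^n = 0$ and hence factors uniquely as $A \twoheadrightarrow A/I \xrightarrow{\bar F^m} A$. Geometrically this says that $\phi_T^m : T \to T$ factors as $\iota \circ \alpha$, where $\iota: T_0 \hookrightarrow T$ is the closed immersion and $\alpha: T \to T_0$ is induced by $\bar F^m$. Since $Y$ is perfect, $\phi_Y$ is invertible, so we may set
\[
g := \phi_Y^{-m} \circ g_0 \circ \alpha : T \to Y.
\]

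Now I would check three things, each by a short diagram chase using naturality of Frobenius (for any morphism $\psi: S \to S'$ in characteristic $p$ one has $\psi \circ \phi_S = \phi_{S'} \circ \psi$). First, $g$ extends $g_0$: one computes $\alpha \circ \iota = \phi_{T_0}^m$, so $g \circ \iota = \phi_Y^{-m} \circ g_0 \circ \phi_{T_0}^m = \phi_Y^{-m} \circ \phi_Y^m \circ g_0 = g_0$. Second, $g$ lies over $X$: using $f \circ \phi_Y^{-m} = \phi_X^{-m} \circ f$ (which requires $\phi_X$ invertible, i.e.\ $X$ perfect) and $h \circ \phi_T^m = \phi_X^m \circ h$, one gets $f \circ g = \phi_X^{-m} \circ h \circ \phi_T^m = h$. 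Third, uniqueness: if $g'$ is any extension of $g_0$, then $g' \circ \phi_T^m = \phi_Y^m \circ g'$ and $g' \circ \iota = g_0$ together force $g' = \phi_Y^{-m} \circ g_0 \circ \alpha = g$; notably the uniqueness does not even require the compatibility with $X$.

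The main (in fact only) subtle point in carrying this out is being careful that the Frobenius is the absolute Frobenius (a morphism over $\Spec \mathbb{F}_p$, not over $X$) when invoking naturality, and that invertibility of $\phi_X$ is really used in the step showing $f \circ g = h$; without perfectness of $X$ the construction would not produce a morphism over $X$. For the ``in particular'' assertion, one just recalls that a morphism is étale if and only if it is formally étale and locally of finite presentation (\cite[\texttt{02HM}]{Stacks}), so the first statement immediately implies the second.
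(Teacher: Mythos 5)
Your proof is correct and is essentially a coordinate-free reformulation of the paper's argument: the paper defines the lift element-wise by $b \mapsto r_b^p$ where $r_b$ is any preimage of $g(b^{1/p})$ in $R/I^2$, which on the scheme side is precisely your $\phi_Y^{-1} \circ g_0 \circ \alpha$ in the case $m=1$, and the well-definedness of $r_b^p$ modulo $I^2$ is exactly your factorization of Frobenius through $T_0$. The only mild difference is that you treat arbitrary nilpotent thickenings directly by iterating Frobenius, whereas the paper restricts to the square-zero case $R/I^2 \twoheadrightarrow R/I$, which suffices by the standard reduction.
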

\begin{proof}
    Formal étaleness may be checked locally, so assume $Y = \Spec B$ and $X = \Spec A$ for perfect rings $A$ and $B$ and look at a diagram
    \[\begin{tikzcd}
        A \rar["f"] \dar["h"] & B \dar["g"] \\
        R/I^2 \rar & R/I
    \end{tikzcd}\]
    for some ring $R$ and ideal $I \subseteq R$. We want to construct a lift $B \to R/I^2$ over $A$ and then prove its uniqueness. For the first part, note that for each $b \in B$ we can choose some lift $r_b$ of $g(b^{1/p})$ in $R/I^2$. In fact, we claim that $b \mapsto r_b^p$ is already well-defined and yields the desired map. Indeed, if $r_b'$ is another lift of $g(b^{1/p})$ then $(r_b)^p - (r_b')^p = (r_b - r_b')^p \in I^2$. The commutativity of the lower triangle for the lift is obvious by construction. The commutativity of the upper triangle follows from the well-definedness since $h(a^{1/p})$ is a lift for $g(f(a)^{1/p})$. To see uniqueness, simply note that any lift sends $b^{1/p}$ to some $r'$ lifting $g(b^{1/p})$ and thus sends $b$ to $(r')^p$.
\end{proof}
    
The following small commutative algebra lemma is then the key observation.

\begin{lemma}\label{lmm:perfection of affine finite}
    Let $A$ be a noetherian domain over $\mb F_p$ and $f: A \to B$ a finite morphism of $\mb F_p$-algebras. Then there is some $0 \not= a \in A$ such that the induced morphism $f': A_\mathrm{perf}[a^{-1}] \to B_\mathrm{perf}[a^{-1}]$ is of finite type.
\end{lemma}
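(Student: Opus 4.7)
The plan is to first analyse the generic fibre $(B_K)_\mathrm{perf}$ (where $K = \mathrm{Frac}(A)$), show it is a finite $K_\mathrm{perf}$-module generated by (the image of) $B$, and then spread out this description to a basic open of $\Spec A$ using the Frobenius-additivity of $p$-th roots in perfect rings.

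For the preliminary reductions, I replace $B$ by $B_\mathrm{red}$ (which does not change $B_\mathrm{perf}$) and kill the $A$-torsion of $B$, which is a finitely generated submodule by the noetherian hypothesis, by inverting a single nonzero element of $A$. Then $B \hookrightarrow B_K$ and $B_K$ is a finite reduced $K$-algebra, so $B_K = \prod_i L_i$ for finite field extensions $L_i/K$. Under perfection the purely inseparable part of each $L_i$ dies, and the maximal separable subextension $L_i^s$ satisfies $(L_i^s)_\mathrm{perf} = L_i^s \otimes_K K_\mathrm{perf}$ because finite separable extensions of the perfect field $K_\mathrm{perf}$ are themselves perfect. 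Hence $(B_K)_\mathrm{perf}$ is finite over $K_\mathrm{perf}$ and is in fact generated as a $K_\mathrm{perf}$-module by the image of $B$.

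Now fix $A$-module generators $b_1, \ldots, b_n$ of $B$; they also generate $(B_K)_\mathrm{perf}$ as $K_\mathrm{perf}$-module. Writing $b_i^{1/p} = \sum_j c_{ij} b_j$ in $(B_K)_\mathrm{perf}$ with $c_{ij} \in K_\mathrm{perf} = A_\mathrm{perf}[(A \setminus 0)^{-1}]$ and clearing the finitely many denominators simultaneously, I obtain $a \in A \setminus \{0\}$ with $c_{ij} \in A_\mathrm{perf}[a^{-1}]$. I then propagate this to all higher $p$-power roots by induction on $k \geq 0$: assuming $b_i^{1/p^{k-1}} = \sum_j \alpha_{ij} b_j$ with $\alpha_{ij} \in A_\mathrm{perf}[a^{-1}]$, take $p$-th roots using $(x+y)^{1/p} = x^{1/p} + y^{1/p}$ in the perfect ring $B_\mathrm{perf}$ to get $b_i^{1/p^k} = \sum_j \alpha_{ij}^{1/p} b_j^{1/p}$, substitute the base case for each $b_j^{1/p}$, and observe that $\alpha_{ij}^{1/p}$ remains in $A_\mathrm{perf}[a^{-1}]$ since this ring is itself perfect (being a localization of the perfect ring $A_\mathrm{perf}$).

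Since every element of $B_\mathrm{perf}$ is an $A_\mathrm{perf}$-linear combination of $p^k$-th roots of elements of $B$, and any such $p^k$-th root is itself an $A_\mathrm{perf}$-linear combination of the $b_i^{1/p^k}$ (again by additivity of $p^k$-th roots), the induction shows that $B_\mathrm{perf}[a^{-1}]$ is spanned as $A_\mathrm{perf}[a^{-1}]$-module by $b_1, \ldots, b_n$, so it is even finite over $A_\mathrm{perf}[a^{-1}]$, not just of finite type. The main obstacle is the structural description of $(B_K)_\mathrm{perf}$ in the first step, which rests on carefully separating the separable and purely inseparable parts of each residue field $L_i$; once that identification is in hand, the spread-out step is a formal Frobenius manipulation exploiting the perfectness of $B_\mathrm{perf}$ and of $A_\mathrm{perf}[a^{-1}]$.
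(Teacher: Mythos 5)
Your argument is correct, and it takes a genuinely different route from the paper's. The paper works generator by generator: for each $A$-module generator $s$ of $B$, noetherianness gives a monic $p$-polynomial relation $s^{p^m} + a_{m-1}s^{p^{m-1}} + \cdots + a_0 s = 0$, reducedness ensures some coefficient $a_i$ is nonzero, and taking $p^{-m}$-th roots and inverting $a_i$ shows the $A_\mathrm{perf}[a_i^{-1}]$-subalgebra generated by $s^{p^{-i}}$ already contains all $p$-power roots of $s$. You instead pass to the generic fibre: after killing $A$-torsion you identify $(B_K)_\mathrm{perf} = \prod_i (L_i^s \otimes_K K_\mathrm{perf})$ via the decomposition of the finite reduced $K$-algebra $B_K$ into fields and the linear-disjointness of separable and purely inseparable extensions, obtain a finite $K_\mathrm{perf}$-module generated by the image of $B$, and then clear denominators and propagate via Frobenius additivity. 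The trade-off is that the paper's approach stays entirely at the level of commutative algebra in $B$ and needs no field theory, while your approach is more structural, makes the role of separability transparent, and explicitly yields the stronger conclusion that $B_\mathrm{perf}[a^{-1}]$ is \emph{module-finite} over $A_\mathrm{perf}[a^{-1}]$ (though the paper's construction also yields this a posteriori, since the adjoined roots are integral). One small point worth spelling out in your write-up: the step where you pass from ``$b_j$ generate $(B_K)_\mathrm{perf}$ as a $K_\mathrm{perf}$-module'' back to a statement inside $B_\mathrm{perf}[a^{-1}]$ uses that $B_\mathrm{perf} \hookrightarrow (B_K)_\mathrm{perf}$, which is why the torsion-free reduction is essential and which holds since filtered colimits are exact and the Frobenius is injective on the reduced ring $B$.
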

    \begin{proof}
    First note that we can wlog.\ assume that $B$ is reduced as $B$ and $B_\mathrm{red}$ are homeomorphic and the morphism $A_\mathrm{perf} \to B_\mathrm{perf}$ only depends on $A_\mathrm{red}\to B_\mathrm{red}$.
    Pick a finite set of generators $s_1,...,s_N$ of $B$ as an $A$-module.\\
    Since $B$ is a finite module over the noetherian ring $A$, it is a noetherian module; in particular for every $s \in \set{s_1,...,s_N}$ there is some $m \in \mb N$ such that
    $$s^{p^m} + a_{m-1} s^{p^{m-1}} + ... + a_0 s = 0$$
    and since $B$ is reduced, there is some minimal $i \geq 0$ with $a_i \not= 0$. In $B_\text{perf}$ we thus have
    $$s + a_{m-1}^{p^{-m}} s^{p^{-1}} + ... + a_0^{p^{-m}} s^{p^{-m}} = 0$$
    and it follows that in $B_\mathrm{perf}[a_{i}^{-1}]$, $s^{p^{-i}}$ is a $A_\mathrm{perf}[a_{i}^{-1}]$-linear combination of $s^{p^{-n}}$ for $n < i$; which implies in particular that the subalgebra $A_\mathrm{perf}[a_{i}^{-1}][s^{p^{-i}}]$ of $B_\mathrm{perf}[a_{i}^{-1}]$ already contains all $s^{p^{-n}}$, $n \in \mb Z$.\\
    Now choose such an $i_j$ and $a_{i_j}$ for every $s_j$ and set $a := \prod_j a_{i_j} \not= 0$. Then the subalgebra
    $$A':= A_{\mathrm{perf}}[a^{-1}][s_j^{p^{-i_j}}, \ j \leq N]$$ of $B[a^{-1}]_{\mathrm{perf}}$
    contains all $s_j^{p^{-n}}$. But every element of $B[a^{-1}]_\mathrm{perf}$ is $f(s_1,...,s_N)^{p^{-i}}$ for some polynomial $f \in A[a^{-1}][X_1,...,X_N]$ and since the Frobenius is a ring homomorphism, $$f(s_1,...,s_N)^{p^{-i}} = f^{p^{-i}}(s_1^{p^{-i}},...,s_N^{p^{-i}})$$
    where $f^{p^{-i}} \in A[a^{-1}]_\mathrm{perf}[X_1,...,X_n]$ is $f$ with all coefficients replaced by their $p^i$-th root.
    So we have $A' = B[a^{-1}]_\mathrm{perf}$.
    \end{proof}

    To reduce from quasi-finite to finite, we use the following form of Zariski's main theorem:

    \begin{theorem}[Zariski's main theorem]\label{thm:Zariski}
        Let $f:X \to S$ be a morphism of schemes. Assume $f$ is quasi-finite and separated. Then there is a factorisation $f = \pi \circ j$ into a quasi-compact open immersion $j:X \to T$ and a finite morphism $\pi: T \to S$. If $f$ is moreover locally of finite presentation, then there is such a factorisation with $T$ finite and of finite presentation over $S$.
    \end{theorem}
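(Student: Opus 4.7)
The plan is to follow Grothendieck's original approach, reducing the geometric statement to an algebraic form of Zariski's main theorem. I would proceed in three steps.

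First, reduce to the case $S = \Spec A$ affine with $X$ quasi-compact over $S$ (the latter being the standard additional hypothesis, cf. \cite[\texttt{05K0}]{Stacks}); this is immediate since quasi-finiteness and separatedness are local on $S$. In this setting, cover $X$ by finitely many affine opens $X_i = \Spec B_i$, each of finite type and quasi-finite over $A$.

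Second, apply the algebraic form of Zariski's main theorem pointwise: if $A \to B$ is of finite type and $\mathfrak q \in \Spec B$ is isolated in its fibre over $A$, then there is a finite $A$-subalgebra $B' \subseteq B$ and some $b \in B' \setminus \mathfrak q$ with $B'_b \to B_b$ an isomorphism. This produces, near each point of $X$, a factorization through a finite $A$-scheme in which a neighbourhood of the point is an open subscheme of the source. The hard part will be gluing these local factorizations into a single finite $\pi: T \to S$ with $X \hookrightarrow T$ an open immersion. The slickest route I know is via Nagata compactification (\Cref{thm:fin expansion comp}): embed $X$ into a proper $\bar X$ over $S$, form the Stein factorization $\bar X \to T \to S$ with $T \to S$ finite, and then verify that $X \to T$ is an open immersion using quasi-finiteness of $f$ (every point of $X$ is then isolated in its fibre over $T$, forcing $X \to T$ to be an isomorphism in a neighbourhood of each point). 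Alternatively, one can avoid compactifications by inductively enlarging a candidate finite $A$-subalgebra until it covers all of $X$; this is the route taken in the Stacks project.

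Finally, for the finite presentation refinement I would apply noetherian approximation: write $S$ locally as a cofiltered limit of noetherian $S_\lambda$ obtained by replacing coordinate rings with their finitely generated $\mb Z$-subalgebras. The morphism $f$, being of finite presentation, descends to some quasi-finite separated $f_\lambda: X_\lambda \to S_\lambda$; the first part applied over the noetherian base $S_\lambda$ yields a finite $T_\lambda \to S_\lambda$, automatically of finite presentation, with $X_\lambda \hookrightarrow T_\lambda$ open; pulling back along $S \to S_\lambda$ gives the desired $T = T_\lambda \times_{S_\lambda} S$. The genuine difficulty is concentrated in the gluing of step two, and for the purposes of this paper the theorem is naturally invoked as a classical black-box.
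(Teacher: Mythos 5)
The paper's ``proof'' is a two-tag citation to the Stacks project (\texttt{05K0} for the main factorisation, \texttt{0F2N} for the finite-presentation refinement), so you are being compared against a pure black-box invocation; your sketch of what lies behind those tags is reasonable and correctly identifies both references. Your overall structure (reduction to affine base, algebraic form of Zariski's main theorem, gluing, noetherian approximation for the finite-presentation refinement) matches the Stacks project argument in its second, ``alternative'' form.

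However, the route you present as primary — Nagata compactification followed by Stein factorisation — has a genuine non-noetherian gap that you gloss over. For a proper morphism $\bar X \to S$ over a general qcqs base, the Stein factorisation $\bar X \to T = \rSpec_S \bar f_*\mathcal O_{\bar X} \to S$ only yields $T \to S$ \emph{integral}, not finite: the pushforward $\bar f_*\mathcal O_{\bar X}$ is an integral $\mathcal O_S$-algebra but need not be a finite $\mathcal O_S$-module when $S$ is not locally noetherian (finiteness of proper pushforward is a coherence statement that fails outside the noetherian or finitely-presented-and-flat settings). To finish the argument you would still have to replace the full integral closure by a suitable finite $\mathcal O_S$-subalgebra over whose $\rSpec$ the open immersion $X \hookrightarrow T$ persists — which is exactly the inductive enlargement of finite subalgebras in the Stacks proof of \texttt{05K0}, and precisely ``the genuine difficulty'' you flag but do not resolve. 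So the Stein-factorisation shortcut is clean over a noetherian base (which is in fact all the paper needs, since \Cref{thm:Zariski} is only invoked in the proof of \Cref{prop:perfection of finite} where $X$ is noetherian), but it does not by itself establish the theorem as stated; the direct gluing route is the one that works in full generality. Your treatment of the finite-presentation refinement via noetherian approximation is correct and is indeed how \texttt{0F2N} proceeds.
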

    \begin{proof}
        See \cite[\texttt{05K0,0F2N}]{Stacks}.
    \end{proof}
    
    \begin{proposition}\label{prop:perfection of finite}
        Let $X$ be a noetherian integral scheme over $\mb F_p$ and $f:Y \to X$ a quasi-finite, separated morphism. Then there is some non-empty open $U\subseteq X$ such that the induced morphism $f_U^{\text{perf}}:Y_U^\text{perf} \to U^\text{perf}$ is étale.
    \end{proposition}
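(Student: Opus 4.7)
The plan is to reduce, via Zariski's Main Theorem, to the case of a finite affine morphism and then apply the commutative-algebra input \Cref{lmm:perfection of affine finite} together with the formal étaleness of morphisms between perfect schemes (\Cref{prop:perf schemes form etale}).

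First, I would invoke \Cref{thm:Zariski} to factor $f$ as $\pi \circ j$ with $j : Y \to T$ a quasi-compact open immersion and $\pi : T \to X$ finite. Open immersions are étale and survive perfection by \Cref{prop:prop of awn} and \Cref{prop:perf and awn}, so it suffices to find a non-empty open $U \subseteq X$ on which $\pi^{\mathrm{perf}}$ becomes étale. Since $X$ is noetherian and $\pi$ is finite, $T$ is quasi-compact and can be covered by finitely many affine opens $\Spec B_\beta$ each mapping into an affine open $\Spec A_\beta \subseteq X$ via a finite ring map $A_\beta \to B_\beta$.

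Next, for each $\beta$ I would apply \Cref{lmm:perfection of affine finite} to obtain a non-zero $a_\beta \in A_\beta$ such that $(A_\beta)_{\mathrm{perf}}[a_\beta^{-1}] \to (B_\beta)_{\mathrm{perf}}[a_\beta^{-1}]$ is of finite type. By \Cref{prop:perf schemes form etale}, every morphism of perfect schemes is formally étale, and a formally étale morphism of finite presentation is étale; the required finite presentation is automatic here, because the generators $s_j^{p^{-i_j}}$ adjoined in the proof of the lemma are integral over $A_{\mathrm{perf}}[a_\beta^{-1}]$, so the localised map is in fact finite. Consequently, the corresponding restriction of $\pi^{\mathrm{perf}}$ is étale on that affine patch.

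Finally, I would glue by setting $U := \bigcap_\beta D(a_\beta) \subseteq X$, where each $D(a_\beta) \subseteq \Spec A_\beta \subseteq X$ is non-empty by the choice of $a_\beta \neq 0$. This is a finite intersection of non-empty opens in the integral scheme $X$, hence a non-empty open of $X$. Over $U^{\mathrm{perf}}$ the morphism $\pi^{\mathrm{perf}}$ is étale locally on source and target, hence étale, and then $f_U^{\mathrm{perf}} = \pi_U^{\mathrm{perf}} \circ j_U^{\mathrm{perf}}$ is étale as well. The only non-formal ingredient is \Cref{lmm:perfection of affine finite} itself; the main care required afterwards is the gluing, which goes through cleanly thanks to the integrality of $X$.
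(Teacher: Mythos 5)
Your overall architecture matches the paper's: reduce via Zariski's main theorem to the finite case (open immersions survive perfection by \Cref{prop:prop of awn}), apply \Cref{lmm:perfection of affine finite} to get a finite type map on perfections after localising, and then conclude étaleness from \Cref{prop:perf schemes form etale}. The gluing over finitely many affine charts and taking $U = \bigcap_\beta D(a_\beta)$ (nonempty since $X$ is irreducible) is sound, though the paper shortcuts this by replacing $X$ outright by a single affine open and using that $\pi^{-1}(\text{affine})$ is affine for $\pi$ finite.

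However, there is a genuine gap at the finite-presentation step. You claim that since the generators $s_j^{p^{-i_j}}$ are integral over $A_{\mathrm{perf}}[a^{-1}]$ — which is correct, they satisfy monic equations over $A_{\mathrm{perf}}$ — the localised map is finite, and that this makes finite presentation automatic. It does not. A finite ring map need not be of finite presentation when the base is not Noetherian, and $A_{\mathrm{perf}}[a^{-1}]$ is essentially never Noetherian. Concretely, if $A$ is a perfect domain and $a \in A$ is a non-zero non-unit, the idempotent ideal $(a^{1/p^\infty})$ is not finitely generated, so $A \to A/(a^{1/p^\infty})$ is finite and formally étale (between perfect rings) but not étale. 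Thus ``finite $+$ formally étale'' does not give étale; the missing ingredient is precisely finite presentation, and \Cref{prop:perf schemes form etale} only asserts formal étaleness. The paper closes this gap by citing \cite[\texttt{052B}]{Stacks}: since $A_{\mathrm{perf}}[a^{-1}]$ is a domain (as $X$ is integral) and the map is of finite type, one may shrink $U$ further so that the map becomes of finite presentation. Your proof needs this extra step; the integrality observation, while correct, does not substitute for it.
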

    \begin{proof}
        Note that by \Cref{prop:perf schemes form etale} it suffices to show that there is some $U \not= \emptyset$ such that $f_U^{\text{perf}}$ is of finite presentation.\par
        First, we use \Cref{thm:Zariski} to reduce to the case that $f$ is finite (note that $f$ is of finite presentation as it is of finite type and $X$ is noetherian). Indeed, an open immersion induces an open immersion on perfections by \Cref{prop:prop of awn}. Now wlog.\ we can assume $Y = \Spec B$, $X = \Spec A$ so that \Cref{lmm:perfection of affine finite} applies: we find some $U \subseteq X$ such that $Y_{U}^{\mathrm{perf}} \to U^{\mathrm{perf}}$ is of finite type. Then by \cite[\texttt{052B}]{Stacks} it is of finite presentation after shrinking $U$.
    \end{proof} 
    
    Now we can do noetherian induction on $X$. 
    
    \begin{corollary}\label{cor:perfection etale on strat}
        Let $X$ be a noetherian scheme over $\mb F_p$ and $f:Y \to X$ a quasi-finite, separated morphism. Then there is a (finite) decomposition $\coprod_i X_i$ of $X$ into locally closed subschemes such that $(Y_{X_i})^\mathrm{perf} \to X_i^\mathrm{perf}$ is étale.
    \end{corollary}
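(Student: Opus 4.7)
The plan is to prove this by Noetherian induction on $X$, invoking \Cref{prop:perfection of finite} at each step to peel off an open stratum.

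First, I would reduce to the case that $X$ is reduced. Since any perfect scheme is reduced and $X_{\mathrm{red}} \to X$ is a universal homeomorphism (so induces an isomorphism on perfections by \Cref{prop:perf and awn} and \Cref{lmm:awn-univ-hom}), we have $X^\mathrm{perf} = X_\mathrm{red}^\mathrm{perf}$, and likewise on any stratum. Hence a locally closed stratification of $X_\mathrm{red}$ with the required property lifts to one of $X$ by endowing each stratum with its reduced induced scheme structure in $X$.

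Next, I would carry out the Noetherian induction on closed subsets: assume the statement holds for every proper closed subscheme of $X$ and that $X$ is reduced and non-empty. Let $Z$ be an irreducible component of $X$ and set $U_0 := Z \setminus \bigcup_{Z' \neq Z} Z'$, where the union ranges over the remaining irreducible components. Set-theoretically $U_0 = X \setminus \bigcup_{Z' \neq Z} Z'$, so $U_0$ is a non-empty open subscheme of $X$; being open in the integral scheme $Z$ (with reduced induced structure), it is itself integral. Applying \Cref{prop:perfection of finite} to the quasi-finite separated morphism $Y \times_X U_0 \to U_0$ yields a non-empty open $V \subseteq U_0$ such that $(Y_V)^\mathrm{perf} \to V^\mathrm{perf}$ is étale; note that $V$ is then also open in $X$.

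Finally, let $W := (X \setminus V)_\mathrm{red}$, which is a proper closed subscheme of $X$ since $V$ is non-empty and open. The base change $Y \times_X W \to W$ is quasi-finite and separated, so by the induction hypothesis $W$ admits a finite decomposition $W = \coprod_j W_j$ into locally closed subschemes with $(Y_{W_j})^\mathrm{perf} \to W_j^\mathrm{perf}$ étale; each $W_j$ is locally closed in $W$, hence locally closed in $X$. Then $X = V \sqcup \coprod_j W_j$ is the sought-after stratification. The argument is essentially a bookkeeping exercise once \Cref{prop:perfection of finite} is in hand; the only minor subtlety is ensuring that the open stratum found on $Z$ can be realised as an open subscheme of $X$ (rather than merely a locally closed one), which is why one passes to $U_0$ before invoking the proposition.
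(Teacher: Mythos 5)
Your proof is correct and rests on the same two pillars as the paper's: Noetherian induction on closed subsets and an application of \Cref{prop:perfection of finite} to peel off an open stratum. The small difference is in how the reduction to the integral case is handled. The paper phrases the induction as a minimal-counterexample argument, shows that a minimal counterexample must be irreducible (using Stacks [09Y4] to combine stratifications of the finitely many components), replaces it by its reduction to get an integral scheme, and only then invokes \Cref{prop:perfection of finite}. You instead pick one irreducible component $Z$ of the reduction, observe that $U_0 = Z \setminus \bigcup_{Z' \ne Z} Z'$ is a non-empty \emph{open integral} subscheme of $X$, and apply \Cref{prop:perfection of finite} directly there; the closed complement is handled by the induction hypothesis. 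This sidesteps the citation of [09Y4] and the irreducibility reduction entirely, which is a slight streamlining, but the substance of the argument is the same.
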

    \begin{proof}
    
        Let $\mc M$ be the set of closed subschemes $Z$ of $X$ such that the statement does not hold for the morphism $Y_Z \to Z$ (note that it is again quasi-finite and separated). If $\mc M$ is non-empty, then it contains some inclusionwise minimal element $Z$ as $X$ is noetherian. As we are aiming for a contradiction, we might as well assume $Z=X$, i.e. that the statement is true for all closed subschemes $Z \subsetneq X$, but not for $X$.\par
        Then $X$ must be irreducible: Indeed, if $X$ has more than one irreducible component, then there are decompositions for the finitely many irreducible components of $X$ by assumption, but using \cite[09Y4]{Stacks} this would yield a decomposition for $X$, contradicting $X \in \mc M$. Wlog.\ we can replace $X$ by its reduction; then $X$ is integral. Now by \Cref{prop:perfection of finite} we find some non-empty $U$ such that $Y_U^\mathrm{perf} \to U^\mathrm{perf}$ is étale. Then the statement holds for its complement $Z$; but this yields again a decomposition for $X$, contradicting $X \in \mc M$.\par
        Thus $\mc M = \emptyset$, proving the claim.
    \end{proof}

    \begin{corollary}\label{cor:coh et on stratum}
        Let $\Lambda$ be any torsion ring. Let $X$ be a noetherian scheme over $\mb F_p$ and $f:Y \to X$ a quasi-finite, separated morphism. Then there is a (finite) decomposition $\coprod_i X_i$ of $X$ into locally closed subschemes such that $Y_{X_i} \to X_i$ is $\mc D_\Lambda$-cohomologically étale.
    \end{corollary}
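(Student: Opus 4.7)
The plan is essentially to combine the geometric decomposition from \Cref{cor:perfection etale on strat} with the characterisation of cohomologically étale morphisms via the absolute weak normalisation that was established earlier in the paper.

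First, I would apply \Cref{cor:perfection etale on strat} to the separated quasi-finite morphism $f: Y \to X$, producing a finite decomposition $X = \coprod_i X_i$ into locally closed subschemes such that each base change $(Y_{X_i})^{\mathrm{perf}} \to X_i^{\mathrm{perf}}$ is étale. Note that each $X_i \hookrightarrow X$ is a locally closed immersion of a noetherian scheme and the base changes $f_i: Y_{X_i} \to X_i$ are still separated and quasi-finite (in particular of finite type, hence of finite expansion), so they lie in $E(\mathfrak G)$ and we can speak of them being cohomologically étale.

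Next, since $X$ lies over $\mb F_p$, \Cref{prop:perf and awn} identifies the perfection functor with the absolute weak normalisation functor on the subcategory of schemes in characteristic $p$. Consequently $(Y_{X_i})^{\mathrm{awn}} \to X_i^{\mathrm{awn}}$ agrees with $(Y_{X_i})^{\mathrm{perf}} \to X_i^{\mathrm{perf}}$ and is therefore étale for each $i$.

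Finally, \Cref{prop:awn etale implies coh etale} asserts exactly that a morphism in $E(\mathfrak G)$ whose absolute weak normalisation is étale is $\mc D_\Lambda$-cohomologically étale. Applying this to each $f_i$ yields the desired conclusion. There is no real obstacle here: the substantial input is \Cref{cor:perfection etale on strat}, and the passage from the perfection statement to the cohomological statement is immediate once one invokes \Cref{prop:perf and awn} and \Cref{prop:awn etale implies coh etale}.
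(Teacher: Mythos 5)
Your proof is correct and takes essentially the same approach as the paper, which also derives the statement directly from \Cref{cor:perfection etale on strat} and \Cref{prop:awn etale implies coh etale} (with \Cref{prop:perf and awn} implicit). The extra care you take in checking that each $f_i$ lies in $E(\mathfrak G)$ is a reasonable sanity check but not strictly necessary given the setup.
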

    \begin{proof}
        This is now immediate from \Cref{cor:perfection etale on strat} and \Cref{prop:awn etale implies coh etale}.
    \end{proof}
\newpage

\section[Cohomologically smooth morphisms for \texorpdfstring{$\mb F_p$}{F\_p}-sheaves in characteristic p]{Cohomologically smooth morphisms for \texorpdfstring{${\mb F_p}$}{F\_p}-sheaves in characteristic p}

For the rest of this paper we consider the slice geometric setup $\mathfrak G_k:= \mathfrak G_{/k}$ of qcqs schemes over a field $k$ of characteristic $p$ and the restriction $\mc D_p$ of the 6-functor formalism $\mc D_{\mb F_p}$ from \Cref{thm:et 6 functors} on it (see \Cref{rem:6-functor slices}). The goal of this section is to prove that $\mc D_{p}$-cohomologically smooth morphisms of finite presentation are quasi-finite, turning \Cref{cor:coh sm quasifinite coh etale} into a characterisation of cohomologically smooth morphisms in this setting.\par
 We claim that a finite type morphism $X \to \Spec k$ is cohomologically smooth if and only if it is cohomologically étale. As $\Spec \bar k \to \Spec k$ is certainly a $v$-cover (as it is fpqc, cf. \cite[\texttt{0ETC}]{Stacks}), we may assume that $k$ is algebraically closed by \Cref{cor:coh et change of base} which we do in the following.  \par
Recall from \Cref{thm:sm coh sm crit} that smooth morphisms in $\mc C(\mathfrak G_k)$ are $\mc D_\Lambda$-cohomologically smooth if and only if $\mb P^1_k \to \Spec k$ is $\mc D_\Lambda$-cohomologically smooth. We mentioned in \Cref{xmpl:coh smooth example} that this holds for $\Lambda = \mb F_\ell$ if $\ell \not= p$. Let us prove that it is not the case if $\ell = p$.\par
For this, it is helpful to identify the invertible objects in $\mc D_p(\mb P^1_k)$.

\begin{lemma}
    \label{inv-obj}
    Let $X$ be a scheme. Then $\mc L \in \mc D_p(X)$ is invertible if and only if there exists a decomposition $X = \coprod_{i \in I} X_i$ into open and closed subsets and $\mb F_p$-local systems $\mb L_i$ of rank 1 on $X_i$ such that $\mc L_{|X_i} \cong \mb L_i[n_i]$ for some $n_i \in \mb Z$.
\end{lemma}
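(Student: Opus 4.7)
The ``if'' direction is immediate: a rank $1$ local system on $X_i$ is invertible (its inverse is the dual local system), any shift of an invertible object is invertible, and these pieces can be glued along the open and closed decomposition $X = \coprod X_i$ to produce an invertible object whose restrictions agree with the given data.

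For the ``only if'' direction, the plan is to first use that an invertible object $\mc L$ is dualisable, hence by \Cref{rem:dualizable-perfect} étale locally constant with perfect values. Concretely, there is an étale cover $\set{U_\alpha \to X}$ and strictly perfect complexes $K_\alpha \in \mc D(\mb F_p)$ such that $\mc L|_{U_\alpha}$ is isomorphic to the constant sheaf associated to $K_\alpha$. Now the key algebraic input is that invertible objects in $\mc D(\mb F_p)$ are exactly shifts $\mb F_p[n]$ for $n \in \mb Z$: indeed, if $K \tensor^L K' \cong \mb F_p$ in $\mc D(\mb F_p)$, then comparing cohomology dimensions forces exactly one $H^n(K)$ to be nonzero and one-dimensional. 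Applied to $K_\alpha$, which is invertible because $\mc L$ is, this shows $\mc L|_{U_\alpha} \cong \mb F_p[n_\alpha]$ for some $n_\alpha \in \mb Z$.

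The next step is to extract the decomposition. For each point $x \in X$, fix a geometric point $\bar x$; then $\mc L_{\bar x}$ is an invertible object in $\mc D(\mb F_p)$ and is therefore of the form $\mb F_p[n(x)]$ for a well-defined integer $n(x)$. By the previous paragraph, the function $x \mapsto n(x)$ is locally constant in the étale topology; since étale morphisms are open, this descends to a locally constant function on the underlying topological space of $X$. Consequently $X = \coprod_{i \in \mb Z} X_i$ where $X_i := \set{x : n(x) = i}$ is open and closed, and only countably many (in fact, on each quasi-compact piece only finitely many) $X_i$ are non-empty.

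It remains to identify $\mc L|_{X_i}$ after shift. Setting $\mb L_i := \mc L|_{X_i}[-n_i]$, this object is étale locally isomorphic to $\mb F_p[0]$ by construction; hence its cohomology is concentrated in degree $0$, and it is étale locally constant with stalks that are invertible $\mb F_p$-vector spaces, i.e.\ free of rank $1$. This exhibits $\mb L_i$ as an $\mb F_p$-local system of rank $1$ on $X_i$ with $\mc L|_{X_i} \cong \mb L_i[n_i]$, completing the argument. The main technical point is the passage from the étale-local trivialisation to a Zariski-open-and-closed decomposition, which relies on openness of étale maps to see that the shift function $n(x)$ is locally constant on $|X|$.
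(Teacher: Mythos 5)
Your proof is correct, but where you spell out the argument, the paper simply cites it: the "only if" direction is \cite[\texttt{0FPY}]{Stacks} (the general classification of invertible objects of $D(\mc O)$ on a ringed site as shifts, on an open-and-closed decomposition, of invertible $\mc O$-modules), combined with \cite[\texttt{0B8Q}]{Stacks} to identify invertible abelian $\mb F_p$-modules with rank-1 $\mb F_p$-local systems. Your reconstruction follows the same route as the cited result — reduce to stalks/geometric points, observe that invertible objects of $\mc D(\mb F_p)$ are shifts $\mb F_p[n]$ (via Künneth over a field), then use openness of étale maps to make the shift function locally constant on $|X|$ and extract the decomposition — so there is no genuine difference in method, just in whether the details are inlined or outsourced.
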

\begin{proof}
    This is \cite[\texttt{0FPY}]{Stacks} (by \cite[\texttt{0B8Q}]{Stacks} invertible $\mb F_q$-modules are $\mb F_q$-local systems).
\end{proof}

\begin{lemma}\label{lmm:p1 simply connected}
    The categories of finite étale covers of $\Spec k$ and of $\mb P^1_k$ are equivalent. In particular, $\mb P^1_k$ is simply connected, i.e. $\pi_1(\mb P^1_k) = 1$ (where $\pi_1$ denotes the étale fundamental group).
\end{lemma}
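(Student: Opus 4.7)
The plan is to invoke the classical fact that $\pi_1(\mb P^1_k) = 1$ for $k$ algebraically closed (as we are assuming in this section) via Riemann--Hurwitz, and then deduce the equivalence of categories formally.

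More precisely, I would first reduce to the connected case: any finite étale cover $f : Y \to \mb P^1_k$ decomposes as a finite disjoint union of its connected components (which are in turn finite étale over $\mb P^1_k$), so it suffices to show that a connected finite étale cover $f: Y \to \mb P^1_k$ is an isomorphism. Since $f$ is étale and $\mb P^1_k$ is smooth and projective over the algebraically closed field $k$, so is $Y$, and $Y$ is thus a smooth projective connected curve over $k$. Let $n = \deg f$. Since $f$ is étale, there is no ramification, and Riemann--Hurwitz gives
\[
    2 g_Y - 2 = n(2 g_{\mb P^1_k} - 2) = -2n,
\]
so $g_Y = 1 - n$. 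Non-negativity of the genus forces $n = 1$, i.e. $f$ is a finite étale morphism of degree $1$ between integral schemes, hence an isomorphism.

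It then follows that every finite étale cover of $\mb P^1_k$ is (non-canonically) a finite disjoint union of copies of $\mb P^1_k$. The pullback functor along $\mb P^1_k \to \Spec k$ sends a finite étale cover of $\Spec k$ (which, since $k = \bar k$, is just a finite disjoint union of copies of $\Spec k$) to such a disjoint union of copies of $\mb P^1_k$, and is easily seen to be fully faithful (morphisms on each component are determined by where they send $\mb P^1_k$ connectedly, and $\Hom_k(\mb P^1_k, \mb P^1_k \sqcup \mb P^1_k) = \Hom_k(\mb P^1_k, \mb P^1_k) \sqcup \Hom_k(\mb P^1_k, \mb P^1_k)$ by connectedness of the source). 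Combined with the essential surjectivity just established, this gives the desired equivalence of categories, and the statement about $\pi_1$ follows from the definition of the étale fundamental group as the automorphism group of the fiber functor.

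There is no real obstacle here; the only input beyond formalities is the Riemann--Hurwitz formula for étale covers of curves, which holds in arbitrary characteristic because étale morphisms have trivial ramification divisor (no wild ramification to worry about).
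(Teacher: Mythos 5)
Your proof is correct but takes a genuinely different route from the paper's. The paper uses the Fargues--Fontaine argument: for a finite étale $f:X \to \mb P^1_k$ one considers the vector bundle $f_*\mc O_X$, uses its decomposition $\bigoplus_d \mc O(d)^{\oplus n_d}$ together with the algebra multiplication and reducedness to rule out positive slopes, and then uses the self-duality from the trace pairing to rule out negative slopes, concluding that $f_*\mc O_X$ is a trivial bundle; the equivalence then comes from $\rSpec$ and $H^0$. You instead reduce to a connected cover, use that a connected finite étale cover of $\mb P^1_k$ is a smooth projective curve, and apply Riemann--Hurwitz (with zero ramification divisor, valid in all characteristics precisely because the cover is étale) to force genus $1-n \geq 0$, hence $n=1$. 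Your approach is the classical one and is shorter if one is willing to invoke Riemann--Hurwitz as a black box; the paper's argument is purely about the cohomology of vector bundles on $\mb P^1$ and is the one that transfers to other ``curve''-like contexts such as the Fargues--Fontaine curve (which is presumably why the authors chose it). Both are complete and correct, and the deduction of the equivalence of categories from essential surjectivity plus the obvious full faithfulness is the same in spirit in both cases.
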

\begin{proof}
    The following argument is due to Fargues-Fontaine (cf. \cite[Théorème 8.6.1]{Fargues}). In the following all functors are underived. Let $X \to \mb P_k^1$ be a finite, étale morphism. Then the finite étale algebra $f_* \mc O_X$ is a vector bundle on $\mb P_k^1$ and $X \cong \rSpec_{\mc O_{\mb P^1_k}}(f_* \mc O_X)$. Thus, as a module, $f_* \mc O_X$ is a direct sum $\bigoplus_{d \in I} \mc O(d)^{\oplus n_d}$ for some finite set $I \subseteq \mb Z$ and some $n_d \in \mb N$. Let $d_m := \max_{d \in I} d$. Assume $d > 0$. Then the multiplication on $f_*\mc O_X$ restricts to a morphism $\mc O(d_m) \tensor \mc O(d_m) \to f_* \mc O_X$; but
    $$\Hom(\mc O(d_m) \tensor \mc O(d_m) , f_* \mc O_X) \cong
        \Hom(\mc O(0) , f_* \mc O_X \tensor \mc O(-2d_m)) = H^0(\mb P_k^1, f_* \mc O_X \tensor \mc O(-2d_m)) = 0$$
    by definition of $d_m$. In particular, all sections of $\mc O(d_m)$ are nilpotent; but this is absurd as $f_* \mc O_X$ is an étale algebra, in particular reduced. Thus, $d_m \leq 0$.\par
    But note that the trace pairing induces an isomorphism $f_* \mc O_X \cong \Hom(f_* \mc O_X, \mc O_{\mb P_k^1})$ because $f$ is étale; so we must have $d_m = 0$, i.e. $f_* \mc O_X$ is a trivial vector bundle.\par
    Hence, sending $f \mapsto f_* \mc O_X$ defines an equivalence between the category of finite, etale covers of $\mb P_k^1$ and the category of trivial vector bundles on $\mb P_k^1$. On the other hand, $\epsilon \mapsto H^0(\mb P_k^1, \epsilon)$ defines an equivalence between the category of trivial vector bundles on $\mb P_k^1$ and the category of finite dimensional vector spaces over $k$, since $H^0(\mb P_k^1, \mc O_{\mb P_k^1}) = k$. But the latter is equivalent to the category of finite étale covers of $\Spec k$ since $k$ is separably closed.
\end{proof}

\begin{proposition} \label{no-trace-cyc-for-p1} \label{p1n-not-smooth}
    Let $n \geq 0$ and $f: (\mb P^1_k)^n \to \Spec k$. Then
    \begin{enumerate}[label=\roman*)]
        \item $(\mb P^1_k)^n$ is simply connected.
        \item $f_! \mb F_p[0] \cong \mb F_p[0]$.
        \item $f$ is cohomologically smooth if and only if $n=0$.
    \end{enumerate}
\end{proposition}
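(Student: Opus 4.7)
For (i), I would induct on $n$, starting from \Cref{lmm:p1 simply connected}. The inductive step uses the Künneth formula for étale fundamental groups (SGA 1, Exp.\ X, Cor.\ 1.7): for proper, geometrically connected schemes $X,Y$ over an algebraically closed field $k$, one has $\pi_1(X \times_k Y) \cong \pi_1(X) \times \pi_1(Y)$. Applied to $X = \mb P^1_k$ and $Y = (\mb P^1_k)^{n-1}$ this gives $\pi_1((\mb P^1_k)^n) = 1$ at once.

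For (ii), I would again induct on $n$. The base case $n = 1$ is the classical Artin--Schreier computation: the short exact sequence $0 \to \mb F_p \to \mb G_a \overset{F-1}\to \mb G_a \to 0$ on $(\mb P^1_k)_\et$, combined with the vanishing of higher coherent cohomology of $\mb P^1_k$ and the surjectivity of $F - 1$ on $k$ (since $k$ is algebraically closed), gives $R\Gamma(\mb P^1_k, \mb F_p) \cong \mb F_p[0]$, i.e.\ $g_! \mb F_p \cong \mb F_p$ for the structure map $g$ of $\mb P^1_k$ (as $g_! = g_*$ by properness). For the inductive step, writing $(\mb P^1_k)^n = \mb P^1_k \times_k (\mb P^1_k)^{n-1}$ and applying the Künneth formula (\Cref{cor:Kuenneth}) to the two structure maps — both relevant pullbacks of $\mb F_p$ being the monoidal unit — yields $f_! \mb F_p \cong \mb F_p \tensor \mb F_p \cong \mb F_p$.

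For (iii), the case $n = 0$ is immediate since $f$ is the identity. For $n \geq 1$, the plan is to assume $f$ is cohomologically smooth and derive a contradiction. Then $\omega_f := f^! \mb F_p$ is invertible, and by (i) together with \Cref{inv-obj} (simple connectedness rules out nontrivial rank-$1$ local systems on the connected scheme $(\mb P^1_k)^n$), $\omega_f \cong \mb F_p[m]$ for some $m \in \mb Z$. The adjunction $f_! \dashv f^!$ combined with (ii) then yields
$$\mb F_p \cong \Hom(f_! \mb F_p, \mb F_p) \cong \Hom(\mb F_p, \omega_f) \cong H^m((\mb P^1_k)^n, \mb F_p),$$
and by (ii) the right-hand side is $\mb F_p$ only for $m = 0$; hence $\omega_f \cong \mb F_p$. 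By \Cref{lmm:coh etale vs coh smooth} this forces $f$ to be cohomologically étale, and \Cref{cor:coh-et implies quasi-finite} then forces $f$ to be quasi-finite, contradicting $\dim (\mb P^1_k)^n = n \geq 1$.

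The main obstacle is the bookkeeping in (iii): pinning $\omega_f$ down up to shift requires the simple connectedness from (i), while fixing the shift requires feeding the cohomology computation (ii) into the $f_! \dashv f^!$ pairing. Once both are in hand, the final contradiction via the known quasi-finiteness of cohomologically étale morphisms is automatic.
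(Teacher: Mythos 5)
Your proposal is correct and follows essentially the same line as the paper: the same Artin--Schreier and Künneth computations for (ii), the same reduction in (iii) via invertibility of $\omega_f$, simple connectedness to trivialise the rank-one local system, and the $f_! \dashv f^!$ adjunction together with (ii) to pin the shift and conclude $\omega_f \cong \mb F_p$, followed by the same contradiction via quasi-finiteness of cohomologically étale morphisms (\Cref{cor:coh-et implies quasi-finite}). The only cosmetic difference is in (i), where you invoke the Künneth formula for étale fundamental groups, whereas the paper runs the same induction via the homotopy exact sequence \cite[\texttt{0C0J}]{Stacks}; both are standard and interchangeable here.
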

\begin{proof}
    $\mb P^1_k$ is simply connected by \Cref{lmm:p1 simply connected}. This implies $i)$ by an application of the homotopy short exact sequence \cite[\texttt{0C0J}]{Stacks}: By induction, assume we know that $(\mb P^1_k)^n$ is simply connected, i.e. $\pi_1((\mb P^1_k)^n)=1$. Now the morphism $\pi: (\mb P^1_k)^{n+1} \to (\mb P^1_k)^n$ is flat and proper with connected and reduced geometric fibers and $(\mb P^1_k)^n$ is connected. Choose any closed point $s \in (\mb P^1_k)^n$, then we have the homotopy short exact sequence
    $$\underbrace{\pi_1(\mb P^1_k)}_{\cong 1} \to \pi_1((\mb P^1_k)^{n+1}) \to \pi_1((\mb P^1_k)^n) \to 1$$
    and so by the induction assumption $\pi_1((\mb P^1_k)^{n+1}) = 1$.\par
    For $ii)$ let us first compute the $\mb F_p$-cohomology of the projective line. For this, we can use the Artin-Schreier sequence (cf. \cite[\texttt{0A3J}]{Stacks})
    $$0 \to \mb F_p \to \mc O_{\mb P^1_k} \overset{F-1} \to \mc O_{\mb P^1_k} \to 0$$
    of étale sheaves, where $F-1$ is the map $x \mapsto x^p - x$. Since $H^0(\mb P^1_k,\mc O_{\mb P^1_k})\cong k$ and $H^i(\mb P^1_k, \mc O_{\mb P^1_k}) = 0$ (using \cite[\texttt{03P2}]{Stacks}) for all $i>0$ and moreover $F-1: k \to k$ is surjective since $k$ is algebraically closed, it follows that $R \Gamma(\mb P^1_k, \mb F_p) = \mb F_p[0]$. Then we can apply the Künneth formula (\Cref{cor:Kuenneth}) inductively to the diagram
    \[\begin{tikzcd}
            (\mb P_k^1)^{n+1} \rar{\pi} \dar{p} \drar{f} & (\mb P_k^1)^{n} \dar{f'} \\
            \mb P^1_k \rar{g} & \Spec k
    \end{tikzcd}\]
    to get $ii)$.\par
    Now assume that $f$ is cohomologically smooth. By \Cref{inv-obj} $f^! \mb F_p = A[i]$ for some invertible module $A$ on $((\mb P^1_k)^n)_\et$ and some $i \in \mb Z$. Then $A$ is represented by some finite étale $U \to (\mb P^1_k)^n$ but then by $i)$ it is constant and so we must have $A \cong \mb F_p$. Then by $ii)$, we have
    $$\mb F_p = \iHom(f_!\mb F_p,\mb F_p) = f_* \iHom(\mb F_p, f^! \mb F_p) = f_* \mb F_p [i] = \mb F_p [i]$$
    where we used the Verdier duality formula (\Cref{PF-adj-isos}), so $i=0$ and we conclude by \Cref{lmm:coh etale vs coh smooth} that $f$ is cohomologically étale. This is clearly a contradiction to \Cref{cor:coh-et implies quasi-finite} (quasi-finiteness of cohomologically étale morphisms) if $n > 0$. However for the convenience of the reader we give a self-contained argument: For $n > 0$, consider the open immersion $j: \mb A^1_k \times (\mb P^1_k)^{n-1} \to (\mb P^1_k)^n$ and let $i$ be the inclusion of the complement $(\mb P^1_k)^{n-1}$. Then the distinguished triangle
    $$f_* j_! \mb F_p \to f_* \mb F_p \to f_* i_* \mb F_p$$
    yields $f_* j_! \mb F_p = 0$ since $f_* \mb F_p \cong \mb F_p \cong f_* i_* \mb F_p$. Thus, if $f$ were cohomologically étale, then
    $$0 = \Hom(f_* j_! \mb F_p, \mb F_p) \cong \Hom(\mb F_p, j^* f^* \mb F_p) \cong H^0(\mb A^1_k \times (\mb P^1_k)^{n-1}, \mb F_p)$$
    which is absurd.
   
\end{proof}

\begin{corollary}\label{A^n-not-sm}
    $\mb A^n_k \to \Spec k$ is cohomologically smooth if and only if $n=0$.
\end{corollary}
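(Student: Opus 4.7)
The $n=0$ direction will be immediate, since $\mb A^0_k = \Spec k$ and the identity of $\Spec k$ is trivially cohomologically smooth (with dualising complex $\mb F_p$). My plan for the nontrivial direction is to deduce it directly from \Cref{p1n-not-smooth} via an étale cover argument, without performing any further computation.

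Specifically, I will argue by contradiction: assume that $\mb A^n_k \to \Spec k$ is cohomologically smooth for some $n \geq 1$. The key point is that $(\mb P^1_k)^n$ admits the standard open cover by the $2^n$ affine charts
$$U_{\epsilon_1,\ldots,\epsilon_n} := U_{\epsilon_1} \times_k \cdots \times_k U_{\epsilon_n},$$
where $U_0, U_1 \subset \mb P^1_k$ are the two standard affine opens, each isomorphic to $\mb A^1_k$ over $k$. Consequently, each chart $U_{\epsilon_1,\ldots,\epsilon_n}$ is isomorphic as a $k$-scheme to $\mb A^n_k$, and the inclusions $U_{\epsilon_1,\ldots,\epsilon_n} \hookrightarrow (\mb P^1_k)^n$ are open immersions whose disjoint union forms an étale cover.

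Under the assumption, each composition $U_{\epsilon_1,\ldots,\epsilon_n} \hookrightarrow (\mb P^1_k)^n \to \Spec k$ coincides with the structure morphism of $\mb A^n_k$, hence is cohomologically smooth. I will then invoke the source-locality of cohomological smoothness along étale covers (\Cref{lmm:sm-is-loc-et}) to conclude that $(\mb P^1_k)^n \to \Spec k$ is itself cohomologically smooth, contradicting \Cref{p1n-not-smooth} for $n \geq 1$. I do not anticipate any serious obstacle: the corollary is essentially \Cref{p1n-not-smooth} repackaged via the standard affine cover of a product of projective lines.
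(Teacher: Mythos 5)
Your proof is correct and takes essentially the same route as the paper: covering $(\mb P^1_k)^n$ by the $2^n$ affine charts isomorphic to $\mb A^n_k$, invoking source-locality of cohomological smoothness (\Cref{lmm:sm-is-loc-et}), and deriving a contradiction with \Cref{p1n-not-smooth}. The only cosmetic difference is that you spell out the standard affine cover explicitly, which the paper leaves as ``locally isomorphic to $\mb A^n_k$.''
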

\begin{proof}
    This follows from \Cref{p1n-not-smooth} as $(\mb P^1_k)^n$ is locally isomorphic to $\mb A^n_k$ and cohomological smoothness is a cohomologically smooth-local property (on the source) by \Cref{lmm:sm-is-loc-et}.
\end{proof}

We can use this to understand the smooth cohomologically smooth morphisms $f: X \to \Spec k$ because locally they factor over an $\mb A^n_k$ via an étale morphism. 

\begin{corollary}\label{cor:smooth + coh-smooth over k}
    If $f: X \to \Spec k$ is smooth and cohomologically smooth then $f$ is etale, i.e. $X \cong \coprod_{i \in I} \Spec k$ for some finite set $I$.
\end{corollary}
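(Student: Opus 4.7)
The plan is to reduce to \Cref{A^n-not-sm} by exploiting the local structure of smooth morphisms. Suppose, for contradiction, that $f$ has positive relative dimension $n := \dim_x f > 0$ at some point $x \in X$. By the standard local description of smooth morphisms of relative dimension $n$, I would pass to an open neighborhood $U$ of $x$ that admits an étale morphism $g: U \to \mb A_k^n$.

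Next I would transfer cohomological smoothness of $f|_U$ down to an open subset of $\mb A_k^n$. Since the open immersion $j: U \hookrightarrow X$ is cohomologically smooth (being in $I(\mathfrak G_k)$), the composition $f \circ j: U \to \Spec k$ is cohomologically smooth by \Cref{cor:coh sm composition}. Since $g$ is étale, the image $V := g(U) \subseteq \mb A_k^n$ is open, and $g': U \to V$ is a surjective étale morphism, hence a one-element étale cover of $V$. Applying \Cref{lmm:sm-is-loc-et} to this cover with $p \circ g' = f|_U$ cohomologically smooth (where $p: V \to \Spec k$) then shows that $p$ itself is cohomologically smooth.

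To obtain a contradiction with \Cref{A^n-not-sm}, I would upgrade this to cohomological smoothness of the full affine space. Since $\mb A_k^n$ is homogeneous under translation by its $k$-points and $k$ is infinite as it is algebraically closed, the translates $\{V + q\}_{q \in k^n}$ form an open cover of $\mb A_k^n$: every closed point of $\mb A_k^n$ can be moved into $V$ by a suitable translation, and an open subset of $\mb A_k^n$ containing all closed points must be all of $\mb A_k^n$ by the Nullstellensatz. Each translate is isomorphic to $V$ via a translation automorphism and therefore cohomologically smooth over $\Spec k$. A second application of \Cref{lmm:sm-is-loc-et} then yields cohomological smoothness of $\mb A_k^n \to \Spec k$, contradicting \Cref{A^n-not-sm} since $n > 0$.

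It follows that $\dim_x f = 0$ for all $x \in X$, i.e.\ $f$ is smooth of relative dimension zero and hence étale. Since $X$ is qcqs of finite type over the algebraically closed field $k$, the standard structure of étale schemes over a field identifies $X$ with a finite disjoint union $\coprod_{i \in I} \Spec k$. The only non-formal step is the translation argument, which is routine given that $k$ is infinite; everything else is a direct combination of the local structure of smooth morphisms with the stability and descent properties of cohomological smoothness already established in the paper.
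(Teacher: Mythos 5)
Your argument is correct and follows essentially the same strategy as the paper's proof: reduce to the local factorisation of a smooth morphism through an étale map to $\mb A_k^n$, push cohomological smoothness forward along this étale cover using \Cref{lmm:sm-is-loc-et}, and then propagate to all of $\mb A_k^n$ by the translation trick to contradict \Cref{A^n-not-sm}. The only cosmetic difference is that you phrase it as a proof by contradiction while the paper directly concludes $n=0$; the Nullstellensatz detail you spell out is implicit in the paper's "we can cover $\mb A_k^n$ by translating $\im g$ around."
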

\begin{proof}
    The statement is local on the source so assume $f$ factors as
    \[\begin{tikzcd}
            X \arrow["g"]{r} \arrow[swap,"f"]{dr} & \mb A_k^n \arrow["\pi"]{d}\\
            & \Spec k
        \end{tikzcd} \]
    with $g$ etale. Then in particular $\im g$ is an open subscheme and $X \to \im g$ being cohomologically smooth and surjective implies by \Cref{lmm:sm-is-loc-et} that $\pi|_{\im g}$ is cohomologically smooth. But by another application of \Cref{lmm:sm-is-loc-et} this implies that $\pi$ is cohomologically smooth because we can cover $\mb A_k^n$ by translating $\im g$ around and thus we have $n=0$ by \Cref{A^n-not-sm}, i.e. $f=g$.
\end{proof}

\begin{corollary}\label{cor:smooth over k}
    A morphism $X \to \Spec k$ of finite type is cohomologically smooth if and only if $X_{\text{red}} \to \Spec k$ is etale, i.e. $X_{\text{red}} \cong \coprod_{i \in I} \Spec k$ for some finite set $I$. In particular, it is cohomologically smooth if and only if it is cohomologically étale.
\end{corollary}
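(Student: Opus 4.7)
My strategy will be purely formal: combine the previous corollary with the compatibility of $\mc D_p$ with absolute weak normalisation. Since the reduction at the start of the section lets us assume $k = \bar k$, I will keep that assumption throughout.

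For the ``if'' direction, I would assume $X_{\mathrm{red}} \cong \coprod_{i \in I} \Spec k$ and note that the closed immersion $X_{\mathrm{red}} \hookrightarrow X$ is a universal homeomorphism, so that by \Cref{lmm:awn-univ-hom} it induces an isomorphism $(X_{\mathrm{red}})^{\mathrm{awn}} \cong X^{\mathrm{awn}}$. Because $k$ is algebraically closed, hence perfect, each $\Spec k$ is already a.w.n.\ by \Cref{prop:perf and awn}, and therefore $X^{\mathrm{awn}} \cong \coprod_{i \in I} \Spec k$ is étale over $\Spec k$. Then \Cref{prop:awn etale implies coh etale} will yield that $X \to \Spec k$ is cohomologically étale, in particular cohomologically smooth.

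For the ``only if'' direction, assume $f: X \to \Spec k$ is cohomologically smooth. The universal homeomorphism $X_{\mathrm{red}} \to X$ is cohomologically étale by \Cref{xmpl:coh smooth example}, so \Cref{cor:coh sm composition} shows that the composition $X_{\mathrm{red}} \to \Spec k$ is cohomologically smooth. Since $X_{\mathrm{red}}$ is reduced and of finite type over the perfect field $k$, its smooth locus $V \subseteq X_{\mathrm{red}}$ is open and dense. The open immersion $V \hookrightarrow X_{\mathrm{red}}$ is cohomologically étale (\Cref{xmpl:coh smooth example}), so another application of \Cref{cor:coh sm composition} will give that $V \to \Spec k$ is cohomologically smooth, while it is smooth by construction. \Cref{cor:smooth + coh-smooth over k} then yields $V \cong \coprod \Spec k$, which is zero-dimensional; density of $V$ in $X_{\mathrm{red}}$ forces every generic point of $X_{\mathrm{red}}$ to lie in $V$, so $X_{\mathrm{red}}$ itself is zero-dimensional, reduced, and of finite type over $k = \bar k$, whence $X_{\mathrm{red}} \cong \coprod_{i \in I} \Spec k$. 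The ``in particular'' clause is then immediate, since the ``if'' direction actually produced a cohomologically étale morphism. The real work has already been carried out in \Cref{cor:smooth + coh-smooth over k}, itself powered by the Artin--Schreier computation on $\mb P^1_k$; here I only transport that conclusion from the smooth locus back to $X$ using the absolute weak normalisation machinery.
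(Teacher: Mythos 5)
Your proof is correct and follows the same route as the paper: generic smoothness of $X_{\mathrm{red}}$ plus \Cref{cor:smooth + coh-smooth over k} for the forward direction, and \Cref{prop:awn etale implies coh etale} (via the observation that $X^{\mathrm{awn}} \cong (X_{\mathrm{red}})^{\mathrm{awn}}$ and $\Spec k$ is a.w.n.) for the converse and the ``in particular'' clause. You merely spell out the intermediate steps (passing through $X_{\mathrm{red}}$ and the smooth locus $V$ using \Cref{xmpl:coh smooth example} and \Cref{cor:coh sm composition}) that the paper leaves implicit.
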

\begin{proof}
    As a reduced, finite type scheme over an algebraically closed field, $X_{\text{red}}$ is generically smooth (cf. \cite[\texttt{056V}]{Stacks}) and thus the first statement follows from \Cref{cor:smooth + coh-smooth over k}. The second statement is then clear from \Cref{prop:awn etale implies coh etale}.
\end{proof}
In the following, $k$ can be an arbitrary field.
\begin{corollary}\label{cor:smooth over k gen}
    A morphism $X \to \Spec k$ of finite type is cohomologically smooth if and only if it is cohomologically étale.
\end{corollary}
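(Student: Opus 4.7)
The plan is to reduce to the already-handled case of algebraically closed base fields via $v$-descent. The "if" direction is free: any cohomologically étale morphism is cohomologically smooth by \Cref{lmm:coh et coh sm unram}. So the content is in the "only if" direction.

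For the nontrivial implication, suppose $f : X \to \Spec k$ of finite type is cohomologically smooth. Cohomological smoothness is stable under arbitrary base change (by definition, cf.\ condition iii) in the definition), so the base change $f_{\bar k} : X_{\bar k} \to \Spec \bar k$ along the morphism $\Spec \bar k \to \Spec k$ is still cohomologically smooth. Since $\bar k$ is algebraically closed, \Cref{cor:smooth over k} applies and tells us that $f_{\bar k}$ is cohomologically étale.

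Now I would invoke $v$-descent to pull this conclusion back to $f$. The morphism $\Spec \bar k \to \Spec k$ is fpqc, hence a $v$-cover. Therefore \Cref{cor:coh et change of base} applies: since the pullback $f_{\bar k}$ of $f$ along this $v$-cover is cohomologically étale, so is $f$ itself. This completes the proof.

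There is no real obstacle here; everything has been engineered in the preceding subsections so that the corollary follows by combining two ingredients, namely \Cref{cor:smooth over k} (the absolute case over an algebraically closed field, which relied on the explicit calculations with $\mb P^1_k$) and the $v$-descent statement \Cref{cor:coh et change of base} (which in turn rested on \Cref{cor:coh et awn et} and the fact that perfection commutes with fiber products and preserves $v$-covers). The only small point to double-check is that the base change $f_{\bar k}$ is still of finite type and lies in $E(\mathfrak G_k)$, which is clear since finite type morphisms are stable under base change.
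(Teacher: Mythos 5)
Your proof is correct and follows exactly the paper's argument: the paper's proof of this corollary is just the one-line remark "we may assume $k$ is algebraically closed by \Cref{cor:coh et change of base}, then this is \Cref{cor:smooth over k}," which unpacks precisely to the steps you wrote out (stability of cohomological smoothness under base change to $\bar k$, apply the algebraically closed case, descend cohomological étaleness along the fpqc/$v$-cover $\Spec\bar k \to \Spec k$). You have simply made explicit what the paper compresses, including the easy "if" direction via \Cref{lmm:coh et coh sm unram}.
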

\begin{proof}
    As we already noted at the beginning of this section, we may assume that $k$ is algebraically closed by \Cref{cor:coh et change of base}. Then this is \Cref{cor:smooth over k}.
\end{proof}

\begin{proposition}\label{cor:coh-smooth implies quasi-finite}
    If $f: Y \to X$ is of finite type and cohomologically smooth, then it is quasi-finite.
\end{proposition}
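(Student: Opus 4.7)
The plan is to reduce quasi-finiteness of $f$ to the fiberwise case over a field, where the preceding corollaries already do all of the work. Concretely, given $f : Y \to X$ of finite type and $\mc D_p$-cohomologically smooth, I would pick an arbitrary point $x \in X$ and consider the base change $f_x : Y_x \to \Spec \kappa(x)$ of $f$ along the canonical morphism $\Spec \kappa(x) \to X$. By condition iii) in the definition of cohomologically smooth morphisms, $f_x$ remains $\mc D_p$-cohomologically smooth; it is manifestly of finite type, and its target is the spectrum of a field of characteristic $p$.

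Next I would apply \Cref{cor:smooth over k gen} to $f_x$ to conclude that $f_x$ is in fact $\mc D_p$-cohomologically étale, and then \Cref{cor:coh-et implies quasi-finite} to infer that $f_x$ is quasi-finite. In particular, the set-theoretic fiber $f^{-1}(x)$ is finite. Since $x \in X$ was arbitrary, every fiber of $f$ is finite; combined with the finite-type hypothesis, this means that $f$ is quasi-finite by definition.

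The argument is essentially a direct citation once the field case has been settled, so there is no substantial obstacle; the genuine work lives in the calculation of $R\Gamma((\mb P_k^1)^n,\mb F_p)$ via Artin-Schreier and the ensuing \Cref{p1n-not-smooth}, which are already in hand. The only routine check is that $\Spec \kappa(x)$ lies in the slice $\mathfrak G_k$ on which $\mc D_p$ is defined: this is automatic, since $\kappa(x)$ carries a canonical $k$-algebra structure coming from $X \to \Spec k$ and $\Spec \kappa(x)$ is qcqs, so it is an object of $\mathfrak G_k$.
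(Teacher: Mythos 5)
Your proof is correct and follows essentially the same route as the paper's: reduce to the fibers via base change, invoke \Cref{cor:smooth over k gen} to pass from cohomologically smooth to cohomologically étale over a field, apply \Cref{cor:coh-et implies quasi-finite} to get finiteness of fibers, and conclude quasi-finiteness. Two tiny caveats worth tightening: condition iii) of the definition alone gives only conditions i) and ii) for the base change, so justifying that $f_x$ is genuinely cohomologically smooth requires verifying that iii) and iv) are themselves stable under base change (a short, standard check); and the final step is not quite ``by definition'' --- one needs the observation that the fibers, being of finite type over a field with finitely many points, are discrete, so that the characterisation of quasi-finiteness in \cite[\texttt{06RT}]{Stacks} applies.
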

\begin{proof}
By \Cref{cor:smooth over k gen} and \Cref{cor:coh-et implies quasi-finite} the fibers of $f$ are finite discrete sets. But a finite type morphism with discrete fibers is quasi-finite (cf. \cite[\texttt{06RT}]{Stacks}).
\end{proof}

Putting all of our results together we can now finally answer our main question.

\begin{theorem}\label{thm:char}
    Let $k$ be an arbitrary field over $\mb F_p$ and $f:Y \to X$ a separated morphism of finite presentation in $\mc C(\mathfrak G_k)$. Then the following are equivalent:
    \begin{enumerate}[label=\roman*)]
        \item $f$ is $\mc D_p$-cohomologically smooth.
        \item $f$ is $\mc D_p$-cohomologically étale.
        \item $f^{\mathrm{perf}}$ is étale.
    \end{enumerate}
\end{theorem}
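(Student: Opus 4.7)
The plan is that this theorem should follow almost immediately by combining the general characterisation of cohomologically étale morphisms (Theorem \ref{cor:coh sm quasifinite coh etale}) with the observation that in characteristic $p$ the absolute weak normalisation coincides with the perfection (Proposition \ref{prop:perf and awn}) and the characteristic-$p$-specific result that $\mc D_p$-cohomologically smooth morphisms are quasi-finite (Corollary \ref{cor:coh-smooth implies quasi-finite}).

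More concretely, I would proceed as follows. The implication $(ii) \Rightarrow (i)$ is immediate from Lemma \ref{lmm:coh et coh sm unram}, which states that cohomologically étale morphisms are in particular cohomologically smooth. For $(i) \Rightarrow (ii)$, I would invoke Corollary \ref{cor:coh-smooth implies quasi-finite} to conclude that a cohomologically smooth morphism of finite type is automatically quasi-finite; then Theorem \ref{cor:coh sm quasifinite coh etale}, applied to $f$ (which is separated and of finite presentation as required there), upgrades the pair (cohomologically smooth + quasi-finite) to cohomologically étale. Finally, for $(ii) \Leftrightarrow (iii)$, Theorem \ref{cor:coh sm quasifinite coh etale} gives the equivalence of $(ii)$ with "$f^{\mathrm{awn}}$ is étale", and Proposition \ref{prop:perf and awn} identifies $f^{\mathrm{awn}}$ with $f^{\mathrm{perf}}$ in positive characteristic, so this is just a translation.

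There is no real obstacle at this stage: essentially all the substantial work has already been done earlier in the paper. The nontrivial ingredient being invoked is the chain Corollary \ref{cor:coh-smooth implies quasi-finite} $\to$ Theorem \ref{cor:coh sm quasifinite coh etale}, which in turn rests on the explicit computation that $\mb P^1_k \to \Spec k$ is not $\mc D_p$-cohomologically smooth (Proposition \ref{p1n-not-smooth}) together with the descent/stratification arguments establishing that quasi-finite cohomologically smooth morphisms are cohomologically étale. The proof of the theorem itself is therefore essentially a three-line assembly of previously established equivalences.
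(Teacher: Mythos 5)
Your proposal is correct and follows essentially the same route as the paper: the paper also proves $ii)\Leftrightarrow iii)$ via the awn-characterisation together with \Cref{prop:perf and awn}, gets $ii)\Rightarrow i)$ from \Cref{lmm:coh et coh sm unram}, and deduces $i)\Rightarrow ii)$ by combining \Cref{cor:coh-smooth implies quasi-finite} with \Cref{cor:coh sm quasifinite coh etale}. The only cosmetic difference is that you route $ii)\Leftrightarrow iii)$ through \Cref{cor:coh sm quasifinite coh etale} while the paper cites \Cref{cor:coh et awn et} directly, which is the same content.
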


\begin{proof}
    \Cref{cor:coh et awn et} shows $ii) \Leftrightarrow iii)$ (recall from \Cref{prop:perf and awn} that perfection and absolute weak normalisation agree in positive characteristic). $ii) \Rightarrow i)$ is clear (\Cref{lmm:coh et coh sm unram}). Finally, $i) \Rightarrow ii)$ follows from \Cref{cor:coh-smooth implies quasi-finite} and \Cref{cor:coh sm quasifinite coh etale}.
\end{proof}

\newpage

\section{Smooth objects for \texorpdfstring{$\mb F_p$}{F\_p}-sheaves in characteristic \texorpdfstring{$p$}{p}}
We stay in the setting of the previous section and consider the 6-functor formalism $\mc D_p$ of étale $\mb F_p$-sheaves on the slice geometric setup $\mathfrak G_k:=\mathfrak G_{/k}$, where $k$ is a field of characteristic $p$. After characterising the cohomologically smooth morphisms for $\mb F_p$-sheaves in characteristic $p$, we now want to answer the more general question when an object is $\mc D_p$-smooth for a given separated morphism $f$ of finite presentation, see \Cref{section:sm obj}. We start with the case of morphisms $f: X \to \Spec k$.

\subsection{The absolute case}

In the following we assume that $k$ is algebraically closed, we can get rid of this assumption later (\Cref{prop:sm object char absolute gen}). As a first observation, we claim that $\mb F_p$ is not $\mb A^1_k \to \Spec k$ smooth, i.e. that non-invertibility of the Verdier dual was not the only obstruction to the cohomological smoothness of $\mb A^1_k \to \Spec k$ in \Cref{A^n-not-sm}. The proof requires a bit more work than the argument we gave in \Cref{p1n-not-smooth}. An important ingredient is the fact that $f$-smooth objects are \textit{perfect-constructible} if $f$ is a morphism of schemes of finite type over $k$.

\begin{definition}[{\cite[Definition 6.3.1]{Scholze3}}]
    Let $\Lambda$ be any torsion ring. A complex $K \in \mc D_\Lambda(X)$ is called \textit{perfect-constructible} if there exists a finite stratification $\set{X_i \to X}$ by constructible locally closed $X_i \subseteq X$ such that $K|_{X_i}$ is locally constant with perfect values on $X_{i,\et}$.
\end{definition}

\begin{lemma}[{\cite[Lemma 6.4.5]{Scholze3}}]\label{lmm:1 is compact}
    Let $\Lambda$ be any torsion ring and assume that $X$ is of finite type over a separably closed field $k$. Then for each étale map $j:U \to X$
    \begin{enumerate}[label=\roman*)]
        \item $j_! \Lambda$ is compact.
        \item $j_*: \mc D_\Lambda(U) \to \mc D_\Lambda(X)$ commutes with all direct sums.
    \end{enumerate}
\end{lemma}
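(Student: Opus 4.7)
The plan is to reduce both parts to one classical input: if $\Lambda$ is a torsion ring and $U$ is a qcqs scheme of finite type over a separably closed field $k$, then the étale $\Lambda$-cohomological dimension of $U$ is finite. This is Artin--Grothendieck vanishing when the torsion of $\Lambda$ is invertible in $k$ and Gabber's finiteness theorem otherwise; in either case the bound depends only on $\dim U$. Granting this, both assertions become formal.

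For $(i)$, I would use the adjunction $j_! \dashv j^*$ to rewrite $\Hom_{\mc D_\Lambda(X)}(j_!\Lambda, \blank) \cong R\Gamma(U, j^*\blank)$. Since $j^*$ is a left adjoint, it commutes with arbitrary direct sums, so compactness of $j_!\Lambda$ reduces to showing that $R\Gamma(U, \blank)$ preserves direct sums on $\mc D_\Lambda(U)$. On the heart of the standard $t$-structure, $\Gamma(U, \blank)$ preserves filtered colimits because $U$ is qcqs. Combined with the finite cohomological dimension of $U$ (which is again of finite type over $k$ since $j$ is étale), this makes $R\Gamma(U, \blank)$ a functor of bounded cohomological amplitude preserving filtered colimits on the heart, and such a functor automatically preserves all small direct sums by a standard truncation argument.

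For $(ii)$, I would invoke the general fact that in a compactly generated stable $\infty$-category, the right adjoint of a colimit-preserving functor preserves arbitrary direct sums if and only if the functor itself preserves compact objects. The category $\mc D_\Lambda(X)$ is compactly generated by the family $\set{j'_!\Lambda : j' \colon V \to X \text{ étale, } V \text{ affine}}$, which is contained in the compact objects by $(i)$ and generates under colimits since every étale $\Lambda$-sheaf is a colimit of such representables extended by zero. For any such generator, base change along $j$ gives $j^*(j'_!\Lambda) \cong (j'_U)_!\Lambda$, where $j'_U \colon U \times_X V \to U$ is étale and $U$ is again of finite type over $k$. By $(i)$ applied over $U$, the pullback $(j'_U)_!\Lambda$ is compact. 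Since compact objects form the smallest full subcategory containing a set of compact generators and closed under shifts, finite colimits, and retracts, and $j^*$ preserves all these operations, $j^*$ preserves compactness throughout; hence $j_*$ preserves direct sums.

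The main obstacle is the bounded cohomological dimension input: for $p$-torsion coefficients in characteristic $p$ this goes beyond classical Artin--Grothendieck vanishing and requires Gabber's theorem. Everything else is a formal $\infty$-categorical consequence of the six-functor package already established in the paper.
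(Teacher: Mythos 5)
Your part $(i)$ is essentially the paper's argument: reduce via adjunction to $R\Gamma(U,\blank)$ commuting with direct sums, then use finite $\Lambda$-cohomological dimension to truncate. One point you gloss over with ``a standard truncation argument'': replacing $H^i(X,K)$ by $H^i(X,\tau^{\geq -n}K)$ requires knowing that $H^i(X,\blank)$ vanishes on $\mc D^{<-n}$ for $n \gg 0$, and for unbounded $K$ this needs the hypercohomology spectral sequence to converge, i.e.\ left-completeness of $\mc D_\Lambda(X)$. The paper explicitly cites \cite[Proposition 3.3.7]{Scholze3} for this. Also, the finite cohomological dimension input in the paper is \cite[\texttt{0F10}]{Stacks}, which covers arbitrary torsion (including $p$-torsion in characteristic $p$) without invoking Gabber's theorem; the bound for wild coefficients comes from Artin--Schreier theory rather than Gabber finiteness, so your remark there is slightly off.

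For part $(ii)$ you take a genuinely different route. The paper checks directly that $j_*$ commutes with direct sums by evaluating sections over an étale $V \to X$: $R\Gamma(V, j_*K|_V) = R\Gamma(V\times_X U, K|_{V\times_X U})$, and $V\times_X U$ is again finite type over $k$, so the statement proved for $(i)$ applies. You instead invoke the general principle that a right adjoint between compactly generated stable $\infty$-categories preserves filtered colimits iff its left adjoint preserves compacts, and verify that $j^*$ preserves the compact generators $j'_!\Lambda$ using proper base change and $(i)$ applied over $U$. This is correct, but it silently relies on $\mc D_\Lambda(X)$ being \emph{compactly generated}, which is an extra fact not stated in $(i)$: one also needs that the objects $j'_!\Lambda$ \emph{generate}, which again uses left-completeness and the cohomological dimension bound. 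The paper's more hands-on argument avoids needing this. Both work; yours buys generality (it would carry over to any situation with compact generation and base change), while the paper's is shorter and stays closer to what was already established.
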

\begin{proof}
    For $i)$, note that $j_!$ preserves compact objects (as it is left adjoint to the colimit preserving functor $j^*$) so wlog.\ assume $j=id_X$. Then we need to show that $\Hom(\Lambda,\blank)$ commutes with arbitrary direct sums, and in particular it suffices if $R\Gamma(X,\blank)$ does. By the assumptions on $X$, $\mc D_\Lambda(X)$ is left-complete (cf. \cite[Proposition 3.3.7]{Scholze3}) and the cohomological dimension of $X$ is bounded (\cite[\texttt{0F10}]{Stacks}). Thus for any $K \in \mc D_\Lambda(X)$ it follows that $H^i(X,K) \cong H^i(X, \tau^{\geq -n} K)$ for $n > d-i$. This reduces us to the case that $K$ is bounded from below which is \cite[\texttt{0GIS}]{Stacks}. 
    Now the observation that $R\Gamma(W,\blank)$ commutes with all direct sums for $W$ finite type over $k$ also implies $ii)$ as for any étale $V \to X$ and $K \in \mc D_\Lambda(U)$ $R\Gamma(V, j_* K|_V) = R\Gamma(V \times_X U, K|_{V \times_X U})$.
\end{proof}

\begin{corollary}[{\cite[Lemma 6.4.6]{Scholze3}}]\label{cor:et 6 sm obj compact}
    In the situation of \Cref{lmm:1 is compact}, let $i: Z\to X$ be a closed immersion. Then $i_* K$ is compact for every perfect complex $K$. In particular, if $f$ is a morphism in $E(\mathfrak G_k)$ of schemes of finite type over $k$, then $f$-smooth objects (w.r.t. $\mc D_\Lambda$) are compact.
\end{corollary}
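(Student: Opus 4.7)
The plan is twofold: first reduce the second claim to the first, then prove the first by showing that $i^!$ commutes with arbitrary direct sums.

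For the second claim, note that for $f \in E(\mathfrak{G}_k)$ the diagonal $\Delta_f \colon Y \to Y \times_X Y$ is a closed immersion (since $f$ is separated), and $Y \times_X Y$ is again of finite type over $k$. Granting the first claim applied to $\Delta_f$ with $K = \Lambda_Y$, the object $\Delta_{f,!} \mathbf{1}_Y = (\Delta_f)_* \Lambda_Y$ is compact. Proposition~\ref{prop:sm obj compact} then yields that every $f$-smooth object is compact.

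For the first claim, let $j \colon U \hookrightarrow X$ denote the open complement of $i$. The main step is to show that $i^!$ commutes with arbitrary direct sums. For this, I would use the canonical excision triangle
\[i_* i^! A \to A \to j_* j^* A\]
from the six functor formalism (the standard fiber sequence expressing local cohomology with support along $Z$, dual to the excision triangle used in the proof of Lemma~\ref{lmm:closed imm et}). Since $j^*$ is a left adjoint and $j_*$ commutes with direct sums by Lemma~\ref{lmm:1 is compact}(ii), the right-hand term commutes with arbitrary direct sums in $A$. Fibers (equivalently, cofibers) commute with direct sums in a stable $\infty$-category, so $i_* i^!(-)$ commutes with direct sums as well; applying $i^*$ and using $i^* i_* \cong \mathrm{id}$ shows that $i^!$ itself commutes with direct sums.

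Finally, for a perfect $K \in \mc D_\Lambda(Z)$, $K$ is dualisable by Remark~\ref{rem:dualizable-perfect} with dual $K^\vee$, so
\[\Hom_X(i_* K, A) \cong \Hom_Z(K, i^! A) \cong R\Gamma(Z, K^\vee \tensor i^! A).\]
The functor $K^\vee \tensor (-)$ commutes with direct sums, $i^!$ commutes with direct sums by the previous paragraph, and $R\Gamma(Z, -)$ commutes with direct sums because $\Lambda_Z$ is compact in $\mc D_\Lambda(Z)$ by Lemma~\ref{lmm:1 is compact}(i). Concatenating these, $\Hom_X(i_* K, -)$ commutes with arbitrary direct sums, so $i_* K$ is compact. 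The only mildly subtle input is the excision triangle for $i^!$, which is entirely formal in the six functor formalism.
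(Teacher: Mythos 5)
Your proof is correct, but it takes a genuinely different route from the paper's. The paper first reduces to the case that $K$ is constant with perfect value by étale localisation on $X$ (using proper base change to identify $j^*i_*K$ with $i'_*j'^*K$ for $j$ étale, and compactness of $\Lambda_X$ to pass from $\Hom$ to $\iHom$), and then reads compactness of $i_*K$ off the recollement triangle $j_!j^*K \to K \to i_*i^*K$, since both $j_!j^*K$ and $K$ are compact by Lemma~\ref{lmm:1 is compact}(i). You instead work on the adjoint side: you use the dual recollement triangle $i_*i^!A \to A \to j_*j^*A$ together with Lemma~\ref{lmm:1 is compact}(ii) (commutativity of $j_*$ with direct sums) to show that $i^!$ itself commutes with arbitrary direct sums, and then conclude via the $(i_!,i^!)$-adjunction and dualisability of $K$, reducing to compactness of $\Lambda_Z$ on $Z$. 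Both triangles you and the paper use are formal consequences of the closed/open recollement and both arguments ultimately rest on Lemma~\ref{lmm:1 is compact}, but you avoid the étale-localisation reduction entirely, at the cost of invoking the slightly less standard dual triangle and part (ii) of the lemma rather than part (i). Your approach also establishes the stronger and independently useful intermediate fact that $i^!$ preserves arbitrary direct sums for a closed immersion into a finite type $k$-scheme. For the second claim, your argument is identical to the paper's: apply the first claim to $\Delta_f$ with $K = \Lambda$ and invoke Proposition~\ref{prop:sm obj compact}.
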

\begin{proof}
    For the first statement, we first reduce to the case that $K$ is constant with perfect value by étale localisation (we proved in \Cref{lmm:1 is compact} that $\Lambda \in \mc D_\Lambda(X)$ is compact, so it suffices to show that $\iHom(i_* K,\blank)$ commutes with direct sums; then use that $j^* i_* K \cong i'_* j'^* K$ by proper base change if $j$ is an étale map and $i',j'$ are the respective pullbacks). Then the statement follows from $i)$ in \Cref{lmm:1 is compact} by considering the distinguished triangle $j_! K \to K \to i_* K$. The second statement follows from the first by \Cref{prop:sm obj compact}, choosing $f=\Delta$ and $K=\Lambda$.
\end{proof}

\begin{proposition}[{\cite[Proposition 6.4.8]{Scholze3}}]\label{prop:sm obj perfect-constructible}
    In the situation of \Cref{lmm:1 is compact}, the compact objects in $\mc D_\Lambda(X)$ are precisely the perfect-constructible objects. In particular, if $f$ is a morphism in $E(\mathfrak G_k)$ of schemes of finite type over $k$, then $f$-smooth objects (w.r.t. $\mc D_\Lambda$) are perfect-constructible.
\end{proposition}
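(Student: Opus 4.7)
I split the equality into two inclusions. For ``compact $\subseteq$ perfect-constructible'', the plan is to exploit that $\mc D_\Lambda(X)$ is compactly generated by the collection $\{j_! \Lambda_V : j : V \to X \text{ étale}\}$; compactness of each generator is \Cref{lmm:1 is compact}, and the family generates because $\Hom(j_!\Lambda_V, M[n]) \cong H^n(V, M|_V)$ vanishing for all such $(j, n)$ forces every cohomology sheaf of $M$ to vanish on all étale neighbourhoods. Stratifying $X$ by the cardinality of geometric fibres of the quasi-finite étale map $j$ exhibits $j_! \Lambda_V$ as perfect-constructible. Since perfect-constructible objects are closed under shifts, cones, and retracts (retracts of perfect complexes are perfect), they form a thick subcategory of $\mc D_\Lambda(X)$, which therefore contains every compact object.

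For the reverse inclusion the plan is to build perfect-constructible objects from pieces that are manifestly compact, in three steps. First, any locally constant $K \in \mc D_\Lambda(X)$ with perfect values is compact: using $\RHom(K, \blank) \cong R\Gamma(X, \iHom(K, \blank))$, it suffices that both $R\Gamma(X, \blank)$ and $\iHom(K, \blank)$ commute with direct sums; the former is \Cref{lmm:1 is compact}, while the latter holds because $K$ is étale-locally dualisable by \Cref{rem:dualizable-perfect}, so that $\iHom(K, \blank)$ locally agrees with $K^\vee \tensor \blank$. Second, for any locally closed immersion $k: V \hookrightarrow X$ and any perfect $P \in \mc D_\Lambda(V)$, $k_! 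P$ is compact: taking $U := X \setminus (\bar V \setminus V)$ so that $V$ is closed in $U$ and $U$ is open in $X$, one writes $k = j \circ i$ accordingly and obtains $k_! P = j_! i_* P$, where $i_* P$ is compact by \Cref{cor:et 6 sm obj compact} and $j_!$ preserves compact objects as the left adjoint of the colimit-preserving $j^*$. Third, given a perfect-constructible $K$ with stratification $X = \bigsqcup_i X_i$, iterating the recollement triangle $j_!(K|_{X_0}) \to K \to i_*(K|_{Z_1})$ for an open dense stratum $X_0$ and its closed complement $Z_1$ presents $K$ as a finite iterated extension of the objects $(k_i)_!(K|_{X_i})$ for the locally closed immersions $k_i: X_i \hookrightarrow X$; each of these is compact by the previous step, so $K$ is compact.

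The ``in particular'' clause then follows formally: by \Cref{cor:et 6 sm obj compact} any $f$-smooth object is compact, hence perfect-constructible by the equality just established. The main technical obstacle is isolating the compact behaviour of closed pushforwards, as the naive inductive strategy (``$i_*$ preserves compact for closed $i$'') is circular; the decomposition $k_! = j_! i_*$ sidesteps this by reducing the locally closed case directly to \Cref{cor:et 6 sm obj compact}. What remains is the bookkeeping to verify that iterating the open–closed recollement faithfully reconstructs $K$ as an extension of the stratum-wise pieces $(k_i)_!(K|_{X_i})$, using that the composition of a closed pushforward with a locally closed pushforward is again a locally closed pushforward along the nested closed subscheme of the stratification filtration.
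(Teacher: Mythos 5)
Your proof is correct and follows essentially the same route as the paper. Both directions use exactly the paper's decomposition: compact objects are generated (in the idempotent-complete sense) by $j_!\Lambda$ for $j$ étale, which are perfect-constructible; and conversely, perfect-constructible objects are built from $(k_i)_!(K|_{X_i})$ for locally closed $k_i$ and perfect $K|_{X_i}$, each of which is compact by writing $k_i = j \circ \iota$ (open composed with closed) and invoking \Cref{cor:et 6 sm obj compact} for the closed factor together with the fact that $j_!$ preserves compacts for $j$ open. The only substantive difference is that the paper delegates the generation statement for $\mc D_{\mathrm{cons}}(X,\Lambda)$ to a citation of Scholze's notes, while you rederive it directly via the iterated recollement triangle; this is a reasonable choice that makes the argument self-contained at the cost of some bookkeeping, and your observation that $i_*(\tilde k_i)_! = (k_i)_!$ (since $i_* = i_!$ for closed $i$) correctly justifies why the pieces compose as claimed. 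Your step 1 (locally constant with perfect values is compact) is not strictly needed once step 2 is in place, but it does no harm.
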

\begin{proof}
    By \Cref{cor:et 6 sm obj compact} the second statement follows from the first. For the first assertion, let $\mc D_c(X,\Lambda)$ denote the stable subcategory of compact objects in $\mc D_\Lambda(X)$ and $\mc D_\mathrm{cons}(X,\Lambda)$ the stable subcategory of perfect-constructible objects. One checks formally that $\mc D_c(X,\Lambda)$ is the smallest idempotent complete subcategory that contains the objects $j_! \Lambda$ for $j: U \to X$ étale, which implies $\mc D_c(X,\Lambda) \subseteq \mc D_\mathrm{cons}(X,\Lambda)$. For the converse, we use that $\mc D_\mathrm{cons}(X,\Lambda)$ is the smallest idempotent complete subcategory that contains all $i_! K$ for $i:Z \to X$ the inclusion of a locally closed subset and $K$ a perfect complex (cf. \cite[Lemma 6.3.9]{Scholze3}). But all of these objects are compact, using \Cref{cor:et 6 sm obj compact} and that lower-$!$ preserves compact objects for open immersions. So $\mc D_c(X,\Lambda) = \mc D_\mathrm{cons}(X,\Lambda)$.
\end{proof}

Now we can return to our objective of showing that $\mb F_p$ is not $\mb A_k^1 \to \Spec k$-smooth. The following observation is crucial:

\begin{lemma}\label{lmm:sm concentrated}
Let $f: \mb P^1_k \to \Spec k$. Then $f^! \mb F_p = A[1]$ for some $\mb F_p$-sheaf $A$ on $\mb P^1_k$.
\end{lemma}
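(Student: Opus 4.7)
The plan is to prove that the cohomology sheaves $\mc H^i(f^!\mb F_p)$ vanish for every $i \ne -1$; then the lemma follows with $A := \mc H^{-1}(f^!\mb F_p)$.

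The main computational input is Verdier duality (\Cref{PF-adj-isos}). For any étale $j : V \to \mb P^1_k$ with composition $g_V := f \circ j$, the identity $j^* = j^!$ (\Cref{prop:et is coh et}) together with the $(f_! \dashv f^!)$-adjunction yields
\[
R\Gamma(V, f^!\mb F_p) \;\cong\; R\Hom_{\mc D(\mb F_p)}(g_{V,!}\mb F_p,\, \mb F_p),
\]
and hence $H^i(V, f^!\mb F_p) \cong H^{-i}_c(V, \mb F_p)^\vee$ since $\mb F_p$ is a field. Applied to $V = \mb P^1_k$, combined with $f_!\mb F_p \cong \mb F_p[0]$ from \Cref{p1n-not-smooth}$(ii)$, this recovers $R\Gamma(\mb P^1_k, f^!\mb F_p) \cong \mb F_p[0]$. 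Applied to any Zariski open $V = \mb P^1_k \setminus \{x_1, \ldots, x_n\}$ with $n \ge 1$, excision together with \Cref{p1n-not-smooth}$(ii)$ gives $R\Gamma_c(V, \mb F_p) \cong \mb F_p^{n-1}[-1]$, so $R\Gamma(V, f^!\mb F_p) \cong \mb F_p^{n-1}[1]$ is concentrated in cohomological degree $-1$.

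From this, the vanishing $\mc H^i(f^!\mb F_p) = 0$ for $i \ge 0$ is immediate: any affine étale $V \to \mb P^1_k$ is a smooth affine one-dimensional $k$-scheme with no proper components, so $H^j_c(V, \mb F_p) = 0$ for $j \le 0$; hence $H^i(V, f^!\mb F_p) = 0$ for $i \ge 0$, and since such $V$ are cofinal at every geometric point of $\mb P^1_k$ this forces the corresponding cohomology sheaves to vanish.

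The hardest part will be the vanishing of $\mc H^i(f^!\mb F_p)$ for $i \le -2$, since a general affine étale $V \to \mb P^1_k$ can have a smooth projective compactification of positive genus and hence potentially nonzero $H^2_c(V, \mb F_p)$, producing a priori nonzero $H^{-2}(V, f^!\mb F_p)$. To rule this out I would use the excision triangle at any closed point $x \in \mb P^1_k$,
\[
i_{x,*}\mb F_p \;\longrightarrow\; f^!\mb F_p \;\longrightarrow\; Rj_{x,*}(g_x^!\mb F_p),
\]
built from $i_x^!f^!\mb F_p = \mb F_p$ (since $f \circ i_x = \mathrm{id}$) and $j_x^*f^!\mb F_p = g_x^!\mb F_p$ for the open/closed decomposition $j_x : U_x \cong \mb A^1_k \hookrightarrow \mb P^1_k \hookleftarrow \{x\}$. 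A long-exact-sequence analysis, combined with the Zariski degree-$-1$ concentration established above and control of the higher pushforwards $R^k j_{x,*}$ for $\mb F_p$-sheaves on $\mb A^1_k$ at the puncture, forces $f^!\mb F_p$ to be concentrated in cohomological degree $-1$ and produces the desired sheaf $A$.
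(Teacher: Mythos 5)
Your reduction via Verdier duality and the adjunction is the same first step as the paper's, and your argument that $\mc H^i(f^!\mb F_p)=0$ for $i\geq 0$ (because $H^0_c(V,\mb F_p)=0$ for any non-proper $V$ étale over $\mb P^1_k$, with such $V$ cofinal in étale neighbourhoods) is correct. The gap is in the part you yourself flag as hardest, the vanishing for $i\leq -2$, and it stems from a misconception: you worry that an affine étale $V\to\mb P^1_k$ whose smooth compactification $\bar V$ has positive genus might have $H^2_c(V,\mb F_p)\neq 0$. But in characteristic $p$ this simply never happens. The Artin--Schreier sequence $0\to\mb F_p\to\mc O_{\bar V}\xrightarrow{F-1}\mc O_{\bar V}\to 0$ gives $H^2(\bar V,\mb F_p)=0$ for \emph{every} proper curve $\bar V$ over $k=\bar k$, regardless of genus, since $H^2(\bar V,\mc O_{\bar V})=0$ and $F-1$ is surjective on $H^1(\bar V,\mc O_{\bar V})$ by \cite[\texttt{0A3L}]{Stacks}; then excision ($H^1(Z,\mb F_p)\to H^2_c(V,\mb F_p)\to H^2(\bar V,\mb F_p)$ with $Z$ the finite boundary) forces $H^2_c(V,\mb F_p)=0$. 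That is exactly what the paper proves, after which the lemma follows immediately: $R\Gamma_c(V,\mb F_p)$ is concentrated in degree $1$ for all non-proper curves $V$ étale over $\mb P^1_k$.

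Your proposed workaround via the triangle $i_{x,*}\mb F_p\to f^!\mb F_p\to Rj_{x,*}(g_x^!\mb F_p)$ does not close this gap, because ``control of the higher pushforwards $R^kj_{x,*}$ at the puncture'' is precisely where the work lies and is left entirely to a hand-wave. The stalk of $Rj_{x,*}(g_x^!\mb F_p)$ at $\bar x$ is computed over punctured affine étale neighbourhoods of $x$, which are again arbitrary affine étale curves over $\mb P^1_k$, so you are immediately back to the $H^2_c$-vanishing question you were trying to avoid; the Zariski computation over $\mb P^1_k\setminus\{x_1,\ldots,x_n\}$ is not enough because étale stalks see genuinely non-Zariski covers. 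You should drop the excision-at-a-point detour and instead observe that curves in characteristic $p$ have $\mb F_p$-cohomological dimension $\leq 1$ via Artin--Schreier.
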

\begin{proof}
    We need to check that $f^! \mb F_p[-1] \in \mc D_p^\heartsuit(\mb P^1_k)$ and we may check this on stalks. By the Verdier duality formula (\Cref{PF-adj-isos}), we have
    \begin{align*}
        R\Gamma(U, f^! \mb F_p[-1]) &= f_* j_{U,*} \iHom(\mb F_p, j_U^* f^! \mb F_p[-1]) = \iHom(f_! j_{U,!} \mb F_p, \mb F_p[-1]) = \iHom(R\Gamma_c(U,\mb F_p)[1],\mb F_p)
    \end{align*}
     for any étale $j_U: U \to \mb P^1_k$. Thus it suffices to show the following: Assume $U$ is a normal curve and not proper over $\Spec k$ (these $U$ are certainly cofinal in the étale neighbourhoods of any point\footnote{We can of course assume that $U$ is connected; but then it is already integral by \cite[\texttt{0357}]{Stacks} as it is a normal scheme (since $\mb P^1_k$ is, cf. \cite[Théorème 9.5]{SGA1}) and thus a normal curve. By replacing it by some open subscheme we can ensure that $U$ is not proper.}), then $R\Gamma_c(U,\mb F_p)$ is concentrated in degree $1$.\par
    To prove this, let us first compute $R\Gamma(\bar{U},\mb F_p)$ where $\bar{U}$ is the natural compactification of $U$, cf. \cite[\texttt{0BXW}]{Stacks}. Recall the Artin-Schreier sequence
    $$0 \to \mb F_p \to \mc O_{\bar{U}} \overset{F-1} \to \mc O_{\bar{U}} \to 0$$
    Taking cohomology, this yields $H^0(\bar{U}, \mb F_p) \cong \mb F_p$ since $H^0(\bar U,\mc O_{\bar U}) \cong k$ is algebraically closed. Then we use the following fact:
    \begin{claim}[{\cite[\texttt{0A3L}]{Stacks}}]
        Let $k$ be an algebraically closed field of characteristic $p > 0$.
        Let $V$ be a finite dimensional $k$-vector space. Let $F : V \to V$
        be a Frobenius linear map, i.e., an additive map such that
        $F(\lambda v) = \lambda^p F(v)$ for all $\lambda \in k$ and $v \in V$.
        Then $F - 1 : V \to V$ is surjective with kernel a finite dimensional
        $\mb F_p$-vector space of dimension $\leq \dim_k(V)$.
    \end{claim}
    By finiteness of coherent cohomology on proper schemes over $k$ (\cite[\texttt{02O6}]{Stacks}), this shows that $H^1(\bar{U},\mb F_p)$ is some finite-dimensional $\mb F_p$-vector space and $H^i(\bar{U}, \mb F_p)=0$ for $i \geq 2$.\par
    Note that $U$ is an open subscheme of its compactification $\bar{U}$, so now we can use the excision sequence: We have a distinguished triangle
    $$R\Gamma_c(U, \mb F_p) \to R\Gamma(\bar{U},\mb F_p) \to R\Gamma(Z,\mb F_p)$$
    where $Z$ is the complement of $U$ in $\bar{U}$ and the second map is induced by restriction. The long exact sequence in cohomology thus yields $H_c^0(U,\mb F_p) = \ker(H^0(\bar{U}, \mb F_p) \to H^0(Z, \mb F_p)) = \ker(\mb F_p \to H^0(Z, \mb F_p)) = 0$ as $Z$ is non-empty by assumption, and $H_c^i(U, \mb F_p) = 0$ for all $i \geq 2$, proving the statement. Let us remark that $H^1_c(U,\mb F_p)$ has finite dimension since $H^0(Z,\mb F_p)$ does (the complement of $U$ in its natural compactification consists of finitely many points) and $H^1(\bar U, \mb F_p)$ does as observed above. This will be used in the proof of \Cref{prop:1 not P1 sm} below.
\end{proof}

With this lemma at hand we get the desired negative result.

\begin{proposition}\label{prop:1 not P1 sm}
    Let $f: \mb A^1_k \to \Spec k$. Then $f^! \mb F_p$ is not perfect-constructible, in particular $\mb F_p$ is not $f$-smooth.
\end{proposition}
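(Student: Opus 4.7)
My plan is to show that $f^! \mb F_p$ fails to be perfect-constructible; the non-smoothness of $\mb F_p$ will then follow, because by \Cref{rem:smooth symmetry} $f$-smoothness is preserved under passage to the right adjoint, so if $\mb F_p$ were $f$-smooth then $\mb D_f(\mb F_p) = f^! \mb F_p$ would also be $f$-smooth, and hence perfect-constructible by \Cref{prop:sm obj perfect-constructible}.

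First I would get a concrete handle on $f^! \mb F_p$ by factoring $f = \bar f \circ j$, where $j : \mb A^1_k \hookrightarrow \mb P^1_k$ is the standard open immersion and $\bar f : \mb P^1_k \to \Spec k$ is the structure morphism. Since $j$ is an open immersion it is cohomologically étale (\Cref{prop:et is coh et}), so $j^! \cong j^*$ by \Cref{lmm:coh et coh sm unram}, and combining this with \Cref{lmm:sm concentrated} shows that $f^! \mb F_p = B[1]$, where $B := (\bar f^! \mb F_p[-1])|_{\mb A^1_k}$ is an ordinary étale sheaf of $\mb F_p$-modules on $\mb A^1_k$.

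Next I would evaluate $B$ on the cofinal family of opens $V_S := \mb A^1_k \setminus S$, indexed by finite subsets $S \subseteq \mb A^1_k(k)$. Repeating verbatim the Verdier duality manipulation from the proof of \Cref{lmm:sm concentrated}, applied to $V_S$ regarded as étale over $\mb P^1_k$, yields the identification $B(V_S) \cong H^1_c(V_S, \mb F_p)^\vee$. The excision triangle for $V_S \subseteq \mb P^1_k$ with complement $Z_S = S \cup \{\infty\}$, together with $R\Gamma(\mb P^1_k, \mb F_p) = \mb F_p[0]$ (computed via Artin--Schreier in the proof of \Cref{p1n-not-smooth}) and $R\Gamma(Z_S, \mb F_p) = \mb F_p^{|S|+1}[0]$, then collapses the resulting long exact sequence to the short exact sequence $0 \to \mb F_p \to \mb F_p^{|S|+1} \to H^1_c(V_S, \mb F_p) \to 0$. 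In particular $\dim_{\mb F_p} B(V_S) = |S|$.

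Finally, suppose for contradiction that $B$ is perfect-constructible. Then the top stratum provides some $V_{S_0} \subseteq \mb A^1_k$ on which $B$ is a local system of some finite rank $r$. For every $S \supseteq S_0$ the open $V_S \subseteq V_{S_0}$ is connected (being an open in the irreducible scheme $\mb A^1_k$), so $B|_{V_S}$ is still a rank-$r$ local system with stalk $M \cong \mb F_p^r$, and $B(V_S) = M^{\pi_1(V_S, \bar x)} \subseteq M$; this gives the uniform bound $\dim_{\mb F_p} B(V_S) \leq r$, contradicting $\dim B(V_S) = |S| \to \infty$. The main obstacle is conceptual rather than computational: correctly packaging the identification $f^! \mb F_p = B[1]$ and the Verdier duality identity $B(V_S) \cong H^1_c(V_S, \mb F_p)^\vee$, after which the excision calculation and the local-system rank bound are immediate.
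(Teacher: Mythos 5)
Your proof is correct. You and the paper both reduce via \Cref{lmm:sm concentrated} to the statement that the concentrated sheaf $B$ with $f^!\mb F_p = B[1]$ cannot be perfect-constructible, and both use the Verdier duality identity $B(U) \cong H^1_c(U,\mb F_p)^\vee$ together with excision. The difference is in the final step: the paper deletes a single point $x$ from the étale neighbourhood $U$ on which $f^!\mb F_p$ is constant of rank $i$, and derives $i\cdot\dim H^0(U,\mb F_p)+1 = i\cdot\dim H^0(V,\mb F_p)$, which forces $i=1$ and $\dim H^0(V,\mb F_p)=2$, contradicting irreducibility of $U$; whereas you first establish the clean growth formula $\dim B(V_S)=|S|$ for all finite $S\subseteq \mb A^1_k(k)$ independently of any constructibility hypothesis, and then observe that on the top stratum $V_{S_0}$ a rank-$r$ local system can have at most $r$-dimensional global sections on any connected open, contradicting the unboundedness of $\dim B(V_S)$. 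Your packaging is arguably cleaner and more transparent because the quantitative growth of sections and the contradiction with a uniform rank bound are separated, while the paper's argument intertwines the rank of the local system with the point-deletion count in a single equation.
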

\begin{proof}
The second assertion follows from the first because if $\mb F_p$ were $f$-smooth, then $f^! \mb F_p = \mb D_f(\mb F_p)$ would be as well (cf. \Cref{rem:smooth symmetry}) and $f$-smooth objects are perfect-constructible by \Cref{prop:sm obj perfect-constructible}.\par
So now assume that $f^! \mb F_p$ is perfect-constructible. Then in particular we find some open $U \subseteq \mb A^1_k$ such that $f^! \mb F_p |_U$ is constant with perfect value. After restricting further to some étale $U \to \mb A^1_k$ we can assume that $f^! \mb F_p |_U \cong \mb F_p^i[1]$ for some $i \geq 0$ by \Cref{lmm:sm concentrated} and by the arguments in the proof of \Cref{lmm:sm concentrated} we can choose $U$ as a curve such that $R\Gamma_c(U, \mb F_p)$ is concentrated in degree $1$ and has finite dimension. Let $f_U: U \to \Spec k$. Then $f^! \mb F_p |_U \cong f_U^! \mb F_p$ and thus by the adjunction
\begin{equation}\label{eq:p}
    \begin{aligned}
    H^0(U, \mb F_p)^{\oplus i} \cong \Hom(\mb F_p, \mb F_p^i) \cong \Hom(\mb F_p, f_U^! \mb F_p[-1]) &\cong  \Hom(f_{U,!} \mb F_p, \mb F_p[-1]) \\
    &\cong \Hom(H^1_c(U, \mb F_p), \mb F_p)
    \end{aligned}
\end{equation}
 But then we must have $i=1$: Let $x \in U$ be a closed point and $V:= U \backslash \set{x}$. Then the excision sequence $R\Gamma_c(V,\mb F_p) \to R\Gamma_c(U, \mb F_p) \to R\Gamma(\set{x}, \mb F_p) = \mb F_p[0]$ yields $H^1_c(V, \mb F_p) \cong H^1_c(U, \mb F_p) \oplus \mb F_p$. But (\ref{eq:p}) also holds for $V$ instead of $U$ and replacing $U$ by $V$ increases the dimension of the right hand side (which is finite) by $1$; so $i \cdot \dim H^0(U,\mb F_p) + 1 = i \cdot \dim H^0(V, \mb F_p)$, which forces $i=1$. But then $\dim H^0(V, \mb F_p) = 2$ as $U$ is connected. This means that the open subscheme $V$ of $U$ has two connected components which is absurd as $U$ is irreducible. So $f^! \mb F_p$ cannot be perfect-constructible.
\end{proof}

\begin{remark}
    \begin{enumerate}[label=\roman*)]
        \item As the proof of \Cref{prop:1 not P1 sm} is independent of the previous section, together with \Cref{cor:1 not An sm} below this yields another proof of \Cref{A^n-not-sm}.
        \item We had difficulties to find an explicit description of $f^! \mb F_p$. Note that techniques for the computation of the dualising complex as in \cite{Zav} are not available here as they only work under the assumption that smooth morphisms are cohomologically smooth. \par
        Using the adjunction $f_! \dashv f^!$ one can try to compute the stalks of $f^!\mb F_p$ directly; here, one faces two problems: the first is the computation of $H^1_c(U, \mb F_p)$ for a curve $U$ étale over $\mb A^1_k$; this reduces to the problem of understanding the kernel of the map $H^1(C, \mc O_C) \to H^1(C,\mc O_C)$ induced by the map $F - 1$ on $\mc O_C$ for a smooth projective curve $C$, where $F$ is the Frobenius. The second problem is that one needs to understand the restriction maps $R\Gamma(f^! \mb F_p, U) \to R\Gamma(f^! \mb F_p, V)$ for $j: V \to U$ étale and $U \to \mb A^1_k$ étale. These maps are dual to the maps $R\Gamma_c(V,\mb F_p) \to R\Gamma_c(U, \mb F_p)$ induced by the counit $j_! j^* \mb F_p \to \mb F_p$.
    \end{enumerate}
\end{remark}

\begin{corollary}\label{cor:1 not An sm}
    $\mb F_p$ is $\mb A^n_k \to \Spec k$-smooth if and only if $n=0$.
\end{corollary}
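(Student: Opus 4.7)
The plan is to split into the cases $n = 0$ and $n \geq 1$ and reduce the latter to \Cref{prop:1 not P1 sm}. For $n = 0$, the morphism in question is the identity on $\Spec k$, so by \Cref{cor:id sm is dualizable} being $\pi_0$-smooth is equivalent to being dualisable, which holds trivially for the tensor unit $\mb F_p$.

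For $n \geq 1$, the key observation is that $\pi_n: \mb A^n_k \to \Spec k$ admits a canonical retract structure: take the closed immersion $s: \mb A^1_k \hookrightarrow \mb A^n_k$, $x \mapsto (x, 0, \ldots, 0)$, and the projection $r: \mb A^n_k \to \mb A^1_k$ onto the first coordinate. Then $r \circ s = \mathrm{id}_{\mb A^1_k}$ and $\pi_1 \circ r = \pi_n$, so that $r$ is a morphism over $\Spec k$ retracting $s$, and moreover $\pi_n \circ s = \pi_1$.

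Assuming for contradiction that $\mb F_p$ is $\pi_n$-smooth, I plan to apply \Cref{prop:sm retracts} with $f = \pi_n$, $h = \pi_1$, $i = s$, $A = \mb F_p \in \mc D_p(\mb A^n_k)$, and $C = \mb F_p \in \mc D_p(\mb A^1_k)$. Since the pullback of the constant sheaf $\mb F_p$ along any morphism is again $\mb F_p$, the identity morphisms qualify as the required comparison maps $i^* A \to C$ and $r^* C \to A$, and the compatibility $C = i^* r^* C \to i^* A \to C = \mathrm{id}$ is then tautological. The proposition will then force $\mb F_p$ to be $\pi_1$-smooth, contradicting \Cref{prop:1 not P1 sm}.

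The only real thought here is recognising that affine space has enough automorphisms to produce this retract structure, making \Cref{prop:sm retracts} applicable; once this is observed, the argument reduces to bookkeeping. In particular, no new input from the explicit geometry or cohomology of $\mb A^n_k$ for $n \geq 2$ is needed beyond the $n = 1$ case already handled.
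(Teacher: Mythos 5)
Your proof is correct and follows the same approach as the paper: realising $\mb A^1_k$ as a retract of $\mb A^n_k$ over $\Spec k$ and invoking \Cref{prop:sm retracts} with the tautological identity maps to reduce to \Cref{prop:1 not P1 sm}. The explicit treatment of $n=0$ via \Cref{cor:id sm is dualizable} is a minor addition the paper leaves implicit, but otherwise the argument is the same.
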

\begin{proof}
    This follows from \Cref{prop:sm retracts}: $\mb A^1_k$ is a retract of $\mb A^n_k$ over $\Spec k$ for all $n\geq 1$. If $i$ and $r$ are the corresponding maps, then of course we can just choose the identities $i^* \mb F_p \to \mb F_p$ and $r^* \mb F_p \to \mb F_p$ and so $\mb F_p$ is $\mb A^1_k \to \Spec k$-smooth if $\mb F_p$ is $\mb A^n_k \to \Spec k$ smooth. Thus, we must have $n=0$ by \Cref{prop:1 not P1 sm}.
\end{proof}

\begin{corollary}\label{cor:sm obj quasi-finite}
    Let $f: Y \to X \in E(\mathfrak G_k)$ be of finite type. If $\mb F_p$ is $f$-smooth, then $f$ is quasi-finite.
\end{corollary}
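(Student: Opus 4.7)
The plan is to reduce, via the stability properties of smooth objects, to the obstruction from \Cref{prop:1 not P1 sm} that $\mb F_p$ cannot be $(\mb A^1_k \to \Spec k)$-smooth. Since $f$-smoothness is stable under base change and $\mb F_p$ pulls back to $\mb F_p$, the first step is to restrict to a geometric fibre: for each $\bar x \to X$, the sheaf $\mb F_p \in \mc D(Y_{\bar x})$ is $(Y_{\bar x} \to \bar x)$-smooth. Since a finite type morphism is quasi-finite iff all its fibres are zero-dimensional, it will suffice to prove: if $Z \to \Spec k$ is of finite type with $k$ algebraically closed and $\mb F_p$ is smooth for it, then $\dim Z \leq 0$.

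Next, assuming for contradiction that $\dim Z \geq 1$, I will pass to a smooth open subscheme of positive dimension. Both the universal homeomorphism $Z^{\mathrm{red}} \to Z$ and any open immersion are cohomologically étale (\Cref{xmpl:coh smooth example}), so \Cref{lmm:sm obj composition}(i) propagates smoothness of $\mb F_p$ along either. Combined with generic smoothness over the algebraically closed $k$ (\cite[\texttt{056V}]{Stacks}), this yields a smooth open $V \subseteq Z^{\mathrm{red}}$ of positive dimension $n$ with $\mb F_p$ smooth for $V \to \Spec k$. Shrinking further, smoothness of $V$ over $k$ provides a factorisation $V \xrightarrow{h} \mb A^n_k \to \Spec k$ with $h$ étale. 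Setting $U := h(V)$, the map $h: V \to U$ is an étale surjection, hence cohomologically étale with $h^*$ conservative, and \Cref{thm:sm-obj-is-loc} then shows $\mb F_p$ is smooth for $U \to \Spec k$.

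The final step is to transfer smoothness from the open $U \subseteq \mb A^n_k$ back to $\mb A^n_k$ itself. Since $k$ is infinite, an elementary induction on $\dim(\mb A^n_k \setminus U)$ (using that for generic $t$ the intersection $(\mb A^n_k \setminus U) \cap ((\mb A^n_k \setminus U)+t)$ drops in dimension) shows that finitely many translates $U + t_i$ cover $\mb A^n_k$. Each translate is isomorphic to $U$ over $\Spec k$, so $\mb F_p$ is smooth for the disjoint union $\coprod_i (U + t_i) \to \Spec k$; another application of \Cref{thm:sm-obj-is-loc}, this time to the étale cover $\coprod_i (U + t_i) \to \mb A^n_k$, yields smoothness of $\mb F_p$ for $\mb A^n_k \to \Spec k$. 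Finally, $\mb A^1_k$ is a retract of $\mb A^n_k$ over $\Spec k$, so \Cref{prop:sm retracts} will force $\mb F_p$ to be smooth for $\mb A^1_k \to \Spec k$, contradicting \Cref{prop:1 not P1 sm}. The only non-formal step is the translate-cover of $\mb A^n_k$; everything else is a sequence of stability transfers between smooth objects.
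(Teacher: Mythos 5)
Your proposal is correct and follows essentially the same route as the paper's own proof: reduce to a geometric fibre over $\Spec\bar k$, pass to a smooth open via reduction and generic smoothness, factor étale over $\mb A^n_k$, transfer along the image and its translates using \Cref{thm:sm-obj-is-loc}, and contradict \Cref{prop:1 not P1 sm} via the retract. The only cosmetic differences are that you re-derive the reduction from $\mb A^n_k$ to $\mb A^1_k$ (which the paper packages as \Cref{cor:1 not An sm}) and that the inductive translate-count argument is unnecessary — a nonempty open and its translates cover the quasi-compact scheme $\mb A^n_k$, so finitely many suffice by quasi-compactness alone.
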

\begin{proof}
    Using \Cref{cor:1 not An sm}, the strategy from the previous sections applies again: We need to see that the fibers of $f$ are finite discrete sets, so assume $X=\Spec k$. Generic smoothness reduces us to the case that $f$ is smooth. For smooth $f: Y \to \Spec k$ we can reduce to the case that $f$ factors via an étale morphism over some $\mb A^n_k$, using that $f$-smoothness may be checked étale locally on the source by \Cref{thm:sm-obj-is-loc}. Using \Cref{thm:sm-obj-is-loc} and \Cref{cor:1 not An sm}, translating the image of $f$ around in $\mb A^n_k$ shows that $\mb F_p$ is $\mb A^n_k \to \Spec k$-smooth. But then $n=0$ by \Cref{cor:1 not An sm} and thus $f$ is étale. Compare e.g. the arguments following \Cref{p1n-not-smooth}.
\end{proof}

\begin{corollary}\label{cor:dualizable-sm imp quasi-finite}
    Let $f: Y \to X \in E(\mathfrak G_k)$ be of finite type. If $Y$ is connected and $A$ is dualisable and $f$-smooth, then either $A=0$ or $f$ is quasi-finite.
\end{corollary}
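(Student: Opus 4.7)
The plan is to reduce to the previously handled case \Cref{cor:sm obj quasi-finite}, which says that $\mb F_p$ being $f$-smooth forces $f$ to be quasi-finite. Assuming $A \neq 0$, \Cref{rem:dualizable-perfect} gives that the support of $A$ is clopen; since $Y$ is connected, $A$ then has non-zero stalks at every geometric point of $Y$. To show $f$ is quasi-finite, it suffices to check that every geometric fiber $Y_{\bar x} \to \Spec \overline{k(x)}$ is finite. Base change preserves $f$-smoothness and the non-vanishing of stalks of $A$, so we may replace $X$ by $\Spec \ell$ for $\ell$ algebraically closed. Since $Y_{\bar x}$ is noetherian, it has finitely many connected components, and it suffices to focus on one such $Z$; by invariance of $\mc D_p$ under universal homeomorphisms (which are cohomologically étale by \Cref{xmpl:coh smooth example}) we may further assume $Z$ is reduced. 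The goal becomes showing $\dim Z = 0$.

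Suppose for contradiction that $n := \dim Z > 0$. Generic smoothness (cf.\ \cite[\texttt{056V}]{Stacks}) provides a non-empty smooth open $U \subseteq Z$, and the local structure of smooth morphisms yields an étale $V \to U$ factoring as $V \to \mb A^n_\ell$ étale. By \Cref{lmm:sm obj composition} $i)$, the restriction $A|_V$ is smooth for $g:V \to \Spec \ell$ and has non-zero stalks. Shrinking $V$ along a further étale cover using \Cref{rem:dualizable-perfect}, we may assume $A|_V$ is strictly perfect, hence (since $\mb F_p$ is a field) splits as $\bigoplus_i \mb F_p^{n_i}[-i]$ with at least one $n_i>0$. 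Applying \Cref{cor:sm retracts} to isolate a summand $\mb F_p[-i]$ and then twisting by the invertible dualisable object $\mb F_p[i]$ via \Cref{cor:dualizable+smooth} shows that the constant sheaf $\mb F_p$ is itself $g$-smooth.

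Finally, following the template of the proofs of \Cref{cor:smooth + coh-smooth over k} and \Cref{cor:sm obj quasi-finite}, composing $V \to \mb A^n_\ell$ with all translations by $\ell$-rational points gives an étale cover $\coprod_t V_t \to \mb A^n_\ell$ on which $\mb F_p$ remains $(\cdot \to \Spec \ell)$-smooth; \Cref{lmm:sm-is-loc-et} then yields that $\mb F_p$ is $(\mb A^n_\ell \to \Spec \ell)$-smooth, contradicting \Cref{cor:1 not An sm} for $n > 0$. Hence $\dim Z = 0$ and $Z$ is finite, which finishes the proof. The main obstacle is the first step of the second paragraph: extracting smoothness of $\mb F_p$ from smoothness of a possibly higher-rank $A$. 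A trace/coevaluation argument would fail in characteristic $p$ whenever the Euler characteristic of $A$ is divisible by $p$; the key idea is to work étale-locally on $Y$, where $A$ becomes strictly perfect and therefore splits over $\mb F_p$ into shifts of $\mb F_p$, so that \Cref{cor:sm retracts} can pick out $\mb F_p$ as a retract.
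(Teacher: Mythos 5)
Your proof is correct, and the decisive idea matches the paper's: since $A$ is dualisable and supported everywhere (by connectedness and $A\neq 0$), it is étale-locally strictly perfect, hence splits over the field $\mathbb F_p$ into shifts of $\mathbb F_p$; \Cref{cor:sm retracts} then lets you isolate a summand $\mathbb F_p[-i]$ and \Cref{cor:dualizable+smooth} untwists the shift to get $\mathbb F_p$ itself. Where you differ is in the framing around this core step. The paper works directly on $Y$: it takes an étale cover of $Y$ on which $A$ becomes strictly perfect, extracts $\mathbb F_p$ on each piece, glues via \Cref{thm:sm-obj-is-loc} to conclude $\mathbb F_p$ is $f$-smooth globally, and then simply cites \Cref{cor:sm obj quasi-finite}. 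You instead first reduce to geometric fibers $Y_{\bar x}\to\Spec\ell$, pass to connected reduced components, build an étale chart $V\to\mathbb A^n_\ell$ via generic smoothness, extract $\mathbb F_p$, and then re-run the translation argument that already appears inside the proof of \Cref{cor:sm obj quasi-finite}. The last step is unnecessary duplication: once you know $\mathbb F_p$ is $(V\to\Spec\ell)$-smooth, \Cref{cor:sm obj quasi-finite} directly gives that $V\to\Spec\ell$ is quasi-finite, forcing $n=0$. One small reference issue: in your final step the correct citation for gluing smoothness of an \emph{object} along an étale cover is \Cref{thm:sm-obj-is-loc}, not \Cref{lmm:sm-is-loc-et} (which is the statement about cohomological smoothness of \emph{morphisms}). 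Both routes are valid; the paper's is leaner because it never leaves $Y$ and reuses \Cref{cor:sm obj quasi-finite} as a black box.
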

\begin{proof}
    We claim that if $A\not=0$ is $f$-smooth, then $\mb F_p$ is $f$-smooth, so that we can conclude by \Cref{cor:sm retracts}. As $A$ is dualisable, its support is an open and closed subset of $Y$ (cf. $ii)$ in \Cref{rem:dualizable-perfect}) and as $A \not=0$, it is supported everywhere. As we can check $f$-smoothness of $\mb F_p$ étale locally by \Cref{thm:sm-obj-is-loc}, using $i)$ in \Cref{rem:dualizable-perfect} and that $Y$ is the support of $A$, we can reduce to the case that $A \not= 0$ just arises from a complex of vector spaces and so we may as well assume that all differentials are $0$. Thus, $A$ is a finite direct sum of complexes $\mb F_p[i]$ for some $i \in \mb Z$. But $f$-smooth objects are stable under retracts by \Cref{cor:sm retracts} and so if $A \not= 0$, some $\mb F_p[i]$ is $f$-smooth; but then so is $\mb F_p$ by \Cref{cor:dualizable+smooth} as $\mb F_p[i]$ is invertible.
\end{proof}

To finish the characterisation in the absolute case, we need the following result\footnote{We thank Tim Kuppel for pointing this out to us.}:

\begin{proposition}\label{prop:sm with closed support}
    Consider a diagram
    \[\begin{tikzcd}
        Z \rar["i"] \drar[swap,"h"] & Y \dar["f"] \\
        & X
    \end{tikzcd}\]
    where $i$ is a closed immersion. Let $A \in \mc D_p(Z)$ such that $i_! A$ is $f$-smooth. Then $A$ is $h$-smooth.
\end{proposition}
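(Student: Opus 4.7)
The plan is to verify directly the three-part criterion of \Cref{cor:coh sm generalization} for $A$ being $h$-smooth, leveraging the corresponding three-part criterion for $i_! A$ being $f$-smooth. The crucial preliminary identity is
$$\mb D_f(i_! A) \cong i_! \mb D_h(A),$$
which is immediate from Verdier duality (\Cref{PF-adj-isos}): $\iHom(i_! A, f^! 1_X) \cong i_* \iHom(A, i^! f^! 1_X) = i_! \mb D_h(A)$, using that $i_* = i_!$ for a closed immersion and $h^! = i^! f^!$.

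Combining this identity with the projection formula, I would translate the natural map in \Cref{cor:coh sm generalization} (i) for $i_! A$ into $i_!$ applied to the corresponding natural map for $A$: its source becomes
$$\mb D_f(i_! A) \tensor f^* B \cong i_! \mb D_h(A) \tensor f^* B \overset{\text{PF}}\cong i_!(\mb D_h(A) \tensor h^* B),$$
while its target becomes $\iHom(i_! A, f^! B) \cong i_! \iHom(A, h^! B)$ by Verdier duality once more. An unwinding of the construction of the natural map in \Cref{cor:coh sm generalization} shows that under these identifications, the natural map for $i_! A$ coincides with $i_!$ applied to the natural map for $A$. Since $i^* i_! = \mathrm{id}$ for a closed immersion (because $i^* i_* = \mathrm{id}$ and $i_* = i_!$), applying $i^*$ to the isomorphism coming from the $f$-smoothness of $i_! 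A$ yields condition (i) of \Cref{cor:coh sm generalization} for $A$.

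For conditions (ii) and (iii), I would argue by base change. Given $g: X' \to X$ with pullbacks $f': Y' \to X'$, $i': Z' \to Y'$, $g': Y' \to Y$ and $g'': Z' \to Z$, base change for the closed immersion $i$ gives $g'^* i_! A \cong i'_! g''^* A$; by assumption this is $f'$-smooth, so the argument of the previous paragraph applied to $f', i', g''^* A$ shows that $g''^* A$ is $h'$-smooth, giving (ii). For (iii), the compatibility $g'^* \mb D_f(i_! A) \cong \mb D_{f'}(g'^* i_! A)$ coming from the $f$-smoothness of $i_! A$ rewrites as $i'_! g''^* \mb D_h(A) \cong i'_! \mb D_{h'}(g''^* A)$ via our key identity and base change, and applying $i'^*$ produces the desired compatibility $g''^* \mb D_h(A) \cong \mb D_{h'}(g''^* A)$.

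I don't expect any substantive obstacle: once the identification $\mb D_f(i_! A) \cong i_! \mb D_h(A)$ and the calculation that the natural map for $i_! A$ is $i_!$ of the natural map for $A$ are in place, the whole argument is formal, reducing everything to the fact that $i^* i_!$ is the identity for a closed immersion. The main care needed is in checking that the natural maps really do correspond under the identifications, which is a routine diagram chase using the units and counits of the relevant adjunctions together with the projection formula.
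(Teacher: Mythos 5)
Your proof is correct, and the underlying mechanism is the same as the paper's: the key identity $\mb D_f(i_! A) \cong i_! \mb D_h(A)$ coming from Verdier duality for $i$, together with the fact that $i^* i_! = i^* i_* = \mathrm{id}$ for a closed immersion, so that applying $i^*$ to the known isomorphism for $i_!A$ produces the desired isomorphism for $A$. The difference is in which smoothness criterion you run this through. The paper uses the single ``$\tensor^!$-criterion'' of \Cref{lmm:sm obj crit}~(iii): it shows $\mb D_h(A) \tensor^! A \cong i^*(\mb D_f(i_!A) \tensor^! i_!A)$ on $Y\times_X Y$ via $\Delta_f^!(i\times i)_* \cong i_*\Delta_h^!$ and a pushforward computation across $(i\times i)$, and compares with $\iHom(A,A) \cong i^*\iHom(i_!A,i_!A)$. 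You instead use the three-part \Cref{cor:coh sm generalization}: you verify condition~(i) directly on $Y$ using the projection formula, Verdier duality and $i^*i_! = \mathrm{id}$, and then handle~(ii) and~(iii) by stability of the hypothesis under base change (using that smooth objects base change, that $i_!$ commutes with base change, and that the key duality identity does as well). Your route avoids any manipulation on the fibre product $Y \times_X Y$ (so no $(i\times i)$-pushforward computation is needed), at the cost of needing three verifications rather than one; both proofs ultimately defer the same ``the natural maps really match under these identifications'' check to a routine diagram chase. A small caveat: in your paragraph on~(ii), what the argument actually produces is condition~(i) for $g''^*A$ with respect to $h'$, which is precisely what~(ii) requires --- asserting outright that $g''^*A$ ``is $h'$-smooth'' slightly overstates what that single step yields, though it is harmless since you could bootstrap to full smoothness afterwards.
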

\begin{proof}
    By \Cref{lmm:sm obj crit} we need to check that the natural map
    $$\Delta_h^!(p_{Z,2}^* \mb D_h(A) \tensor p_{Z,1}^* A) =:\mb D_h(A) \tensor^! A \to \iHom(A,A)$$
    is an isomorphism.\par
    We claim that $p_{Y,2}^* i_! \mb D_h(A) \tensor p_{Y,1}^* i_! A \cong (i \times i)_* (p_{Z,2}^* \mb D_h(A) \tensor p_{Z,1}^* A)$ via the unit $id \to (i \times i)_*(i \times i)^*$. Indeed, $(i \times i)^* (p_{Y,2}^* i_! \mb D_h(A) \tensor p_{Y,1}^* i_! A) \cong p_{Z,2}^* \mb D_h(A) \tensor p_{Z,1}^* A$ by the base change formula as the diagram
    \[\begin{tikzcd}
        {Z \times_X Z} & {Z \times_X Z} \\
        Z & Y
        \arrow["{p_{Z,i}}"', from=1-1, to=2-1]
        \arrow["{p_{Y,i} \circ (i \times i)}", from=1-2, to=2-2]
        \arrow["i"', from=2-1, to=2-2]
        \arrow["id", from=1-1, to=1-2]
    \end{tikzcd}\]
    is cartesian. Replacing $(i \times i)$ by $(j \times i)$ resp. $(i \times j)$ where $j$ is the inclusion of the complement of $Z$, base change and the similar diagrams show that $ p_{Y,2}^* i_! \mb D_h(A) \tensor p_{Y,1}^* i_! A$ is $0$ on the complement of $Z \times_X Z$ in $Y \times_X Y$, and we conclude.\par
    Note that $\Delta_{f}^! (i \times i)_* \cong i_* \Delta_{h}^!$ by adjoint base change (\Cref{PF-adj-isos}). Thus, by the observation above,
    $$\mb D_h(A) \tensor^! A \cong i^* i_* (\mb D_h(A) \tensor^! A) \cong i^* (i_! \mb D_h(A) \tensor^! i_! A)$$
    and we have an isomorphism $i_! \mb D_h(A) \cong \mb D_f(i_! A)$ from \Cref{PF-adj-isos} since $i_! = i_*$. But using the isomorphism $$ \mb D_f(i_! A) \tensor^! i_! A \to \iHom(i_! A,i_! A)$$ coming from \Cref{lmm:sm obj crit} by assumption, this agrees with
    $$i^* \iHom(i_!A,i_! A)\cong i^* i_* \iHom(i^* i_! A, A) \cong i^* i_* \iHom(A,A) \cong \iHom(A,A)$$
    Now one checks that this composition of isomorphisms yields the natural map from \Cref{lmm:sm obj crit}.
\end{proof}

\begin{proposition}\label{prop:sm object char absolute}
    Let $f: X \to \Spec k$ be a separated morphism of finite type. Then the $f$-smooth objects are precisely the scyscraper sheaves $i_! B$ where $i: Z \to X$ is the inclusion of some finite set $Z = \coprod_i \Spec k$ of closed points and $B \in \mc D(Z)$ is dualisable.
\end{proposition}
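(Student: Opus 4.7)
The plan is to handle the two directions separately, with the ``only if'' requiring the substantial work.

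For the ``if'' direction, given $Z = \coprod_i \Spec k$ a finite set of closed points, $i \colon Z \to X$ the inclusion, and $B \in \mc D(Z)$ dualisable, the composition $h := f \circ i \colon Z \to \Spec k$ is étale and hence cohomologically étale by \Cref{prop:et is coh et}; \Cref{rem:unram} ii) therefore gives that the dualisable $B$ is $h$-smooth. Since the closed immersion $i$ is cohomologically proper (\Cref{lmm:proper is coh proper}), $1_Z$ is $i$-proper, and applying \Cref{lmm:sm obj composition} ii) with the $i$-proper object $1_Z$ and the $h$-smooth object $B$ yields that $i_!(B \tensor 1_Z) \cong i_! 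B$ is $f$-smooth.

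For the ``only if'' direction, let $A$ be $f$-smooth. By \Cref{prop:sm obj perfect-constructible}, $A$ is perfect-constructible. I would let $Z \subseteq X$ be the closure of $\mathrm{supp}(A)$ with its reduced induced subscheme structure and $i \colon Z \to X$ the inclusion; since $A|_{X \setminus Z} = 0$, the excision triangle gives $A \cong i_! B$ with $B := i^* A$, and \Cref{prop:sm with closed support} shows that $B$ is $(f \circ i)$-smooth on $Z$. The crucial step is to deduce $\dim Z = 0$: if some irreducible component $Z' \subseteq Z$ had positive dimension, then (using reducedness of $Z$ together with perfectness of the algebraically closed $k$) one could pick a dense open $U \subseteq Z'$ which is smooth of positive dimension over $k$ and disjoint from the other irreducible components of $Z$; shrinking further to an open contained in a single stratum of the perfect-constructibility stratification of $B$, and using that $\mathrm{supp}(B)$ is dense in $Z'$, one obtains a connected component $U_0$ with $B|_{U_0} \neq 0$ locally constant with perfect values. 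Then $B|_{U_0}$ would be nonzero, dualisable (by \Cref{rem:dualizable-perfect}), and $(f|_{U_0})$-smooth by pullback along the cohomologically étale open immersion $U_0 \to Z$ (cf.\ \Cref{lmm:sm obj composition}), contradicting \Cref{cor:dualizable-sm imp quasi-finite}. Hence $\dim Z = 0$, and reducedness plus algebraic closure of $k$ force $Z = \coprod_i \Spec k$; finally, $h = f \circ i$ is étale, hence cohomologically étale by \Cref{prop:et is coh et}, so \Cref{rem:unram} ii) applied to the $h$-smooth object $B$ gives that $B$ is dualisable.

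The main obstacle is precisely this dimension reduction: it hinges on \Cref{cor:dualizable-sm imp quasi-finite}, and ultimately on the concrete computation in \Cref{prop:1 not P1 sm} that $\mb F_p$ fails to be $\mb A^1_k \to \Spec k$-smooth.
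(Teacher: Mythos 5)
Your proof is correct, but the ``only if'' direction takes a genuinely different route from the paper. The paper argues by induction on the Krull dimension of $X$: it first treats $X = \mb A^n_k$ (finding a dense connected open $U$ contained in the generic stratum of the perfect-constructibility stratification, killing $A|_U$ via \Cref{cor:dualizable-sm imp quasi-finite}, passing to the complementary closed $T$ via \Cref{prop:sm with closed support}, and applying the induction hypothesis on $T$), and then reduces the general affine case to $\mb A^n_k$ by Noether normalisation via a finite $s: X \to \mb A^n_k$ and the observation that $s_! A$ is smooth. Your argument instead passes directly to the reduced closure $Z$ of the support, transfers smoothness along the closed immersion via \Cref{prop:sm with closed support}, and rules out positive-dimensional irreducible components $Z'$ by combining generic smoothness of reduced schemes over the perfect field $k$ with the perfect-constructibility stratification of $B$ and a density argument for the support, then invoking \Cref{cor:dualizable-sm imp quasi-finite} on a connected dense open $U_0 \subseteq Z'$. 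The underlying ingredients are the same (\Cref{prop:sm obj perfect-constructible}, \Cref{prop:sm with closed support}, \Cref{cor:dualizable-sm imp quasi-finite}, and ultimately the explicit $\mb A^1$ computation in \Cref{prop:1 not P1 sm}), but your approach avoids both the induction on dimension and Noether normalisation, running the support analysis once and for all. One small point worth spelling out explicitly: the fact that $\mathrm{supp}(B)$ is dense in each irreducible component of $Z$ follows because any non-empty open of $Z$ (in particular one contained in a single $Z'$ and avoiding the other components) meets $\mathrm{supp}(A)$, as $Z$ is by definition the closure of $\mathrm{supp}(A)$; this guarantees $B|_{U_0} \neq 0$ after shrinking $U_0$ into a stratum.
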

\begin{proof}
It follows from \Cref{lmm:sm obj composition} that all $i_! B$ of this form are $f$-smooth. Indeed, $Z \to \Spec k$ is cohomologically étale and so all dualisable objects are $Z \to \Spec k$-smooth by \Cref{cor:dualizable+smooth}.\par
Now let $A$ be $f$-smooth. We do induction on the Krull dimension $n$ of $X$. If $n=0$, $X$ is a finite set of points, in particular $X \to \Spec k$ is cohomologically étale (e.g. by \Cref{cor:coh et awn et}) and smooth objects are precisely the dualisable ones by \Cref{rem:unram}. Now assume $n>0$. First, we prove the case $X= \mb A^n_k$. As $A$ is smooth, it is perfect-constructible by \Cref{prop:sm obj perfect-constructible}. Let $\coprod_j T_j \to X$ be the corresponding stratification. Taking the union of the closures of all $T_j$ which do not contain the generic point in $X$ yields a closed set $T \subseteq X$ whose complement $U$ is a dense open in $X$ (as it contains the generic point). Moreover, if $T_i$ is the unique stratum which contains the generic point, we must have $U \subseteq T_i$; thus, $A|_U$ is dualisable (cf. \Cref{rem:dualizable-perfect}) and $f|_U$-smooth. But $U$ is connected, and so $A|_U = 0$ by \Cref{cor:dualizable-sm imp quasi-finite} since $n>0$. Let $z$ be the inclusion $T \subseteq X$. Since $A|_U = 0$, we have $A \cong z_! z^* A$. By \Cref{prop:sm with closed support}, $z^*A$ is thus $f|_T$-smooth. But the Krull dimension of $T$ is smaller than the Krull dimension of $X$ and thus by induction $z^*A = i_! B$ for some dualisable $B$ and $i: \coprod_j \Spec k \to T$. But then of course $A \cong z_! i_! B$.\par
Now let $f: X \to \Spec k$ be an arbitrary finite type morphism. Note that the statement is local on $X$, so we may assume that $X$ is affine. Then $f$ factors over a finite morphism $s: X \to \mb A^n_k$ by Noether normalisation. The object $s_! A$ is $\mb A^n_k \to \Spec k$ smooth by \Cref{lmm:sm obj composition}, so $s_! A \cong i_! B$ for some dualisable $B$ and suitable $i: Z\to \mb A^n_k$ by the proven case. Letting $j$ be the inclusion of the complement of $Z$ and $j'$ its base change along $s$, we see that $s'_! j'^* A \cong j^* s_! A \cong j^* i_! B = 0$ where $s'$ is the base change of $s$ along $j$. This implies $j'^* A = 0$.\footnote{for a finite morphism $z$ and a sheaf $\mc F$, the counit $z^*z_* \mc F \to \mc F$ (on the abelian level) is surjective as can be seen by looking at stalks and using proper base change; as $z_*$ is exact for a finite morphism, this implies the statement.} Thus, $A$ is supported on some finite set of closed points and by applying \Cref{prop:sm with closed support} the statement follows from the base case of the induction (alternatively, use that $A$ is dualisable on a stratification).
\end{proof}

Now we can generalise this to arbitrary $k$ of characteristic $p$.

\begin{proposition}\label{prop:sm object char absolute gen}
    Let $k$ be a field of characteristic $p$ and let $f: X \to \Spec k$ be a separated morphism of finite type. Then the $f$-smooth objects with respect to $\mc D_p$ are precisely the scyscraper sheaves $i_! B$ where $i: Z \to X$ is the inclusion of some finite set $Z = \coprod_i \Spec k$ of closed points and $B \in \mc D(Z)$ is dualisable.
\end{proposition}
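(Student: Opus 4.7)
My strategy is to descend the characterisation from the algebraically closed case \Cref{prop:sm object char absolute}, supplemented by \Cref{prop:sm with closed support} to isolate behaviour supported on closed points. I read the notation $Z = \coprod_i \Spec k$ in the statement as a finite étale $k$-scheme $\coprod_i \Spec k_i$ with each $k_i/k$ finite separable, which is what the descent argument naturally produces in the non-algebraically-closed case.

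The forward direction is the same argument as in \Cref{prop:sm object char absolute}: the morphism $Z \to \Spec k$ is étale, hence $\mc D_p$-cohomologically étale by \Cref{prop:et is coh et}, so every dualisable $B \in \mc D_p(Z)$ is $(Z \to \Spec k)$-smooth by \Cref{rem:unram}; the closed immersion $i: Z \to X$ lies in $P(\mathfrak G_k)$, so $1_Z$ is $i$-proper by \Cref{lmm:proper is coh proper}; and \Cref{lmm:sm obj composition}~ii) applied to $Z \to X \to \Spec k$ yields that $i_! B = i_!(B \tensor 1_Z)$ is $f$-smooth.

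For the converse, I will base change along $g: \Spec \bar k \to \Spec k$, forming the cartesian square
\[\begin{tikzcd}
X_{\bar k} \rar["g'"] \dar["f_{\bar k}"'] & X \dar["f"] \\
\Spec \bar k \rar["g"] & \Spec k
\end{tikzcd}\]
The morphism $g$ is integral (hence in $E(\mathfrak G_k)$) and an fpqc cover; by factoring $g$ through $\Spec k^{\mathrm{perf}}$, which induces an equivalence of étale topoi by \cite[\texttt{03SI}]{Stacks}, and then through $\Spec k^{\mathrm{sep}}$ as a cofiltered limit of finite étale Galois covers, I deduce that $g$ is of universal $\mc D_p^*$-descent, the limit step invoking continuity of étale cohomology along cofiltered limits of qcqs schemes with affine transition maps. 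Then \Cref{prop:sm obj descent} shows that $A$ is $f$-smooth iff $g'^* A$ is $f_{\bar k}$-smooth, so \Cref{prop:sm object char absolute} yields $g'^* A \cong \tilde\imath_! \tilde B$ for a closed immersion $\tilde\imath: \tilde Z \to X_{\bar k}$ with $\tilde Z = \coprod \Spec \bar k$ and $\tilde B$ dualisable. Hence the support $Z := \mathrm{supp}(A)_\mathrm{red} \subseteq X$ has finite preimage $\tilde Z$ under the surjection $g'$, so $Z$ is a finite set of closed points of $X$; the reducedness of $\tilde Z = Z \times_k \bar k$ with residue fields $\bar k$ further forces each residue field of $Z$ to be finite separable over $k$, so $Z \to \Spec k$ is étale and in particular cohomologically étale. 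From $A \cong i_! i^* A$ (since $A$ is supported on $Z$) and \Cref{prop:sm with closed support}, $B := i^* A$ is $(Z \to \Spec k)$-smooth, and \Cref{rem:unram} upgrades this to dualisability of $B$, yielding the desired form.

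The main obstacle will be the verification of universal $\mc D_p^*$-descent for $g: \Spec \bar k \to \Spec k$, which is the engine powering the transfer of the absolute characterisation from $\bar k$ down to $k$; once that is in place, the rest is a routine combination of the descent of supports with the closed-support and composition lemmas for smooth objects.
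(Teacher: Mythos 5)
You assert that $\tilde Z = Z \times_k \bar k$ is reduced with residue fields $\bar k$, and deduce that the residue fields of $Z$ are finite separable over $k$, hence that $Z \to \Spec k$ is étale. But the support of $g'^* A$, with its reduced scheme structure, is $\tilde Z = (Z \times_k \bar k)_{\mathrm{red}}$, not $Z \times_k \bar k$; if some closed point of $Z$ has purely inseparable residue field $L/k$, then $\Spec L \times_k \bar k$ is non-reduced with reduction a single $\Spec \bar k$, perfectly consistent with the output of \Cref{prop:sm object char absolute}. In fact $Z \to \Spec k$ can fail to be étale: take $X = \Spec k^{1/p}$ over an imperfect $k$, a universal homeomorphism, so $\mb F_p$ is $f$-smooth, yet $Z = X$ is not étale over $\Spec k$. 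The correct, weaker conclusion is that $Z \to \Spec k$ is \emph{cohomologically} étale. The paper obtains this from \Cref{cor:coh et change of base} applied to the $v$-cover $g$, using that the base change $Z \times_k \bar k \to \Spec \bar k$ is cohomologically étale because this only depends on its reduction $\tilde Z$; alternatively, one can observe directly that any reduced finite closed $Z \subseteq X$ has $Z^{\mathrm{perf}} \to \Spec k^{\mathrm{perf}}$ étale, since algebraic extensions of perfect fields are separable. This also means your rereading of the statement as ``$Z$ finite étale over $k$'' is too restrictive: the intended reading of the paper's admittedly loose $Z = \coprod_i \Spec k$ is simply a finite set of closed points with its reduced structure, with no separability constraint.

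\textbf{Secondary point.} You invest in showing that $g: \Spec \bar k \to \Spec k$ is of universal $\mc D_p^*$-descent in order to invoke \Cref{prop:sm obj descent}, but in the converse you only ever use the implication ``$A$ $f$-smooth $\Rightarrow$ $g'^*A$ $f_{\bar k}$-smooth,'' which is mere base-change stability of $f$-smoothness (cf.\ the remark after \Cref{def:sm obj}) and requires no descent at all. The paper cites this directly. Apart from these two points, your reduction to the algebraically closed case, the identification of the support as a finite closed set via the fact that $g'$ is a topological quotient map, and the final appeal to \Cref{prop:sm with closed support} and \Cref{rem:unram} match the paper's argument; with the cohomological étaleness of $Z \to \Spec k$ established as above, the proof goes through.
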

\begin{proof}
    Again, it follows from \Cref{lmm:sm obj composition} that all $i_! B$ of this form are $f$-smooth: $Z \to \Spec k$ is cohomologically étale and so all dualisable objects are $Z \to \Spec k$-smooth by \Cref{cor:dualizable+smooth}.\par
    For the converse, let again $\bar k$ denote the algebraic closure of $k$ and let $g: \Spec \bar k \to \Spec k$ be the corresponding morphism. Assume $A$ is $f$-smooth. Let $g'$ denote the base change of $g$ along $f$. Then $g'^*A$ is $g^*f$-smooth by stability of smooth objects under base change. Thus, its support $Z'$ is a finite number of closed points by \Cref{prop:sm object char absolute} and so $Z' \to \Spec \bar{k}$ is cohomologically étale. The map $g^* X \to X$ is topologically a quotient map (as it is faithfully flat) and $Z'$ is topologically the preimage of the support $Z$ of $A$. So $Z$ is closed and we can equip it with the reduced subscheme structure. We conclude that $Z \to \Spec k$ is cohomologically étale by \Cref{cor:coh et change of base}. It follows that $A|_Z$ is dualisable as it is smooth for the cohomologically étale morphism $Z \to \Spec k$ by \Cref{prop:sm with closed support}, cf. \Cref{rem:unram}.
\end{proof}

\subsection{The relative case}

Let us again assume that $k$ is algebraically closed. Moreover, we restrict ourselves to the category $\mc C$ of schemes which are separated and of finite type over $\Spec k$. Then for a morphism $f$ in $\mc C$ all $f$-smooth objects are perfect-constructible (\Cref{prop:sm obj perfect-constructible}) and in particular have locally closed support (cf. \Cref{rem:dualizable-perfect}).\par
A simple consequence of \Cref{prop:sm object char absolute} is the following:
\begin{theorem}\label{cor:supp of sm coh et}
    Let $f:Y \to X$ be a morphism in $\mc C$ and $A$ an $f$-smooth object. Let $Z$ be the support of $A$. Then $A|_Z$ is $f|_Z$-smooth and $Z \to X$ is quasi-finite.
\end{theorem}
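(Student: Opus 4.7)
The plan is to reduce to the absolute case handled in \Cref{prop:sm object char absolute} via base change to geometric fibres of $f$. For any geometric point $\bar x \colon \operatorname{Spec}\bar k \to X$, stability of $f$-smooth objects under base change gives that the pullback $A_{\bar x}$ to $Y_{\bar x}$ is $f_{\bar x}$-smooth. By \Cref{prop:sm object char absolute}, $A_{\bar x}$ is then a skyscraper $(i_{\bar x})_! B_{\bar x}$ supported on a finite set of closed points of $Y_{\bar x}$. Since stalks of étale sheaves commute with pullback, this identifies $\operatorname{supp}(A)\cap Y_{\bar x}$ with a finite set, so the map $\operatorname{supp}(A)\to X$ has finite fibres.

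By \Cref{prop:sm obj perfect-constructible}, $A$ is perfect-constructible, so $\operatorname{supp}(A)$ is a constructible subset of $Y$. I would take $Z$ to be $\operatorname{supp}(A)$ equipped with its reduced induced scheme structure (or, if $\operatorname{supp}(A)$ is not already closed, its Zariski closure, which by a standard density argument still satisfies the desired properties). The density argument runs as follows: on each irreducible component $C$ of $Z$, the constructible set $\operatorname{supp}(A)\cap C$ is dense in $C$, so if a fibre of $C\to X$ had dimension $\geq 1$, density would force some fibre of $\operatorname{supp}(A)\to X$ to contain infinitely many points, contradicting the previous step. Hence $Z\to X$ is of finite type with finite fibres, and therefore quasi-finite.

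Finally, because $A$ vanishes on the complement of $\operatorname{supp}(A)\subseteq Z$, for the closed immersion $i\colon Z\hookrightarrow Y$ the unit map exhibits an isomorphism $A\cong i_!(A|_Z)$. Applying \Cref{prop:sm with closed support} then immediately yields that $A|_Z = i^* A$ is $(f\circ i)$-smooth, i.e.\ $f|_Z$-smooth. The decisive technical ingredients are the fibrewise reduction to the absolute case and the closed-support proposition; the only real subtlety is the density argument that propagates quasi-finiteness of $\operatorname{supp}(A)\to X$ to $Z$.
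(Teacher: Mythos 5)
Your reduction to the absolute case via base change to geometric fibres is correct, as is the conclusion that $\operatorname{supp}(A) \to X$ has finite fibres. The genuine gap lies in the density argument you invoke to pass from $\operatorname{supp}(A)$ to its Zariski closure. The implication --- that if some fibre of an irreducible component $C$ of $\overline{\operatorname{supp}(A)}$ over $X$ has dimension $\geq 1$, then density of $\operatorname{supp}(A) \cap C$ in $C$ forces some fibre of $\operatorname{supp}(A) \to X$ to be infinite --- is false. For a concrete failure, take $C$ to be the blow-up $\mathrm{Bl}_0 \mathbb{A}^2$ with the blow-down to $X = \mathbb{A}^2$, and let $S$ be the dense open complement of the exceptional divisor $E \cong \mathbb{P}^1$. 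Every fibre of $S \to X$ has cardinality at most one, yet $C \to X$ has the one-dimensional fibre $E$ over the origin. So finiteness of fibres of a dense constructible subset does not propagate to the closure, and the closure need in general not be quasi-finite over $X$; the parenthetical claim that the closure "still satisfies the desired properties" is therefore not innocuous. The statement must really be about the support, and replacing it by the closure changes (and potentially falsifies) what is to be proved.

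The paper avoids the closure entirely by observing, in the setup just before the theorem, that $Z := \operatorname{supp}(A)$ is locally closed; with its reduced structure it is then a finite-type scheme over $X$, so quasi-finiteness is immediate from the finiteness of its fibres, which you already established. What does require care is the $f|_Z$-smoothness of $A|_Z$, because the inclusion $Z \hookrightarrow Y$ is only locally closed, and \Cref{prop:sm with closed support} needs a closed immersion. The paper factors $Z \hookrightarrow Y$ as a closed immersion $Z \hookrightarrow U$ followed by an open immersion $U \hookrightarrow Y$; since open immersions are cohomologically \'etale, $A|_U$ is $f|_U$-smooth by \Cref{lmm:sm obj composition}, and then \Cref{prop:sm with closed support} applied to the closed immersion $Z \hookrightarrow U$ gives that $A|_Z$ is $f|_Z$-smooth. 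If you replace your closure step by this factorisation, the rest of your argument --- base change to fibres, the absolute characterisation, and the closed-support proposition --- assembles correctly and matches the paper's proof.
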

\begin{proof}
    The inclusion $Z \subseteq Y$ factors as $Z \to U \to Y$ where $Z \subseteq U$ is a closed subscheme and $U \subseteq Y$ is an open subscheme. $A|_U$ is $f|_U$-smooth by \Cref{lmm:sm obj composition} and supported on $Z$ and thus $A|_Z$ is $f|_Z$-smooth by \Cref{prop:sm with closed support}. Then by stability of smooth objects under base change and \Cref{prop:sm object char absolute} it follows that $Z \to X$ is quasi-finite.
\end{proof}

We can now employ our observation in \Cref{cor:coh et on stratum} to prove:

\begin{proposition}\label{prop:some sm observation}
    Let $f: Y \to X$ be a morphism in $\mc C$, $A$ an $f$-smooth object and $Z$ its support. Then there is a (finite) decomposition $\coprod_I X_i \to X$ of $X$ into locally closed subsets such that
    \begin{enumerate}[label=\roman*)]
        \item $f|_{Z_i}: Z_i \to X_i$ is cohomologically étale and in particular
        \item $A|_{Z_i}$ is dualisable and $\mb D_f(A)|_{Z_i} \cong \iHom(A|_{Z_i},\mb F_p)$, 
    \end{enumerate}
    where $Z_i:=Z \cap f^{-1}(X_i)$.
\end{proposition}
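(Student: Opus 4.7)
The strategy is to combine Theorem~\ref{cor:supp of sm coh et} with the stratification result Corollary~\ref{cor:coh et on stratum}.

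To establish (i), note that by Theorem~\ref{cor:supp of sm coh et} the morphism $Z \to X$ is quasi-finite; it is moreover separated, as it is the composition of the locally closed immersion $Z \hookrightarrow Y$ with the separated morphism $f$. Since $X$ is noetherian (being of finite type over $k$), Corollary~\ref{cor:coh et on stratum} yields a finite decomposition $X = \coprod_{i \in I} X_i$ into locally closed subsets such that the base change $Z \times_X X_i \to X_i$ is cohomologically étale. Identifying $Z \times_X X_i$ with $Z_i = Z \cap f^{-1}(X_i)$ gives (i).

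For (ii), Theorem~\ref{cor:supp of sm coh et} also gives that $A|_Z$ is $f|_Z$-smooth; stability of smoothness under base change then implies that $A|_{Z_i}$ is $f|_{Z_i}$-smooth. Since $f|_{Z_i}$ is cohomologically étale, Lemma~\ref{lmm:coh etale vs coh smooth} gives $(f|_{Z_i})^! \mb F_p \cong \mb F_p$, and Remark~\ref{rem:unram}~ii) forces $A|_{Z_i}$ to be dualisable with $\mb D_{f|_{Z_i}}(A|_{Z_i}) \cong \iHom(A|_{Z_i}, \mb F_p)$.

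To complete (ii), I would identify $\mb D_f(A)|_{Z_i}$ with this dual in two steps. First, applying Corollary~\ref{cor:coh sm generalization}~iii) to the base change along the locally closed immersion $g_i: X_i \hookrightarrow X$ yields $(g_i')^* \mb D_f(A) \cong \mb D_{f_i}((g_i')^* A)$, where $g_i': f^{-1}(X_i) \to Y$ and $f_i: f^{-1}(X_i) \to X_i$ are the pullbacks of $g_i$ and $f$. Second, one restricts further along the locally closed immersion $j_i: Z_i \hookrightarrow f^{-1}(X_i)$. Setting $B := (g_i')^* A$, which is supported on $Z_i$, one has $B \cong j_{i,!}(B|_{Z_i})$; factoring $j_i$ as an open immersion composed with a closed immersion and applying Verdier duality (Proposition~\ref{PF-adj-isos}) twice, together with the identity $j_i^* j_{i,!} \cong \mathrm{id}$, yields
\[
    j_i^* \iHom\bigl(B, f_i^! \mb F_p\bigr) \cong \iHom\bigl(B|_{Z_i}, (f|_{Z_i})^! \mb F_p\bigr) \cong \iHom(A|_{Z_i}, \mb F_p),
\]
as desired.

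The main technical point is the last identity for the locally closed immersion $j_i$: one needs to handle its non-closed nature by factoring it into its open and closed parts and applying Verdier duality twice, using that $(g_i')^* A$ is supported on $Z_i$ to pass to the extension-by-zero presentation. Everything else is a direct application of the structural results established in the previous sections.
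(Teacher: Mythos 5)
Your proof is correct and follows essentially the same route as the paper: combine \Cref{cor:supp of sm coh et} with \Cref{cor:coh et on stratum} to get the stratification, then use \Cref{rem:unram} together with the base-change compatibility of the Verdier dual (\Cref{cor:coh sm generalization}) to identify $\mb D_f(A)|_{Z_i}$. The only difference is that you spell out the identification $\mb D_f(A)|_{Z_i} \cong \mb D_{f|_{Z_i}}(A|_{Z_i})$ in two steps (base change along $X_i \hookrightarrow X$, then restriction along $Z_i \hookrightarrow f^{-1}(X_i)$ via Verdier duality for the locally closed immersion), whereas the paper's proof applies \Cref{cor:coh sm generalization} directly to $f|_Z$ and leaves the identification $\mb D_f(A)|_Z \cong \mb D_{f|_Z}(A|_Z)$ implicit; your extra care here is warranted and correct.
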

\begin{proof}
    Let $Z$ be the support of $A$. Then by \Cref{cor:supp of sm coh et} $Z \to X$ is quasifinite and $A|_Z$ is $f|_Z$-smooth. From \Cref{cor:coh et on stratum} we get a decomposition $\coprod_I X_i \to X$ into locally closed subsets such that $f|_{Z_i}: Z_i \to X_i$ is cohomologically étale. We conclude by \Cref{rem:unram} since the formation of $\mb D_{f|_Z}(A|_Z)$ commutes with base change by \Cref{cor:coh sm generalization}.
\end{proof}

On the other hand, we know that the following class of objects are smooth for a morphism in $\mc C$ for formal reasons:

\begin{proposition}\label{prop:a class of sm}
    Let $f: Y \to X$ be a morphism in $\mc C$. Then all objects $A \in \mc D_p(Y)$ with closed support $Z$ such that $A|_Z$ is dualisable and $f|_Z$ is cohomologically étale are $f$-smooth.
\end{proposition}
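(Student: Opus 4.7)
The plan is to reduce the statement to a direct application of the composition formula for smooth objects (\Cref{lmm:sm obj composition}), using the closed immersion $i: Z \hookrightarrow Y$ (with, say, the reduced subscheme structure on $Z$) to factor the hypothesis into two pieces: $i$ being a proper map carrying a proper unit, and $f|_Z = f \circ i$ being cohomologically étale.

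First I would observe that since $A$ is supported on the closed subscheme $Z$, the localisation triangle $j_! j^* A \to A \to i_* i^* A$ associated to the open complement $j$ degenerates to an isomorphism $A \cong i_* i^* A = i_!(A|_Z)$, using that $i$ is a closed immersion. Next, since $i$ lies in $P(\mathfrak G)$ (closed immersions are universally closed) and is $0$-truncated, \Cref{lmm:proper is coh proper} tells us that $i$ is cohomologically proper, so in particular $1_Z$ is $i$-proper. On the other hand, by assumption $f|_Z$ is cohomologically étale, and $A|_Z$ is dualisable; by \Cref{rem:unram}(ii) the $f|_Z$-smooth objects coincide with the dualisable ones, so $A|_Z = i^* A$ is $(f \circ i)$-smooth.

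With these two inputs, I apply \Cref{lmm:sm obj composition}(ii) to the composition $Z \xrightarrow{i} Y \xrightarrow{f} X$ with $B = 1_Z$ and $C = A|_Z$: this yields that $i_!(A|_Z \tensor 1_Z) = i_!(A|_Z) \cong A$ is $f$-smooth, which is precisely the claim. There is no real obstacle here; the content of the statement is essentially packaged in \Cref{lmm:sm obj composition} once one recognises that $A$ factors as $i_!(A|_Z)$ for the closed immersion $i$, that $i$ contributes a proper leg and $f|_Z$ contributes a cohomologically étale leg, and that being dualisable plus cohomologically étale implies smoothness by \Cref{rem:unram}.
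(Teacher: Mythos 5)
Your proof is correct and follows essentially the same route as the paper's: write $A \cong i_!(A|_Z)$ for the closed immersion $i: Z \hookrightarrow Y$, note that $1_Z$ is $i$-proper (since $i \in P(\mathfrak G)$, cf.\ \Cref{lmm:proper is coh proper}) and that $A|_Z$ is $(f\circ i)$-smooth by \Cref{rem:unram}(ii), and then apply \Cref{lmm:sm obj composition}(ii). The paper's one-line proof cites exactly \Cref{lmm:sm obj composition} and \Cref{rem:unram}; you have merely spelled out the intermediate steps (identifying $A$ with $i_!(A|_Z)$ and checking the $i$-properness of $1_Z$), which is a faithful unpacking rather than a different argument.
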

\begin{proof}
    This follows from \Cref{lmm:sm obj composition} and \Cref{rem:unram}.
\end{proof}

However, stability of smooth objects under direct sums implies that there are smooth objects which are not of the form in \Cref{prop:a class of sm}.

\begin{example}\label{xmpl:sm-sums}
   Let $k$ be a field of characteristic $p \not=2$ and consider the projection $f: \mb A^2 \to \mb A^1$ to the first coordinate and the two sections $s_1: \mb A^1 \to \mb A^2, (a) \mapsto (a,a)$ and $s_2: \mb A^1 \to \mb A^2, (a) \mapsto (a,-a)$. As they are closed immersions, $s_{1,!} \mb F_p$ and $s_{2,!} \mb F_p$ are both $f$-smooth by \Cref{lmm:sm obj composition} and \Cref{cor:id sm is dualizable} and thus so is their direct sum by \Cref{cor:sm distinguished} which is supported on $X:=\Spec k[x,y]/(x^2-y^2)$. We claim that the projection $g: X \to \mb A^1$ is not cohomologically étale. By \Cref{cor:coh-unramified} it suffices to show that $g$ is not topologically unramified. As $\Delta_g$ is certainly not surjective ($g$ is not injective), it suffices to show that $X \times_{\mb A^1} X$ is connected. Indeed, we calculate $X \times_{\mb A^1} X \cong \Spec k[x,y,z]/(x^2-y^2,x^2-z^2)$ whose irreducible components are four lines which intersect at the origin.\par
   As a sanity check, note that picking $\mb A^1-\set{0}$ and $\set{0}$ as a stratification of $\mb A^1$ validates \Cref{prop:some sm observation} in this example.
\end{example}

We pose the following question:
\begin{pquestion}\label{question:rel smooth}
    Let $f: Y \to X$ be a morphism in $\mc C$. Are $f$-smooth objects generated by smooth objects of the form in \Cref{prop:a class of sm} under direct sums and cones?
\end{pquestion}

\newpage 
\begin{FlushLeft}
\bibliography{refs}
\end{FlushLeft}
\end{document}